\documentclass[10pt]{article}
\usepackage[english,activeacute]{babel}
\usepackage{epsfig}

\usepackage{stmaryrd}
\usepackage{amsmath,amsfonts,amssymb,mathrsfs,amsthm}
\usepackage{xcolor}
\usepackage{dsfont}
\usepackage{graphicx}
\usepackage[mathscr]{eucal}
\usepackage{makeidx}
\usepackage{verbatim}
\usepackage{graphics,graphicx}
\usepackage{tikz}
\usetikzlibrary{patterns}
\usepackage{textcomp}
\usepackage{float}
\usepackage[colorlinks=true]{hyperref}
\setcounter{tocdepth}{2} 


\newtheorem{lemma}{Lemma}[section]
\newtheorem{theorem}[lemma]{Theorem}
\newtheorem{proposition}[lemma]{Proposition}
\newtheorem{corollary}[lemma]{Corollary}

\theoremstyle{definition}

\newtheorem{definition}[lemma]{Definition}
\newtheorem{remark}[lemma]{Remark}

\makeatletter
\def\keywords{
    \vspace{1ex}
    \noindent
    \if@twocolumn
      \small{\bf  Keywords}\/---$\!$    \else
      \begin{center}\small\ {\bf Keywords}\end{center}\quotation\small
    \fi}
\def\endkeywords{\vspace{0.6em}\par\if@twocolumn\else\endquotation\fi
    \normalsize\rm}
\makeatother


\newcommand{\G}{\ensuremath{\mathcal G}}

\renewcommand{\L}{\ensuremath{\mathcal L}}

\newcommand{\ID}{\ensuremath{\mathbb D}}
\newcommand{\x}{\ensuremath{\bf x}}

\newcommand{\X}{\ensuremath{\mathcal X}}
\newcommand{\Y}{\ensuremath{\mathcal Y}}

\DeclareMathOperator{\loc}{loc}
\DeclareMathOperator{\comp}{comp}

\DeclareMathOperator{\Vol}{Vol}

\DeclareMathOperator{\Int}{Int}

\DeclareMathOperator{\Eucl}{Eucl}

\DeclareMathOperator{\Hess}{Hess}

\DeclareMathOperator{\Tr}{Tr}

\newcommand{\mb}[1]{\ensuremath{\mathbb{#1}}}
\newcommand{\N}{{\mb{N}}}

\newcommand{\R}{{\mb{R}}}
\newcommand{\C}{{\mb{C}}}

\newcommand{\w}{\ensuremath{w}}


\newcommand{\eps}{\varepsilon}

\newcommand{\M}{\ensuremath{\mathcal M}}

\newcommand{\B}{\ensuremath{\mathcal B}}

\let \Re \relax
\DeclareMathOperator{\Re}{Re}
\let \Im \relax
\DeclareMathOperator{\Im}{Im}

\newcommand{\ovl}[1]{\overline{#1}}




\DeclareMathOperator{\supp}{supp}

\DeclareMathOperator{\dist}{dist}



\DeclareMathOperator{\op}{op}

\DeclareMathOperator{\id}{Id}



\renewcommand{\d}{\ensuremath{\partial}}
\newcommand{\wt}{\ensuremath{\widetilde}}












\newcommand{\dsp}{\displaystyle}

\let \div \relax
\DeclareMathOperator{\div}{div}


\def\x{x}




\newcommand{\E}{\mathscr E}





%
%
%

\addtolength{\oddsidemargin}{-.8in}
\addtolength{\evensidemargin}{-.8in}
\addtolength{\textwidth}{1.6in}

\addtolength{\topmargin}{-.8in}
\addtolength{\textheight}{1.6in}


\def\e{{\mu}}

\newcommand\bna{\begin{eqnarray*}} 
	\newcommand\ena{\end{eqnarray*}}

\newcommand\bnan{\begin{eqnarray}} 
	\newcommand\enan{\end{eqnarray}}

\newcommand\bnp{\begin{proof}} 
	\newcommand\enp{\end{proof}}

\newcommand\bneq{\begin{eqnarray*}\left\lbrace \begin{array}{rcl}}
		\newcommand\eneq{\end{array} \right.\end{eqnarray*}}
\newcommand\bneqn{\begin{eqnarray}\left\lbrace \begin{array}{rcl}}
		\newcommand\eneqn{\end{array} \right.\end{eqnarray}}


\newcommand{\Ht}[1]{H^{#1}_{\tau}}


\newcommand\nor[2]{\left\|#1\right\|_{#2}}


\newcommand\ubar{\overline{u}}

\newcommand\chit{\widetilde{\chi}}

\renewcommand{\kappa}{\nu}

\newcommand{\tf}{\tilde{f}}

\newcommand{\dzb}{\partial_{\bar{z}}}
\newcommand{\zb}{\bar{z}}

\newcommand{\bc}{\check{b}_s}
\newcommand{\ban}{\mathcal{B}}

\newcommand{\pot}{\mathsf{q}}
\newcommand{\pg}{P_{\mathsf{b},\pot}}

\newcommand{\dt}{\langle D_t \rangle}

\newcommand{\kerone}{\mathcal{K}_{1,h}}
\newcommand{\kertwo}{\mathcal{K}_{2,h}}

\newcommand{\xg}{{\ensuremath{\bf x}}}

\newcommand\gl[2]{\left\langle #1, #2\right\rangle_g}
\newcommand\gln[1]{\left| #1 \right|_g^{2}}
\newcommand\Lap{\Delta_{g}}
\newcommand\nablag{\nabla_{g}}

\def\x{{\bf x}}
\def\q{{\mathsf q}}

\numberwithin{equation}{section}


\author{Spyridon Filippas\footnote{Department of Mathematics and Statistics, University of Helsinki, Helsinki, Finland, email: spyridon.filippas@helsinki.fi},
	Camille Laurent\footnote{CNRS UMR 7598 and Sorbonne Universit\'es UPMC Univ Paris 06, Laboratoire Jacques-Louis Lions, F-75005, Paris, France, email: camille.laurent@sorbonne-universite.fr}, and Matthieu L\'eautaud\footnote{Laboratoire de Math\'ematiques d'Orsay, UMR 8628, Universit\'e Paris-Saclay, CNRS, B\^atiment 307, 91405 Orsay Cedex France, email: matthieu.leautaud@universite-paris-saclay.fr.} \footnote{Institut Universitaire de France, Paris, France.}
}

\makeatletter
\def\keywords{
	\vspace{1ex}
	\noindent
	\if@twocolumn
	\small{\bf  Keywords}\/---$\!$    \else
	\begin{center}\small\ {\bf Keywords}\end{center}\quotation\small
	\fi}
\def\endkeywords{\vspace{0.6em}\par\if@twocolumn\else\endquotation\fi
	\normalsize\rm}
\makeatother

\begin{document}
	\title{Unique continuation for Schr{\"o}dinger operators with partially Gevrey coefficients}
	\maketitle
	
	\begin{abstract}
		We prove a local unique continuation result for Schr{\"o}dinger operators with time independent Lipschitz metrics and lower order terms which are Gevrey $2$ in time and bounded in space. This implies global unique continuation from any open set in a connected Riemannian manifold. 
		These results relax in the same geometric setting the analyticity assumption in time of the Tataru-Robbiano-Zuily-H\"ormander theorem for these operators.
		The proof is based on $(i)$ a Tataru-Robbiano-Zuily-H\"ormander type Carleman estimate with a nonlocal weight adapted to the anisotropy of the Schr{\"o}dinger operator and $(ii)$ the description of the  conjugation of the Schr{\"o}dinger operator with Gevrey coefficients by this nonlocal weight. 
	\end{abstract}
	
	\begin{keywords}
		\noindent
		Unique continuation, Carleman estimates, Schr{\"o}dinger operators, Gevrey regularity.
		\medskip

		\noindent
		\textbf{2010 Mathematics Subject Classification:}
		35B60, 
		35Q41, 
		47F05,       
		93B07, 
		93C20, 
		93C73. 
	\end{keywords}

	\tableofcontents
	\setcounter{tocdepth}{1}

	\section{Introduction and main results}
	
	\subsection{Background and results}
	In this article we are interested in the {\em unique continuation} problem for a large family of time-dependent Schr\"{o}dinger operators. For a general differential operator 
	\begin{align}
		\label{e:def-P}
		P = \sum_{|\alpha| \leq m} a_\alpha(\x)D_\x^\alpha , \quad \text{where } D_{\x_j}=  \frac{\d_{\x_j}}{i}, \quad m\in \N ,
	\end{align}
	on an open set $\Omega \subset \R^n$ the problem of {\em local} unique continuation is the following question: given $\x_0 \in \Omega \subset \R^n$ and $S\ni \x_0$ a smooth oriented hypersurface, do we have:
	\begin{equation}
		\label{e:local-UC}
		\big(   Pu=0 \textnormal{ in } \Omega, \quad u=0 \textnormal{ in } S^{-}\cap \Omega \big)  \Longrightarrow \x_0 \notin \supp (u),
	\end{equation}
	where we denote by $S^{-}$ one side of the oriented hypersurface $S$? If the local unique continuation property holds for a sufficiently large family of hypersurfaces, one can propagate it and deduce a {\em global} result. For $\omega$ a small subset of $\Omega$ such a result takes the form:
	\begin{equation}
		\label{e:global-UC}
		\big(   Pu=0 \textnormal{ in } \Omega, \quad u=0 \textnormal{ in } \omega\big)  \Longrightarrow u=0 \textnormal{ in }\Omega.
	\end{equation}
	A motivation arising from control theory is described in Section~\ref{s:application} below (see also~\cite{LL:23notes}).
	On the one hand, the Holmgren-John theorem~\cite[Theorem 5.3.1]{Hoermander:63} yields unique continuation assuming all coefficients of $P$ (i.e. all $a_\alpha$'s for all $|\alpha |\leq m$ in~\eqref{e:def-P}) are real-analytic and the hypersurface  $S$ is non-characteristic, that is to say
	\begin{align}
		\label{e:non-char}
		p_m(\x_0, d \Psi(\x_0)) \neq 0, \quad \text{ where } S = \{\Psi = 0\} , 
	\end{align} 
	and  \begin{align}
		\label{e:ppal-sym}
		p_m(\x, \xi) := \sum_{|\alpha| = m} a_\alpha(\x) \xi^\alpha
	\end{align}
	is the so-called principal symbol of the operator $P$.
	On the other hand, if one is interested in $C^\infty$ (or $C^k$) regularity, H\"{o}rmander's theorem~\cite[Theorem 28.3.4]{Hoermander:V4} yields unique continuation under a (rather strong, unless if $P$ is elliptic, a case which is not considered in the present article) so-called pseudoconvexity condition (that is to be checked on the whole cotangent space over $\x_0$). 
	The seminal result of Robbiano~\cite{Robbiano:91} for hyperbolic operators, subsequently improved in~\cite{Hormander:92}, paved the way to a more general theorem that would bridge the gap between the $C^\infty$ and the analytic case. Following another breakthrough by Tataru~\cite{Tataru:95}, this program was finally completed by Robbiano-Zuily, H\"{o}rmander and Tataru in the series of papers~\cite{RZ:98, Hor:97,Tataru:99}, proving a general unique continuation result for operators having partially analytic coefficients, containing as a particular cases both the Holmgren-John and the H\"{o}rmander theorems. We refer to~\cite{LL:15,LL:XEDP,LL:book,LL:23notes} for further discussions and comments on these results.
	
\medskip
In this article, motivated by applications to control theory (see Section~\ref{s:application}  below), we are interested in the particular case of Schr\"odinger operators 
\begin{align}
	\label{laplacien-perturbe-gevrey}
	\pg = i \d_t +\sum_{j,k=1}^d \d_{x_j} g^{jk}(x) \d_{x_k}+\sum_{j=1}^d \mathsf{b}^j(t,x)\d_{x_j}+\pot(t,x) ,
\end{align}
where $g^{jk}(x)$ is a symmetric elliptic matrix on an open set $V \subset \R^d$, that is to say $g^{jk}(x)=g^{kj}(x)$ and 
\begin{align}
\label{e:elliptic}
\text{there is }c_0>0 \text{ such that } \sum_{j,k=1}^d g^{jk}(x)\xi_j\xi_k \geq c_0 |\xi|^2, \quad \text{ for all }(x,\xi) \in V \times \R^d .
\end{align}
Compared to the general situation in~\eqref{e:def-P}--\eqref{e:ppal-sym}, we have here $n=1+d$, $\x = (t,x)$, $m=2$, and the ``principal symbol'' of $P$ is $p_2(\x,\xi) =p_2(t,x,\xi_t,\xi_x) =- \sum_{j,k} g^{jk}(x)\xi_{x_j} \xi_{x_k}$. The latter does not depend on $\xi_t$ (and, in particular, is the same as for the heat operator~\eqref{laplacien-perturbe-gevrey} in which $i\d_t$ is replaced by $-\d_t$). The formulation of $\pg$ in divergence form, as opposed to~\eqref{e:def-P}, is related to the low regularity of the coefficients in our results, see the discussion in Section~\ref{s:rem-div}.
The classical theorem of H\"{o}rmander is empty in this case. 
Taking advantage of the anisotropic (or quasi-homogeneous) nature of the Schr\"odinger operator, Lascar and Zuily proved in~\cite{LZ:82} that the results of H\"{o}rmander~\cite[Chapter~28]{Hoermander:V4} can be generalized to the anisotropic case with an appropriate modification of the symbol classes and Poisson bracket. See also~\cite{Dehman:84},~\cite{isakov1993carleman} and~\cite{Tataru:96ani} for later results in this direction. 
In the context of~\eqref{laplacien-perturbe-gevrey}, this result applies for coefficients $g^{jk}\in C^1$ and $\mathsf{b}^j,\pot \in L^\infty$, under a pseudoconvexity condition on the hypersurface. 
The latter is a very strong {\em local} geometric assumption on the surface for~\eqref{e:local-UC} to hold, which necessarily leads to a very strong {\em global} geometric assumption of the observation set $\omega$ in an associated global unique continuation statement of the form~\eqref{e:global-UC}.
For applications to control or inverse problems, related global Carleman estimates for Schr\"odinger operators have been proved for instance in~\cite{BP:02} (constant leading order coefficients) and in~\cite{TX:07,L:10} (Riemannian manifolds or varying coefficients). A weak pseudoconvexity condition has also been proved sufficient in~\cite{MOR:08} for a flat metric and in \cite{L:10} with varying metrics. Yet, in all of these references, a form of pseudoconvexity related to that of~\cite{LZ:82} is required and global statements hold under strong geometric assumptions. As proved in~\cite[Th\'eor\`emes~1.4 et~1.6]{LZ:82}, a pseudoconvexity condition is actually essentially {\em necessary} in the following sense: if it is ``strongly violated'', then there exists $\pot \in C^\infty(\Omega)$ such that \eqref{e:local-UC} does not hold for the operator $P=\pg$ in~\eqref{laplacien-perturbe-gevrey} with $\mathsf{b}^j=0$ (see Section~\ref{s:section-remarks-gevrey} below).

The Tataru-Robbiano-Zuily-H\"{o}rmander theorem also applies to the Schr\"odinger operator~\eqref{laplacien-perturbe-gevrey}. In that case, it implies local unique continuation~\eqref{e:local-UC} assuming 
\begin{enumerate}
	\item that the surface $S$ is {\em non-characteristic}, i.e.~\eqref{e:non-char};
	\item that the coefficients are {\em real-analytic} with respect to the time variable $t$. 
\end{enumerate}
In the setting of the Schr\"odinger operator~\eqref{laplacien-perturbe-gevrey} in $\R^{1+d}$, note that the non-characteristicity assumption~\eqref{e:non-char} rewrites equivalently
\begin{equation}
	\label{e:non-char-schro}
	\sum_{j,k=1}^d  g^{jk}(x_0)\d_{x_j} \Psi(t_0,x_0)\d_{x_k} \Psi(t_0,x_0) \neq 0 .
\end{equation}
From the geometric point of view, the non-characteristicity assumption is optimal: it excludes only surfaces tangent to $\{t=t_0\}$, for which we know that local unique continuation may fail (this would otherwise imply finite speed of propagation for Schr\"odinger equations). Applied iteratively to appropriate families of hypersurfaces (see e.g.~\cite[Section~6.2]{LL:15}), this result thus leads to a global unique continuation statement under an optimal geometric condition, still assuming analyticity in time of the coefficients. 
From the point of view of regularity requirements, however, analyticity in time is of course very demanding. 

Note finally that T'jo{\"e}n~\cite{tjoenAnistrop} proved a quasi-homogeneous variant of the Tataru-H\"{o}rmander-Robbiano-Zuily theorem in a general setting and Masuda~\cite{masuda} proved a global uniqueness result in the case of $C^2$ principal coefficients and time independent coefficients. 
A challenging problem is to understand to which extent the time-analyticity condition can be relaxed under optimal geometric conditions. For the wave operator, we refer to the discussion in~\cite{LL:23notes} and the counterexamples of Alinhac-Baouendi~\cite{AB:79,Alinhac:83,AB:95} and H\"ormander~\cite{Hormander:00} (see Section~\ref{s:section-remarks-gevrey} below).
In this direction, our results relax the time analyticity assumption of the Tataru-Robbiano-Zuily-H\"{o}rmander theorem for the Schr\"odinger operator~\eqref{laplacien-perturbe-gevrey} down to Gevrey regularity. 

\begin{definition}
	\label{d:gevrey}
	Given $\mathsf{d}\in \N^*$, $U\subset \R^{\mathsf{d}}$ an open set, $(\ban, \| \cdot \|_{\ban})$ a Banach space and $s>0$,  we say that $f$ is a $s$-Gevrey function valued in $\ban$, denoted $f \in \mathcal{G}^s(U;\ban)$, 
	if $f\in C^\infty(U;\ban)$ is such that  for every compact set $K\subset U$, there are constants $C,R>0$ such that for all $\alpha \in \N^{\mathsf{d}}$  
	\begin{equation*}
		\max_{t \in K} \nor{\d^\alpha f(t)}{\ban}\leq C R^{|\alpha|} \alpha!^s .
	\end{equation*}
\end{definition} 
These spaces were introduced by Gevrey~\cite{gevrey1918nature} to investigate regularity properties for solutions of the heat equation between real-analyticity and $C^\infty$ regularity.
Recall that for $s=1$, $\mathcal{G}^1(U;\ban) = C^\omega (U;\ban)$ is the space of real-analytic $\ban$-valued functions. However, for $s>1$, $\mathcal{G}^s(U;\ban)$ contains nontrivial compactly supported functions. See e.g.~\cite{Hoermander:V1} or~\cite{Rodino:93} for more properties of Gevrey functions.
In what follows, we mostly consider the case $\mathsf{d}=1$, $t$ being the time variable (but also consider $\mathsf{d}=2$ in Section~\ref{s:gevrey-fcts}).
Our main results may be summarized as follows.
\begin{theorem}[Local unique continuation for Schr\"odinger operators]
	\label{theorem local uniqueness intro}
	Assume $\Omega = I \times V$ where $I \subset \R$ is an open interval and $V\subset \R^d$ an open set, and let $(t_0,x_0) \in \Omega$. 
	Assume $g^{jk}\in W^{1,\infty}(V)$ satisfies~\eqref{e:elliptic}, that $\mathsf{b}^j, \pot \in \G^2(I; L^\infty(V;\C))$. Let $\Psi \in C^1(\Omega;\R)$ such that $\{\Psi=0\}$ is non characteristic for $P$ at $(t_0,x_0)$, in the sense of~\eqref{e:non-char-schro}.
	Then, there is a neighborhood $W$ of $(t_0,x_0)$ such that, for $\pg$ defined in~\eqref{laplacien-perturbe-gevrey}, 
	$$
	\left( \pg u = 0 \quad \text{ in }\Omega, \quad  u \in L^2(I;H^1(V)) , \quad u =0 \text{ in } \{\Psi >0\}  \right) \implies u = 0 \text{ in } W .
	$$
\end{theorem}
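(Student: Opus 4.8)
The plan is to deduce this local unique continuation statement from a Carleman estimate for $\pg$ carrying a \emph{nonlocal} weight adapted to the Schr\"odinger anisotropy $\xi_t \sim |\xi_x|^2$, combined with a precise description of the conjugation of $\pg$ by that weight — exactly the pair of ingredients $(i)$ and $(ii)$ mentioned above. The architecture is the classical Tataru-Robbiano-Zuily-H\"ormander one: convexify the non-characteristic surface into a weight, apply the Carleman estimate to a cutoff of $u$ after an auxiliary regularization in $t$, and send the large parameter to infinity. The genuinely new features lie in the anisotropic symbol calculus and in replacing analyticity in $t$ by Gevrey-$2$ regularity in $t$.

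\emph{Reduction and choice of weight.} After a local change of coordinates one may assume $(t_0,x_0)=(0,0)$; since~\eqref{e:non-char-schro} forces $d_x\Psi(0,0)\neq 0$, one may also normalize $\Psi$ so that its spatial gradient at the origin is a fixed nonzero vector. I would then build a weight $\varphi$ by convexifying $\Psi$ in the quasi-homogeneous sense — for instance $\varphi = e^{\lambda\psi}$ with $\lambda$ large and $\psi$ a small smooth perturbation of $\Psi$ — so that the associated nonlocal weight satisfies the anisotropic sub-ellipticity condition behind the Carleman estimate. This is where the non-characteristicity hypothesis enters: the spatial part of the principal symbol $p_2$ is elliptic and $d_x\Psi\neq 0$, so the $\xi_t$ direction, which carries no principal symbol for $\pg$, is harmless and the relevant anisotropic Poisson-bracket positivity can be forced by convexification, exactly as for operators of real principal type in the Tataru-H\"ormander scheme. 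The role played by analyticity in $t$ in that scheme is taken over by Gevrey-$2$ regularity: for $\pg$ one time derivative has the weight of two space derivatives, so at the scale of the space frequencies the Gevrey-$2$ bound $\max_K\|\d_t^\alpha\mathsf{b}^j\|_{L^\infty},\ \max_K\|\d_t^\alpha\pot\|_{L^\infty}\leq CR^{|\alpha|}\alpha!^2$ is the precise counterpart of an analytic bound for a second-order operator, and it is what the nonlocal weight is designed to exploit.

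\emph{Conclusion argument.} Let $u$ satisfy $\pg u=0$ in $\Omega$, $u\in L^2(I;H^1(V))$, and $u\equiv 0$ on $\{\Psi>0\}$. Choose $\chi\in C_c^\infty$ equal to $1$ near $(0,0)$ and supported in a small neighborhood, arranged via the level sets of $\varphi$ so that $\supp d\chi\cap\{\Psi\le 0\}\subset\{\varphi<\varphi(0,0)-\delta\}$ for some $\delta>0$. Apply the nonlocal Carleman estimate to $\chi u$, after passing it through the Gaussian/FBI regularization in $t$: the source $\pg(\chi u)=[\pg,\chi]u$ is supported in $\supp d\chi$, its contribution from $\{\Psi>0\}$ vanishes since $u$ does, and its contribution from $\{\Psi\le 0\}$ is weighted by at most $e^{\tau(\varphi(0,0)-\delta)}$. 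The regularization in $t$ also generates commutators between the Gaussian and the coefficients $\mathsf{b}^j,\pot$, controlled by the conjugation description $(ii)$, and between the Gaussian and the cutoff $\chi$, which are negligible because of the exponential decay of the Gaussian tails on the scale fixed by $\tau$. Since the left-hand side bounds from below $e^{\tau(\varphi(0,0)-\delta/2)}\|u\|$ on a neighborhood $W$ of $(0,0)$, dividing and letting $\tau\to+\infty$ yields $u\equiv 0$ on $W$.

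\emph{Main obstacle.} The technical core is the quantitative control of $e^{\tau\Phi}\,\pg\, e^{-\tau\Phi}$ when $\Phi$ is nonlocal in $t$ while the coefficients are merely Gevrey-$2$ in $t$ and Lipschitz in $x$. Two things must be achieved simultaneously: commuting the Gaussian/FBI regularization past the non-analytic coefficients $\mathsf{b}^j,\pot$ should only produce errors of the order already present, which pins down the width of the Gaussian and the convexity parameter $\lambda$ against the Gevrey-$2$ constant $R$; and the second-order term $\d_{x_j}g^{jk}\d_{x_k}$ must be kept in divergence form and integrated by parts once, using only the $W^{1,\infty}$ bound on $g^{jk}$ and the $H^1$-in-space regularity of $u$, never a $C^1$ bound on the metric — which is precisely why $\pg$ is written as in~\eqref{laplacien-perturbe-gevrey}. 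Reconciling the local cutoff $\chi$ with the nonlocal weight, i.e. estimating the part of the regularized function whose $t$-support has leaked across $\supp d\chi$, is the last delicate point, and is again absorbed by the Gaussian tails.
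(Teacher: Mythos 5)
Your overall architecture matches the paper's: convexify $\Psi$ into a weight satisfying the anisotropic subellipticity conditions, prove a Carleman estimate for $\pg$ with the nonlocal Gaussian weight $e^{-\mu|D_t|^2/(2\tau^3)}e^{\tau\phi}$, handle the Gevrey-$2$ lower order terms by a conjugation result with exponentially small error, apply the estimate to $\chi u$, and localize the commutator $[\pg,\chi]u$ using the level sets of the weight. You also correctly identify the scaling heuristic (one $\d_t$ worth two $\d_x$, hence $\frac{|D_t|^2}{\tau^3}$ and Gevrey $2$) and the role of divergence form versus the mere $W^{1,\infty}$ regularity of the metric.

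There is, however, a genuine gap in your conclusive step. You write that the left-hand side of the Carleman estimate ``bounds from below $e^{\tau(\varphi(0,0)-\delta/2)}\|u\|$ on a neighborhood $W$ of $(0,0)$, dividing and letting $\tau\to+\infty$ yields $u\equiv 0$ on $W$.'' This works for the local weight $e^{\tau\phi}$, but it is false for the nonlocal weight $Q_{\e,\tau}^{\phi}=e^{-\mu|D_t|^2/(2\tau^3)}e^{\tau\phi}$: the Gaussian convolution in $t$ averages over a shrinking-but-nonzero window, and one cannot recover a pointwise lower bound $\|Q_{\e,\tau}^{\phi}w\|\gtrsim e^{\tau c}\|w\|_{W}$. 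This is precisely why the Tataru--Robbiano--Zuily--H\"ormander machinery does not conclude ``by dividing''; one instead derives a uniform-in-$\tau$ bound on $\|Q_{\e,\tau}^{\phi+\delta}\chi u\|_{L^2}$ and then invokes the harmonic-analysis lemma of H\"ormander (restated as Lemma~\ref{lemme d analyse harmonique} in the paper): if $\|A_\tau(D)e^{\tau\phi}u\|_{L^2}\le C$ uniformly with $A_\tau\to 1$ locally uniformly, then $\supp u\subset\{\phi\le 0\}$. Without this lemma (or an equivalent substitute) the argument does not close, and it is the one real idea of Tataru's that the rest of the nonlocal-weight scheme hinges upon.

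A second, smaller omission: the exact conjugation formulas $Q_{\e,\tau}^{\phi}D_k=(D_k)_{\phi,\e}Q_{\e,\tau}^{\phi}$ require $\phi$ to be a \emph{polynomial quadratic in $t$}; the weight $e^{\lambda\psi}$ you build by convexification is not a polynomial. The paper handles this by taking the order-$2$ Taylor expansion of the convexified weight and subtracting $\delta|\x-\x_0|^2$, which preserves the $C^2$-open positivity conditions on $\B_{g,\phi,f}$ and $\E_{g,\phi,f}$. You should incorporate this truncation explicitly, since it is not merely cosmetic: with a nonpolynomial $\phi$ the conjugated operator does not have the claimed structure.
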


For applications, one may need to assume less regularity on the solution $u$. The latter can indeed be relaxed, if we assume some additional regularity of the coefficients.
\begin{theorem}[Local unique continuation for $L^2$ solutions]
	\label{theorem local L2}
	Under same assumptions as in Theorem~\ref{theorem local uniqueness intro}, and assuming in addition that $\sum _{j=1}^d\d_{x_j}\mathsf{b}^j \in  L^\infty(\Omega;\C)$,  there is a neighborhood $W$ of $(t_0,x_0)$ such that 
	$$
	\left( \pg u = 0 \quad \text{ in }\Omega, \quad  u \in L^2(\Omega) , \quad u =0 \text{ in } \{\Psi >0\}  \right) \implies u = 0 \text{ in } W .
	$$
\end{theorem}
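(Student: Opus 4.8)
The plan is to deduce Theorem~\ref{theorem local L2} from the machinery behind Theorem~\ref{theorem local uniqueness intro}, by combining a divergence-form reformulation of $\pg$ valid on $L^2(\Omega)$ with a localization argument that feeds a cut-off of $u$ into the Carleman estimate of the paper. The only genuinely new point, compared with Theorem~\ref{theorem local uniqueness intro}, is to make sense of the equation, and of that Carleman estimate, when $u$ is merely square integrable, and this is precisely where the extra hypothesis $\sum_{j}\d_{x_j}\mathsf b^j\in L^\infty(\Omega)$ enters.

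First I would rewrite the first-order term in divergence form. Using $\sum_j \mathsf b^j\d_{x_j}u = \mathrm{div}_x(\mathsf b\,u)-\big(\sum_j\d_{x_j}\mathsf b^j\big)u$, the operator reads $\pg u = i\d_t u + \mathrm{div}_x\big(g\,\nabla_x u + \mathsf b\,u\big) + \q' u$ with $\q' := \pot - \sum_j\d_{x_j}\mathsf b^j \in L^\infty(\Omega;\C)$. For $u\in L^2(I;H^1(V))$ this agrees with~\eqref{laplacien-perturbe-gevrey}, and for $u\in L^2(\Omega)$ the right-hand side is a well-defined distribution; along a solution one moreover has $i\d_t u = -\mathrm{div}_x(g\nabla_x u + \mathsf b u)-\q' u$, so that, locally, $\pg u$ is an element of $L^2(I;H^{-1}(V))$ and the equation $\pg u = 0$ is unambiguous. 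Without the new hypothesis the product $\mathsf b\cdot\nabla_x u$ (of type $L^\infty\times H^{-1}$) would not even be defined, which is the sole obstruction to running the argument of Theorem~\ref{theorem local uniqueness intro} for $L^2$ solutions.

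Next I would localize. Choose $\chi\in C^\infty_c(\Omega)$ equal to $1$ on a neighborhood $W$ of $(t_0,x_0)$ and supported in a small ball where the pseudoconvexity construction of the nonlocal weight $e^{\tau\phi}$ from the proof of Theorem~\ref{theorem local uniqueness intro} applies, arranged so that $d\chi$ is supported where $\phi$ is strictly below its values near $(t_0,x_0)$. A direct computation from the divergence form gives $\pg(\chi u)=\mathrm{div}_x(\vec F)+f$, where $\vec F = 2\,g(\nabla\chi)\,u\in L^2(\Omega;\C^d)$ and $f$ is a bounded combination of $u$ (with $L^\infty$ coefficients built from $\d_t\chi$, $\nabla\chi$, $\nabla^2\chi$, $\nabla g$ and $\mathsf b$), both supported in $\mathrm{supp}(d\chi)$ and vanishing on $W$. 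Hence $\pg(\chi u)\in L^2(I;H^{-1}(V))$ with norm $\lesssim \|u\|_{L^2(\mathrm{supp}(d\chi))}$. I would then apply the Carleman estimate of the paper to $v=\chi u$: since the anisotropic principal part of $\pg$ has order $2$ in $x$, this estimate is naturally formulated in the $H^1_x$--$H^{-1}_x$ duality scale, so that its right-hand side $\|e^{\tau\phi}\pg v\|$ is measured in a weighted $L^2(I;H^{-1}(V))$ and the $\mathrm{div}_x(\vec F)$ contribution costs nothing. Bounding this right-hand side by $C e^{\tau\sup_{\mathrm{supp}(d\chi)}\phi}\|u\|_{L^2}$, bounding the left-hand side from below by $c\,e^{\tau(\phi(t_0,x_0)-\varepsilon)}\|u\|_{L^2(W')}$ on a small $W'\subset W$ around $(t_0,x_0)$ on which $\phi>\phi(t_0,x_0)-\varepsilon$, and letting $\tau\to+\infty$ forces $u\equiv0$ on $W'$, exactly as in Theorem~\ref{theorem local uniqueness intro}.

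The main obstacle is the legitimacy of this last application of the Carleman estimate to the non-smooth function $\chi u$. Since the Schr\"odinger operator is not hypoelliptic, one cannot upgrade $u\in L^2(\Omega)$ to $u\in L^2(I;H^1(V))$ by interior regularity and reduce literally to Theorem~\ref{theorem local uniqueness intro}; instead one has to prove and use the Carleman estimate in the form allowing $\pg v\in L^2(I;H^{-1}(V))$. Concretely, this requires a density argument in which the full anisotropic principal part $i\d_t + \mathrm{div}_x(g\nabla_x\,\cdot)$ is kept together when approximating $\chi u$ by smooth functions --- a naive separate mollification of $\mathrm{div}_x(g\nabla_x u)$ only converges in $H^{-2}_x$ --- or, equivalently, a verification that the quadratic-form identities and integrations by parts in the proof of the estimate extend continuously to such $v$. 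Granting this, everything else is the routine Carleman scheme recalled above; the Gevrey-in-time structure of $\mathsf b,\pot$ and the nonlocal weight enter only through the already-established estimate behind Theorem~\ref{theorem local uniqueness intro}.
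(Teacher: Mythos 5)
Your reduction of the statement to ``apply the Carleman estimate to $\chi u$ with source in $L^2(I;H^{-1}(V))$'' contains a genuine gap, and you flag it yourself but do not close it: the Carleman estimate the paper actually proves (Theorem~\ref{th carleman with gevrey terms}, built on Theorem~\ref{th:carlemanschrod} and Proposition~\ref{p:subellipti-xit=0}) controls $\tau\|Q^\phi_{\e,\tau}w\|_{\Ht{1}}^2$ by $\|Q^\phi_{\e,\tau}\pg w\|_{L^2}^2$, i.e.\ the source term is measured in $L^2$ in \emph{both} variables; it is only the exponentially small \emph{remainder} that carries a negative Sobolev index, and that index sits on the \emph{time} variable ($H^{-k}_t H^1_x$). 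An estimate ``naturally formulated in the $H^1_x$--$H^{-1}_x$ duality scale,'' with $\|e^{\tau\phi}\pg v\|$ taken in a weighted $L^2_tH^{-1}_x$, is a different (shifted) Carleman inequality; it does not follow from the paper's subelliptic estimate by density, and the whole conjugation machinery of Section~\ref{section conjugation with gevrey} is set up for $\X=\Y=L^2(V)$, so establishing it would require redoing the core of the argument. Writing ``Granting this, everything else is routine'' leaves precisely the nontrivial step unproved.

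The paper closes this gap by a different mechanism, which you do not use. First, ellipticity of $\Delta_{g,\varphi}$ in space combined with the equation $i\d_t u=-\Delta_{g,\varphi}u-Ru$ upgrades the solution itself: Lemma~\ref{imroved regularity for L2 solution} (via Lemma~\ref{lm:ellipticH1H-1}) shows that a localized $L^2$ solution belongs to $H^{-1}(\R;H^1(V))$ — one trades a time derivative for the missing space derivative, which is exactly the regularity your hypoellipticity remark overlooks. Second, the commutator $[\pg,\chi]u$ (first order in $x$, zeroth order in $t$) then lies in $H^{-1}_tL^2_x$, and the source term of the Carleman estimate is converted to that norm not by weakening the spatial index but by exploiting the time-smoothing of the Gaussian weight: $\nor{Q^\phi_{\e,\tau}\pg(\chi u)}{L^2}\leq C\tau^{3/2}\nor{e^{\tau\phi}[\pg,\chi]u}{H^{-1}_tL^2_x}$, since $\nor{D_te^{-\e|D_t|^2/(2\tau^3)}}{L^2\to L^2}\lesssim\tau^{3/2}$; the polynomial loss is absorbed by the exponential gain coming from the support of $\nabla\chi$. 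This is why the paper needs the $k=1$ version of Theorem~\ref{th carleman with gevrey terms} and a duality argument in the \emph{time} variable, rather than an $H^{-1}_x$-source Carleman estimate. Your divergence-form rewriting of the first-order term and the support/convexification bookkeeping are consistent with the paper, but the central analytic step of your plan is the one the paper replaces by this ellipticity-plus-time-regularization argument.
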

Note that the divergence form of the principal part of $\pg$ together with the respective regularity assumptions on $g^{jk}, \mathsf{b},\pot$ and $u$ allow to make sense of $\pg u$ in $\mathcal{D}'(\Omega)$.
With respect to  the Tataru-Robbiano-Zuily-H\"{o}rmander theorem for the Schr\"odinger operator~\eqref{laplacien-perturbe-gevrey}, we relax the analyticity-in-time assumption for the lower order terms to a Gevrey $2$ condition. We also relax the regularity of the main coefficients (assumed either $C^\infty$ in~\cite{RZ:98, Hor:97,Tataru:99} or $C^1$ in~\cite{Tataru:95}), replaced here by Lipschitz regularity; in the elliptic context (and therefore in our context as well) this is essentially the minimal regularity in dimension $d \geq 3$ for local uniqueness to hold (see~\cite{Plis:63} and~\cite{Miller:74} for $C^{0, \alpha}$ counterexamples for all $\alpha<1$, for operators in divergence forms or not). 

\begin{remark}
\label{r:distributions}
	One can further lower the regularity of the solution $u$ by assuming additional regularity of the coefficients $g^{ij}, \mathsf{b}^j, \pot$. For instance, assuming (in addition to the assumptions of Theorem~\ref{theorem local uniqueness intro}) that $g^{ij}\in C^\infty(V) ,  \mathsf{b}^j, \pot \in C^\infty(\Omega;\C)$, then we have 
	$$
	\left( \pg u = 0 \quad \text{ in }\Omega, \quad  u \in \mathcal{D}'(\Omega) , \quad u =0 \text{ in } \{\Psi >0\}  \right) \implies u = 0 \text{ in } W .
	$$
\end{remark}
Successive applications of Theorem~\ref{theorem local uniqueness intro} or Theorem~\ref{theorem local L2} through a family of well-chosen non-characteristic hypersurfaces yield the following global result (see~\cite[Proof of Theorem~6.7 p.~100]{LL:15} and use that a connected manifold is path-connected).
\begin{theorem}
	\label{theorem global intro}
	Let $T>0$ and $\M = \Int(\M)\sqcup \d\M$ be a connected smooth manifold with or without boundary $\d\M$.
	Suppose that $g \in W^{1,\infty}_{\loc}(\Int(\M))$ is a Riemannian metric on $\Int(\M)$, 
	that $\pot \in \G^2((0,T); L^\infty_{\loc}(\Int(\M);\C))$, that $\mathsf{b}$ is a one form with all components belonging to $\G^2((0,T); L^\infty_{\loc}(\Int(\M);\C))$, 
	and consider the differential operator 
	$$
	\mathcal{P}_{\mathsf{b},\pot} : = i \d_t +\Delta_g+ \mathsf{b} \cdot \nabla_g+\pot(t,x) ,
	$$
	where $\Delta_g$ is the Laplace-Beltrami operator on $\Int(\M)$, $\nabla_g$ the Riemannian gradient.

	Then given  $\omega$ a nonempty open set of $\M$,  we have 
	$$
	\begin{cases}
		\mathcal{P}_{\mathsf{b},\pot}  u = 0  \text{ in }(0,T)\times \Int(\M) \\  u \in L^2_{\loc}(0,T;H^1_{\loc}(\Int(\M)))  \\ u =0 \text{ in } (0,T)\times \omega    
	\end{cases} \implies u = 0 \text{ in } (0,T)\times \Int(\M) .
	$$
	If in addition $\div_g(\mathsf{b})\in L^\infty_{\loc}((0,T)\times \Int(\M))$, then 
	$$
	\begin{cases}
		\mathcal{P}_{\mathsf{b},\pot}  u = 0  \text{ in }(0,T)\times \Int(\M) \\  u \in L^2_{\loc}((0,T)\times \Int(\M))  \\ u =0 \text{ in } (0,T)\times \omega    
	\end{cases}
	\implies u = 0 \text{ in } (0,T)\times \Int(\M) .
	$$
\end{theorem}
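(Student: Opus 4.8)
The plan is to deduce Theorem~\ref{theorem global intro} from the local results, Theorems~\ref{theorem local uniqueness intro} and~\ref{theorem local L2}, by the classical propagation‑of‑uniqueness mechanism, using the specific feature of the Schr\"odinger operator that non‑characteristicity only excludes hypersurfaces tangent to time‑slices. The first step is to localize: $\Int(\M)$ is covered by coordinate charts, and in a chart $U\subseteq\R^d$ one computes $\Delta_g u = \sum_{j,k}\d_{x_j}(g^{jk}\d_{x_k}u) + \sum_j c^j\d_{x_j}u$ with $c^j = \sum_k g^{jk}\d_{x_k}\log\sqrt{\det g}\in L^\infty_{\loc}(U)$ (using $g\in W^{1,\infty}_{\loc}$ and ellipticity) and $\mathsf b\cdot\nabla_g u = \sum_j\big(\sum_i g^{ij}\mathsf b_i\big)\d_{x_j}u$, so that $\mathcal P_{\mathsf b,\pot}$ over $(0,T)\times U$ is exactly of the form~\eqref{laplacien-perturbe-gevrey}, with Lipschitz elliptic principal coefficients and with lower‑order coefficients $\sum_i g^{ij}\mathsf b_i + c^j$ and $\pot$ still in $\G^2((0,T);L^\infty_{\loc}(U))$, being sums and products of time‑independent Lipschitz or bounded functions with Gevrey‑$2$ functions of $t$. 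Theorem~\ref{theorem local uniqueness intro} thus applies in each chart. For the $L^2$ statement one must avoid the second derivatives of $g$ lurking in $\sum_j\d_{x_j}c^j$; it is then convenient to multiply the equation by the positive Lipschitz density $\sqrt{\det g}$, turning $\Delta_g$ into the pure divergence‑form operator $\sqrt{\det g}\,\Delta_g u = \sum_{j,k}\d_{x_j}\big(\sqrt{\det g}\,g^{jk}\d_{x_k}u\big)$, whose first‑order part $\tilde{\mathsf b}^j = \sqrt{\det g}\sum_i g^{ij}\mathsf b_i$ satisfies $\sum_j\d_{x_j}\tilde{\mathsf b}^j = \sqrt{\det g}\,\div_g\mathsf b\in L^\infty_{\loc}$ --- which is precisely why $\div_g\mathsf b\in L^\infty_{\loc}$ is the natural hypothesis. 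The factor $\sqrt{\det g}(x)$ now in front of $i\d_t$ is harmless, since the principal symbol remains $-\sum_{j,k}\sqrt{\det g}\,g^{jk}\xi_j\xi_k$, independent of $\xi_t$; and by ellipticity a $C^1$ hypersurface $\{\Psi=0\}$ of $(0,T)\times U$ satisfies~\eqref{e:non-char-schro} at any point where $d_x\Psi\neq0$, i.e.\ whenever it is not tangent to the time‑slice through that point. Hence Theorems~\ref{theorem local uniqueness intro} and~\ref{theorem local L2} provide local unique continuation across every such hypersurface.

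Next I would run the propagation argument. Let $F$ be the largest open subset of $(0,T)\times\Int(\M)$ on which $u\equiv0$; it contains $(0,T)\times(\omega\cap\Int(\M))$, which is nonempty since $\d\M$ has empty interior, and the goal is $F=(0,T)\times\Int(\M)$. As $\M$ is a connected manifold, $\Int(\M)$ is connected, hence path‑connected; fix $x_*\in\Int(\M)$, join a point of $\omega$ to $x_*$ by a path $\gamma$, and cover its compact image by finitely many charts. Chaining the local theorems along $\gamma$ --- exactly as in~\cite[proof of Theorem~6.7]{LL:15} --- propagates the vanishing of $u$ to a neighbourhood of $(0,T)\times\{x_*\}$: the elementary step is that, $u$ vanishing on a ball around some point $(t,y_0)$, one propagates across a small sphere centered there, which is non‑characteristic everywhere except at its two poles in the time direction --- poles one avoids --- so that $u$ vanishes near every point of that sphere, enlarging the zero set in the spatial directions; performing this for every $t$ and using that the local unique continuation neighbourhood at a point $(t,\cdot)$ contains $(t,\cdot)$, so that the union over $t\in(0,T)$ of these neighbourhoods covers the full time interval, one enlarges the spatial zero set while retaining all times. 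Since $x_*$ was arbitrary and such neighbourhoods cover $\Int(\M)$, this gives $F=(0,T)\times\Int(\M)$; the $L^2$ part is identical, using Theorem~\ref{theorem local L2} in place of Theorem~\ref{theorem local uniqueness intro}.

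The real difficulty lies in the localization step: checking that the lower‑order coefficients stay in the Gevrey‑$2$‑in‑time, $L^\infty$‑in‑space class under the coordinate change, and that the divergence condition survives in the $L^2$ setting --- which is what forces the multiplication by $\sqrt{\det g}$ rather than the naive expansion, so as to keep only one derivative on $g$ --- together with confirming that the local theorems accommodate the harmless positive Lipschitz factor in front of $i\d_t$. Once the chart operator is in the correct form, the propagation is soft and purely topological, resting only on Schr\"odinger non‑characteristicity excluding nothing but time‑slices and on $\Int(\M)$ being path‑connected.
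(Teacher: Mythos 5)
Your overall strategy (localize in charts, then propagate along paths using small non-characteristic spheres, as in \cite[Theorem~6.7]{LL:15}) is exactly the paper's, and your treatment of the first part is fine: in coordinates $\Delta_g$ differs from $\sum_{j,k}\d_{x_j}g^{jk}\d_{x_k}$ only by a time-independent $L^\infty$ first-order term, which is absorbed into $\mathsf b$, and Theorem~\ref{theorem local uniqueness intro} applies.

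There is, however, a genuine gap in your reduction for the second ($L^2$) part. Multiplying the equation by $\sqrt{\det g}$ does put the elliptic part in pure Euclidean divergence form, but it also replaces $i\d_t$ by $i\sqrt{\det g}(x)\,\d_t$, and this is \emph{not} covered by Theorem~\ref{theorem local L2}, nor is it ``harmless'': after normalizing $\sqrt{\det g}(x_0)=1$, the conjugated operator acquires a perturbation of the form $a(x)D_t$ with $a=1-\sqrt{\det g}$, which does not belong to the class of admissible remainders of Remark~\ref{carleman insensitive lot} (those only tolerate $\tau^{-1}D_t$ or $\tau^{-2}D_t$), and the term $\|aD_tv\|^2$ cannot be absorbed either by the $\tau^{-1}\|D_tv\|^2$ remainder of the subelliptic estimate~\eqref{Carlpropwaveinterm} or, after the frequency splitting $\|D_tQ^{\phi}_{\mu,\tau}w\|\lesssim\sigma\tau^2\|Q^{\phi}_{\mu,\tau}w\|+\mathcal O(e^{-c\tau})$, by the $\tau^3\|Q^{\phi}_{\mu,\tau}w\|^2$ term on the right of~\eqref{Carlemanschrod} uniformly in $\tau$. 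The paper also warns (Section~\ref{s:rem-div}) that the $L^2$ statement for one density cannot be deduced from that for another, so no soft reduction of this kind can work. The paper's actual resolution is different: Theorem~\ref{theorem local L2} is proved directly for the operator $P_{\mathsf b,\pot,\varphi}=i\d_t+\Delta_{g,\varphi}+\mathsf b\cdot\nabla+\pot$ with an \emph{arbitrary} nonvanishing Lipschitz density $\varphi$ (see~\eqref{e:def-P-b-pot-phi}, Remark~\ref{r:carleman-density}, Lemmas~\ref{imroved regularity for L2 solution} and~\ref{lm:ellipticH1H-1}), which keeps the coefficient of $i\d_t$ equal to $1$ because $\Delta_{g,\varphi}=\frac1\varphi\sum_j\d_{x_j}(\varphi g^{jk}\d_{x_k})$ differs from the Euclidean divergence form only by a time-independent $L^\infty$ first-order term; the global statement then follows by taking $\varphi=\sqrt{\det g}$, for which $\Delta_{g,\varphi}=\Delta_g$ and $\div_1(\varphi g^{-1}\mathsf b)\in L^\infty$ is equivalent to $\div_g\mathsf b\in L^\infty$. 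To repair your argument you should invoke (or reprove) the local theorem in this density-general form rather than multiply the equation through by $\sqrt{\det g}$.
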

Note that by $\pot \in \G^2((0,T); L^\infty_{\loc}(\Int(\M);\C))$, we mean $\pot \in \G^2((0,T); L^\infty(K;\C))$ for all compact subsets $K$ of $\Int(\M)$.
Note also that under the assumptions of Theorem~\ref{theorem global intro}, the Cauchy problem $\mathcal{P}_{\mathsf{b},\pot}  u=0, u(0, \cdot)=u_0$ is not well-posed in general. 

As in~\cite{LL:23notes} (see Theorem~3.24 and the remark thereafter), this result (for $L^2(I; H^1(V))$ solutions) can also be translated into a global unique condition from an arbitrarily small nonempty open subset of the boundary $\d \M$ (in case $\d\M\neq \emptyset$); we do not state this result for the sake of concision.

We finally mention that other notions of global unique continuation have been extensively investigated for solutions of Schr\"{o}dinger equations during the last years. One such notion is the following: Assume that a solution $u=u(t,x)$ of the Schr\"{o}dinger equation on $\R_{t}\times \R_{x}$ vanishes in $|x|>R$ for some $R>0$ at two different times $t_0$ and $t_1$. Can we then conclude that $u$ vanishes everywhere? This question has been addressed for instance in~\cite{escauriaza2006uniqueness,ionescu2006uniqueness,dong2007unique}, see also the references therein. All of these results hold under stronger geometric assumptions in space (flat metric, nullity outside of a ball), weaker regularity assumptions on the lower order terms, and use as a key tool Carleman inequalities.

\subsection{Application to controllability and observability}
\label{s:application}
\subsubsection{Approximate controllability}
As already alluded, unique continuation properties for evolution equations are often equivalent to approximate controllability results for an appropriate dual problem, see e.g. the introduction of~\cite{LL:23notes} for the wave equation.
In particular, Theorem~\ref{theorem global intro} has an ``approximate controllability'' counterpart. For simplicity of the exposition, we only treat the internal control (the boundary control could be considered as well) of $L^2$ solutions (the case of $C^0H^{-1}$ solutions could be considered as well) with $\mathsf{b}=0$ (the case of general $\mathsf{b}$ could be considered as well, with regularity assumptions depending on the space in which the control problem is set; note that in any case, additional assumptions should be made so that to ensure well-posedness of the Cauchy problem).
Given $T>0$, $\M=\Int(\M)\sqcup\d\M$ a smooth manifold with (possibly empty) boundary, $g$ a locally Lipschitz continuous metric on $\M$, and $\omega \subset \M$ an open set, we consider the control problem
\begin{equation}
	\label{e:Schro-control}
	\left\{
	\begin{array}{rl}
		i \d_t v+\Delta_gv+ \q v = \mathds{1}_\omega f, &  \text{ in } (0,T) \times \Int(\M)  ,\\
		v = 0 ,& \text{ on } (0,T) \times \d\M  \quad \text{ if } \d\M \neq \emptyset , \\
		v(0,\cdot) = v_0 ,&   \text{ in } \Int(\M) .
	\end{array}
	\right.
\end{equation}
Here, $f$ is a control force acting on the system on the small open set $\omega$ and one would like to control the state $v$ of the equation. Concerning well-posedness of the Cauchy problem in~\eqref{e:Schro-control}, we first let  $H^1_0(\M)$ be the completion of $C^1_c(\Int(\M))$ for the norm
\begin{equation}
\label{e:def-H1-norm}
\|u\|_{H^1(\M)}^2 := \int_\M \left( \left| \nablag u\right|_g^2 + |u|^2 \right) d\Vol_g .
\end{equation}
Note that $C^1_c(\Int(\M))$ being dense in $L^2(\M)$, we have a continuous embedding $H^1_0(\M) \subset L^2(\M)$.
Second, we take the Friedrichs extension on $L^2(\M)$ of $-\Delta_g$ defined on $C^\infty_c(\Int(\M))$, which we denote by $-\Delta_{g,\text{F}}$. It is defined by 
\begin{align}
D(-\Delta_{g,\text{F}})& :=\Big\{u\in H^1_0(\M),\text{ there exists }f\in L^2(\M), \nonumber  \\
& \quad   \int_\M \langle \nablag u,\nablag \varphi \rangle_g^2 + u\varphi \ d\Vol_g= \int_\M f\varphi \ d\Vol_g \quad \text{for all } \varphi \in H^1_0(\M) \Big\} ,  \label{e:domaine-}
\end{align}
For $u \in D(-\Delta_{g,\text{F}})$, there is a unique $f$ satisfying~\eqref{e:domaine-}, and we set  $(-\Delta_{g,\text{F}} + \id)u  := f$.
Third, for $\q \in L^\infty((0,T)\times \M; \C)$, the solution to~\eqref{e:Schro-control} is defined via the Duhamel formula for the unitary group $\big(e^{i t\Delta_{g,\text{F}}}\big)_{t\in \R}$ and is a solution of the first equation of~\eqref{e:Schro-control} in the sense of distributions on $(0,T)\times\Int(\M)$.
Note that if we assume that $\M$ is (topologically) complete and that $\d\M$ is compact, then 
$
H^1_0(\M) = \{ u \in H^1(\M) , \Tr(u) = 0\} ,
$
where $H^1(\M)$ is defined as the completion of $C^1(\M)$ functions with finite $H^1$ norm for this norm (definition~\eqref{e:def-H1-norm}) and $\Tr :H^1(\M)\to L^2(\d\M)$ is the trace operator.
This remark justifies the formal writing of the Cauchy problem in~\eqref{e:Schro-control}.

The (second) unique continuation result of Theorem~\ref{theorem global intro} combined with a classical duality argument (see~\cite{DR:77,Lio:88} or~\cite{LL:23notes}) yields the following corollary.
\begin{corollary}
	Assume $\M$ is a complete connected Riemannian manifold with or without compact boundary, $g$ is a locally Lipschitz continuous metric on $\M$, and $\q \in L^\infty((0,T)\times \M; \C) \cap \G^2((0,T);L^\infty_{\loc}(\M;\C))$. For any nonempty open set $\omega\subset \M$, for all $v_0,v_1 \in L^2(\M;\C)$ and for all precision $\eps>0$, there is $f \in L^2((0,T)\times \omega)$ such that the solution to~\eqref{e:Schro-control} satisfies $\|v(T,\cdot)-v_1\|_{L^2(\M)}\leq \eps$.
\end{corollary}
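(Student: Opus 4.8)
The plan is to derive the Corollary from the second unique continuation statement of Theorem~\ref{theorem global intro} through the classical duality argument for approximate controllability (see~\cite{DR:77,Lio:88} or~\cite{LL:23notes}). First, by linearity of~\eqref{e:Schro-control} and the well-posedness provided by the Duhamel formula for the unitary group $(e^{it\Delta_{g,\text{F}}})_{t\in\R}$, I would split the solution as $v = v_{\text{free}} + v_f$, where $v_{\text{free}}$ solves~\eqref{e:Schro-control} with $f = 0$ and $v_f$ solves it with $v_0 = 0$. Since $v_{\text{free}}(T,\cdot)$ is fixed, the Corollary is equivalent to the density in $L^2(\M;\C)$ of the reachable set $\mathcal{R}_T := \{ v_f(T,\cdot) \ : \ f\in L^2((0,T)\times\omega)\}$: one then chooses $f$ so that $v_f(T,\cdot)$ is within $\eps$ of $v_1 - v_{\text{free}}(T,\cdot)$.

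Since $\mathcal{R}_T$ is a linear subspace of the Hilbert space $L^2(\M;\C)$, I would prove its density by showing that its orthogonal complement is trivial. Let $g\in L^2(\M;\C)$ be orthogonal to $\mathcal{R}_T$, and let $\phi\in C^0([0,T];L^2(\M))$ be the backward adjoint solution of $i\d_t\phi + \Delta_g\phi + \overline{\q}\,\phi = 0$ in $(0,T)\times\Int(\M)$ with homogeneous Dirichlet data on $(0,T)\times\d\M$ and $\phi(T,\cdot) = g$, again defined via the unitary group. A standard integration by parts in $t$ and in $x$ (the spatial boundary terms vanishing by the Dirichlet conditions on both $v_f$ and $\phi$, and using $v_f(0,\cdot) = 0$) produces, up to an irrelevant nonzero constant, the duality identity $\langle v_f(T,\cdot), g\rangle_{L^2(\M)} = \int_0^T\!\int_\omega f\,\overline{\phi}\ d\Vol_g\,dt$. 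Since the left-hand side vanishes for every $f\in L^2((0,T)\times\omega)$, I would deduce $\phi = 0$ on $(0,T)\times\omega$, hence on the nonempty open subset $(0,T)\times(\omega\cap\Int(\M))$ of $(0,T)\times\Int(\M)$.

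Then I would apply the second part of Theorem~\ref{theorem global intro} to $\phi$: it solves $\mathcal{P}_{0,\overline{\q}}\phi = 0$ in $(0,T)\times\Int(\M)$ with $\mathsf{b} = 0$, so $\div_g(\mathsf{b}) = 0 \in L^\infty_{\loc}$; the potential $\overline{\q}$ still belongs to $\G^2((0,T);L^\infty_{\loc}(\Int(\M);\C))$ because complex conjugation preserves Gevrey regularity and $L^\infty$ bounds; and $\phi\in C^0([0,T];L^2(\M))\subset L^2_{\loc}((0,T)\times\Int(\M))$. Hence $\phi = 0$ in $(0,T)\times\Int(\M)$, and by continuity of $t\mapsto\phi(t,\cdot)$ into $L^2(\M)$, letting $t\to T^-$ gives $g = \phi(T,\cdot) = 0$. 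Therefore $\mathcal{R}_T$ is dense in $L^2(\M;\C)$, which is exactly the desired approximate controllability statement.

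I do not expect a genuine obstruction, since every ingredient is classical; the only points requiring care are to set up the adjoint system with the correct complex conjugate of the potential and to justify the duality identity at the weak level of regularity available here ($L^2$ data, $L^\infty$ potential, solutions defined by Duhamel), and to check that the adjoint solution fits exactly the hypotheses of the $L^2$-solution version of Theorem~\ref{theorem global intro} — in particular that conjugating $\q$ keeps it in the Gevrey-in-time, $L^\infty$-in-space class and that $\mathsf{b} = 0$ (hence $\div_g \mathsf{b} \in L^\infty_{\loc}$) makes that version applicable.
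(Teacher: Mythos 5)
Your proposal is correct and follows exactly the route the paper intends: the classical duality argument reduces approximate controllability to unique continuation for the backward adjoint problem with potential $\overline{\q}$, and you then correctly invoke the second ($L^2$-solution) part of Theorem~\ref{theorem global intro} with $\mathsf{b}=0$ (so $\div_g\mathsf{b}=0$ trivially) and $\overline{\q}\in L^\infty\cap\G^2((0,T);L^\infty_{\loc})$. The paper gives no more detail than the citation of~\cite{DR:77,Lio:88,LL:23notes}, and your write-up fills that in faithfully, including the necessary care about the conjugated potential and the $L^2$-regularity version of the global uniqueness theorem.
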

Note that we actually only need to assume $\q \in \G^2(I;L^\infty_{\loc}(\M;\C))$ for some nonempty open set $I\subset (0,T)$.

\subsubsection{Observability, exact controllability}
Unique continuation also plays a key role in proofs of exact controllability results, or equivalently, observability estimates. For wave-type and Sch\"odinger equations, the proof of the latter often decomposes into a high frequency and a low-frequency analysis. We refer to the introduction of~\cite{LL:16} for a detailed account in the case of the wave equation. 
The low-frequency part of the analysis amounts to a unique continuation like Theorem~\ref{theorem global intro}.
The observation system is the following free Schr\"odinger equation:
\begin{equation}
	\label{e:Schro-obs}
	\left\{
	\begin{array}{rl}
		i \d_t u+\Delta_g u+ q u = 0 &  \text{ in } (0,T) \times \Int(\M)  ,\\
		u = 0 ,& \text{ on } (0,T) \times \d\M  \quad \text{ if } \d\M \neq \emptyset , \\
		u(0,\cdot) = u_0 ,&   \text{ in } \Int(\M)  ,
	\end{array}
	\right.
\end{equation}
dual to the control problem~\eqref{e:Schro-control} if $\q=\ovl{q}$.
As in the preceding section, for simplicity of the exposition, we only discuss the internal observability/control of $L^2$ solutions with $\mathsf{b}=0$ to illustrate some applications of our results, and provide with a single geometric example of application.
\begin{theorem}
	\label{t:observability-gevrey}
	Assume that $(\M,g) = (\ID, \Eucl)$ is the Euclidean (closed) unit disk and that $q \in C^\infty([0,T]\times \ID; \R) \cap \G^2((0,T);L^\infty_{\loc}(\Int(\ID);\R))$ is real valued and $\omega$ is any nonempty open set such that $\omega \cap \d\ID\neq \emptyset$. Then for any $T>0$, there is $C>0$ such that for all $u_0 \in L^2(\M)$, the solution $u$ to~\eqref{e:Schro-obs} satisfies
	\begin{align}
		\label{observSchrod}
		\|u_0\|_{L^2(\M)}^2 \leq C \int_0^T \int_\omega |u(t,x)|^2 dx dt .
	\end{align}
\end{theorem}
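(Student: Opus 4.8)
The plan is to combine the Gevrey-in-time unique continuation result of Theorem~\ref{theorem global intro} (applied on the disk, which plays the role of $\M$, with the time-independent analytic metric $\Eucl$ and the potential $q$) with a high-frequency resolvent/observability estimate for the stationary problem, in the classical spirit of Lebeau's work on the disk and the general scheme recalled in the introduction of~\cite{LL:16}. The point is that on the unit disk with a control region touching the boundary, the geometric control condition is satisfied for the wave equation (every generalized geodesic reaches $\omega\cap\d\ID$ in finite time), hence the Schr\"odinger group is exactly observable for $q=0$; the issue is only to absorb the lower-order perturbation $q$, which is handled by a low-frequency unique continuation argument plus a compactness--uniqueness step.

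Concretely, I would proceed as follows. \emph{Step 1 (free observability).} First recall that for the free Schr\"odinger equation $i\d_t u+\Delta_g u=0$ on $(\ID,\Eucl)$ with Dirichlet conditions, inequality~\eqref{observSchrod} holds whenever $\omega\cap\d\ID\neq\emptyset$: this follows from the exact controllability of the Schr\"odinger equation on the disk proved by Anantharaman--Fermanian-Kammerer--Maci\`a and, in this particular geometry with boundary control region, can also be obtained from Lebeau's resolvent estimate for the wave equation under GCC together with the transmutation/complex-time argument. \emph{Step 2 (perturbation by compactness--uniqueness).} Write $u$ solving~\eqref{e:Schro-obs} as a perturbation: the remainder $qu$ is a source term bounded in $L^2$ by $\|q\|_{L^\infty}\|u\|_{L^2}$; a standard fixed-point/perturbation argument shows that either~\eqref{observSchrod} holds for the perturbed equation with a constant uniform in $u_0$, or there is a sequence of normalized solutions $u_0^{(n)}$ with $\|u_0^{(n)}\|_{L^2}=1$ and $\int_0^T\int_\omega|u^{(n)}|^2\to 0$. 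By weak compactness and the compact embedding $H^1\hookrightarrow L^2$, one extracts a weak limit; the limit is a nonzero solution $u$ of~\eqref{e:Schro-obs} with $u=0$ on $(0,T)\times\omega$. \emph{Step 3 (unique continuation kills the limit).} Now invoke Theorem~\ref{theorem global intro}: since $q\in C^\infty([0,T]\times\ID;\R)\cap\G^2((0,T);L^\infty_{\loc}(\Int(\ID);\R))$ and $u=0$ on $(0,T)\times\omega$ with $u\in L^2_{\loc}(0,T;H^1_{\loc})$ (or even $L^2_{\loc}$), the theorem forces $u\equiv0$ on $(0,T)\times\Int(\ID)$. This contradicts $\|u_0\|_{L^2}=1$ (one must check the normalization passes to the limit, which is where a spectral decomposition or a finite-dimensional eigenspace argument is used to handle the possible loss of the $L^2$ norm in the weak limit — a classical point in compactness--uniqueness). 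Hence~\eqref{observSchrod} holds.

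The main obstacle, and the place where the novelty of the paper is genuinely used, is \emph{Step 3}: the potential $q$ is only Gevrey $2$ (and $L^\infty$) in time, not analytic, so the classical Tataru--Robbiano--Zuily--H\"ormander unique continuation theorem would \emph{not} apply and one really needs Theorem~\ref{theorem global intro}, whose proof rests on the nonlocal Carleman estimate described in the abstract. A secondary technical point is the compactness--uniqueness bookkeeping (ensuring the normalization does not degenerate under weak limits): the standard remedy is to first prove a weak observability estimate of the form $\|u_0\|_{L^2}^2\leq C\int_0^T\int_\omega|u|^2+C\|u_0\|_{H^{-1}}^2$ and then remove the compact lower-order term $C\|u_0\|_{H^{-1}}^2$ via the unique continuation statement. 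One should also note that the hypothesis $\omega\cap\d\ID\neq\emptyset$ is exactly what makes the free equation observable on the disk (an interior $\omega$ not touching the boundary would require the focusing/whispering-gallery analysis), so this hypothesis enters only in Step~1 and the rest of the argument is soft.
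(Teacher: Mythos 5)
Your overall architecture — a high-frequency (weak) observability estimate combined with a unique continuation statement, glued by Bardos--Lebeau--Rauch compactness--uniqueness — is what the paper has in mind, and you correctly locate the novelty in Step~3: Theorem~\ref{theorem global intro} replaces a time-analytic UCP by a Gevrey-$2$ one. The paper's own argument is extremely short: it cites~\cite[Theorem~1.2]{ALM:16} as a \emph{black box} for the high-frequency observability estimate \emph{with the smooth potential $q$ already included}, and Theorem~\ref{theorem global intro} for the low-frequency/compactness--uniqueness step. This is why the hypothesis $q\in C^\infty([0,T]\times\ID)$ appears (it is required by the high-frequency black box) alongside $q\in\G^2$ in time (required by the UCP). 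Your Steps~1 and~2 attempt to rederive what the paper simply cites, and they have concrete gaps.

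Step~1 is false as stated: the geometric control condition does \emph{not} hold on the Euclidean disk with a small $\omega$ touching $\partial\ID$. Periodic billiard trajectories (inscribed regular polygons, or a diameter) have finitely many reflection points and therefore miss any sufficiently small boundary neighbourhood, and whispering-gallery trajectories avoid $\omega$ for arbitrarily long times. Hence the wave-equation GCC/transmutation route is unavailable, and free observability from such an $\omega$ is itself a nontrivial theorem — precisely the content of~\cite[Theorem~1.2]{ALM:16} (Anantharaman--L\'eautaud--Maci\`a on the disk; your attribution to Anantharaman--Fermanian-Kammerer--Maci\`a is the torus paper), proved by Wigner/semiclassical measures and a boundary eigenfunction analysis, not by GCC. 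Moreover that result already handles the potential, so there is no ``perturb off the free equation'' step to perform.

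Step~2 is circular as written: the dichotomy ``either~\eqref{observSchrod} holds or there is a normalized sequence with vanishing observation'' is just the negation of the inequality. To prevent the weak limit from vanishing one must already possess a weak observability estimate $\|u_0\|_{L^2}^2\leq C\int_0^T\int_\omega|u|^2 + C\|u_0\|_{H^{-1}}^2$ before passing to the limit, as you note at the end; but your proposed ``standard fixed-point/perturbation argument'' from the free equation does not produce it. For Schr\"odinger at the $L^2$ level the term $qu$ is bounded of the \emph{same} order as $u$, it is neither small nor compactly lower order, and the $T$-dependence of the free observability constant is not a priori controlled, so neither absorption by smallness nor a naive compactness gain closes the estimate. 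The paper sidesteps all of this by taking the weak/high-frequency observability with potential directly from~\cite[Theorem~1.2]{ALM:16}, and uses Theorem~\ref{theorem global intro} only to dispose of the compact remainder — which is exactly your correct Step~3.
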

Our contribution in Theorem~\ref{t:observability-gevrey} is to include more general time-dependent potentials $q$, using Theorem~\ref{theorem global intro} for the ``low frequency'' part of the proof.  Theorem~\ref{t:observability-gevrey} is a direct combination of~\cite[Theorem~1.2]{ALM:16} and Theorem~\ref{theorem global intro}. Note that the $C^\infty$ regularity can be relaxed, see~\cite[Remark~1.6]{ALM:16}.

By a classical compactness-uniqueness argument~\cite{BLR:92}, observability estimates like~\eqref{observSchrod} can be deduced from the unique continuation result of Theorem~\ref{theorem global intro} together with a weakened (or high-frequency) observability estimate (i.e. of the form~\eqref{observSchrod} with an additional relatively compact remainder term on the right-hand side).
The geometry discussed in Theorem~\ref{t:observability-gevrey} is only an example for which the high frequency result may be applied as a black box. One may hope to generalize Theorem~\ref{t:observability-gevrey} to many other geometric situations where the high frequency observability is well-understood, for instance in general geometries under the Geometric Control Condition~\cite{Leb:92}, on tori~\cite{AnantharamanMaciaTore,AnantharamanKMacia,BBZ14}, on negatively curved manifolds~\cite{AnRiv,NAQUE,DJ:17,DJN:20}, in unbounded geometries~\cite{Prouff:23} (see also the references therein).
This requires additional work and we plan to study this question elsewhere.

 \medskip
As a direct corollary of the observability statement of Theorem~\ref{t:observability-gevrey}, we deduce an exact controllability statement for System~\eqref{e:Schro-control}  (see~\cite{DR:77,Lio:88}).
\begin{corollary}
	Assume that the assumptions of Theorem~\ref{t:observability-gevrey} are satisfied. Then, for all $v_0,v_1 \in L^2(\M;\C)$, there is $f \in L^2((0,T)\times \omega)$ such that the solution to~\eqref{e:Schro-control} satisfies $v(T,\cdot) = v_1$.
\end{corollary}

\subsection{Remarks}
\subsubsection{Remarks on Gevrey regularity}
\label{s:section-remarks-gevrey}
One may question the role of the Gevrey $2$ regularity. 
Gevrey regularity already appears in the study of strong unique continuation for elliptic operators, see e.g.~\cite{Lerner:81,CGT:06,IK:12,KNS:19} and the references therein.
In these references, the authors consider elliptic operators with complex coefficients and characterize a critical Gevrey index for strong unique continuation to hold, in relation to the geometry of the image-cone of the principal symbol. 

Gevrey spaces also appeared recently in the related context of control of $1D$ evolution equations in the so-called flatness method. For an operator of the form $\partial_t+ a\partial_x^{\alpha}$, with $a\in \R$ and $\alpha\in \N$ the idea of this method is to solve the ill-posed problem $\partial_x^{\alpha}=-a^{-1}\partial_t u $, seeing $x$ as a new evolution variable. It turns out that the correct regularity in time to be able to solve this evolution problem and the associated control problem is Gevrey $\alpha$, see \cite{MRR:16} for the particular examples of the heat operator, \cite{MRRR:19} for the KdV operator. It corresponds to the index $\alpha=2$ in the case of the Schr\"odinger equation. For an anisotropic operator of the form $P=\partial_t^N+Q$ with $Q$ a differential operator in the space variable of order $M>N$, it is likely that an analog of our result holds assuming that the coefficients of the operator $Q$ are Gevrey $s$ in $t$ with $s=M/N$. 

\medskip
Also, as already alluded, it is proved in~\cite[Th\'eor\`emes~1.4 et~1.6]{LZ:82} that a quasihomogeneous version of pseudoconvexity is actually needed for unique continuation to hold for general $C^\infty$ lower order terms. As an illustration,~\cite[Th\'eor\`eme~1.6]{LZ:82} proves that if $d\geq 2$, there exist $u , \pot \in C^\infty(B_{\R^{1+d}}(0,1);\C)$ such that 
\begin{align*}
	P_{0,\pot} u=0, \quad \textnormal{ in }B_{\R^{1+d}}(0,1) , 
	 \quad \quad u=0 \text{ on } x_1>0, \quad  \text{and} \quad
	0 \in \supp(u) ,   
\end{align*}
whence unique continuation {\em does not hold} across the non-characteristic surface $\{x_1=0\}$. Hence the statements of Theorems~\ref{theorem local uniqueness intro} and~\ref{theorem local L2} are false without the Gevrey-in-time regularity assumption of $\pot$.

As a comparison, in the case of the wave equation, the classical counterexamples of Alinhac-Bahouendi~\cite{AB:79,Alinhac:83,AB:95}, as refined by H\"ormander~\cite{Hormander:00}, prove the following statement. For any $s>1$ and $d\geq 2$, there exist $u,\pot \in \mathcal{G}^s(B_{\R^{1+d}}(0,1);\C)$ so that 
\begin{align*}
	\partial_{t}^{2}u-\Delta u+\pot u=0, \quad \textnormal{ in }B_{\R^{1+d}}(0,1) ,  \quad \text{and} \quad 
	\supp(u)=\left\{(t,x_{1},\dots, x_{d})\in B_{\R^{1+d}}(0,1),  x_{1}\geq 0\right\}.
\end{align*}
This shows that, without any further assumptions, the analyticity in time of $\pot$ is essentially optimal (within the class of Gevrey spaces; note that H\"ormander's statement is even stronger) in geometrical situations where the strong pseudoconvexity of the hypersurface is not satisfied. Theorems~\ref{theorem local uniqueness intro} and~\ref{theorem local L2} show that in the context of the Schr\"odinger equation, Gevrey $1+\eps$ counterexamples do not exist. It would be interesting to know if an equivalent counterexample can be proved for Schr\"odinger type equations with Gevrey $2+\eps$ coefficients, that is to say, whether the Gevrey $2$ regularity in time is the critical one.

\subsubsection{Remarks on the divergence}
\label{s:rem-div}
In the local setting, we have written the elliptic operator in~\eqref{laplacien-perturbe-gevrey} in divergence form. Since we assume that $g^{jk}$ has Lipschitz (time-independent) regularity, and we have
$g^{jk}(x)\d_{x_j} \d_{x_k} = \d_{x_j} g^{jk}(x) \d_{x_k} -\d_{x_j} (g^{jk})(x) \d_{x_k}$, the operator $\d_{x_j} (g^{jk})(x) \d_{x_k}$ has time independent $L^\infty$ coefficients, i.e. the same regularity as $\mathsf{b}^j\d_{x_j}$ in Theorem~\ref{theorem local uniqueness intro}. Hence, the statement of Theorem~\ref{theorem local uniqueness intro} holds as well for $\d_{x_j} g^{jk}(x) \d_{x_k}$ replaced by $g^{jk}(x)\d_{x_j} \d_{x_k}$. That is to say, Theorem~\ref{theorem local uniqueness intro} does not care about the divergence form of the operator. The same remark holds for the first part of Theorem~\ref{theorem global intro}.

\medskip
In Theorem~\ref{theorem local L2} however (and in the second part of Theorem~\ref{theorem global intro}), for the unique continuation statement for $L^2$ solutions, it is important that the elliptic operator is in divergence form.
However, the principal term $\d_{x_j} g^{jk} \d_{x_k}$ or $\Lap$ in these two statements may be replaced by {\em any} operator of the form 
$$\Delta_{g,\varphi} :=\div_\varphi \nabla_g ,$$
 where $g$ is a Lipschitz continuous Riemannian metric, $\varphi$ is a Lipschitz continuous nowhere vanishing density and $\div_\varphi$ and $\nabla_g$ denote respectively the associated divergence (the Riemannian case corresponds to $\varphi = \sqrt{\det(g)}$ with $g=(g_{jk})=(g^{jk})^{-1}$, and the Euclidean case to $\varphi=1$) and gradient. In local coordinates, they write 
$$
\div_{\varphi}(X) = \sum_{j=1}^d \frac{1}{\varphi}\d_{x_j} \left(\varphi X_j\right) ,\qquad \nabla_{g} u = \sum_{j,k=1}^d g^{jk}(\d_{x_j}u)\, \frac{\d}{\d_{x_k}} .
$$
The results of Theorem~\ref{theorem local L2} and the second part of Theorem~\ref{theorem global intro} (for $L^2$ solutions) actually depend on the density chosen (i.e. the result for one density cannot be deduced from that for another density).  They are however valid for any locally Lipschitz nonvanishing density and the proof of Theorems~\ref{theorem local L2} is actually written in the general context of the operator $\Delta_{g,\varphi}$.

\medskip
As far as first order terms are concerned, for the unique continuation statement for $L^2$ solutions, it is crucial that $\sum _{j=1}^d\d_{x_j}\mathsf{b}^j \in  L^\infty(\Omega ;\C)$ in Theorem~\ref{theorem local L2}  (and in the second part of Theorem~\ref{theorem global intro}).
Note that in Theorem~\ref{theorem local L2}, the divergence (form of the operator as well as the divergence condition for $\mathsf{b}$) is taken with respect to the Euclidean density in $\R^{d}$. 
In the global setting of Theorem~\ref{theorem global intro}, the divergence (form of the operator as well as the divergence condition for $\mathsf{b}$) is taken with respect to the Riemannian density in $(\M,g)$.
However, in both settings, given {\em any} nondegenerate locally Lipschitz density $\varphi$, we see that  $$\div_{\varphi}(X) = \div_{1}(X) +  \sum_{j=1}^d \frac{\d_{x_j} \varphi}{\varphi}X_j.$$
Hence, for any $L^\infty$ vector field $\mathsf{b}$, any Lipschitz metric $g$ and any nonvanishing Lipschitz density $\varphi$, we have (locally near a point)
$$
\div_g(\mathsf{b}) \in L^\infty \quad \Longleftrightarrow \quad \div_\varphi (\mathsf{b}) \in L^\infty \quad \Longleftrightarrow\quad \div_1(\mathsf{b}) \in L^\infty ,
$$
where $\div_g$ denotes the Riemannian gradient (and is defined by $\div_{\sqrt{\det(g)}}$).

\subsubsection{More general lower order terms}
\label{r:lot}
So far, all results are stated for linear Schr\"odinger operators. However, as one can check in the proof (see Section~\ref{section adding partially gevrey} where the perturbation argument is performed), 
$\C$--antilinear lower order terms can be included in the unique continuation statements. For instance, the statement of Theorem~\ref{theorem local uniqueness intro} remains valid for all solutions $u$ to 
$$
\pg u + \sum_{j=1}^d \wt{\mathsf{b}}^j (t,x)\d_{x_j}\ovl{u} + \wt{\pot}(t,x) \ovl{u} = 0 ,
$$
assuming (in addition to the assumptions of Theorem~\ref{theorem local uniqueness intro})  that $\wt{\mathsf{b}}^j, \wt{\pot} \in \G^2(I; L^\infty(V;\C))$.

\medskip
One may also want to lower the space regularity of the lower order terms. 
In the proof of Theorem~\ref{theorem local uniqueness intro}, an application of a rough Sobolev embedding shows that only $\pot \in \G^2(I; L^d(V;\C))$ is needed if $d\geq 3$ and $\pot \in \G^2(I; L^{2+\eps}(V;\C))$ for some $\eps>0$ if $d=2$. See Remark~\ref{r:lot-bis} below.
Note also that our result is of no interest in space dimension $d=1$, for unique continuation applies to $L^\infty(I\times V)$ coefficients (without any Gevrey assumption; the appropriate pseudoconvexity condition being satisfied in $1D$), see e.g.~\cite[Corollary 6.1.]{isakov1993carleman}.

\subsection{Idea of the proof, structure of the paper}
\label{s:plan-structure}
Since the pioneering work of~\cite{Carleman:39}, Carleman inequalities are one of the main tools for proving unique continuation results. Carleman estimates are weighted inequalities of the form 
\begin{equation}
\label{e:Carleman-0}
\nor{e^{\tau \phi} Pu }{L^2} \gtrsim  \nor{e^{\tau \phi} u}{L^2}, \quad \tau \geq \tau_0,
\end{equation}
which are uniform in the large parameter $\tau$ and are applied to compactly supported functions $u$. The weight $e^{\tau \phi}$ allows to propagate uniqueness from large to low level sets of $\phi$ by letting $\tau \to \infty$. The key additional idea in~\cite{Tataru:95} (following the introduction in this problem of the FBI transform in time in~\cite{Robbiano:91}) is to make use of the nonlocal Fourier multiplier in time $e^{-\frac{\varepsilon|D_t|^2}{2\tau}}$, and  replace~\eqref{e:Carleman-0} by 
\begin{equation}
\label{e:Carleman-tataru}
\nor{ e^{-\frac{\varepsilon|D_t|^2}{2\tau}} e^{\tau \phi} Pu }{L^2} + e^{-\mathsf{d}\tau}  \nor{e^{\tau \phi} u}{L^2} \gtrsim \nor{ e^{-\frac{\varepsilon|D_t|^2}{2\tau}} e^{\tau \phi} u}{L^2}, \quad \tau \geq \tau_0 .
\end{equation}
A key feature of this approach is that, although~\eqref{e:Carleman-tataru} carries less information on $e^{\tau \phi} u$, it is still enough to prove unique continuation (see Lemma~\ref{lemme d analyse harmonique} below).  And the advantage of~\eqref{e:Carleman-tataru} with respect to~\eqref{e:Carleman-0} is that the operator and the function are localized in a low frequency regime with respect to the variable $t$. Hence~\eqref{e:Carleman-tataru} holds if we only assume the classical pseudoconvexity assumption in a smaller subset of the phase space, namely where $\xi_t=0$ (here, $\xi_t$ is the dual variable to $t$). See~\cite{Tataru:95,RZ:98, Hor:97,Tataru:99} for the original proofs and~\cite{LL:23notes} for introductory lecture notes on this topics in the case of the wave operator.

In the setting of the wave operator $P=-\d_t^2 + \sum g^{jk}(x)\d_{x_j}\d_{x_k}$, the principal symbol $p_2 = \xi_t^2 -  \sum g^{jk}(x)\xi_{x_j}\xi_{x_k}$ is homogeneous of degree two in all co-tangent variables in $(\xi_t,\xi_x)$. When proving Carleman esimates like~\eqref{e:Carleman-0} or~\eqref{e:Carleman-tataru}, the large parameter $\tau$ plays the role of a derivative, which, naturally results in $D_t \sim D_x \sim \tau$.
In this scaling, the Fourier multiplier $\frac{\varepsilon|D_t|^2}{2\tau}$ appearing in~\eqref{e:Carleman-tataru} is ``of order one'', and large frequencies $|D_t| \geq c_0 \tau$ only contribute to admissible remainders of size $e^{-\eps \frac{c_0^2}{2} \tau}$.
 
 \medskip
The first main idea for the proof of Theorems~\ref{theorem local uniqueness intro}--\ref{theorem local L2} is to prove a Carleman estimate adapted to the anisotropy of the  Schr{\"o}dinger operator~\eqref{laplacien-perturbe-gevrey} in case $\mathsf{b}=0,\pot=0$. 
In this setting, we want to consider that $D_t$ is homogeneous to $D^2_x \sim \tau^2$. With this new definition of homogeneity/order/scaling, the natural ``first order'' Fourier multiplier in time is $\frac{|D_t|^2}{\tau^3}$.
Therefore, the first step of the proof of Theorems~\ref{theorem local uniqueness intro}--\ref{theorem local L2} is the Carleman estimate of the form
\begin{equation}
\label{e:Carleman-tataru-nous}
\nor{ e^{-\frac{\mu|D_t|^2}{2\tau^3}} e^{\tau \phi} Pu }{L^2} + e^{-\mathsf{d}\tau}  \nor{e^{\tau \phi} u}{L^2} \gtrsim \nor{ e^{-\frac{\mu|D_t|^2}{2\tau^3}} e^{\tau \phi} u}{L^2}, \quad \tau \geq \tau_0  ,
\end{equation}
 for the unperturbed Schr\"odinger operator $P= i \d_t +\sum g^{jk}(x)\d_{x_j} \d_{x_k}$. This is achieved in Section~\ref{section the carleman estimate} (see Theorem~\ref{th:carlemanschrod}).
 Note that as compared to~\eqref{e:Carleman-tataru}, frequencies $|D_t| \geq c_0 \tau^2$ contribute to admissible remainders of size $e^{-\mu \frac{c_0^2}{2} \tau}$.
 In other words,~\eqref{e:Carleman-tataru-nous} carries information on time-frequencies $|D_t| \lesssim \tau^2$ of the function $e^{\tau \phi} u$ whereas the usual estimate~\eqref{e:Carleman-tataru} only contains information on time-frequencies $|D_t| \lesssim \tau$.
 This is also clearly seen in the proof of~\cite{LL:15} of the optimal quantitative version of the Tataru-H\"{o}rmander-Robbiano-Zuily theorem. In~\cite{LL:15}, the Carleman estimate~\eqref{e:Carleman-tataru-nous} allows to propagate low frequency information of the solution in the sense $|D_t| \lesssim \tau$; whereas the Carleman estimate~\eqref{e:Carleman-tataru-nous} will allow to propagate low frequency information of order $|D_t|\lesssim \tau^2$. This indicates that the new weight allows to ``propagate more information''.
 
The key step in the proof of the Carleman inequality~\eqref{e:Carleman-tataru-nous} (in Theorem~\ref{th:carlemanschrod} below) is a subelliptic estimate (Proposition~\ref{p:subellipti-xit=0} below) for the conjugated operator $P_{\phi,\mu}$ defined by 
\begin{equation}
\label{e:conj-P}
e^{-\frac{\mu|D_t|^2}{2\tau^3}} e^{\tau \phi} P = P_{\phi,\e} e^{-\frac{\mu|D_t|^2}{2\tau^3}} e^{\tau \phi} ,
\end{equation}
where the time independence of the coefficients of $P$ is crucial for the computation of $P_{\phi,\e}$.
The latter takes the form (for appropriate norms)
\begin{align}
\label{e:subell-estim}
 \nor{P_{\phi,\e}v}{L^2}+ \nor{D_t v}{}\gtrsim  \nor{v}{} .
\end{align}
That the subelliptic estimate~\eqref{e:subell-estim}, applied to $v= e^{-\frac{\mu|D_t|^2}{2\tau^3}} e^{\tau \phi} u$, implies the Carleman inequality~\eqref{e:Carleman-tataru-nous} follows from the fact that $e^{-\frac{\mu|D_t|^2}{2\tau^3}}$ localizes exponentially close to $D_t=0$. Hence the term $\nor{D_t v}{}$ mostly contributes to the exponentially small remainder in~\eqref{e:Carleman-tataru-nous} plus a small term that one can absorb in the right hand-side of~\eqref{e:Carleman-tataru-nous} .
The proof of~\eqref{e:subell-estim} relies on two steps. We first perform the computations in the case $\e=0$, that is to say, as for a traditional Carleman estimate of the form~\eqref{e:Carleman-0}, with the difference that all terms involving $\nor{D_t v}{}$ can be considered as remainder terms. This essentially reduces this step to a usual Carleman estimate for elliptic operators with only Lipschitz regularity (plus remainder terms involving time derivatives), for which we rely on~\cite[Appendix A]{LL:18}.
Then the second step consists in considering the general case $\e>0$ as a perturbation of the previous step plus admissible remainder terms.
A related (although different) perturbation argument is used in the proofs of~\cite{Tataru:95,Hor:97,RZ:98,Tataru:99}, see e.g.~\cite[Section~3.3]{LL:23notes}. 
A remarkable difference is that we prove~\eqref{e:Carleman-tataru-nous} for all $\e>0$, whereas~\eqref{e:Carleman-tataru} only holds for {\em small} $\eps>0$. 

\medskip
The second main step for the proof of Theorems~\ref{theorem local uniqueness intro}--\ref{theorem local L2} is to prove that~\eqref{e:Carleman-tataru-nous} still holds for general $\mathsf{b},\pot$ having Gevrey $2$ time-regularity.
To this aim, we perform again a perturbation argument and essentially need to prove that 
\begin{align}
\label{e:estimate-perturb-lot}
\nor{ e^{-\frac{\mu|D_t|^2}{2\tau^3}} (\pot w) }{L^2} \lesssim \nor{ e^{-\frac{\mu|D_t|^2}{2\tau^3}} w }{L^2} + e^{-\mathsf{d}\tau}  \nor{w}{L^2} ,
\end{align}
which becomes an admissible remainder in the sharp version of~\eqref{e:Carleman-tataru-nous} (i.e. with the appropriate norms and powers of the large parameter $\tau$). The proof of~\eqref{e:estimate-perturb-lot} relies on  a conjugation result of the form~\eqref{e:conj-P} but for the multiplication by a function, say $\pot$, depending on $t$. 
The issue of defining a conjugate of a multiplication operator $\pot$ by $e^{-\frac{\eps|D_t|^2}{2\tau}}$ is one of the main difficulties in~\cite{Tataru:95,RZ:98,Hor:97,Tataru:99}. Even if the function $\pot$ is real analytic with respect to $t$ a conjugate operator with respect $e^{-\frac{|D_t|^2}{2\tau}}$ does not necessarily exist. However, one can define an approximate conjugate up to an error of the form $e^{-d \tau }\nor{u}{}$, which is admissible in view of~\eqref{e:estimate-perturb-lot} and~\eqref{e:Carleman-tataru}. 
In the present setting and if typically $\pot \in \mathcal{G}^2(\R;\C)$ depends only on $t$, our conjugation result writes as
\begin{align}
\label{e:conjugation-intro}
e^{-\frac{h}{2}|D_t|^2}  \pot  =  \op^w\big(\tilde{\pot}^r(t+ih\xi_t) \big) e^{-\frac{h}{2}|D_t|^2}  + O\left(e^{-\frac{\delta}{h^{1/3}}}\right)_{\L(L^2(\R))} , \quad h\to 0^+ .
\end{align}
Taken for $h = \frac{\e}{\tau^3}$, owing to the fact that $\op^w\big(\tilde{\pot}^r(t+ih\xi_t) \big)$ is uniformly bounded on $L^2(\R)$, this provides a proof of~\eqref{e:estimate-perturb-lot} and  eventually of ~\eqref{e:Carleman-tataru-nous} for the perturbed  operator $\pg$.
In this expression, the conjugated operator $\op^w\big(\tilde{\pot}^r(t+ih\xi_t) \big)$ is cooked up from $\pot$ as follows: 
\begin{enumerate}
\item First we define $\tilde{\pot}$ an almost analytic extension of $\pot$ to $\C$, well-suited to the $\mathcal{G}^2$ regularity of $\pot$. That is to say a function $\tilde{\pot}\in \mathcal{G}^2(\C;\C)$ such that $\d_{\zb}\tilde{\pot}$ vanishes at any order on the real line.
\item Second we set $\tilde{\pot}^r(t+ih\xi_t) = \eta(h^{2/3}\xi_t) \tilde{\pot}(t+ih\xi_t)$ for $(t,\xi_t) \in \R \times \R$ (with the second variable being the dual variable to $t$, that is to say such that $\op^w(\xi_t)=D_t$), where $\eta$ cuts-off high frequencies, $|D_t |\gtrsim h^{-2/3}\simeq \tau^2$, which, as already mentioned, is the right scale in the present setting.
\end{enumerate}
Our proof of this conjugation result relies on the strategy of Tataru~\cite{Tataru:99}, and proceeds with a deformation of contour. Working with non-analytic functions raises certain non trivial technical difficulties.

\medskip
The plan of this article is as follows. Section~\ref{section the carleman estimate} is devoted to the proof of the Carleman estimate~\eqref{e:Carleman-tataru-nous} in the unperturbed case $\mathsf{b}=0,\pot=0$.
We use some notation from Riemannian geometry which we recall in Section~\ref{s:Riemtool}. We discuss the conjugated operator in this setting in Section~\ref{subsection carleman} and state the  Carleman estimate~\eqref{e:Carleman-tataru-nous}  in Theorem~\ref{th:carlemanschrod}.
We then state the subelliptic estimate~\eqref{e:subell-estim} in Proposition~\ref{p:subellipti-xit=0}, prove that the subelliptic estimate implies the Carleman estimate in Section~\ref{s:sub-to-carl}, and prove the subelliptic estimate in Section~\ref{s:sub}. As already mentioned, this proposition proceeds in two steps: the case $\mu=0$ is first treated in Section~\ref{section proof for epsilon zero} and then the case $\mu>0$ in Section~\ref{e:subell-in} in a perturbation argument. The usual convexification step is performed in Section~\ref{s:convexification}, allowing to transform the function $\Psi$ defining the hypersurface into a weight function $\phi$ satisfying the assumptions of the subelliptic and the Carleman estimate.

Section~\ref{section conjugation with gevrey} is devoted to the study of the conjugated operator and a proof of a conjugation statement like~\eqref{e:conjugation-intro} (namely Proposition~\ref{good conjugagte with function}). In Section~\ref{s:gevrey-fcts} we start with the construction of almost analytic extensions of Gevrey functions adapted to our needs. We then state the conjugation result in Proposition~\ref{good conjugagte with function} and proceed to the proof in Section~\ref{s:conj-operator}. 

The unique continuation Theorems~\ref{theorem local uniqueness intro}--\ref{theorem local L2}  are finally proved in Section~\ref{section the uniqueness theorem}.
Combining the results of Section~\ref{section the carleman estimate} and Section~\ref{section conjugation with gevrey} yields a Carleman estimate with Gevrey lower order terms, studied in Section~\ref{section adding partially gevrey}. Then an appropriate weight function for the unique continuation results is constructed in Section~\ref{section using the carleman estimate} and we conclude the proof of Theorem~\ref{theorem local uniqueness intro}. 
In Section~\ref{section reducing the regularity} we explain how one can exploit the time-regularization of the Fourier multiplier $e^{-\e\frac{|D_t|^2}{\tau^3}}$ combined with the ellipticity of $\pg$ in space, in order to reduce the regularity of the solution in the unique continuation result. This step, actually relying also on a refined estimate proved in Section~\ref{section the carleman estimate} and Section~\ref{section conjugation with gevrey} (where remainder terms involve only $H^{-1}$ regularity of the solution in time), allows to prove Theorem~\ref{theorem local L2}.

The article concludes with Appendix~\ref{a:tools} where we collect several technical estimates and lemmata.

\bigskip
\noindent
{\textbf{\em Acknowledgements.}}
The authors would like to thank Nicolas Burq for having pointed to them the reference~\cite{masuda} and Luc Robbiano for having drawn their attention to the articles~\cite{LZ:82} and~\cite{Dehman:84}. They also thank Didier Smets for helpful comments about elliptic estimates and in particular for Lemma~\ref{lm:ellipticH1H-1}. Most of the work for this project was done when the first author was in the Laboratoire de MathÃ©matiques d'Orsay. He would like to thank the institution for its kind hospitality. The third author is partially supported by the Institut Universitaire de France and the Agence Nationale de la Recherche under grants SALVE (ANR-19-CE40-0004) and ADYCT (ANR-20-CE40-0017).

\section{The Carleman estimate}
\label{section the carleman estimate}

\subsection{Toolbox of Riemannian geometry}
\label{s:Riemtool}
The proof of the Carleman estimate below (as many proofs of Carleman inequalities for operators with low-regularity coefficients) relies on an integration by parts. 
Although we work here in a local setting, it is still convenient to formulate our integration by parts formula in a Riemannian geometric framework following~\cite[Appendix~A]{LL:18}, which we recall now (see~\cite{GallotHulinLaf}).

We  work in a relatively compact open set $V\subset \R^d$.  
We denote by $g = (g_{jk})_{1\leq j,k\leq d}$ a Lipschitz metric on $V$, (that is, $x \mapsto g_x(\cdot , \cdot)$ is a Lipschitz family of symmetric bilinear forms on $TV$ that is uniformly bounded from below, which is equivalent to~\eqref{e:elliptic}). 
We denote by $\gl{\cdot}{\cdot} = g(\cdot , \cdot )$ the inner product in $TV=V \times \R^d$. Remark that this notation omits to mention the point $x \in V$ at which the inner products takes place: this allows to write $\gl{X}{Y}$ as a function on $V$ (the dependence on $x$ is omitted here as well) when $X$ and $Y$ are two vector fields on $V$.
We also denote for a vector field $X$, $\gln{X}=\gl{X}{X}$.
	In $V$, for $f$ a smooth function and $X=\sum_i X^i\frac{\partial }{\partial x_i}$, $Y=\sum_i Y^i\frac{\partial }{\partial x_i}$ smooth vector fields on $V$, we write
	\begin{equation*}
		\begin{array}{ll}
			\dsp \gl{X}{Y}=\sum_{i,j=1}^d g_{ij} X^iY^j , & 
			\nablag f= \sum_{i,j=1}^d g^{ij}(\partial_j f)\frac{\partial }{\partial x_i} , \\
			\dsp  \div_g(X) =\sum_{i=1}^d \frac{1}{\sqrt{\det g}}\partial_i \left(\sqrt{\det g} X_i\right) , & 
			\Lap f = \div_g  \nablag f= \sum_{i,j=1}^d \frac{1}{\sqrt{\det g}}\partial_i \left(\sqrt{\det g}g^{ij}\partial_j f\right) , \\ 
			\dsp D_{X}Y=\sum_{i=1}^d\left(\sum_{j=1}^dX^j\frac{\partial Y^i}{\partial x_j}+\sum_{j,k=1}^d\Gamma_{j,k}^iX^jY^k\right)\frac{\partial }{\partial x_i} ,
		\end{array}
	\end{equation*}
	where $(g^{-1})_{ij}=g^{ij}$ and the Chritoffel symbols are defined by $$\Gamma_{j,k}^i=\frac{1}{2}\sum_{l=1}^d g^{il}\left(\partial_j g_{kl}+\partial_k g_{lj}-\partial_lg_{jk}\right) ,$$ (see for instance \cite[p71]{GallotHulinLaf}).
	Note in particular that the Lipschitz regularity of $g$ writes $g_{ij} \in W^{1,\infty} (V)$, and implies $g^{ij} \in W^{1,\infty} (V)$.
	This entails, if $f, X, Y$ are smooth, that $\gl{X}{Y} \in W^{1,\infty} (V)$, $\nablag f$ is a Lipschitz vector field, $ \Lap f \in L^\infty(V)$ and $D_X Y$ is an $L^\infty$ vector field on $V$, since the definitions of $\Lap$ and $D_X$ involve one derivative of the coefficients of $g$.
 Note that we have chosen to use the Riemannian density $\varphi = \sqrt{\det g}$ in the definition of the divergence for simplicity. Any non-vanishing Lipschitz density $\varphi$ would do the same. The results for one density may anyways be deduced from those with another density, see the discussion in Section~\ref{s:rem-div} as well as Remark~\ref{r:carleman-density} below.
	Let us now collect some properties of these objects, that we shall use below.
	For $f,g$ two smooth functions on $V$ and $X=\sum_i X^i\frac{\partial }{\partial x_i}$, $Y=\sum_i Y^i\frac{\partial }{\partial x_i}$ two smooth vector fields on $V$, we have
	\begin{align*}
		\div_g (f X)&= \gl {\nablag f}{X}+f \div_g(X), \\
		D_X (fY) &= (Xf)Y+ f D_X Y, \\  
		D_X (\gl{Y}{Z})&= \gl {D_X Y}{Z}+ \gl {Y}{D_X Z}.
	\end{align*}
	We define (see~\cite[Exercice~2.65]{GallotHulinLaf} or~\cite{LL:18} for more on the the Hessian) 
	\begin{align*}
		\Hess(f)(X,Y)=(D_X df)(Y) = \sum_{i,j}X^iY^j\left[ \partial_{ij}^2f-\Gamma_{ij}^k\partial_k f \right] ,
	\end{align*}
	which again is in $L^\infty(V)$ for a Lipschitz metric $g$ and $L^\infty$ vector fields $X,Y$. Note also that the Hessian of $f$ is  symmetric, that is $\Hess(f)(X,Y)=\Hess(f)(Y,X)$ and for any function $f$ and any vector field $X$ and $Y$, we have (see e.g.~\cite[Lemma~A.1]{LL:18})
	$\Hess(f)(X,Y) =\gl{ D_{X}\nablag f}{ Y}$.
	Concerning integrals, we write in this section $$\int f  = \int_V f(x) \sqrt{\det g(x)} dx ,$$ 
	where $\sqrt{\det g(x)} dx$ is the Riemannian density. With this notation, a useful integration by parts formula writes as follows:
	For all $f\in H^2(V)$ and $h \in H^1(V)$ one of which having compact support in $V$, we have 
	$$
	\int (\Lap f) h  = - \int \gl{ \nablag f}{\nablag h} .
	$$
	As we are interested in complex-valued functions, we set
	$(f,g)=(f,g)_{L^2(V)}=\int f\overline{h}$ for the $L^{2}$ hermitian product. 
	We are moreover interested in time-dependent functions, and in the context of spacetime integration, we write $\iint f  = \int_{\R_{t}}\int_{V} f(t,x) \sqrt{\det g(x)} dx dt$ and similarly $(f,g) =\iint f\overline{h}$.
	
	\subsection{The Carleman weight}
	\label{subsection carleman}
	We denote by $\Omega=I \times V$ where $I$ is a bounded open  interval of $\R$ and $V$ is a relatively compact open subset of $\R^d$ equipped with a Lipschitz metric $g$. In this section, we set $P:=i \d_t +\Delta_g$ where $\Delta_g$ is defined in Section~\ref{s:Riemtool}.

	For a smooth real-valued weight function $\phi$ (later on, we will assume that it is polynomial of order $2$), the Carleman estimate below will make use of the operator, as explained in Section~\ref{s:plan-structure}.
	\bna
	Q_{\e,\tau}^\phi u: =e^{-\e\frac{|D_t|^2}{2\tau^3}}e^{\tau \phi} u.
	\ena
	
	In all the rest of the proof, $\mu$ does not have any role and could be any constant. We have chosen to keep it along the proof since we believe it helps to follow the perturbation of the pseudodifferential weight.
	We now describe the conjugation by $e^{-\e\frac{|D_t|^2}{2\tau^3}}$.
	\begin{lemma}[Lemma 3.12 in~\cite{LL:23notes}]
		\label{lmcommuteps}
		Let $u\in\mathcal{S}(\R^{1+d})$ and $\varsigma>0$, then
		\bna
		e^{-\frac{|D_t|^2}{2\varsigma}}(tu)=\left(t+i\frac{D_t}{\varsigma} \right)e^{-\frac{|D_t|^2}{2\varsigma}}u.
		\ena
	\end{lemma}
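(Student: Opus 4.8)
The plan is to verify the identity by a direct Fourier-side computation, which is the cleanest route since $e^{-\frac{|D_t|^2}{2\varsigma}}$ is just the Fourier multiplier by $e^{-\frac{\xi_t^2}{2\varsigma}}$ acting in the $t$-variable (the $x$-variables are spectators, so I may as well treat $u\in\mathcal{S}(\R)$). First I would write, for $u\in\mathcal{S}(\R_t)$,
\[
\mathcal{F}_t\big(e^{-\frac{|D_t|^2}{2\varsigma}}(tu)\big)(\xi_t) = e^{-\frac{\xi_t^2}{2\varsigma}}\,\mathcal{F}_t(tu)(\xi_t) = e^{-\frac{\xi_t^2}{2\varsigma}}\, i\d_{\xi_t}\widehat{u}(\xi_t),
\]
using the standard rule $\mathcal{F}_t(tu) = i\d_{\xi_t}\widehat u$ (with the convention that makes $\op^w(\xi_t)=D_t$, consistent with the paper).

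Next I would compute the Fourier transform of the right-hand side. Since $\mathcal{F}_t\big(\tfrac{D_t}{\varsigma}v\big)(\xi_t) = \tfrac{\xi_t}{\varsigma}\widehat v(\xi_t)$ and $\mathcal{F}_t(tv)(\xi_t)=i\d_{\xi_t}\widehat v(\xi_t)$, applying these to $v = e^{-\frac{|D_t|^2}{2\varsigma}}u$, whose Fourier transform is $e^{-\frac{\xi_t^2}{2\varsigma}}\widehat u(\xi_t)$, gives
\[
\mathcal{F}_t\Big(\big(t+ i\tfrac{D_t}{\varsigma}\big)e^{-\frac{|D_t|^2}{2\varsigma}}u\Big)(\xi_t) = i\d_{\xi_t}\!\big(e^{-\frac{\xi_t^2}{2\varsigma}}\widehat u\big)(\xi_t) + \tfrac{i\xi_t}{\varsigma}e^{-\frac{\xi_t^2}{2\varsigma}}\widehat u(\xi_t).
\]
Carrying out the derivative, $i\d_{\xi_t}(e^{-\frac{\xi_t^2}{2\varsigma}}\widehat u) = i\big(-\tfrac{\xi_t}{\varsigma}e^{-\frac{\xi_t^2}{2\varsigma}}\widehat u + e^{-\frac{\xi_t^2}{2\varsigma}}\d_{\xi_t}\widehat u\big)$, so the $-\tfrac{i\xi_t}{\varsigma}e^{-\frac{\xi_t^2}{2\varsigma}}\widehat u$ term cancels against the extra $\tfrac{i\xi_t}{\varsigma}e^{-\frac{\xi_t^2}{2\varsigma}}\widehat u$, leaving exactly $e^{-\frac{\xi_t^2}{2\varsigma}}\,i\d_{\xi_t}\widehat u(\xi_t)$, which matches the left-hand side. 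By injectivity of the Fourier transform on $\mathcal{S}'$, the identity follows; the Schwartz hypothesis on $u$ guarantees all manipulations (multiplication by $t$, differentiation under $\mathcal{F}_t$, boundedness of the Gaussian multiplier) are legitimate, and one checks $tu\in\mathcal{S}$ so the left-hand side is well defined.

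There is no real obstacle here — the statement is an elementary commutator identity and the only thing to be careful about is bookkeeping of constants and the Fourier convention so that the factor of $i$ and the sign of $\tfrac{D_t}{\varsigma}$ come out right; one could alternatively phrase it as the observation that $e^{-\frac{|D_t|^2}{2\varsigma}}\, t\, e^{\frac{|D_t|^2}{2\varsigma}} = t + i\varsigma^{-1}D_t$, i.e. conjugating the multiplication operator by the Gaussian shifts it by $i\varsigma^{-1}D_t$, which is the heat-semigroup analogue of the familiar $e^{sD_t}\,t\,e^{-sD_t} = t + s$ (here with imaginary-type shift coming from the $|D_t|^2$ generator). Since this is quoted as Lemma~3.12 of~\cite{LL:23notes}, it would also suffice to simply cite that reference, but the self-contained Fourier computation above is short enough to include.
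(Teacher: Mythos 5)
Your Fourier-side computation is correct and complete: with the convention $D_t=-i\d_t$, the identities $\mathcal{F}_t(tu)=i\d_{\xi_t}\widehat u$ and $\mathcal{F}_t(D_tv)=\xi_t\widehat v$ combine with the product rule to give exact cancellation of the $\tfrac{i\xi_t}{\varsigma}e^{-\xi_t^2/(2\varsigma)}\widehat u$ terms, and the Schwartz hypothesis justifies all the manipulations. Note that the paper itself offers no proof of this lemma — it is quoted verbatim as Lemma~3.12 of~\cite{LL:23notes} — so there is nothing to compare against; your argument is simply the standard (and essentially unique) short proof, and the closing remark recasting it as the conjugation identity $e^{-|D_t|^2/(2\varsigma)}\,t\,e^{|D_t|^2/(2\varsigma)}=t+i\varsigma^{-1}D_t$ is a nice way to see why it is the analogue of $e^{sD_t}te^{-sD_t}=t+s$.
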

	This implies the following conjugation of monomials.
	\begin{lemma}[Lemma 3.14 in~\cite{LL:23notes}]
		\label{l:conjugation-D}
		Assume $\phi$ is a real polynomial of degree two in the variable $t$. For all $k \in \{0,\cdots ,d\}$ (with the convention $t=\x_0$, $D_0=D_t$) we have
		$$
		Q_{\e,\tau}^{\phi} D_k =(D_k)_{\phi,\e}Q_{\e,\tau}^{\phi} ,
		$$
		where (denoting $\phi''_{t,\x_k} = \d_t \d_{\x_k}\phi$) 
		$$
		(D_k)_{\phi,\e}= D_k +i\tau \d_{\x_k}\phi(\x)-\e \phi''_{t,\x_k}\frac{D_t}{\tau^2}.
		$$
	\end{lemma}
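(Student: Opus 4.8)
The plan is to prove Lemma~\ref{l:conjugation-D} by combining the elementary conjugation identities for the Fourier multiplier $e^{-\e\frac{|D_t|^2}{2\tau^3}}$ (Lemma~\ref{lmcommuteps}) with the standard polynomial conjugation $e^{\tau\phi}D_k e^{-\tau\phi}=D_k+i\tau\d_{\x_k}\phi$. More precisely, writing $Q_{\e,\tau}^\phi = e^{-\e\frac{|D_t|^2}{2\tau^3}} e^{\tau\phi}$, I would first compute $e^{\tau\phi}D_k = (D_k + i\tau\d_{\x_k}\phi) e^{\tau\phi}$, which is just the chain rule $D_k(e^{\tau\phi}u) = e^{\tau\phi}(D_k u) + (D_k e^{\tau\phi})u$ together with $D_k e^{\tau\phi} = \frac{1}{i}\d_{\x_k}(e^{\tau\phi}) = i\tau\d_{\x_k}\phi\, e^{\tau\phi}$ (being careful with the convention $D_{\x_j}=\d_{\x_j}/i$, so the sign works out to $+i\tau\d_{\x_k}\phi$). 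Then I would commute $e^{-\e\frac{|D_t|^2}{2\tau^3}}$ past the operator $D_k + i\tau\d_{\x_k}\phi$.

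The second step is the heart of the matter: commuting the Gaussian Fourier multiplier with multiplication by $\d_{\x_k}\phi$. Since $\phi$ is a real polynomial of degree two in $t$, the function $\d_{\x_k}\phi$ is an affine function of $t$ with coefficients depending on the other variables $\x' = (\x_1,\dots,\x_d)$ — indeed $\d_{\x_k}\phi(\x) = \d_{\x_k}\phi(\x) |_{t=0} + t\,\phi''_{t,\x_k}(\x')$, and the coefficient $\phi''_{t,\x_k}$ is actually constant in $t$ (and in all variables if $\phi$ is a genuine polynomial of degree two, but only the $t$-structure matters). The multiplier $e^{-\e\frac{|D_t|^2}{2\tau^3}}$ acts only in the $t$-variable, so it commutes with multiplication by any function of $\x'$ alone; the only nontrivial piece is multiplication by $t$, for which Lemma~\ref{lmcommuteps} with $\varsigma = \tau^3/\e$ gives
\[
e^{-\e\frac{|D_t|^2}{2\tau^3}}(t\, u) = \left(t + i\frac{\e}{\tau^3}D_t\right) e^{-\e\frac{|D_t|^2}{2\tau^3}} u .
\]
Hence $e^{-\e\frac{|D_t|^2}{2\tau^3}}\big((i\tau\d_{\x_k}\phi) u\big) = i\tau\,\d_{\x_k}\phi\big(t + i\frac{\e}{\tau^3}D_t, \x'\big) e^{-\e\frac{|D_t|^2}{2\tau^3}}u$, where the substitution is legitimate because $\d_{\x_k}\phi$ is affine in $t$. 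Expanding, $i\tau\,\d_{\x_k}\phi(t + i\frac{\e}{\tau^3}D_t,\x') = i\tau\,\d_{\x_k}\phi(\x) + i\tau\cdot i\frac{\e}{\tau^3}\phi''_{t,\x_k} D_t = i\tau\,\d_{\x_k}\phi(\x) - \e\,\phi''_{t,\x_k}\frac{D_t}{\tau^2}$, using that $D_t$ and multiplication by $\x'$-functions commute with each other and with $e^{-\e\frac{|D_t|^2}{2\tau^3}}$.

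Finally I would combine the two pieces: $Q^\phi_{\e,\tau} D_k = e^{-\e\frac{|D_t|^2}{2\tau^3}}(D_k + i\tau\d_{\x_k}\phi)e^{\tau\phi} = \big(D_k + i\tau\d_{\x_k}\phi - \e\phi''_{t,\x_k}\frac{D_t}{\tau^2}\big) e^{-\e\frac{|D_t|^2}{2\tau^3}} e^{\tau\phi}$, since $D_k$ (for $k\geq 1$, a spatial derivative) and $D_t$ (for $k=0$) both commute with $e^{-\e\frac{|D_t|^2}{2\tau^3}}$ (the latter is a function of $D_t$). This is exactly $(D_k)_{\phi,\e} Q^\phi_{\e,\tau}$ with $(D_k)_{\phi,\e} = D_k + i\tau\d_{\x_k}\phi(\x) - \e\phi''_{t,\x_k}\frac{D_t}{\tau^2}$, which is the claimed formula. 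The main (mild) obstacle is purely bookkeeping: keeping track of the factors of $i$ under the $D_{\x_j}=\d_{\x_j}/i$ convention and making sure the substitution $t\mapsto t + i\frac{\e}{\tau^3}D_t$ is applied only to the genuinely $t$-dependent (affine) part of $\d_{\x_k}\phi$ — this is exactly where the hypothesis that $\phi$ is polynomial of degree two in $t$ is used, since it guarantees $\d_{\x_k}\phi$ is affine in $t$ so no higher powers of $D_t$ (or operator-ordering subtleties) appear. One should also note this lemma is essentially a restatement of \cite[Lemma 3.14]{LL:23notes} in the present anisotropic scaling $\varsigma = \tau^3/\e$ rather than $\varsigma = \tau/\e$, so one could alternatively just invoke that reference after checking the scaling.
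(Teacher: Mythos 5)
Your approach is correct and is the natural proof of this cited identity (the paper itself defers to~\cite{LL:23notes} without reproducing an argument): first conjugate $D_k$ by $e^{\tau\phi}$, then push $e^{-\e\frac{|D_t|^2}{2\tau^3}}$ through using Lemma~\ref{lmcommuteps} with $\varsigma=\tau^3/\e$ applied to the $t$-affine part of $\d_{\x_k}\phi$. Two points deserve tightening. First, a sign slip in an intermediate line: with $D_k=\d_{\x_k}/i$ one has $D_k e^{\tau\phi}=\tfrac{1}{i}\tau(\d_{\x_k}\phi)e^{\tau\phi}=-i\tau(\d_{\x_k}\phi)e^{\tau\phi}$, not $+i\tau(\d_{\x_k}\phi)e^{\tau\phi}$; the Leibniz rearrangement then supplies an extra minus and restores the correct $e^{\tau\phi}D_k=(D_k+i\tau\d_{\x_k}\phi)e^{\tau\phi}$, so your final formula is right, but the displayed middle step as written contradicts your own conclusion. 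Second, and more substantively, your assertion that $\d_{\x_k}\phi$ is affine in $t$ holds automatically for $k=0$ but for $k\geq 1$ needs the $t^2$-coefficient of $\phi$ to be $x$-independent: writing $\phi=a_0(x)+a_1(x)t+a_2(x)t^2$ gives $\d_{x_k}\phi=\d_{x_k}a_0+(\d_{x_k}a_1)t+(\d_{x_k}a_2)t^2$, which is still genuinely quadratic in $t$ unless $\d_{x_k}a_2=0$; if the $t^2$ term survives, iterating Lemma~\ref{lmcommuteps} produces a $(t+i\tfrac{\e}{\tau^3}D_t)^2$ factor and hence an extra $D_t^2/\tau^6$ contribution that is absent from the stated $(D_k)_{\phi,\e}$. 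So ``polynomial of degree two in $t$'' alone is not quite enough. This is harmless here because every invocation of the lemma in the paper (Corollary~\ref{lmsympphiwave}, Theorem~\ref{th:carlemanschrod}, Lemma~\ref{geometric convexification}) takes $\phi$ to be a genuine quadratic polynomial in all of $\x$, so each $\phi''_{t,\x_k}$ is a constant and your argument goes through verbatim; to make the reconstruction airtight you should either state that stronger hypothesis explicitly or record the requirement that $\phi''_{t,t}$ be $x$-independent.
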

	The goal of Section~\ref{section the carleman estimate} is to prove a Carleman estimate for the ``unperturbed'' operator 
	 \begin{equation}
	 \label{e:def-P-0}
	 P=i \partial_t+\Delta_g=-D_t-\sum_{j,k=1}^d \frac{1}{\sqrt{\det g}}D_j \sqrt{\det g}g^{jk}D_k , 
	 \end{equation} with {\em all coefficients independent of $t$}.
 The following corollary is a direct consequence of Lemma~\ref{l:conjugation-D}.
	\begin{corollary}[The ``conjugated operator'']
		\label{lmsympphiwave}
		Let $\phi$ be a real-valued function being {\em quadratic} in $t$ and $P$ defined in~\eqref{e:def-P-0}. 
		Then, for any $\e>0$,  
		\begin{align*}
			Q_{\e,\tau}^{\phi}P & =P_{\phi,\e}Q_{\e,\tau}^{\phi} , \quad \text{ with } \\ 
			P_{\phi,\e} =&-\left( D_t +i\tau \d_t\phi(\x)-\e \phi''_{t,t}\frac{D_t}{\tau^2}\right)\\&-\sum_{j,k=1}^d \frac{1}{\sqrt{\det g}}\left( D_{j} +i\tau \d_{j}\phi(\x)-\e \phi''_{t,j}\frac{D_t}{\tau^2}\right) \sqrt{\det g}g^{jk}\left( D_{k} +i\tau \d_{k}\phi(\x)-\e \phi''_{t,k}\frac{D_t}{\tau^2}\right) .
		\end{align*}
	\end{corollary}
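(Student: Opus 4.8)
The statement to be proved (Corollary~\ref{lmsympphiwave}) is an immediate consequence of the monomial conjugation formula of Lemma~\ref{l:conjugation-D}, and the strategy is simply to expand $P$ in terms of the operators $D_k$, insert a copy of $Q_{\e,\tau}^{\phi}$ between consecutive factors at the cost of conjugating, and collect terms. First I would record that $Q_{\e,\tau}^{\phi}$ is invertible (as a composition of the invertible Fourier multiplier $e^{-\e|D_t|^2/(2\tau^3)}$ with the invertible multiplication operator $e^{\tau\phi}$), so that the identity $Q_{\e,\tau}^{\phi}P = P_{\phi,\e}Q_{\e,\tau}^{\phi}$ is equivalent to the definition $P_{\phi,\e} := Q_{\e,\tau}^{\phi}\, P\, (Q_{\e,\tau}^{\phi})^{-1}$; hence the content of the corollary is just the explicit computation of the right-hand side, valid on $\mathcal{S}(\R^{1+d})$ and then extended by density. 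Since $\phi$ is quadratic in $t$, each mixed second derivative $\phi''_{t,\x_k}=\d_t\d_{\x_k}\phi$ is a constant (independent of $t$), which is exactly what makes Lemma~\ref{l:conjugation-D} applicable with a clean multiplier $-\e\phi''_{t,\x_k} D_t/\tau^2$.

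\textbf{Key steps.} From~\eqref{e:def-P-0} we have
$$
P = -D_t - \sum_{j,k=1}^d \frac{1}{\sqrt{\det g}}\, D_j\, \sqrt{\det g}\, g^{jk}\, D_k .
$$
For the first order term, Lemma~\ref{l:conjugation-D} with $k=0$ gives directly $Q_{\e,\tau}^{\phi}D_t = \big(D_t + i\tau\,\d_t\phi - \e\,\phi''_{t,t}\,D_t/\tau^2\big)Q_{\e,\tau}^{\phi}$. For the second order term, I would argue factor by factor: the operator $\sqrt{\det g}\,g^{jk}$ and $\frac{1}{\sqrt{\det g}}$ are multiplication by functions of $x$ only, hence commute with $e^{-\e|D_t|^2/(2\tau^3)}$ (which acts only in $t$) and, being $x$-multiplications, commute trivially with $e^{\tau\phi}$ only up to nothing — in fact they commute with $Q_{\e,\tau}^{\phi}$ outright because $Q_{\e,\tau}^{\phi}$ is the composition of a $t$-Fourier multiplier and multiplication by $e^{\tau\phi(\x)}$, and multiplication operators in $x$ commute with multiplication by $e^{\tau\phi}$. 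Therefore $Q_{\e,\tau}^{\phi}$ slides past $\frac{1}{\sqrt{\det g}}$ and past $\sqrt{\det g}\,g^{jk}$ unchanged, and the only nontrivial conjugations are those of the two copies of $D_j$ and $D_k$, each of which produces the replacement $D_\ell \rightsquigarrow (D_\ell)_{\phi,\e} = D_\ell + i\tau\d_\ell\phi - \e\phi''_{t,\ell}D_t/\tau^2$ by Lemma~\ref{l:conjugation-D}. Collecting,
$$
Q_{\e,\tau}^{\phi}P = \Big[-\big(D_t+i\tau\d_t\phi-\e\phi''_{t,t}\tfrac{D_t}{\tau^2}\big) - \sum_{j,k}\tfrac{1}{\sqrt{\det g}}(D_j)_{\phi,\e}\sqrt{\det g}\,g^{jk}(D_k)_{\phi,\e}\Big]Q_{\e,\tau}^{\phi},
$$
which is exactly the asserted formula for $P_{\phi,\e}$.

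\textbf{Remarks on rigor.} The one point deserving care is the justification that intermediate copies of $Q_{\e,\tau}^{\phi}$ may be inserted and that the compositions make sense: this is why the statement and Lemmas~\ref{lmcommuteps}--\ref{l:conjugation-D} are phrased on the Schwartz class $\mathcal{S}(\R^{1+d})$, where $e^{\tau\phi}u$ remains Schwartz (as $\phi$ is polynomial, this requires the dominant behavior of $\phi$ to be controlled — in our application $\phi$ is polynomial of degree $2$ and one works with $u$ of compact support in $x$, so $e^{\tau\phi}u$ has at most Gaussian growth in $t$; strictly speaking the cleanest route is to note the identities hold on a dense class, e.g. $u \in C_c^\infty$ multiplied appropriately, and that all operators involved are continuous for the relevant topology). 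I do not anticipate any real obstacle here: the corollary is a bookkeeping consequence of Lemma~\ref{l:conjugation-D}, and the crucial input — namely that $\d_t\d_{\x_k}\phi$ is $t$-independent, so that the conjugated first-order operators $(D_k)_{\phi,\e}$ have the simple stated form with no remainder — is guaranteed by the hypothesis that $\phi$ is quadratic in $t$. The genuinely substantial work (the subelliptic and Carleman estimates exploiting the structure of $P_{\phi,\e}$, and the handling of the low-regularity coefficients $g^{jk}\in W^{1,\infty}$) comes afterwards and does not enter the proof of this corollary.
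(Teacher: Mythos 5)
Your proof is correct and matches the paper's approach: the paper itself records this corollary simply as ``a direct consequence of Lemma~\ref{l:conjugation-D},'' and your derivation (expand $P$ from~\eqref{e:def-P-0}, commute the $x$-multiplication operators $\sqrt{\det g}\,g^{jk}$ and $1/\sqrt{\det g}$ through $Q_{\e,\tau}^{\phi}$ since they depend only on $x$, replace each $D_\ell$ by $(D_\ell)_{\phi,\e}$ via Lemma~\ref{l:conjugation-D}, and use that $\phi''_{t,\x_k}$ is constant because $\phi$ is quadratic) is exactly that bookkeeping.

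Two minor remarks on your ``remarks on rigor.'' First, invoking invertibility of $Q_{\e,\tau}^{\phi}$ to write $P_{\phi,\e}=Q P Q^{-1}$ is both unnecessary and slightly delicate: the Gaussian Fourier multiplier $e^{-\e|D_t|^2/(2\tau^3)}$ is one-to-one but not boundedly invertible on $L^2$, and $e^{\tau\phi}$ may be unbounded, so ``invertible'' needs a careful domain. It is cleaner to present the corollary as what it is: an operator identity $Q_{\e,\tau}^{\phi}P u = P_{\phi,\e}Q_{\e,\tau}^{\phi}u$ valid on a suitable class of $u$, obtained by iterating Lemma~\ref{l:conjugation-D}, with $P_{\phi,\e}$ \emph{defined} by the displayed formula; no inversion is required. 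Second, your concern about $e^{\tau\phi}u$ leaving the Schwartz class dissolves immediately once you observe that everywhere in the paper (Proposition~\ref{p:subellipti-xit=0}, Theorem~\ref{th:carlemanschrod}) the estimates are applied to $u\in C^\infty_c$, for which $e^{\tau\phi}u\in C^\infty_c\subset\mathcal{S}$; there is no need to discuss growth of the polynomial weight.
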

	We define the anisotropic norm
	\begin{equation}
		\label{def of anisotropic norm}
		\nor{v}{\Ht{1}}^2 := \tau^2 \nor{v}{L^2}^2 +\nor{D_x v}{L^2}^2 +\tau^{-2} \nor{D_t v}{L^2}^2, 
	\end{equation}
	adapted to the homogeneity of the operator $P$ in~\eqref{e:def-P-0} (see the discussion in Section~\ref{s:plan-structure})
	and its spatial part
	\begin{equation}
		\label{def of space norm}
		\nor{v}{H^{1}_{\tau, x}}^2 := \tau^2 \nor{v}{L^2}^2 +\nor{D_x v}{L^2}^2 .    
	\end{equation}
	
	Before stating our main Carleman estimate we need to define the following two important quantities, see~\cite[Theorem~A.5]{LL:18}. Given $\phi \in W^{2, \infty}(\Omega; \R), f\in W^{1,\infty}(\Omega; \R)$, $X$ a smooth {\em complex valued} vector field on $V$ we set
	\begin{align}
		\B_{g, \phi, f}(X)&  := 2 \Hess(\phi)(X,\ovl{X}) - (\Lap \phi )\gln{X}+f \gln{X} , \label{e:def-B}\\ 
		\E_{g, \phi, f} &:= 2 \Hess(\phi)(\nablag \phi,\nablag \phi) + (\Lap  \phi )\gln{\nablag \phi}-f\gln{\nablag \phi},\label{e:def-E}
	\end{align}
	where the Hessian is with respect to the $x$ variable only, see Section~\ref{s:Riemtool}, and where we have written $\gln{X}=\gl{X}{\ovl{X}}$.
	Note that these are two real quantities (since $\Hess(\phi)$ is a real symmetric bilinear form).
	Note that the only difference with~\cite[Theorem~A.5]{LL:18} is that the vectorfield $X$ was assumed real-valued (in applications, $X=\nablag u$).
	Note that for a Lipschitz metric $g$ on $V$, we have $\E_{g, \phi, f} \in L^\infty(\Omega; \R)$ and $\B_{g, \phi, f}(X) \in  L^\infty(\Omega; \R)$ for any bounded vector field $X$ on $V$ and we stress the fact that these two quantities are time-dependent (they are defined on $\Omega=I\times V$).

	\begin{remark}
		In what follows we use the notation $C$ for a constant  whose value may change from one line to another. It may depend on the norms $\nor{\phi}{  W^{2,\infty}}$ and $\nor{f}{W^{1,\infty}}$ where $f \in W^{1,\infty}$ is an auxiliary function, and on the metric $g$ only {\em via} the quantities $\nor{g^{jk}}{W^{1,\infty}(V)}$ and the ellipticity constant $c_0$ of the metric $g^{jk}$ (only Lipschitz regularity of $g$ is assumed). 
	\end{remark}
	
	Let us now state the main result of this section, which is a Carleman estimate in the spirit of~\cite{Tataru:95,Hor:97,RZ:98,Tataru:99} but with two main differences:
	\begin{enumerate}
		\item The Fourier multiplier is now $e^{-\frac{\e |D_t|^2}{2 \tau^3}}$ instead of $e^{-\frac{\e |D_t|^2}{2 \tau}}$ 
		
		\item We use the anisotropic norm defined in~\eqref{def of anisotropic norm}.
	\end{enumerate}
	In Section~\ref{section adding partially gevrey} we show that this estimate remains valid for lower order perturbations of the operator $P$ in~\eqref{e:def-P-0}.

	\begin{theorem}[Carleman estimate]
		\label{th:carlemanschrod}
		Let $\xg_0=(t_0,x_0) \in\Omega =I\times V\subset \R^{1+d}$. Assume that $\phi$ and $f$ satisfy the following: $\phi$ is a {\em quadratic real-valued polynomial}, $f\in W^{1,\infty}(\Omega; \R)$, there exist $r>0$ such that $\gln{\nablag \phi} >0$ on $\overline{B}(\xg_0,r)$, and $C_0>0$ such that for any vector field $X$, we have almost everywhere on $B(\xg_0,r)$:
		\begin{align}
			\label{e:sub-ellipticity-EB}
			\B_{g, \phi, f}(X) \geq C_0 \gln{X},  \quad \text{ and } \quad 
			\E_{g, \phi, f}  \geq C_0  \gln{\nablag \phi} .
		\end{align}
		Then, for all $\mu >0$ and $k \in \N$ there exist $ \mathsf{d}, C, \tau_0>0$ such that for all $\tau \geq \tau_0$ and $w \in C^\infty_c(B(\xg_0, \frac{r}{8}))$, for $P$ defined in~\eqref{e:def-P-0}, we have 
		\bnan
		\label{Carlemanschrod}
		C\nor{Q_{\e,\tau}^{\phi}P w}{L^2}^2+ Ce^{-\mathsf{d}\tau}\nor{e^{\tau\phi}w}{H^{-k}_tH^1_x}^2 \geq  \tau \|Q_{\e,\tau}^{\phi}w\|_{\Ht{1}}^2 .
		\enan
	\end{theorem}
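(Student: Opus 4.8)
The natural strategy is to reduce the Carleman estimate~\eqref{Carlemanschrod} to a subelliptic estimate for the conjugated operator $P_{\phi,\e}$ from Corollary~\ref{lmsympphiwave}, applied to $v := Q_{\e,\tau}^\phi w$. Indeed, by the conjugation identity $Q_{\e,\tau}^\phi P = P_{\phi,\e} Q_{\e,\tau}^\phi$, the left-hand side of~\eqref{Carlemanschrod} controls $\nor{P_{\phi,\e} v}{L^2}$, and one wants to deduce a lower bound of the form $\tau \nor{v}{\Ht1}^2$. This is precisely the content of the subelliptic estimate~\eqref{e:subell-estim} announced as Proposition~\ref{p:subellipti-xit=0}, of the schematic shape $\nor{P_{\phi,\e} v}{L^2} + \nor{D_t v}{} \gtrsim \nor{v}{}$. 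So the first step is simply to invoke that proposition (to be proved later in Sections~\ref{section proof for epsilon zero}--\ref{e:subell-in}), and the real work here is to convert the auxiliary term $\nor{D_t v}{}$ into something admissible.

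\textbf{Handling the $D_t$ remainder.} The key point is that $v = e^{-\e |D_t|^2/(2\tau^3)} e^{\tau\phi} w$ carries a Gaussian cutoff in the time frequency $\xi_t$ localizing exponentially near $\xi_t = 0$ at scale $|\xi_t|\lesssim \tau^2$. One splits $D_t v = \chi(D_t/\tau^2) D_t v + (1-\chi(D_t/\tau^2)) D_t v$ for a suitable cutoff $\chi$. On the low-frequency piece $|\xi_t|\lesssim \tau^2$, the factor $\xi_t$ is bounded by $C\tau^2$, but more importantly $\tau^{-2}\nor{\chi(D_t/\tau^2) D_t v}{L^2}$ is bounded by (a small multiple of, after choosing constants) $\nor{v}{\Ht1}$ and can be absorbed into the right-hand side $\tau\nor{v}{\Ht1}^2$ provided $\tau$ is large — more carefully, one should keep the $\tau$ powers straight: the subelliptic estimate is really stated with appropriate $\tau$-weights so that the low-frequency part of $\nor{D_t v}{}$ genuinely absorbs. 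On the high-frequency piece $|\xi_t|\gtrsim \tau^2$, the symbol $e^{-\e|\xi_t|^2/(2\tau^3)}$ is bounded by $e^{-\e c\tau/2}$ on the support; writing $(1-\chi(D_t/\tau^2))e^{-\e|D_t|^2/(2\tau^3)} = e^{-\mathsf d\tau} m_\tau(D_t)$ with $m_\tau$ a Fourier multiplier uniformly bounded (together with the extra $D_t$ and the $H^1_x$ derivatives, after soaking up polynomial losses in $\tau$ into the exponential) from $H^{-k}_t H^1_x$ to $L^2$, one sees this contributes exactly the admissible remainder $e^{-\mathsf d\tau}\nor{e^{\tau\phi} w}{H^{-k}_t H^1_x}^2$. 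This is where the negative Sobolev index $-k$ on the right-hand side enters: we pay $\tau^{2k}$ (or so) to trade $H^{-k}_t$ for $L^2$, which is absorbed by shrinking $\mathsf d$.

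\textbf{Main obstacle.} The bulk of the difficulty is not in this reduction — which is essentially bookkeeping with Fourier multipliers and the Gaussian decay — but in Proposition~\ref{p:subellipti-xit=0} itself, i.e. the subelliptic estimate for $P_{\phi,\e}$ in the regime $\xi_t$ small. That is precisely the statement the excerpt defers. For the present theorem, assuming that proposition, the subtle points are: (i) correctly tracking the anisotropic norm $\nor{\cdot}{\Ht1}^2 = \tau^2\nor{\cdot}{L^2}^2 + \nor{D_x\cdot}{L^2}^2 + \tau^{-2}\nor{D_t\cdot}{L^2}^2$ through the splitting, making sure each of the three pieces on the right of~\eqref{Carlemanschrod} is produced; (ii) verifying that the hypotheses~\eqref{e:sub-ellipticity-EB} on $\B_{g,\phi,f}$ and $\E_{g,\phi,f}$, together with $\gln{\nablag\phi}>0$ on $\overline{B}(\xg_0,r)$, are exactly what Proposition~\ref{p:subellipti-xit=0} requires (the loss of a factor of $\tau$ — hence $\tau\nor{v}{\Ht1}^2$ rather than $\tau^2$ — is the standard subelliptic, rather than elliptic, gain and should be traceable to the pointwise positivity assumptions holding with the constant $C_0$ but no uniform positivity of the symbol off the characteristic set); and (iii) checking the support condition $w\in C^\infty_c(B(\xg_0,\tfrac r8))$ propagates correctly through $e^{\tau\phi}$ and the nonlocal multiplier so that all integrations by parts and the use of the local estimates~\eqref{e:sub-ellipticity-EB} on $B(\xg_0,r)$ are legitimate — the factor $\tfrac18$ leaving room for the tails of $e^{-\e|D_t|^2/(2\tau^3)}$, which are Schwartz and hence harmless after a standard cutoff-commutator argument. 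Once these are in place, choosing $\tau_0$ large enough to absorb the low-frequency remainder and $\mathsf d$ small enough to absorb the polynomial losses in the high-frequency remainder finishes the proof.
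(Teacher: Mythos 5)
Your proposal follows essentially the same route as the paper: invoke the subelliptic estimate of Proposition~\ref{p:subellipti-xit=0} for the conjugated operator, split $D_t v$ at the frequency scale $\sigma\tau^2$ so that the low-frequency piece is absorbed into $\tau\nor{v}{\Ht{1}}^2$ (by taking $\sigma$ small) and the high-frequency piece is controlled by the Gaussian decay of $e^{-\e|\xi_t|^2/(2\tau^3)}$, producing the $e^{-\mathsf{d}\tau}\nor{e^{\tau\phi}w}{H^{-k}_tH^1_x}$ remainder after trading powers of $\tau$ for the exponential; and handle the nonlocality of the multiplier (which destroys compact support in time) by a time cutoff $\chi$ plus commutator estimates whose errors are exponentially small by the off-support decay of the Gaussian kernel. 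This matches the paper's argument in Section~\ref{s:sub-to-carl}, including the identification of the support issue and the origin of the $H^{-k}_t$ index, so the plan is correct.
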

	In~\eqref{Carlemanschrod}, $H^{-k}_tH^1_x= H^{-k}(\R;H^1(V))$, that is to say
	\begin{align}
		\label{e:norm-esp-tps}
		\nor{v}{H^{-k}_tH^1_x}= \nor{\langle D_t\rangle^{-k} v}{L^2(\R;H^1(V))} .
	\end{align}
	Theorem~\ref{th:carlemanschrod} states a precise version of~\eqref{e:Carleman-tataru-nous}.
		\begin{remark}[Lower order perturbations]
		\label{r:carleman-density}
	Note that in Theorem~\ref{th:carlemanschrod} we have stated the result for the operator $P$ defined in~\eqref{e:def-P-0}. As usual for Carleman estimates, the statement still holds for $P$ replaced by any lower order time-independent perturbation with $L^\infty(V)$ coefficients (using that the latter commutes with $Q_{\e,\tau}^{\phi}$ and the corresponding additional term in~\eqref{Carlemanschrod} can thus be absorbed in the right-hand side for $\tau$ sufficiently large). According to the discussion of Section~\ref{s:rem-div}, this proves that $P$ can be equivalently replaced by $i \partial_t+\Delta_{g,\varphi}$ for any Lipschitz nonvanishing density $\varphi$ in Theorem~\ref{th:carlemanschrod}.
	\end{remark}
	\begin{remark}
	\label{r:rem-k}
	The $H^{-k}_tH^1_x$ norm on the error term in the left-hand side of~\eqref{Carlemanschrod} is obtained as a consequence of the regularization properties of the operator $e^{-\frac{\e |D_t|^2}{2 \tau^3}}$.
	The unique continuation result of Theorem~\eqref{theorem local uniqueness intro} concerning $L^2(I;H^1(V))$ solutions only uses the case $k=0$ (for which the proof of Theorem~\ref{th:carlemanschrod} is simpler).
	The unique continuation result of Theorem~\ref{theorem local L2} concerning $L^2(I\times V)$ relies on the case $k=1$, combined with an ellipticity argument (to gain derivative in space). See  Section~\ref{section reducing the regularity} below.
	Finally, the unique continuation statement of Remark~\ref{r:distributions} concerning distribution solutions uses the full range of $k \in \N$ (together with an ellipticity argument). 
	\end{remark}

	The main step for the proof of Theorem~\ref{th:carlemanschrod} is the following subelliptic estimate.

	\begin{proposition}[Subelliptic estimate]
		\label{p:subellipti-xit=0}
		Let $\xg_0=(t_0,x_0) \in\Omega =I\times V\subset \R^{1+d}$. Assume that $\phi$ and $f$ satisfy the assumptions of Theorem~\ref{th:carlemanschrod}.
		Then, for all $\mu>0$ there exist $C, \tau_0>0$ such that for all $\tau \geq \tau_0$ and $v \in C^\infty_c(B(\xg_0,r))$, we have 
		\bnan
		\label{Carlpropwaveinterm}
		C \nor{P_{\phi,\e}v}{L^2}^2+C\tau^{-1} \nor{D_t v}{L^2}^2\geq \tau \nor{v}{\Ht{1}}^2 .
		\enan
	\end{proposition}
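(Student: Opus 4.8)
The plan is to reduce the subelliptic estimate for the conjugated operator $P_{\phi,\e}$ to a self-adjoint–antisymmetric splitting followed by an integration by parts, treating $D_t$-terms as remainders and then handling the $\e>0$ corrections perturbatively. First I would write $P_{\phi,\e} = A_2 + i B_2 + R_\e$, where, setting $L_k := D_k + i\tau\d_{\x_k}\phi$, the ``main'' piece $A_2 + iB_2$ is the conjugated elliptic operator $-\sum_{j,k}\frac{1}{\sqrt{\det g}} L_j \sqrt{\det g} g^{jk} L_k$ (the spatial part), with $A_2$ self-adjoint and $B_2$ anti-self-adjoint in $L^2(V)$ for each fixed $t$, and $R_\e$ collects (i) the full ``$-D_t - i\tau \d_t\phi$'' term, which is harmless because $\d_t^2\phi$ is a constant and $D_t$ is a remainder in $\Ht{1}$-norm after division by $\tau$, and (ii) all the $\e\phi''_{t,\cdot}\frac{D_t}{\tau^2}$ corrections, each of which carries a factor $D_t/\tau^2$ so that, paired against $\Ht{1}$-controlled quantities, it produces $O(\tau^{-1}\nor{D_t v}{L^2}^2)$ plus terms absorbable into $\tau\nor{v}{\Ht 1}^2$. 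The scaling is exactly arranged (this is the point of the $\tau^3$ in the weight) so that these corrections are genuinely lower order.

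Next I would expand $\nor{P_{\phi,\e} v}{L^2}^2 = \nor{(A_2 + iB_2)v}{}^2 + (\text{cross terms with }R_\e) + \nor{R_\e v}{}^2$ and, on the main term, use $\nor{(A_2+iB_2)v}{}^2 = \nor{A_2 v}{}^2 + \nor{B_2 v}{}^2 + i([A_2,B_2]v,v)$ — this is the classical Carleman commutator identity. The commutator $i[A_2,B_2]$ is a second-order operator whose symbol/quadratic-form is controlled from below by the quantity $\B_{g,\phi,f}$ applied to $\nablag(\cdot)$ and by $\E_{g,\phi,f}$ times $\gln{\nablag\phi}|\cdot|^2$, precisely by the integration-by-parts computation of \cite[Theorem~A.5]{LL:18}, whose statement I would invoke essentially verbatim (the only change being that the relevant vector field $X = \nablag v$ is now complex-valued, which does not affect the real identity since $\Hess(\phi)$ is a real symmetric form, as already noted in the text). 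The subellipticity hypothesis~\eqref{e:sub-ellipticity-EB} then gives $\nor{A_2 v}{}^2 + \nor{B_2 v}{}^2 + i([A_2,B_2]v,v) \gtrsim \tau \nor{v}{H^1_{\tau,x}}^2$ (after also using ellipticity of $g$ to recover $\nor{D_x v}{}^2$, and the pointwise positivity $\gln{\nablag\phi}>0$ on $\overline{B}(\xg_0,r)$ to handle the $\E$-term), modulo lower-order contributions that are absorbed by taking $\tau$ large. Since $v$ is compactly supported in $B(\xg_0,r)$, the almost-everywhere inequalities on $B(\xg_0,r)$ suffice and all boundary terms vanish.

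It then remains to (a) upgrade $\tau\nor{v}{H^1_{\tau,x}}^2$ to the full anisotropic norm $\tau\nor{v}{\Ht 1}^2$, which only adds the term $\tau^{-1}\nor{D_t v}{}^2$ — and this is exactly the term allowed on the left-hand side of~\eqref{Carlpropwaveinterm}, so one simply adds a harmless $C\tau^{-1}\nor{D_t v}{}^2$ on both sides; and (b) bound the $R_\e$ cross terms and $\nor{R_\e v}{}^2$. For (b), the $-D_t$ and $-i\tau\d_t\phi$ pieces are estimated by Cauchy–Schwarz against $\tau\nor{v}{}$ and $\tau^{-1}\nor{D_t v}{}$; the $\e$-corrections, each of the form $\e\,(\text{const})\,\tau^{-2}D_t\, L_k v$ or similar, are estimated by $C\e\tau^{-2}\nor{D_t v}{}\big(\tau\nor{v}{}+\nor{D_x v}{}\big)$, hence $\le \tfrac12\,\tau\nor{v}{H^1_{\tau,x}}^2 + C\e^2\tau^{-3}\nor{D_t v}{}^2 \le \tfrac12\,\tau\nor{v}{H^1_{\tau,x}}^2 + C\tau^{-1}\nor{D_t v}{}^2$ for $\tau\ge\tau_0$, exactly of the permitted shapes. (One can alternatively do this step via symbolic calculus in the anisotropic classes where $D_t\sim\tau^2$, $D_x\sim\tau$, but the direct integration-by-parts route keeps the Lipschitz-only regularity of $g$ explicit.)

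The main obstacle, as in all low-regularity Carleman estimates, is step (a)/(b) done \emph{quantitatively} with only Lipschitz $g$: one cannot freely use the pseudodifferential calculus on the second-order operator $A_2$ (its coefficients are merely $L^\infty$ after one derivative falls on $g$), so the positivity of the commutator must come from the honest integration-by-parts identity of \cite[Appendix~A]{LL:18} rather than from a symbol computation, and every error term must be tracked to show it is either absorbable into $\tau\nor{v}{\Ht 1}^2$ (gaining a power of $\tau$) or into the allowed remainder $\tau^{-1}\nor{D_t v}{}^2$. The delicate bookkeeping is ensuring the $\e$-dependent corrections never produce a term like $\nor{D_t v}{}^2$ without the compensating $\tau^{-1}$ — which is guaranteed precisely because each such correction carries the factor $D_t/\tau^2$, i.e. is built from the anisotropically-scaled Fourier multiplier $e^{-\e|D_t|^2/2\tau^3}$. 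This is the structural reason the proof splits cleanly into the $\e=0$ computation (a genuine elliptic Carleman estimate plus $D_t$-remainders) carried out in Section~\ref{section proof for epsilon zero}, and the perturbative $\e>0$ step in Section~\ref{e:subell-in}.
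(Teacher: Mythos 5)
Your overall strategy for the $\e=0$ step is essentially the paper's: reduce to a quadratic-form cross-term that is computed by the integration-by-parts identity of~\cite[Appendix~A]{LL:18} (with the auxiliary function $f$), treat $\|D_t v\|$ and $\tau\phi_t$ as remainders, and use~\eqref{e:sub-ellipticity-EB} to close. (One small inaccuracy: the paper's $Q_1 = -2\tau\gl{\nablag\phi}{\nablag u}-\tau f u$ is deliberately \emph{not} the anti-self-adjoint part of the conjugated elliptic operator --- the $-\tau f$ term destroys skew-symmetry. This is the whole point of the $f$-device, and the cross term $2\Re(Q_1 u,\widetilde Q_2 u)$ is therefore computed directly via Lemma~\ref{l:computeM1M2} rather than via the commutator identity $([A_2,B_2]v,v)$ you invoke. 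Your language of ``self-adjoint/anti-self-adjoint parts'' is thus not quite the decomposition actually used, although you do cite the correct computational lemma.)

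The genuine gap is in your step (b), the $\e>0$ perturbation. You collect the $\e$-corrections into an additive remainder $R_\e$ and propose to estimate the cross term $2\Re((A_2+iB_2)v,R_\e v)$ and the square $\|R_\e v\|^2$ by Cauchy--Schwarz, claiming a bound of the form $C\e\tau^{-2}\|D_t v\|\,(\tau\|v\|+\|D_x v\|)$. This does not work as stated. The operator $R_\e$ contains the pieces $\e\tau^{-2}\phi''_{t,j}g^{jk}D_t D_k$ (from the cross terms in the expansion of the conjugated Laplacian) and $\e^2\tau^{-4}\phi''_{t,j}\phi''_{t,k}g^{jk}D_t^2$. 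A direct Cauchy--Schwarz applied to the cross term against the second-order main operator $A_2+iB_2$, or to $\|R_\e v\|^2$ itself, produces second-order quantities such as $\e^2\tau^{-4}\|D_tD_xv\|^2$ and $\e^4\tau^{-8}\|D_t^2v\|^2$, none of which is controlled by $\tau\|v\|_{\Ht1}^2$ or by the permitted remainder $\tau^{-1}\|D_tv\|^2$. Your claimed bound $C\e\tau^{-2}\|D_t v\|(\tau\|v\|+\|D_x v\|)$ cannot be obtained along this route.

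What makes the $\e>0$ step close in the paper is a different organization: the $\e$-corrections are kept \emph{inside} the second-order bracket ($\widetilde{\mathcal Q}_{2,\e}$, see~\eqref{def of perturbation of Q2}), so that after discarding the squares the only new cross term is $2\Re\big(Q_1 u,(\widetilde{\mathcal Q}_{2,\e}-\widetilde Q_2)u\big)$ --- a pairing of the \emph{first-order} $Q_1$ with a second-order operator. This is then handled in Lemma~\ref{lemma perturbation of p phi} by successive integrations by parts in $t$ and in $x$ which exploit the real part crucially: for instance, the term $A_1=-2\Re(\tau\gl{\nablag\phi}{\nablag u},L_1 u)$, which looks like it should cost $\|D_t D_x u\|$, in fact collapses after two integrations by parts to $A_1=\frac{2\e}{\tau}\sum(g^{jk}\phi''_{j,t}\d_k u,\phi''_{t,m}g^{lm}\d_l u)$, with \emph{all} time derivatives cancelled, yielding $|A_1|\leq C\tau^{-1}\|\nabla_x u\|^2$. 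This cancellation (an identity of the form ``$a=-a+b$, hence $a=b/2$'' in the IBP chain) is essential; it does not come from Cauchy--Schwarz and it also relies on $\phi$ being quadratic so that the $\phi''$'s are constants. Without this mechanism the $\e$-corrections are not admissible remainders, and the estimate~\eqref{Carlpropwaveinterm} does not follow.
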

	
	\begin{remark}[Perturbations of~\eqref{Carlpropwaveinterm} by lower order terms]
		\label{carleman insensitive lot}
		In the setting of Proposition~\ref{p:subellipti-xit=0}, we consider
		\begin{align}
			\label{e:admiss-remainder}
			R= A\cdot D_x+\tau a+\frac{b}{\tau^2}D_t+\frac{c}{\tau}D_t, \quad \text{ with }\quad a,b,c \in L^\infty(\Omega;\C), A \in L^\infty(\Omega;\C^d) .
		\end{align}
		Recalling~\eqref{def of anisotropic norm}, we have for $\tau \geq1$,
		$$
		\nor{Rv}{L^2} \lesssim \tau \nor{v}{L^2}+\nor{D_{x}u }{L^2}+\frac{\nor{D_t v}{L^2}}{\tau^2}+\frac{\nor{D_t v}{L^2}}{\tau} \lesssim \nor{v}{\Ht{1}} .
		$$
		As a consequence, estimate~\eqref{Carlpropwaveinterm} holds for $P_{\phi,\e}$ if and only if it holds for $P_{\phi,\e}+R$ in place of $P_{\phi,\e}$, up to changing the values of $\tau_0$ and $C$.
	Let us now define
	\begin{equation}
		\label{le fameux spyrossien}
		\mathsf{P}:=\sum_{j,k} g^{jk}(x)\d_j\d_k=-\sum_{j,k} g^{jk}(x)D_j D_k.
	\end{equation}
	As in Corollary~\ref{lmsympphiwave}, we have $Q_{\e,\tau}^{\phi}\mathsf{P} = \mathsf{P}_{\phi,\e}Q_{\e,\tau}^{\phi}$ with 
	\begin{align}
		\label{e:def-PPhi-mu-2}
		\mathsf{P}_{\phi,\e} =&-\left( D_t +i\tau \d_t\phi(\x)-\e \phi''_{t,t}\frac{D_t}{\tau^2}\right) \nonumber\\&-\sum_{j,k=1}^d g^{jk}(x) \left( D_{j} +i\tau \d_{j}\phi(\x)-\e \phi''_{t,j}\frac{D_t}{\tau^2}\right)\left( D_{k} +i\tau \d_{k}\phi(\x)-\e \phi''_{t,k}\frac{D_t}{\tau^2}\right).
	\end{align}
	Remark now that since the metric $g$ is Lipschitz and time independent, the commutator
	$$
	\left[\left( D_{j} +i\tau \d_{j}\phi-\e \phi''_{t,j}\frac{D_t}{\tau^2}\right), \sqrt{\det g}g^{jk}\right]=\left[ D_{j}, \sqrt{\det g}g^{jk}\right],
	$$
	is a differential operator of order zero, with $L^\infty$ coefficients. It follows that 
	$$
	P_{\phi,\e}  = \mathsf{P}_{\phi,\e} -R, \quad \text{ with }\quad  R = \sum_{j,k=1}^d \frac{1}{\sqrt{\det g}}\left[ D_{j}, \sqrt{\det g}g^{jk}\right]\left( D_{k} +i\tau \d_{k}\phi-\e \phi''_{t,k}\frac{D_t}{\tau^2}\right) ,
	$$
	and, according to the above discussion, estimate~\eqref{Carlpropwaveinterm} for $\mathsf{P}_{\phi,\e}$ implies the same estimate for $P_{\phi,\e}$ (and {\em vice versa}). 
\end{remark}
Remark~\ref{carleman insensitive lot} allows to transfer estimates from $P_{\phi,\e}$ to $\mathsf{P}_{\phi,\e}$ and vice versa.
In Section~\ref{s:sub-to-carl}, we first show how the subelliptic estimate of Proposition~\ref{p:subellipti-xit=0} implies the Carleman estimate of Theorem~\ref{th:carlemanschrod}. Then in Section~\ref{s:sub} we prove the subelliptic estimate of Proposition~\ref{p:subellipti-xit=0}.

\subsection{From the subelliptic estimate to the Carleman estimate}
\label{s:sub-to-carl}
\begin{proof}[Proof of Theorem~\ref{th:carlemanschrod} from Proposition~\ref{p:subellipti-xit=0}]
	Suppose for simplicity that $t_0=0$ and let $r_0:=r/2$ with $r$ given by the assumptions of Theorem~\ref{th:carlemanschrod} and Proposition~\ref{p:subellipti-xit=0}. 
	Consider $w \in C^\infty_c(B(\xg_0,r_0/4);[0,1])$ and $\chi \in C^\infty_c((-r_0,r_0);[0,1])$ with $\chi=1$ on $(-r_0/2,r_0/2)$. We notice that
	\bnan
	\label{first basic for carleman }
	\tau \nor{Q_{\e,\tau }^{\phi}w}{\Ht{1}}^2\leq 2 \tau \nor{\chi Q_{\e,\tau }^{\phi}w}{\Ht{1}}^2+ 2 \tau \nor{(1-\chi)Q_{\e,\tau }^{\phi}w}{\Ht{1}}^2.
	\enan
	Consider $\chit \in C^\infty_c((-r_0/3,r_0/3);[0,1])$ with $\chit=1$ in a neighborhood of $[-r_0/4,r_0/4]$, so that $w=\chit w$. 
	Recalling the norms~\eqref{def of anisotropic norm}--\eqref{def of space norm}, the support properties of $\chi, \chit$ and $w$ together with Lemma~\ref{lemma 2.4 from ll} we estimate the second term in \eqref{first basic for carleman } as  
	\begin{align}
		\label{estim for 1-chi}
		\tau \nor{(1-\chi)Q_{\e,\tau }^{\phi}w}{\Ht{1}}^2&\leq C\tau \nor{(1-\chi)Q_{\e,\tau }^{\phi}w}{H^1_{\tau,x}}^2+C\tau^{-1}\nor{D_t(1-\chi)Q_{\e,\tau }^{\phi}w}{L^2}^2 \nonumber  \\
		&= C \tau\nor{(1-\chi)e^{-\e\frac{|D_t|^2}{2\tau^3}}e^{\tau \phi}\chit w}{H^1_{\tau,x}}^2+C\tau^{-1}\nor{D_t(1-\chi)Q_{\e,\tau }^{\phi}w}{L^2}^2\nonumber \\
		&\leq C \tau e^{-2c\frac{\tau^3}{\e}} \nor{e^{\tau \phi} w}{H^{-k}_t H^1_x}^2+C\tau^{-1}\nor{[D_t,(1-\chi)]e^{-\e\frac{|D_t|^2}{2\tau^3}}e^{\tau \phi} \chit w}{L^2}^2 \nonumber \\ &
		\hspace{4mm}+C\tau^{-1}\nor{(1-\chi)D_t Q_{\e,\tau }^{\phi}w}{L^2}^2\nonumber  \\
		&\leq C_\mu e^{-c\frac{\tau^3}{\e}} \nor{e^{\tau \phi} w}{H^{-k}_t H^1_x}^2+C\tau^{-1}\nor{D_t Q_{\e,\tau }^{\phi}w}{L^2}^2,
	\end{align}
	where $H^{-k}_t H^1_x=H^{-k}(\R; H^1(V))$, see~\eqref{e:norm-esp-tps}.
	We estimate now the second term in \eqref{estim for 1-chi}. To do so, we consider $\sigma>0$ a small constant to be chosen later on and we distinguish between frequencies smaller or larger than $\sigma \tau^2$. We also assume $\sigma \tau^{2}\geq 1$ and obtain
	\begin{align*}
		\nor{D_t Q_{\e,\tau }^{\phi}w}{L^2}&\leq \nor{D_t \mathds{1}_{|D_t|\leq \sigma \tau^2} Q_{\e,\tau }^{\phi} w}{L^2}+\nor{D_t^{k+1} \mathds{1}_{|D_t|\geq \sigma \tau^2}\langle D_t\rangle^{-k}e^{-\e\frac{|D_t|^2}{2\tau^3}}e^{\tau \phi} w}{L^2} \\
		&\leq  \sigma \tau^2  \nor{ Q_{\e,\tau }^{\phi} w}{L^2}+\underset{\xi_t\geq \sigma \tau^2}{\max}(\xi_t^{k+1} e^{-\e\frac{|\xi_t|^2}{2\tau^3}})\nor{e^{\tau \phi}w}{H^{-k}_t L^2_x}.
	\end{align*}
	Now the function $\R^+  \ni  s \mapsto s^\mathsf{k} e^{-\e\frac{|s|^2}{2\tau^3}}$ reaches its maximum at $s=\sqrt{\frac{\mathsf{k} \tau^3}{\e}}$ and is decreasing on $[\sqrt{\frac{\mathsf{k} \tau^3}{\e}}, \infty)$. As a consequence, if $\sigma \tau^2 \geq \sqrt{\frac{\mathsf{k} \tau^3}{\e}} 
	$ which translates to $\tau \geq \frac{\mathsf{k}}{\sigma^2 \e}$, one has $\underset{\xi_t \geq \sigma \tau^2}{\max}(\xi_t^\mathsf{k} e^{-\e\frac{|\xi_t|^2}{2\tau^3}})=\sigma^\mathsf{k} \tau^{2\mathsf{k}} e^{- \e \frac{\sigma^2 \tau^4}{2\tau^3}}=\sigma^\mathsf{k} \tau^{2\mathsf{k}} e^{- \e \frac{\sigma^2 \tau}{2}}$.
	We obtain therefore, for $\tau \geq \tau_0 \geq \max \left( 1, \sigma^{-1/2} , \frac{k+1}{\sigma^2\e} \right)$,
	\bnan
	\label{estimate for Dtgaussienne}
	\nor{D_t Q_{\e,\tau }^{\phi}w}{L^2} \leq \sigma \tau^2 \nor{ Q_{\e,\tau }^{\phi} w}{L^2}+\sigma^{k+1} \tau^{2k+2} e^{- \e \frac{\sigma^2 \tau}{2}}\nor{e^{\tau \phi}w}{H^{-k}_t L^2_x}.
	\enan
	We now estimate the term $\tau \nor{\chi Q_{\e,\tau }^{\phi}w}{\Ht{1}}^2$ appearing in \eqref{first basic for carleman }. Thanks to the support properties of $\chi$ and $w$ we can apply the subelliptic estimate of Proposition~\ref{p:subellipti-xit=0} to $v:=\chi Q_{\e,\tau }^{\phi}w \in C^\infty_c([-r/2,r/2]\times B(0,r/8))$. We obtain
	\begin{align}
		\label{subellipic applied to chi v }
		\tau \nor{\chi Q_{\e,\tau }^{\phi}w}{\Ht{1}}^2  &\leq C \nor{P_{\phi, \e} \chi Q_{\e,\tau }^{\phi}w}{L^2}^2+C\tau^{-1} \nor{D_t \chi Q_{\e,\tau }^{\phi}w }{L^2}^2 \nonumber \\
		&\leq 
		C \nor{P_{\phi, \e} \chi Q_{\e,\tau }^{\phi}w}{L^2}^2+C\tau^{-1} \nor{Q_{\e,\tau }^{\phi}w }{L^2}^2+C\tau^{-1} \nor{D_t Q_{\e,\tau }^{\phi}w }{L^2}^2 \nonumber \\
		&\leq C \nor{P_{\phi, \e} \chi Q_{\e,\tau }^{\phi}w}{L^2}^2+C \sigma^2 \tau^3 \nor{ Q_{\e,\tau }^{\phi} w}{L^2}^2+C\sigma^{2k+2} \tau^{4k+3} e^{-\e \sigma^2 \tau}\nor{e^{\tau \phi}w}{H^{-k}_t L^2_x}^2,
	\end{align}
	where for the last inequality, we used $\sigma \tau^{2}\geq 1$ and \eqref{estimate for Dtgaussienne}. Recalling Corollary~\ref{lmsympphiwave}, $Q_{\e,\tau}^{\phi}P =P_{\phi,\e}Q_{\e,\tau}^{\phi}$ and thus
	\begin{align*}
		\nor{P_{\phi, \e} \chi Q_{\e,\tau }^{\phi}w}{L^2} &\leq \nor{\chi P_{\phi, \e}  Q_{\e,\tau }^{\phi}w}{L^2}+\nor{[P_{\phi, \e},\chi] Q_{\e,\tau }^{\phi}w}{L^2}    \\ 
		&\leq \nor{Q_{\e,\tau }^{\phi}P w}{L^2}+\nor{[P_{\phi, \e},\chi] e^{-\e\frac{|D_t|^2}{2\tau^3}}e^{\tau \phi}\chit w}{L^2} .
	\end{align*}
	Recalling that $\chi=\chi(t)$, together with the expression of $P_{\phi, \e}$ in Corollary~\ref{lmsympphiwave}, we have 
	$$
	[P_{\phi, \e},\chi]=i\chi'+R , \quad \text{ with } \quad R = \frac{1}{\tau^2}\left( F(\x)\cdot D_x + f_0(\x) \frac{D_t}{\tau^2} + f_1(\x)\tau +f_2(\x) + \frac{1}{\tau^2}f_3(\x) \right),
	$$ where $F,f_0,f_1,f_2,f_3 \in L^\infty(I\times V)$ satisfy $ \supp(F,f_0,f_1,f_2,f_3) \subset \supp(\chi^\prime) \times V$. 
	Given the support properties of $\chi, \chit$, Lemma~\ref{lemma 2.4 from ll} yields for all $k \in \N$ the existence of $C,c>0$ such that 
	$$
	\nor{[P_{\phi, \e},\chi] e^{-\e\frac{|D_t|^2}{2\tau^3}}e^{\tau \phi}\chit w}{L^2} 
	\leq 
	\nor{\chi'  e^{-\e\frac{|D_t|^2}{2\tau^3}}\chit e^{\tau \phi} w}{L^2} +\nor{R e^{-\e\frac{|D_t|^2}{2\tau^3}}\chit e^{\tau \phi} w}{L^2} 
	\leq C_\mu e^{-c\frac{\tau^3}{\e}} \nor{e^{\tau \phi} w}{H^{-k}_t H^1_x}.
	$$
	Putting the two last inequalities together we obtain
	\begin{align}
		\label{estimate for term of subelliptic with comm}
		\nor{P_{\phi, \e} \chi Q_{\e,\tau }^{\phi}w}{L^2} &\leq  C\nor{Q_{\e,\tau }^{\phi}P w}{L^2}+C e^{-c\frac{\tau^3}{\e}} \nor{e^{\tau \phi} w}{H^{-k}_t H^1_x}.
	\end{align}
	Combining \eqref{first basic for carleman }, \eqref{estim for 1-chi}, \eqref{estimate for Dtgaussienne}, \eqref{subellipic applied to chi v } and \eqref{estimate for term of subelliptic with comm} we find that for any $\mu>0,k \in \N$, there are constants $C,c,\tau_0>0$ such that for any $\sigma>0$ and $\tau \geq \tau_0$ we have
	
	\begin{align*}
		\tau \|Q_{\e,\tau}^{\phi}w\|_{\Ht{1}}^2 \leq C\nor{Q_{\e,\tau }^{\phi}P w}{L^2}^2+ C \sigma^2 \tau^3\nor{Q^\phi_{\e,\tau}w}{L^2}^2+ C\left(  e^{-c\frac{\tau^3}{\e}} + \sigma^{2k+2}\tau^{4k+3}e^{-\e \sigma^2 \tau} \right) \nor{e^{\tau \phi}w}{H^{-k}_t H^1_x}^2.
	\end{align*}
	Choosing then $\sigma>0$ sufficiently small allows to absorb the term $\sigma^2 \tau^3\nor{Q^\phi_{\e,\tau}w}{L^2}^2$ in the left-hand side. Then taking $\tau\geq\tau_0$ with $\tau_0$ sufficiently large finishes the proof of Theorem~\ref{Carlemanschrod} from Proposition~\ref{p:subellipti-xit=0}.
\end{proof}

\subsection{Proof of the subelliptic estimate}
\label{s:sub}
This section is devoted to the proof of Proposition~\ref{p:subellipti-xit=0}.
Recall that the operator $P$ is defined in~\eqref{e:def-P-0} and let us consider the "classical" conjugated operator given by
$$
P_\phi:=e^{\tau \phi}P e^{- \tau \phi} = e^{\tau \phi}(i\partial_{t}+\Lap)e^{- \tau \phi},
$$
where we recall that $\Delta_g$ is defined in Section~\ref{s:Riemtool}.
Remark that $P_\phi =P_{\phi,0}$ where $P_{\phi,\mu}$ is defined in Corollary~\ref{lmsympphiwave}. We start by proving in Section~\ref{section proof for epsilon zero} the desired subelliptic estimate in the particular case $P_\phi =P_{\phi,0}$. We then prove in Section~\ref{e:subell-in} that the additional terms coming from the difference $\nor{(P_{\phi, \e}-P_\phi) u}{L^2}^2$ can be absorbed in the estimate.

\subsubsection{Case $\e=0$}
\label{section proof for epsilon zero}

We recall the definitions of $\B_{g, \phi, f}(X)$ and $\E_{g, \phi, f}$ in~\eqref{e:def-B} and~\eqref{e:def-E} respectively. We sometimes write $v_t:=\d_t v$.
\begin{proposition}
	\label{thmCarlemancalcul-epsilonzero}
	Let $\Omega \subset \R^{1+d}$. Assume that $\phi \in W^{2, \infty} (\Omega; \R)$ and $f \in W^{1,\infty}(\Omega; \R)$. Then, there exists $C>0$ such that for any $u\in C^\infty_c(\Omega)$ and $\tau\geq 0$, we have for any $\delta>0$ 
	\begin{align*}
		&3 \nor{P_\phi u}{L^2}^2 
		+\left(\nor{\Lap \phi}{L^{\infty}}^{2}+\nor{f}{L^{\infty}}\right)\frac{1}{\delta\tau} \nor{u_{t}}{L^{2}}^{2}
		+ R(u) \geq 
		2 \tau^3\iint \left[\E_{g, \phi, f} -\delta\right]|u|^2 + 2  \tau \iint \B_{g, \phi, f}(\nablag u),
	\end{align*}
	\begin{align}
		\label{e:def-R(u)}
		\text{ with } \quad 
		|R(u)| & \leq  C \tau^2 \nor{u}{L^{2}}^{2} 
		+ C \nor{\nablag u}{L^{2}}^{2}. 
	\end{align}
\end{proposition}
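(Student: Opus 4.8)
The statement is a classical Carleman-type integration-by-parts identity, of the kind one finds in \cite[Appendix~A]{LL:18}, adapted here to the Schr\"odinger operator. The plan is to expand $\nor{P_\phi u}{L^2}^2$ directly, where $P_\phi = e^{\tau\phi}(i\d_t + \Lap)e^{-\tau\phi}$, and carefully sort the resulting terms by power of $\tau$, integrating by parts to pull out the two quadratic forms $\E_{g,\phi,f}$ and $\B_{g,\phi,f}$. First I would compute $P_\phi$ explicitly: writing $P_\phi u = i\d_t u + i\tau(\d_t\phi) u + \Lap u + 2\tau\gl{\nablag\phi}{\nablag u} + \tau(\Lap\phi)u + \tau^2\gln{\nablag\phi}u$. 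The natural move is to split $P_\phi = A + iB$ into its (formally) self-adjoint and skew-adjoint parts with respect to the $L^2$ inner product defined via the Riemannian density; the first-order term $i\d_t$ and the gradient term $2\tau\gl{\nablag\phi}{\nablag u}$ are the skew parts, while $\Lap u + \tau^2\gln{\nablag\phi}u$ is the self-adjoint part. The auxiliary function $f$ enters by the standard trick of adding and subtracting $\tau f u$ to redistribute a zeroth order term between $A$ and $B$ (this is exactly the degree of freedom encoded by $f$ in the definitions \eqref{e:def-B}--\eqref{e:def-E}).

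The core computation is then $\nor{P_\phi u}{L^2}^2 = \nor{A u}{L^2}^2 + \nor{B u}{L^2}^2 + 2\Re(Au, iBu) = \nor{Au}{L^2}^2 + \nor{Bu}{L^2}^2 + (i[A,B]u,u)$, and the cross term $(i[A,B]u,u)$ is where the positivity comes from. Computing the commutator $[A,B]$ and integrating by parts, the leading term in $\tau$ (order $\tau^3$) produces $2\tau^3\iint \E_{g,\phi,f}|u|^2$ — this is the term coming from $[\tau^2\gln{\nablag\phi}, 2\tau\gl{\nablag\phi}{\nablag\cdot}]$ together with $\tau\Lap\phi$ and the $f$-contributions, and the explicit form $2\Hess(\phi)(\nablag\phi,\nablag\phi) + (\Lap\phi)\gln{\nablag\phi} - f\gln{\nablag\phi}$ falls out using $\Hess(\phi)(X,Y) = \gl{D_X\nablag\phi}{Y}$ and the divergence formula. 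The subleading term (order $\tau$), coming from the commutator of $\Lap$ with the gradient term $2\tau\gl{\nablag\phi}{\nablag\cdot}$, produces $2\tau\iint\B_{g,\phi,f}(\nablag u)$ after an integration by parts; the key identity is the Bochner-type computation that turns $\Re(\Lap u, \gl{\nablag\phi}{\nablag u})$ into $-\iint\Hess(\phi)(\nablag u,\ovl{\nablag u}) + \tfrac12\iint(\Lap\phi)\gln{\nablag u}$ plus controlled remainders. The terms involving $\d_t$ are new compared to the elliptic case: the commutator $[i\d_t, i\tau\d_t\phi + \tau^2\gln{\nablag\phi} + \dots]$ only sees the $t$-dependence of $\phi$ and $f$, producing terms with one factor of $u_t$; these are estimated by Cauchy-Schwarz as $\frac{1}{\delta\tau}\nor{u_t}{L^2}^2 + \delta\tau^3\nor{u}{L^2}^2$, which explains both the $\nor{u_t}{L^2}^2$ term on the left and the $-\delta$ shift inside the bracket $\E_{g,\phi,f} - \delta$ on the right. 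All remaining terms — those of order $\tau^2$ or lower not already accounted for, including $\nor{Bu}{L^2}^2$ contributions of the wrong sign and lower-order commutator leftovers involving at most one derivative of $u$ — are absorbed into $R(u)$, which by inspection is bounded by $C\tau^2\nor{u}{L^2}^2 + C\nor{\nablag u}{L^2}^2$.

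\textbf{Main obstacle.} The hard part is the bookkeeping of the low-regularity terms: since $g$ is only Lipschitz, $\Lap\phi$, $\Hess(\phi)$, $D_X\nablag\phi$ and the Christoffel symbols are merely $L^\infty$, so every integration by parts must be arranged so that no second derivative of $g$ (or of $\phi$ beyond second order, but $\phi$ is quadratic so this is moot for $\phi$) ever appears — this is precisely why the divergence-form framework of Section~\ref{s:Riemtool} is used, and why the Hessian identity $\Hess(\phi)(X,Y)=\gl{D_X\nablag\phi}{Y}$ (valid with only Lipschitz $g$) is the right tool rather than the coordinate expression with $\Gamma^k_{ij}\d_k\phi$. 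The other delicate point is tracking exactly which cross-terms land in $\E_{g,\phi,f}$ versus $\B_{g,\phi,f}$ versus $R(u)$, and checking that the coefficient of $f$ comes out with the signs displayed in \eqref{e:def-B}--\eqref{e:def-E}; this is essentially the content of \cite[Theorem~A.5]{LL:18}, and I would follow that proof line by line, the only genuine additions being (i) allowing $X=\nablag u$ to be complex-valued, which changes nothing since $\Hess(\phi)$ is a real symmetric form so $\Hess(\phi)(X,\ovl X)$ is real, and (ii) the handling of the $i\d_t$ commutator terms sketched above, which is new but elementary.
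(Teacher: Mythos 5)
Your plan is correct and follows essentially the same route as the paper: the paper likewise expands the square of (a modified) $P_\phi u$, drops the nonnegative squares, computes the elliptic cross term $2\Re\big(Q_1u,\widetilde{Q}_2u\big)$ via the Riemannian integration-by-parts identity of~\cite[Lemma~A.7]{LL:18} to produce $\E_{g,\phi,f}$ and $\B_{g,\phi,f}$, and handles the $u_t$ cross terms by integration by parts and Cauchy--Schwarz with exactly the $\frac{1}{\delta\tau}\nor{u_t}{L^2}^2+\delta\tau^3\nor{u}{L^2}^2$ split you describe. The only differences are cosmetic (the paper bounds $\nor{P_\phi u-R_2u+i\tau\phi_t u}{L^2}^2\leq 3\nor{P_\phi u}{L^2}^2+3\nor{R_2u}{L^2}^2+3\nor{\tau\phi_t u}{L^2}^2$ and keeps only the cross terms, rather than invoking the exact $A+iB$ commutator identity, whence the factor $3$), together with two harmless slips in your sketch that would be corrected in execution: the first-order and $\Lap\phi$ terms of $P_\phi=e^{\tau\phi}Pe^{-\tau\phi}$ carry minus signs, and $i\d_t$ is formally self-adjoint for the Hermitian $L^2$ product, not skew-adjoint.
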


The proof of Proposition~\ref{thmCarlemancalcul-epsilonzero} is inspired by~\cite{L:10} for the Schr\"odinger operator and~\cite{LL:18} for elliptic operators. 
It relies on the Riemannian tools presented in Section~\ref{s:Riemtool}. In~\cite{L:10}, a positivity assumption on the (space) Hessian for the weight function is made (related to the pseudoconvexity assumption in~\cite{LZ:82,Dehman:84,isakov1993carleman}). Here, the possibility of having $\frac{1}{\tau} \nor{u_{t}}{L^{2}}^{2}$ as a remainder term and the introduction of the function $f$ allow to relax this convexity condition and stay closer to the elliptic case as presented in~\cite{LL:18}. 

\bnp[Proof of Proposition \ref{thmCarlemancalcul-epsilonzero}]
We start by computing 
\bna
P_{\phi} u= e^{\tau\phi}(i\partial_{t}+ \Lap) (e^{-\tau\phi}u) =iu_t-i\tau\phi_tu+ \Lap u - 2\tau\gl{\nablag \phi}{\nablag u}- \tau(\Lap \phi) u+\tau^2\gln{\nablag \phi}u .
\ena
We then decompose the conjugated operator $P_{\phi}$ as 
\begin{align*}
	P_{\phi}  & = i\d_t-i\tau\phi_t+Q_2 + Q_1,  \quad \text{ with }\\
	Q_1u & := - 2\tau\gl{\nablag \phi }{\nablag u} - \tau f  u , \\
	Q_2u & :=\Lap u+\tau^2\gln{\nablag \phi}u - \tau(\Lap \phi) u + \tau f u=\widetilde{Q}_2u+R_{2}u ,
\end{align*}
where $\widetilde{Q}_2$ is the principal part of $Q_2$, that is
$$
\widetilde{Q}_2u=\Lap u+\tau^2\gln{\nablag \phi}u , \quad \text{and} \quad  R_2 u =  \tau(- \Lap \phi + f ) u .
$$
Now, we write ($\left\| \cdot \right\|$ denotes the $L^2$ norm for short and $(\cdot,\cdot)$ the associated Hermitian inner product)
\begin{align}
	\label{CarlReIm}
	3\left\|P_{\phi}  u\right\|^2+3\left\|R_{2}u\right\|^2 +3\left\|\tau\phi_tu\right\|^2& \geq \left\|P_{\phi} u-R_{2}u+i\tau\phi_tu\right\|^2  = \left\|iu_t+Q_1u+\widetilde{Q}_2u\right\|^2, 
\end{align}
where we estimate the remainders as 
\begin{align}
	\label{e:estimR2}
	\left\|R_{2}u\right\|^2\leq \tau^{2}\nor{f-\Lap \phi }{L^{\infty}}^{2}  \nor{u}{L^{2}}^{2} , \quad \text{and} \quad   
	\left\|\tau\phi_tu\right\|^2\leq  \tau^{2}\nor{\phi_t }{L^{\infty}}^{2}  \nor{u}{L^{2}}^{2} .
\end{align}
Hence, we are left to produce a lower bound for
\begin{align}
	\left\| iu_t+Q_1u+\widetilde{Q}_2u\right\|^2  & =  \left\|Q_1u\right\|^2+\left\| iu_t+\widetilde{Q}_2u\right\|^2+2\Re\big( iu_t,Q_1u\big)+2\Re \big(Q_1u,\widetilde{Q}_2u\big) \nonumber \\
	& \geq  2\Re \big( iu_t,Q_1u\big)+2\Re \big(Q_1u,\widetilde{Q}_2u\big) . 
	\label{e:square-int}
\end{align}
The second term in the right hand-side of~\eqref{e:square-int} is described in Lemma~\ref{l:computeM1M2} below, and we now estimate the first term as a remainder.
Recalling the expression of $Q_1$, we decompose
\begin{align}
	& 2\Re \big( iu_t,Q_1u \big)  = 2I_1+I_{2} , \quad \text{ with }  \label{e:decomp-I1-I2}  \\
	&I_1 : =- 2\tau\Re \big( iu_t,\gl{\nablag \phi }{\nablag u}\big), \quad \text{ and } \quad I_2 := -2\tau\Re \big( iu_t,fu\big) . \nonumber
\end{align}
Expanding $2\Re a = a+\overline{a}$ for $I_1$ and performing an integration by parts in $t$ for the first term, we obtain 
\begin{align*}
	I_1&=\tau \iint i\gl{\nablag \phi }{\nablag u}\ubar_t -i \tau \iint \gl{\nablag \phi }{\nablag \ubar}u_t\\
	&=\tau \iint -i\left[\gl{\nablag \phi_{t} }{\nablag u} +\gl{\nablag \phi }{\nablag u_{t}}\right]\ubar-i \tau\iint \gl{\nablag \phi }{\nablag \ubar}u_t 
\end{align*}
Concerning the last term, an integration by parts in $x$ yields
$$
-i \iint \gl{\nablag \phi }{\nablag \ubar} u_t=  i\iint (\Lap \phi) \ubar u_t + i \iint \gl{\nablag \phi }{\nablag u_{t}}\ubar.
$$
As a consequence, we deduce
$$
I_1= \tau \iint -i \gl{\nablag \phi_{t} }{\nablag u} \ubar + i\tau \iint (\Lap \phi) \ubar u_t.
$$
The Cauchy-Schwarz inequality yields
\begin{align}
	\label{e:estim-I1}
	2 |I_1| & \leq 2\left| \tau \iint -i \gl{\nablag \phi_{t} }{\nablag u} \ubar \right| + 2\left|\tau \iint (\Lap \phi) \ubar u_t\right|  \nonumber \\
	& \leq \nor{\nablag \phi_t}{L^{\infty}}^2\tau^{2}\nor{u}{L^{2}}^{2}+\nor{\nablag u}{L^{2}}^{2} + \delta \tau^{3} \nor{u}{L^{2}}^{2} +\frac{\nor{\Lap \phi}{L^{\infty}}^{2}}{\delta}\frac{1}{\tau} \nor{u_{t}}{L^{2}}^{2} .
\end{align}
We obtain similarly 
\begin{align}
	\label{e:estim-I2}
	|I_{2}|\leq \delta \tau^{3} \nor{u}{L^{2}}^{2} +  \frac{\nor{f}{L^{\infty}}^{2}}{\delta}\frac{1}{\tau} \nor{u_{t}}{L^{2}}^{2} .
\end{align}
We now provide with a lower bound for the second term in the right hand-side of~\eqref{e:square-int}.
The following result is a version of~\cite[Lemma~A.7]{LL:18} for complex valued functions $u$ in the boundaryless case (recall the definitions of $\B_{g, \phi, f}(X)$ and $\E_{g, \phi, f}$ in~\eqref{e:def-B} and~\eqref{e:def-E}).

\begin{lemma}
	\label{l:computeM1M2}
	Given an open set $\Omega \subset \R^{1+d}$, for all functions $\phi \in W^{2,\infty}_{\loc}(\Omega; \R),f \in W^{1,\infty}_{\loc}(\Omega; \R)$ and $u \in H^2_{\comp}(\Omega;\C)$, we have 
	\begin{align*}
		\Re \big(Q_1u,\widetilde{Q}_2u\big)
		= \tau^3\iint \E_{g, \phi, f} |u|^2 
		+\tau \iint \B_{g, \phi, f}(\nablag u)
		+ \tau  \Re \iint u \gl{ \nablag f}{\nablag \bar{u}}.
	\end{align*}
\end{lemma}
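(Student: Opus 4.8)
The plan is to carry out a direct computation, following \cite[Lemma~A.7]{LL:18} and keeping careful track of the complex conjugates (the only difference with the real-valued elliptic computation there). As in \cite{LL:18}, I would first establish the identity for smooth $g,\phi,f$ and $u\in C^\infty_c(\Omega)$ — all the quantities involved being continuous under approximation in the relevant topologies ($\phi_n\to\phi$ in $W^{2,\infty}$, $f_n\to f$ in $W^{1,\infty}$, the metric coefficients in $W^{1,\infty}$, $u_n\to u$ in $H^2$) — and then conclude the general case by density. The starting point is to expand $\Re\big(Q_1u,\widetilde Q_2u\big)$ using the decompositions $Q_1u=-2\tau\gl{\nablag\phi}{\nablag u}-\tau f u$ and $\widetilde Q_2u=\Lap u+\tau^2\gln{\nablag\phi}u$ into four terms: two carrying the factor $\tau^3$ (those paired with $\tau^2\gln{\nablag\phi}u$) and two carrying the factor $\tau$ (those paired with $\Lap u$). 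I treat these two groups separately.

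For the $\tau^3$-group, I would use $2\Re\big(\bar u\,\gl{\nablag\phi}{\nablag u}\big)=\gl{\nablag\phi}{\nablag(|u|^2)}$, then integrate by parts to transfer $\nablag$ onto the vector field $\gln{\nablag\phi}\nablag\phi$, and finally use $\div_g\big(\gln{\nablag\phi}\nablag\phi\big)=\gl{\nablag(\gln{\nablag\phi})}{\nablag\phi}+\gln{\nablag\phi}\Lap\phi$ together with $\gl{\nablag(\gln{\nablag\phi})}{\nablag\phi}=\nablag\phi\big(\gl{\nablag\phi}{\nablag\phi}\big)=2\Hess(\phi)(\nablag\phi,\nablag\phi)$ (which follows from $D_X\gl{Y}{Z}=\gl{D_XY}{Z}+\gl{Y}{D_XZ}$ and $\Hess(\phi)(X,Y)=\gl{D_X\nablag\phi}{Y}$). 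Together with the contribution of the $-\tau f u$ piece, the two $\tau^3$-terms combine exactly into $\tau^3\iint\E_{g,\phi,f}|u|^2$, the $-f\gln{\nablag\phi}$ summand of $\E_{g,\phi,f}$ being precisely the $-\tau f u$ contribution.

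For the $\tau$-group, I would integrate by parts to move $\Lap$ off $u$ via $\iint(\Lap\bar u)h=-\iint\gl{\nablag\bar u}{\nablag h}$, then expand $\nablag\big(\gl{\nablag\phi}{\nablag u}\big)$ using the covariant Leibniz rule and the symmetric Hessian identities $\Hess(\phi)(X,Y)=\gl{D_X\nablag\phi}{Y}$, $\Hess(u)(X,Y)=\gl{D_X\nablag u}{Y}$. This produces the quantity $\Hess(\phi)(\overline{\nablag u},\nablag u)$ (which is real, since $g$ and $\Hess$ are real and symmetric) plus a term $\gl{D_{\nablag\phi}\nablag u}{\nablag\bar u}$; for the latter I would use $2\Re\,\gl{D_{\nablag\phi}\nablag u}{\nablag\bar u}=D_{\nablag\phi}\big(\gln{\nablag u}\big)$ followed by one more integration by parts to obtain $-\iint\gln{\nablag u}\Lap\phi$. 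Treating the $-\tau f u$ term paired with $\Lap u$ similarly, one integration by parts gives $\tau\iint f\gln{\nablag u}+\tau\Re\iint u\,\gl{\nablag f}{\nablag\bar u}$. Summing, the two occurrences $\pm\tau\iint f\gln{\nablag u}$ cancel and reconstitute exactly the $f$-part of $\B_{g,\phi,f}$, so the $\tau$-group equals $\tau\iint\B_{g,\phi,f}(\nablag u)+\tau\Re\iint u\,\gl{\nablag f}{\nablag\bar u}$. Adding the two groups gives the claimed identity.

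The main obstacle is not conceptual but organizational: the careful bookkeeping of conjugates and real parts throughout — relying systematically on $g$, $\Hess(\phi)$, $f$, $\Lap\phi$ being real and $\Hess$ symmetric — and the nested integrations by parts in the $\tau$-group, in particular the one applied to $\gl{D_{\nablag\phi}\nablag u}{\nablag\bar u}$. Once these are handled, everything else is routine.
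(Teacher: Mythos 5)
Your proposal is correct, but it takes a genuinely different route from the paper. The paper's proof is a one-liner: since $Q_1$ and $\widetilde Q_2$ have real coefficients, they commute with taking real and imaginary parts, and the real part of the Hermitian pairing decouples as $\Re\big(Q_1u,\widetilde Q_2u\big)=\iint Q_1(\Re u)\,\widetilde Q_2(\Re u)+\iint Q_1(\Im u)\,\widetilde Q_2(\Im u)$. One then applies the real-valued identity of \cite[Lemma~A.7]{LL:18} to each piece and adds, checking that $|u|^2$, $\B_{g,\phi,f}(\nablag u)$ and $\Re\,u\,\gl{\nablag f}{\nablag\bar u}$ all split as the corresponding sums over $\Re u$ and $\Im u$ (which they do because $g$, $\Hess(\phi)$ and $f$ are real). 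Your approach instead redoes the underlying integration-by-parts calculation of \cite[Lemma~A.7]{LL:18} directly for complex-valued $u$, carefully tracking conjugates. Both work; the paper's is shorter and piggybacks on the cited result, while yours is more self-contained and explicit about where the $\E$, $\B$ and $\nablag f$ terms come from — and you are right that the density argument to pass from smooth to $W^{2,\infty}\times W^{1,\infty}\times H^2_{\comp}$ data is uneventful, since every quantity in the identity is a continuous bilinear (or trilinear) expression in the relevant topologies.

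One small slip in your $\tau$-group discussion: you say the ``two occurrences $\pm\tau\iint f\gln{\nablag u}$ cancel and reconstitute the $f$-part of $\B$,'' which is self-contradictory as written. In fact there is a single occurrence, $+\tau\iint f\gln{\nablag u}$, coming from $\Re\big(-\tau fu,\Lap u\big)=\tau\Re\iint\gl{\nablag(fu)}{\nablag\bar u}$, and it directly supplies the $+f\gln{X}$ summand of $\B_{g,\phi,f}(X)$; no cancellation is involved. Your final formula for the $\tau$-group is nevertheless correct, so this is an exposition glitch rather than a mathematical one.
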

Lemma~\ref{l:computeM1M2} is a consequence of~\cite[Lemma~A.7]{LL:18} applied to $\Re(u)$ and $\Im(u)$ (with vanishing boundary terms), using that $Q_1,\widetilde{Q}_2$ have real coefficients, hence are $\C-$linear (which follows from the fact that $\phi$ and $f$ are real-valued).

In the estimates of Lemma~\ref{l:computeM1M2}, the last term is estimated as a remainder as
\begin{align}
	\label{e:reste-R3}
	R_3 (u)  = - \Re \tau \iint u \gl{ \nablag f}{\nablag \bar{u}}, \qquad 
	|R_3 (u)|  \leq \frac{\nor{\nablag f}{L^{\infty}}}{2}\left(\nor{\nablag u}{L^{2}}^{2}+\tau^{2}\nor{u}{L^{2}}^{2}\right) ,
\end{align}
Now, combining~\eqref{e:square-int} with~\eqref{CarlReIm} and~\eqref{e:decomp-I1-I2} yields
\begin{align*}
	3\left\|P_{\phi}  u\right\|^2+3\left\|R_{2}u\right\|^2 +3\left\|\tau\phi_tu\right\|^2 +2|I_1| +|I_2| \geq 2\Re \big(Q_1u,\widetilde{Q}_2u\big)  .
\end{align*}
This  combined with~\eqref{e:estim-I1}-\eqref{e:estim-I2} and Lemma~\ref{l:computeM1M2} concludes the proof of the proposition with
\bna
R(u)=3\left\|R_{2}u\right\|^2 +3\left\|\tau\phi_tu\right\|^2+ |R_3 (u)| +C \tau^2 \nor{u}{L^{2}}^{2} 
+ C \nor{\nablag u}{L^{2}}^{2},
\ena
with the first two terms estimated in~\eqref{e:estimR2} and the third in~\eqref{e:reste-R3}.
\enp

\subsubsection{The case $\e>0$: end of the proof of Proposition~\ref{p:subellipti-xit=0}}
\label{e:subell-in}

The strategy of the proof of Proposition~\ref{p:subellipti-xit=0} is to follow step by step the proof of Proposition~\ref{thmCarlemancalcul-epsilonzero} and control the additional error terms. Therefore, we will make use of the different terms appearing in the proof of Proposition~\ref{thmCarlemancalcul-epsilonzero} like $\widetilde{Q}_2,Q_2,Q_1,R_2$.

Thanks to Remark~\ref{carleman insensitive lot} it suffices to prove the inequality of Proposition~\ref{p:subellipti-xit=0} for the operator $\mathsf{P}_{\phi,\e}$ defined in~\eqref{e:def-PPhi-mu-2}. We start by expressing it in terms of $P_\phi$. Recall that by assumption $\phi$ is a quadratic polynomial and therefore $\phi''_{t,j} = \d^2_{t,x_j}\phi$  are actually constants. We have

\begin{align*}
	\mathsf{P}_{\phi, \e}&=P_\phi -\sum_{j,k=1}^dg^{jk}(D_j+ i \tau \d_j \phi )\e \phi''_{t,k} \frac{D_t}{\tau^2}+ g^{jk}\e \phi''_{t,j} \frac{D_t}{\tau^2}(D_k+ i \tau \d_k \phi )\\
	&\quad +\e^2 \sum_{j,k=1}^d \phi''_{t,k} \cdot\phi''_{t,j} g^{jk} \frac{D^2_t}{\tau^4}+\widetilde{R}_1 \\
	&=P_\phi-2\e \sum_{jk} \phi''_{t,k} g^{jk} \frac{D_j D_t}{\tau^2}+\e^2 \sum_{jk} \phi''_{t,k} \cdot\phi''_{t,j} g^{jk} \frac{D^2_t}{\tau^4}+\widetilde{R}_2\\
	&=\widetilde{P}_{\phi,\e}+\widetilde{R}_2, 
\end{align*}
where the operators $\Tilde{R}_1,\Tilde{R}_2 $ belong to the class of admissible perturbations considered in Remark~\ref{carleman insensitive lot} and 
$$
\widetilde{P}_{\phi,\e}:=P_\phi+2\e \sum_{jk} \phi''_{t,k} g^{jk} \frac{\d_i \d_t}{\tau^2}-\e^2 \sum_{jk} \phi''_{t,k} \cdot\phi''_{t,j} g^{jk} \frac{\d^2_t}{\tau^4} .
$$
It suffices then to show the estimate of Proposition~\ref{p:subellipti-xit=0} for the operator $\widetilde{P}_{\phi,\e}$. We decompose
$$
\widetilde{P}_{\phi,\e}=i \d_t -i \tau \d_t \phi+\widetilde{Q}_{1,\e}+\widetilde {Q}_{2,\e},
$$
where, using the notation $\widetilde{Q}_2,Q_2,Q_1,R_2$ from the proof of Proposition~\ref{thmCarlemancalcul-epsilonzero} in Section~\ref{section proof for epsilon zero}, 
$
\widetilde{Q}_{1,\e}=Q_1
$
and
\begin{align*}
	\widetilde {Q}_{2,\e}=Q_2+2\e \sum_{jk} \phi''_{t,k} g^{jk} \frac{\d_i \d_t}{\tau^2}-\e^2 \sum_{jk} \phi''_{t,k} \cdot\phi''_{t,j} g^{jk} \frac{\d^2_t}{\tau^4} 
	=\widetilde{\mathcal{Q}}_{2,\e}+R_2
\end{align*}
with 
\begin{equation}
	\label{def of perturbation of Q2}
	\widetilde{\mathcal{Q}}_{2,\e}:=\widetilde{Q}_2+2\e \sum_{jk} \phi''_{t,k} g^{jk} \frac{\d_i \d_t}{\tau^2}-\e^2 \sum_{ij} \phi''_{t,k} \cdot\phi''_{t,j} g^{jk} \frac{\d^2_t}{\tau^4},
\end{equation}
As in the proof of Proposition~\ref{thmCarlemancalcul-epsilonzero}, the terms $R_2$ and $i\tau \partial_t \phi$ are admissible remainders. As above, we need to provide a lower bound for
\begin{align}
	\label{decomposition carre+ commutateur}
	\nor{i \d_t u + \widetilde{Q}_{1,\e}u+\widetilde{\mathcal{Q}}_{2,\e}u}{L^2}^2&=  \left\|Q_1u\right\|^2+\left\| i\d_t u+\widetilde{\mathcal{Q}}_{2,\e}u\right\|^2+2\Re ( iu_t,Q_1u)+2\Re (Q_1u,\widetilde{\mathcal{Q}}_{2,\e}u) \nonumber  \\
	&= \left\|Q_1u\right\|^2+\left\| i\d_t u+\widetilde{\mathcal{Q}}_{2,\e}u\right\|^2+2\Re ( iu_t,Q_1u)+2\Re (Q_1u,\widetilde{Q}_2u) \nonumber \\
	&\hspace{4mm}+2\Re (Q_1u,(\widetilde{\mathcal{Q}}_{2,\e}-\widetilde{Q}_2)u).
\end{align}
It follows that in order to finish the proof of Proposition~\ref{p:subellipti-xit=0} it suffices to show that the last term in~\eqref{decomposition carre+ commutateur} yields an admissible error in view of the estimate~\eqref{Carlpropwaveinterm}. This is the content of the following lemma.
\begin{lemma}
	\label{lemma perturbation of p phi}
	There exist $C, \tau_0 >0$ such that for all $u \in C^\infty_c(\Omega)$ one has
	\begin{equation*}
		\left|2\Re \left(Q_1u,(\widetilde{\mathcal{Q}}_{2,\e}-\widetilde{Q}_2)u\right)\right|\leq C\nor{u}{\Ht{1}}^2,\quad \textnormal{ for all }\tau \geq \tau_0.
			\end{equation*}
\end{lemma}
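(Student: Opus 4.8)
The plan is to exploit the crucial structural fact that the operator $\widetilde{\mathcal{Q}}_{2,\e}-\widetilde{Q}_2$ in~\eqref{def of perturbation of Q2} factors through $\d_t$. Since the $\phi''_{t,j}$ are constants, one has $\widetilde{\mathcal{Q}}_{2,\e}-\widetilde{Q}_2=\mathcal{C}\,\d_t$ with
$$
\mathcal{C}:=\frac{2\e}{\tau^2}\sum_{j,k}\phi''_{t,k}\,g^{jk}\,\d_j-\frac{\e^2}{\tau^4}\,\rho\,\d_t,\qquad \rho:=\sum_{j,k}\phi''_{t,k}\,\phi''_{t,j}\,g^{jk},
$$
a first‑order differential operator whose coefficients are \emph{time‑independent}, belong to $W^{1,\infty}(V)$ (as $g$ is Lipschitz) and are of size $O(\tau^{-2})$ for the space part, $O(\tau^{-4})$ for the $\d_t$‑part, uniformly in $\tau\geq1$. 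In particular $\mathcal{C}$ commutes with $\d_t$, so $(\widetilde{\mathcal{Q}}_{2,\e}-\widetilde{Q}_2)u=\d_t(\mathcal{C}u)$, and integrating by parts in $t$ (licit since $u\in C^\infty_c(\Omega)$) yields
$$
2\Re\!\big(Q_1u,(\widetilde{\mathcal{Q}}_{2,\e}-\widetilde{Q}_2)u\big)=-2\Re\!\big(\d_t(Q_1u),\mathcal{C}u\big)=-2\Re\!\big(Q_1(\d_t u),\mathcal{C}u\big)-2\Re\!\big([\d_t,Q_1]u,\mathcal{C}u\big),
$$
where $[\d_t,Q_1]u=-2\tau\gl{\nablag\d_t\phi}{\nablag u}-\tau(\d_tf)u$ (here $f\in W^{1,\infty}$ is used). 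The key gain is that $\mathcal{C}u$ is small: $\norL{\mathcal{C}u}\lesssim\tau^{-2}\nor{u}{\Ht{1}}$; since moreover $\norL{[\d_t,Q_1]u}\lesssim\tau\nor{u}{\Ht{1}}$, the last bracket is already $O(\tau^{-1})\nor{u}{\Ht{1}}^2$ by Cauchy--Schwarz.

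It then remains to bound $2\Re(Q_1(\d_t u),\mathcal{C}u)$. Expanding $Q_1(\d_t u)=-2\tau\gl{\nablag\phi}{\nablag\d_t u}-\tau f\,\d_t u$ and $\mathcal{C}u=\frac{2\e}{\tau^2}\gl{\nablag\d_t\phi}{\nablag u}-\frac{\e^2}{\tau^4}\rho\,\d_t u$ produces four bilinear terms. Three of them are disposed of by Cauchy--Schwarz and the weighted Young inequality, systematically using $\norL{u}\leq\tau^{-1}\nor{u}{\Ht{1}}$, $\norL{D_x u}\leq\nor{u}{\Ht{1}}$, $\norL{D_t u}\leq\tau\nor{u}{\Ht{1}}$ together with the fact that each of them carries a prefactor $\lesssim\tau^{-1}$ — for the one pairing $\gl{\nablag\phi}{\nablag\d_t u}$ against $\rho\,\d_t u$ this requires first one integration by parts in space, turning it into $\iint(\text{bounded})|\d_t u|^2$. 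The fourth and only delicate term is
$$
T:=-\frac{4\e}{\tau}\Re\!\big(\gl{\nablag\phi}{\nablag\d_t u},\,\gl{\nablag\d_t\phi}{\nablag u}\big),
$$
which, under the anisotropic scaling $D_x\sim\tau$, $D_t\sim\tau^2$, naively behaves like $\tau^{3}\norL{u}^2$, i.e.\ one power of $\tau$ beyond the budget $\nor{u}{\Ht{1}}^2\sim\tau^2\norL{u}^2$.

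The hard part will be to recover that power of $\tau$ in $T$, which is precisely the classical Carleman cancellation mechanism, transposed here to the anisotropic scaling. Write $\gl{\nablag\phi}{\nablag\d_t u}=\sum_l A^l\,\d_l\d_t u$ and $\gl{\nablag\d_t\phi}{\nablag u}=\sum_k B^k\,\d_k u$, where $A^l,B^k$ are real and in $W^{1,\infty}(\Omega)$ ($A^l$ affine in $t$, $B^k$ time‑independent); all integrations by parts below are taken with respect to the Riemannian density $\sqrt{\det g}$, which, being Lipschitz, generates only extra terms of the same first‑order‑or‑lower type and needs no further comment. Integrating by parts in $x_l$, and using that $\d_k\d_l u$ is symmetric in $(k,l)$ so that $A^lB^k$ may be replaced by the symmetric $C^{kl}:=\tfrac12(A^lB^k+A^kB^l)$, one reduces $T$, up to first‑order$\times$first‑order terms (each bounded by $\norL{D_x u}\norL{D_t u}\lesssim\tau\nor{u}{\Ht{1}}^2$, hence admissible after the $\tau^{-1}$ factor), to a multiple of $\tfrac1\tau\Re J$ with $J:=\sum_{k,l}\iint C^{kl}(\d_k\d_l u)\,\overline{\d_t u}$. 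One further integration by parts in $x_k$ gives $J=-\sum_{k,l}\iint(\d_k C^{kl})(\d_l u)\overline{\d_t u}-S$ with $S:=\sum_{k,l}\iint C^{kl}(\d_l u)\overline{\d_k\d_t u}$; and integrating by parts in $t$, then relabelling $k\leftrightarrow l$ and using $C^{kl}=C^{lk}$, one finds $\overline S=-S-\sum_{k,l}\iint(\d_t C^{kl})(\d_l u)\overline{\d_k u}$, so that $2\Re S$ equals a purely first‑order quantity bounded by $\norL{D_x u}^2\leq\nor{u}{\Ht{1}}^2$. Hence $|\Re J|\lesssim\tau\nor{u}{\Ht{1}}^2$, whence $|T|\lesssim\nor{u}{\Ht{1}}^2$, and collecting all the estimates gives the lemma. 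The single genuinely non‑routine point is the identity $\overline S=-S+(\text{lower order})$: it is the cancellation of the leading real part forced by the joint symmetry of $C^{kl}$ and of $\d_k\d_l$ against the antisymmetry produced by the $t$‑integration by parts, and it is also what makes the argument insensitive to the size of $\e$.
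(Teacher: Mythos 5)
Your proof is correct and yields the lemma, but it takes a genuinely different route from the paper's. The paper expands $\widetilde{\mathcal{Q}}_{2,\e}-\widetilde{Q}_2=L_1+L_2$ and $Q_1$ directly, obtaining four brackets $A_1,\dots,A_4$, and kills each one by a targeted integration by parts; the decisive cancellation in $A_1$ (the analogue of your $T$) is extracted by writing $2\Re a=a+\bar a$ and integrating by parts in $t$, the symmetry then producing an exact match between the two summands (after two implicit integrations by parts in $x$, which the paper leaves tacit). You instead exploit the factorization $\widetilde{\mathcal{Q}}_{2,\e}-\widetilde{Q}_2=\mathcal{C}\,\partial_t$ with $\mathcal{C}$ time-independent, perform a single global integration by parts in $t$ to land on $-2\Re(Q_1(\partial_t u),\mathcal{C}u)$ plus an easy commutator, and then isolate the one dangerous bilinear term $T$; for that term the cancellation is made explicit through a double integration by parts in $x$, symmetrization in $(k,l)$, and the antisymmetry identity $\overline S=-S+\text{(l.o.)}$ which shows $\Re S$ is of first order $\times$ first order. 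Both arguments rest on the same ``$2\Re$'' cancellation forced by the constancy of $\phi''_{t,\cdot}$ and the time-independence of $g$, so they are morally the same; your version is a bit more transparent about exactly where the integration by parts in space is used and gives the slightly weaker (but sufficient) bound $|T|\lesssim\nor{u}{\Ht{1}}^2$ rather than the $\tau^{-1}\nor{u}{\Ht{1}}^2$ the paper asserts for $A_1$. One small presentational remark: when you claim the paper's conclusion $|A_1|\leq C\tau^{-1}\nor{\nabla_x u}{L^2}^2$ is obtained without spatial integrations by parts, this is not quite how the paper's equalities read literally — your more careful version, which tracks the lower-order $O(\norL{D_x u}\norL{D_t u})$ terms coming from those integrations, is the safer account and is still well within budget.
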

\begin{proof}[Proof of Lemma~\ref{lemma perturbation of p phi}]
	Recalling that $Q_1 u= - 2\tau\gl{\nablag \phi }{\nablag u} - \tau f u$ and writing $\widetilde{\mathcal{Q}}_{2,\e}-\widetilde{Q}_2=L_1+L_2$ with
	$$
	L_1:=2\e \sum_{jk} \phi''_{t,k} g^{jk} \frac{\d_j \d_t}{\tau^2},
	\quad \text{ and }\quad 
	L_2:=-\e^2 \sum_{jk} \phi''_{t,k} \cdot\phi''_{t,j} g^{jk} \frac{\d^2_t}{\tau^4},
	$$
	we may develop
	\begin{align}
		\label{temrs to contorl for perturbation Aj}
		&\Re \left(Q_1u,(\widetilde{\mathcal{Q}}_{2,\e}-\widetilde{Q}_2)u\right) = A_1+A_2+A_3+A_4, \quad \text{ with }\\
		&   A_1 :=-2\Re(\tau\gl{\nablag \phi }{\nablag u}, L_1u) ,\quad 
		A_2:= -2\Re(\tau\gl{\nablag \phi }{\nablag u}, L_2u) , \nonumber \\ 
		&   A_3:= -\Re(\tau f u,L_1u) , \quad     A_4:= -\Re(\tau f u,L_2 u) .\nonumber
	\end{align}
	We start by estimating the terms $A_3$ and $A_4$. Integrating by parts in $t$, we obtain 
	\begin{align*}
		A_3&=-2 \frac{\e}{\tau} \Re \bigg( fu,\d_t \sum_{jk}\phi''_{t,k} g^{jk}\d_j u \bigg)\\
		&=2  \frac{\e}{\tau} \sum_{jk} \left[ \Re \bigg( (\d_t f)u,\phi''_{t,k} g^{jk}\d_j u \bigg) + \Re \bigg( f\d_t u, \phi''_{t,k} g^{jk}\d_j u \bigg)\right] .
	\end{align*}
	Therefore, the Cauchy-Schwarz inequality implies, for a constant $C>0$ depending on $f, \phi $ and $g$, 
	\begin{align}
		\label{estimate for A3}
		|A_3|\leq C \bigg( \nor{u}{L^2}^2+\nor{\nabla_x u}{L^2}^2+\frac{\nor{D_t u}{L^2}^2}{\tau^2} \bigg),  \quad \tau \geq 1.
	\end{align}
	Similarly, integrating by parts in time yields
	\begin{align}
		\label{estimate for A4}
		|A_4|\leq \frac{C}{\tau^3}\left(\nor{u}{L^2}^2+\nor{D_t u}{L^2}^2\right).
	\end{align}
	We now turn our attention to $A_1$. Here one needs to use the real part in order to decrease the number of derivatives. We write $\gl{\nablag \phi }{\nablag u}=\sum_{jk}g^{jk}\d_j\phi \d_k u$ and $2\Re a =a +\bar{a}$ to obtain 
	\begin{align}
		\label{estim aux for A1}
		-A_1&=2\Re\bigg(\tau \sum_{jk}g^{jk}\d_j\phi \d_j u, 2\e \sum_{lm}\phi''_{t,m} g^{lm} \frac{\d_l \d_t}{\tau^2} u\bigg) \nonumber \\
		&=\frac{2\e}{\tau}\sum_{jklm}\left( g^{jk}\d_j\phi \d_k u, \phi''_{t,m} g^{lm} \d_l \d_t u\right)+  \left(  \phi''_{t,m} g^{lm} \d_l \d_t u ,  g^{jk}\d_j\phi \d_k u \right).
	\end{align}
	Integrating by parts in $t$ in the first term in the right-hand side~\eqref{estim aux for A1} yields
	\begin{align*}
		& \sum_{jklm}\left(g^{jk}\d_j\phi \d_k u, \phi''_{t,m} g^{lm} \d_l \d_t  u \right)\\
		&=- \sum_{jklm} \left(g^{jk}\phi''_{j,t} \d_k u,\phi''_{t,m} g^{lm} \d_l u   \right) +\left( g^{jk}\d_j \phi \d_{k}\d_{t}u,  \phi''_{t,m} g^{lm}\d_l u \right)\\
		&=-\sum_{jklm} \left(g^{jk}\phi''_{j,t} \d_k u, \phi''_{t,m} g^{lm} \d_l u   \right)+  \left( \phi''_{t,m} g^{lm} \d_l \d_t u , g^{jk}\d_j\phi \d_k u \right) .
	\end{align*}
	Together with~\eqref{estim aux for A1}, this implies 
	$$
	A_1=\frac{2\e}{\tau} \sum_{jklm} \left(g^{jk}\phi''_{j,t} \d_k u,\phi''_{t,m} g^{lm} \d_l u   \right),
	$$
	and thus 
	\begin{equation}
		\label{estimate for A1}
		|A_1|\leq \frac{C}{\tau} \nor{\nabla_x u}{L^2}^2.
	\end{equation}
	Finally, to estimate $A_2$ we proceed similarly by writing
	\begin{align}
		\label{estimate aux for A2}
		A_2&=2\Re\left(\tau \sum_{jk}g^{jk}\d_j\phi \d_k u, \e^2 \sum_{lm}\phi''_{t,m} \cdot\phi''_{t,l} g^{lm} \frac{\d^2_t}{\tau^4} u\right) \nonumber \\
		&=\frac{\e^2}{\tau^3}\sum_{jklm} \left( g^{jk}\d_j\phi \d_k u,\phi''_{t,m} \cdot\phi''_{t,l} g^{lm} \d^2_t u\right)+  \left( \phi''_{t,m} \cdot \phi''_{t,l} g^{lm} \d^2_t u, g^{jk}\d_j\phi \d_k u \right).
	\end{align}
	We integrate by parts in $t$ in the first term in the right-hand side of~\eqref{estimate aux for A2} to obtain
	\begin{align}
		\label{estim aux for A22 2}
		&\sum_{jklm}\left( g^{jk}\d_j\phi \d_k u,\phi''_{t,m} \cdot\phi''_{t,l} g^{lm} \d^2_t u\right)=A_{21}+A_{22} , \quad \text{ with }\\
		&A_{21}:=-\sum_{jklm}\left( g^{jk}\phi''_{j,t} \d_k u,\phi''_{t,m} \cdot\phi''_{t,l} g^{lm} \d_t u\right) , \nonumber \\
		&A_{22}:=-\sum_{jklm}\left( g^{jk}\d_{j}\phi \d^2_{t,k} u,\phi''_{t,m} \cdot\phi''_{t,l} g^{lm} \d_t u\right) . \nonumber
	\end{align}
	To facilitate the notation, we write in what follows $S_j$ for multiplication operators by $L^\infty$ functions that depend only $g, D_xg$, on $\phi$ and its derivatives. 
	We integrate by parts in $x$ and then in $t$ to find
	\begin{align}
		\label{estim for A22}
		A_{22} &=- \sum_{jklm} \left(\d_k \left(g^{jk}\d_{j}\phi \d_{t} u\right),\phi''_{t,m} \cdot\phi''_{t,l} g^{lm} \d_t u\right)+\sum_{lm}\left(S_1 \d_t u,  \phi''_{t,m} \cdot\phi''_{t,l} g^{lm} \d_t u \right) \nonumber\\
		&= \sum_{jklm}\left( g^{jk}\d_{j}\phi \d_{t} u, \phi''_{t,m} \cdot\phi''_{t,l} g^{lm} \d^2_{t,k} u\right)+\left(S_2 \d_t u, \d_t u \right)\nonumber \\
		&=- \sum_{jklm} \left(  g^{jk}\d_{j}\phi \d^2_{t} u, \phi''_{t,m} \cdot\phi''_{t,l} g^{lm} \d_{k} u\right) +\left(S_2 \d_t u,  \d_t u \right)+\sum_{j} \left(S_{3,j} \d_t u,  \d_j u \right) \nonumber \\
		&=- \sum_{jklm} \left(  \phi''_{t,m} \cdot\phi''_{t,l} g^{lm} \d^2_t u,g^{jk}\d_j\phi \d_k u \right)+\left(S_2 \d_t u,  \d_t u \right)+\sum_{j} \left(S_{3,j} \d_t u,  \d_j u \right).
	\end{align}
	Now putting together~\eqref{estimate aux for A2},\eqref{estim aux for A22 2} and~\eqref{estim for A22} implies
	$$
	A_2= \frac{\e^2}{\tau^3}\Big( A_{21}+\left(S_2 \d_t u,  \d_t u \right)+\sum_{j} \left(S_{3,j} \d_t u,  \d_j u \right) \Big) .
	$$
	We obtain therefore
	\begin{equation}
		\label{estim for A2}
		|A_2|\leq \frac{C}{\tau^3}\left( \nor{\nabla_x u}{L^2}^2+ \nor{D_t u}{L^2}^2\right).
	\end{equation}
	Plugging~\eqref{estimate for A3},~\eqref{estimate for A4},~\eqref{estimate for A1} and~\eqref{estim for A2} in~\eqref{temrs to contorl for perturbation Aj} finishes the proof of the lemma.
\end{proof}

With Lemma~\ref{lemma perturbation of p phi}, we can now conclude the proof of the subelliptic estimate of Proposition~\ref{p:subellipti-xit=0}. 
\begin{proof}[End of the proof of Proposition~\ref{p:subellipti-xit=0}]
   Recall now that it suffices to obtain a lower bound for
	$$
	\nor{i \d_t u + \widetilde{Q}_{1,\e}u+\widetilde{\mathcal{Q}}_{2,\e}u}{L^2}^2 \geq 2\Re ( iu_t,Q_1u)+2\Re (Q_1u,\widetilde{Q}_2u) 
	+2\Re (Q_1u,(\widetilde{\mathcal{Q}}_{2,\e}-\widetilde{Q}_2)u),
	$$
	where we used decomposition~\eqref{decomposition carre+ commutateur}. The first two terms on the right-hand side above are estimated in Section~\ref{section proof for epsilon zero}. The first one yieds an admissible error thanks to~\eqref{e:decomp-I1-I2},~\eqref{e:estim-I1},~\eqref{e:estim-I2} and the second one is calculated in Lemma~\ref{l:computeM1M2}. Combining those estimates with Lemma~\ref{lemma perturbation of p phi} which controls the third term above we obtain the existence of $C, \tau_0>0$ such that for all $\delta>0, \tau\geq \tau_0$ and $u \in C^\infty_c(\Omega)$ one has
	\begin{align*}
		\nor{\mathsf{P}_{\phi,\e} u}{L^2}^2
		+ \frac{C}{\delta \tau} \nor{u_{t}}{L^{2}}^{2}
		+ C \nor{u}{H^1_\tau}^2\geq \tau^3\iint \left[\E_{g, \phi, f} -\delta\right]|u|^2 + 2  \tau \iint \B_{g, \phi, f}(\nablag u), 
	\end{align*}
	Recalling Assumption~\eqref{e:sub-ellipticity-EB} in Proposition~\ref{p:subellipti-xit=0}, we may now fix $\delta := \frac{C_0}{2}$ to obtain 
	\begin{align*}
		\nor{\mathsf{P}_{\phi,\e} u}{L^2}^2
		+ \frac{2C}{C_0 \tau} \nor{u_{t}}{L^{2}}^{2}
		+ C \nor{u}{H^1_\tau}^2\geq \frac{C_0}{2} \tau \nor{u}{H^1_\tau}^2
	\end{align*}
	This concludes the proof of Proposition~\ref{p:subellipti-xit=0} when taking $\tau \geq \tau_0$ for $\tau_0$ sufficiently large.
\end{proof}

\subsection{Choice of weight function via convexification}
\label{s:convexification}
In this section, we explain how to construct weight functions $(\check{\phi}, f)$ that \textit{almost} satisfy the assumptions of Theorem~\ref{th:carlemanschrod}, via the usual convexification procedure. In the present context (as opposed to the usual situation), this also requires a smart choice of the function $f$, see~\cite{LL:18}.

The main difference with respect to the assumptions of Theorem~\ref{th:carlemanschrod} is that the function $\check{\phi}$ that we construct here is not a quadratic polynomial. In Section~\ref{section using the carleman estimate} we shall see however that since the positivity of the quantities $\B$ and $\E$ is a condition that only involves derivatives up to order $2$ one can replace $\check{\phi}$ by its Taylor expansion at order 2.
The following is~\cite[Lemma~A.9]{LL:18}. 
\begin{lemma}[Explicit convexification]
	\label{l:explicit-convexification-G}
	Let $\Psi \in W^{2,\infty}(\Omega;\R)$ and $G \in W^{2,\infty}(\R)$, and choose 
	\bnan
	\label{e:choice-phi-f}
	\check{\phi} = G(\Psi)\quad \text{ and } \quad  f = 2 G''(\Psi) \gln{\nablag \Psi}.
	\enan
	Then we have 
	\begin{align*}
		\B_{g, \check{\phi}, f}(X) & = 2 G'(\Psi) \Hess(\Psi)(X,\ovl{X}) + 2G''(\Psi) \left|\gl{\nablag \Psi }{ X}\right|^2
		+ \left(G''(\Psi) \gln{\nablag \Psi} -G'(\Psi) \Lap \Psi \right) \gln{X},\\
		\E_{g, \check{\phi}, f} & = G'(\Psi)^2\Big[2 G'(\Psi) \Hess(\Psi)(\nablag\Psi,\nablag\Psi)+ G''(\Psi)\left| \nablag \Psi\right|_g^4 
		+ G'(\Psi) \Lap \Psi \gln{\nablag \Psi}\Big].
	\end{align*}
\end{lemma}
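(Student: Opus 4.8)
The plan is to insert the choice~\eqref{e:choice-phi-f} directly into the definitions~\eqref{e:def-B}--\eqref{e:def-E} of $\B_{g,\check\phi,f}(X)$ and $\E_{g,\check\phi,f}$ and to compute, using only three elementary chain rules for the Riemannian differential operators recalled in Section~\ref{s:Riemtool}: for $G\in W^{2,\infty}(\R)$ and $\Psi \in W^{2,\infty}(\Omega;\R)$, one has, almost everywhere,
\begin{align*}
\nablag\big(G(\Psi)\big) &= G'(\Psi)\,\nablag\Psi ,\\
\Lap\big(G(\Psi)\big) &= G''(\Psi)\,\gln{\nablag\Psi} + G'(\Psi)\,\Lap\Psi ,\\
\Hess\big(G(\Psi)\big)(X,Y) &= G''(\Psi)\,\gl{\nablag\Psi}{X}\,\gl{\nablag\Psi}{Y} + G'(\Psi)\,\Hess(\Psi)(X,Y) .
\end{align*}
Here all quantities are read as $L^\infty$ functions: $\nablag\Psi$ is a Lipschitz vector field, $G'(\Psi)$ is Lipschitz and $G''(\Psi)$ is bounded, so $\nablag(G(\Psi))$ is Lipschitz and $\Lap(G(\Psi)),\Hess(G(\Psi))\in L^\infty$; in particular $\check\phi\in W^{2,\infty}(\Omega)$, while $f$ — which enters $\B$ and $\E$ only undifferentiated — is bounded, which is all that is needed.

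The first identity is the ordinary chain rule for the differential, combined with $\nablag$ being the metric dual of $d$. The second follows from the first together with the Leibniz rule $\div_g(hX)=\gl{\nablag h}{X}+h\,\div_g X$ from Section~\ref{s:Riemtool}, applied with $h=G'(\Psi)$, $X=\nablag\Psi$, and $\Lap=\div_g\nablag$. The third — the only step requiring a little care — is obtained by writing $\Hess(\phi)(X,Y)=(D_X d\phi)(Y)$ with $d(G(\Psi))=G'(\Psi)\,d\Psi$, and applying the Leibniz rule for the connection $D_X(h\,\omega)=(Xh)\,\omega+h\,D_X\omega$ to the one-form $\omega=d\Psi$: this gives $D_X d(G(\Psi))=G''(\Psi)(X\Psi)\,d\Psi+G'(\Psi)\,D_X d\Psi$, and evaluating at $Y$ and using $X\Psi=\gl{\nablag\Psi}{X}$ and $(D_X d\Psi)(Y)=\Hess(\Psi)(X,Y)$ yields the claim (equivalently one uses $\Hess(f)(X,Y)=\gl{D_X\nablag f}{Y}$ together with $D_X(hY)=(Xh)Y+h\,D_XY$).

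It then remains to plug these three formulas into~\eqref{e:def-B} and~\eqref{e:def-E}. For $\B$: from the Hessian identity, $2\Hess(\check\phi)(X,\overline X)=2G'(\Psi)\Hess(\Psi)(X,\overline X)+2G''(\Psi)\big|\gl{\nablag\Psi}{X}\big|^2$, while $-(\Lap\check\phi)\gln{X}+f\gln{X}=\big(-G'(\Psi)\Lap\Psi-G''(\Psi)\gln{\nablag\Psi}+2G''(\Psi)\gln{\nablag\Psi}\big)\gln{X}=\big(G''(\Psi)\gln{\nablag\Psi}-G'(\Psi)\Lap\Psi\big)\gln{X}$; here the precise role of the choice $f=2G''(\Psi)\gln{\nablag\Psi}$ is exactly to turn the coefficient of $\gln{X}$ into the announced one, and summing the two displays gives the stated formula for $\B_{g,\check\phi,f}(X)$. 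For $\E$: one has $\nablag\check\phi=G'(\Psi)\nablag\Psi$, hence $\gln{\nablag\check\phi}=G'(\Psi)^2\gln{\nablag\Psi}$; inserting $X=Y=\nablag\check\phi=G'(\Psi)\nablag\Psi$ into the Hessian identity and using the bilinearity of $\Hess(\Psi)$ gives $\Hess(\check\phi)(\nablag\check\phi,\nablag\check\phi)=G''(\Psi)G'(\Psi)^2\left|\nablag\Psi\right|_g^4+G'(\Psi)^3\Hess(\Psi)(\nablag\Psi,\nablag\Psi)$. Substituting this, $\Lap\check\phi$ and $f$ into~\eqref{e:def-E} and collecting the $G''(\Psi)G'(\Psi)^2\left|\nablag\Psi\right|_g^4$ terms (the coefficients $2$, $1$, $-2$ from the three summands of~\eqref{e:def-E} add up to $1$) produces precisely the factored expression for $\E_{g,\check\phi,f}$.

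The computation is entirely elementary bookkeeping; the one genuinely delicate point is the Hessian chain rule in the Lipschitz-metric setting, which however reduces cleanly to the Leibniz rule for $D_X$ already recorded in Section~\ref{s:Riemtool} and to the symmetry and bilinearity of $\Hess(\Psi)$. One must only keep in mind throughout that, since $G$ is merely $W^{2,\infty}$ and $g$ merely Lipschitz, all the identities above hold as equalities of $L^\infty$ functions (i.e.\ almost everywhere) — which suffices, as $\B$, $\E$ and the Carleman estimate using them involve these quantities only through integrals.
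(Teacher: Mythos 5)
Your computation is correct: the three chain rules are the right ingredients, the substitution into~\eqref{e:def-B}--\eqref{e:def-E} is carried out accurately (including the cancellation $2+1-2=1$ in front of $G''(\Psi)\left|\nablag\Psi\right|_g^4$ and the role of the factor $2$ in the choice of $f$), and the remark that everything holds only almost everywhere in the $W^{2,\infty}$/Lipschitz setting is the right caveat. The paper itself does not prove this lemma but cites \cite[Lemma~A.9]{LL:18}; your argument is exactly the direct verification that reference performs, so nothing further is needed.
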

To state the next corollary, for $B$ an $L^\infty_{\loc}$ section of bilinear forms on $T V$, we define $|B|_g (x) = \sup_{X \in T_x V\setminus 0}\frac{|B(x,X,\ovl{X})|}{\gln{X}}$ which yields a $L^\infty$ function on $V$. 
\begin{corollary}
	\label{c:explicit-convexification-exp}
	Let $\Psi \in W^{2,\infty}(\Omega;\R)$, $\lambda>0$ and define $\check{\phi}, f$ as in~\eqref{e:choice-phi-f} with $G(t)=e^{\lambda t}-1$. Then, for any $\lambda >0$ and any vector field $X$, we have almost everywhere on $U$
	\begin{align*}
		\B_{g, \check{\phi}, f}(X) &\geq
		\lambda e^{\lambda \Psi} \gln{X}  \left( \lambda \gln{\nablag \Psi} - 2 |\Hess(\Psi)|_g - \Lap \Psi \right) ,\\
		\E_{g, \check{\phi}, f}  & \geq
		\lambda e^{\lambda\Psi} \gln{\nablag \check{\phi}} \left(  \lambda \left| \nablag \Psi\right|_g^2 - 2 | \Hess(\Psi)|_g +  \Lap \Psi \right) .
	\end{align*}
\end{corollary}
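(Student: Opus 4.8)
The plan is to deduce Corollary~\ref{c:explicit-convexification-exp} directly from Lemma~\ref{l:explicit-convexification-G} by specializing to $G(t) = e^{\lambda t} - 1$ and estimating each term. With this choice one has $G'(t) = \lambda e^{\lambda t}$ and $G''(t) = \lambda^2 e^{\lambda t}$, so plugging into the formula for $\B_{g,\check\phi,f}(X)$ gives
\begin{align*}
\B_{g, \check{\phi}, f}(X) = \lambda e^{\lambda \Psi} \Big( 2 \Hess(\Psi)(X,\ovl{X}) + 2\lambda \left|\gl{\nablag \Psi}{X}\right|^2 + \lambda \gln{\nablag \Psi}\gln{X} - \Lap \Psi \, \gln{X}\Big).
\end{align*}
The term $2\lambda|\gl{\nablag\Psi}{X}|^2$ is nonnegative, hence can be dropped from below. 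The term $2\Hess(\Psi)(X,\ovl X)$ is bounded below by $-2|\Hess(\Psi)|_g \gln{X}$ by the very definition of $|\cdot|_g$ given just before the corollary statement. Collecting the remaining terms, all of which are proportional to $\gln{X}$, yields exactly the claimed lower bound for $\B_{g,\check\phi,f}(X)$.

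For $\E_{g,\check\phi,f}$ I would argue the same way. From Lemma~\ref{l:explicit-convexification-G},
\begin{align*}
\E_{g, \check{\phi}, f} = \lambda^2 e^{2\lambda\Psi}\Big[ 2\lambda e^{\lambda\Psi} \Hess(\Psi)(\nablag\Psi,\nablag\Psi) + \lambda^2 e^{\lambda\Psi}\left|\nablag\Psi\right|_g^4 + \lambda e^{\lambda\Psi}\Lap\Psi\,\gln{\nablag\Psi}\Big].
\end{align*}
Factoring $\lambda e^{\lambda\Psi}$ out of the bracket and recognizing that $G'(\Psi)^2 = \lambda^2 e^{2\lambda\Psi}$ and $\gln{\nablag\check\phi} = G'(\Psi)^2\gln{\nablag\Psi} = \lambda^2 e^{2\lambda\Psi}\gln{\nablag\Psi}$ (since $\nablag\check\phi = G'(\Psi)\nablag\Psi$), one rewrites the right-hand side as $\lambda e^{\lambda\Psi}\gln{\nablag\check\phi}$ times $\big(2\Hess(\Psi)(\nablag\Psi,\nablag\Psi)/\gln{\nablag\Psi} + \lambda|\nablag\Psi|_g^2 + \Lap\Psi\big)$ when $\gln{\nablag\Psi}\neq 0$; bounding $\Hess(\Psi)(\nablag\Psi,\nablag\Psi) \geq -|\Hess(\Psi)|_g\gln{\nablag\Psi}$ gives the stated inequality, and the inequality is trivially true (both sides vanishing) where $\nablag\Psi = 0$.

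The only mild subtlety — and the step I expect to require the most care — is bookkeeping the exponential prefactors: making sure that the factor pulled out in the $\E$ estimate is genuinely $\lambda e^{\lambda\Psi}\gln{\nablag\check\phi}$ and not, say, $\lambda e^{\lambda\Psi}\gln{\nablag\Psi}$ times some residual power of $e^{\lambda\Psi}$. Since $\gln{\nablag\check\phi} = \lambda^2 e^{2\lambda\Psi}\gln{\nablag\Psi}$, one checks that $\lambda^2 e^{2\lambda\Psi}\cdot\lambda e^{\lambda\Psi}\gln{\nablag\Psi} = \lambda e^{\lambda\Psi}\cdot(\lambda^2 e^{2\lambda\Psi}\gln{\nablag\Psi}) = \lambda e^{\lambda\Psi}\gln{\nablag\check\phi}$, which matches. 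Everything else is an application of the definition of $|\Hess(\Psi)|_g$ and dropping manifestly nonnegative terms, so the proof is short. No new geometric input beyond Lemma~\ref{l:explicit-convexification-G} is needed.
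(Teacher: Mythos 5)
Your proof is correct and is exactly the expected route: specialize $G(t)=e^{\lambda t}-1$ in the exact formulas of Lemma~\ref{l:explicit-convexification-G}, drop the nonnegative term $2G''(\Psi)\left|\gl{\nablag\Psi}{X}\right|^2$, bound the Hessian contributions by $\pm|\Hess(\Psi)|_g$ using the definition preceding the corollary, and keep track of the factors $\lambda e^{\lambda\Psi}$ and $\gln{\nablag\check\phi}=\lambda^2 e^{2\lambda\Psi}\gln{\nablag\Psi}$. The paper simply cites~\cite[Lemma~A.10]{LL:18} here, and your computation reproduces that reference's argument; the only minor thing worth stating explicitly (which you do) is that the $\E$-estimate is trivially an equality at points where $\nablag\Psi=0$, since both sides vanish there.
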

See~\cite[Lemma~A.10]{LL:18} for a proof.

\section{Conjugation with a partially Gevrey function}
\label{section conjugation with gevrey}
In~\cite{Tataru:95,RZ:98,Hor:97,Tataru:99} part of the difficulty consists in defining an appropriate conjugated operator even in the case where the coefficients of $P$ depend analytically on the time variable. 
Here, we exploit the anisotropic nature of $P$ to allow conjugation with Gevrey $s$ in time functions, for an appropriate $s>1$ adapted to the scaling of the Schr\"odinger operator. Our strategy is based on the proof of Proposition 4.1 in~\cite{Tataru:99}. 

\subsection{Gevrey functions and Banach valued symbols}
\label{s:gevrey-fcts}
For notations, definitions and basic properties of Gevrey functions we essentially follow~\cite{bonthonneau2020fbi}. 
We recall Definition~\ref{d:gevrey} where the space $\mathcal{G}^s(\Omega;\ban)$ of Gevrey $s$ Banach valued is defined. We shall also make use of the following notion 

\begin{definition}
	Given $\mathsf{d}\in \N^*$, $U\subset \R^{\mathsf{d}}$ an open set, $(\ban, \| \cdot \|_{\ban})$ a Banach space, $s>0,R>0$ we say that $f \in \mathcal{G}^{s,R}_b(U;\ban)$, 
	if $f\in C^\infty_b(U;\ban)$ (smooth bounded functions, as well as all their derivatives) and there exists $C>0$ such that
	\begin{equation}
		\label{e:gevrey}
		\nor{\d^\alpha f(t)}{\ban}\leq C R^{|\alpha|} \alpha!^s, \quad \text{ for all } t \in U , \alpha \in \N^{\mathsf{d}} .
	\end{equation}
	and set 
	\begin{equation}
		\label{e:gevrey-norm}
		\nor{f}{s,R,U}: =\underset{\alpha \in \N^{\mathsf{d}} }{\sup} \sup_{t \in U} \frac{\nor{\d^\alpha f(t)}{\ban}}{R^{|\alpha|}\alpha!^s}. 
	\end{equation}
\end{definition}
In what follows, we only consider the case $\mathsf{d}=1$ ($t$ being the time variable) and $\mathsf{d}=2$ for extensions to $\C\simeq \R^2$ of such Gevrey functions.
Note that, given an open set $U$ and $s,R>0$ fixed, $\mathcal{G}^{s,R}_b(U;\ban)$ has the advantage of being a Banach space for the norm $\nor{\cdot}{s,R,U}$ in~\eqref{e:gevrey-norm}.
Note also that for any $R>0$, $\mathcal{G}^{s,R}_b(U;\ban) \subset \mathcal{G}^{s}(U;\ban)$.
Conversely, if $f \in \mathcal{G}^{s}(U;\ban)$, then for any bounded open set $W$ such that $\overline{W} \subset U$, there exists $R>0$ such that $f \in \mathcal{G}^{s,R}_b(W;\ban)$.

\bigskip
The following lemma contains the key properties which we will need concerning Gevrey functions. 
\begin{lemma}
	\label{extension of gevrey}
	Fix $s>1$. For any open set $U \subset \R$ and $\rho>0$, there exist $C_0, A>0$ such that for any $R>0$, there exist $C >0$ and a continuous linear map
	$$
	\begin{array}{rcl}
		\mathcal{G}^{s,R}_b(U;\ban) & \to & \mathcal{G}^{s,AR}_b(U+i\R ;\ban) \\
		f & \mapsto & \tilde{f},
	\end{array}
	$$
	such that for all $f \in \mathcal{G}^{s,R}_b(U;\ban)$, 
	\begin{align}
		\label{e:continuous-extension}
		&  \supp(\tf) \subset U + i [-\rho,\rho], \quad  \tf(t)=f(t) \text{ for } t \in U , \quad \nor{\tf}{s,AR, U+i\R} \leq C \nor{f}{s,R,U},  \\
		\label{estimate for dzb}
		& \nor{\dzb \tf(z)}{\ban}\leq C \nor{f}{s,R,U}\exp{\left(-\frac{1}{C_0(R|\Im(z)|)^{\frac{1}{s-1}}}\right)},\quad \text{ for } z\in U+i\R  ,\\
		\label{propriete de commutation}
		&  \d_{\Re z}^j \tilde{f}(z)=\widetilde{f^{(j)}}(z) \quad \text{ for all }j \in \N  \text{ and }z\in U+i\R . 
	\end{align}
\end{lemma}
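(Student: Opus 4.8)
The plan is to build $\tf$ as a smoothly truncated Taylor expansion in the imaginary direction, with the truncation scale tuned to the Gevrey index $s$, following the classical almost analytic extension procedure (see~\cite{bonthonneau2020fbi}). Fix once and for all a cutoff $\psi\in\mathcal{G}^s_c((-1,1);[0,1])$ with $\psi\equiv1$ near $0$ (such compactly supported Gevrey $s$ functions exist since $s>1$; note $|\psi^{(m)}|\leq C_\psi B_\psi^m m!^s$). For $j\geq0$ set $\epsilon_j:=\min\{\rho,\,c\,(R\langle j\rangle^{s-1})^{-1}\}$, with $\langle j\rangle:=\max(j,1)$ and $c=c(s)>0$ a small constant to be fixed, and put $\chi_j(y):=\psi(y/\epsilon_j)$. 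Then define
\[
\tf(x+iy):=\sum_{j\geq0}\frac{f^{(j)}(x)}{j!}(iy)^j\chi_j(y),\qquad x\in U,\ y\in\R .
\]
The map $f\mapsto\tf$ is manifestly linear; $\supp\chi_j\subset[-\epsilon_j,\epsilon_j]\subset[-\rho,\rho]$ gives $\supp\tf\subset U+i[-\rho,\rho]$; at $y=0$ only the $j=0$ term survives and $\chi_0(0)=1$, so $\tf(x)=f(x)$ on $U$; and~\eqref{propriete de commutation} is immediate, since $\partial_{\Re z}$ acts only on $f^{(j)}$ and the $\chi_j$ do not depend on $f$, whence $\partial_{\Re z}^k\tf(x+iy)=\sum_{j}\frac{f^{(j+k)}(x)}{j!}(iy)^j\chi_j(y)=\widetilde{f^{(k)}}(x+iy)$ after reindexing.

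For~\eqref{estimate for dzb} I would differentiate the series term by term. Using $\dzb=\tfrac12(\partial_{\Re z}+i\partial_{\Im z})$ and $i\partial_{\Im z}\big[(iy)^j/j!\big]=-(iy)^{j-1}/(j-1)!$, the ``holomorphic part'' of each term telescopes against the next one, leaving
\[
\dzb\tf(x+iy)=\frac12\sum_{j\geq0}\frac{f^{(j+1)}(x)}{j!}(iy)^j\big(\chi_j-\chi_{j+1}\big)(y)+\frac i2\sum_{j\geq0}\frac{f^{(j)}(x)}{j!}(iy)^j\chi_j'(y) .
\]
On the support of the $j$-th summand one has $|y|\leq\epsilon_j$, and as soon as $\epsilon_j<\rho$ also $|y|\gtrsim\epsilon_{j+1}$, which forces $\langle j\rangle\gtrsim(R|y|)^{-1/(s-1)}$. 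Bounding $\nor{f^{(j)}(x)}{\ban}\leq\nor{f}{s,R,U}R^j(j!)^s$, $|\chi_j^{(m)}|\leq C_\psi(B_\psi/\epsilon_j)^m m!^s$, $|y|^j\leq\epsilon_j^j$, and using Stirling in the form $(j!)^{s-1}\leq D^j\langle j\rangle^{(s-1)j}$, each summand is $\leq C\nor{f}{s,R,U}(D'c)^j$; choosing $c$ small enough that $D'c<1$, the sum over the relevant range $\langle j\rangle\gtrsim(R|y|)^{-1/(s-1)}$ is a geometric series dominated by $C\nor{f}{s,R,U}\exp\!\big(-1/(C_0(R|y|)^{1/(s-1)})\big)$, with $C_0$ depending only on $s,\rho,\psi,c$. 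The remaining summands, those with $\epsilon_j=\rho$ (where $\chi_j=\psi(y/\rho)$ is independent of $j$), assemble into $\rho^{-1}\psi'(y/\rho)\sum_{j}\frac{f^{(j)}(x)}{j!}(iy)^j$ times constants, and the same Stirling/smallness bound gives $\leq C\nor{f}{s,R,U}$, harmlessly absorbed into the ($R$-dependent) constant $C$.

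The main work is the Gevrey estimate~\eqref{e:continuous-extension}, i.e. $\nor{\partial_{\Re z}^a\partial_{\Im z}^b\tf(z)}{\ban}\leq C\nor{f}{s,R,U}(AR)^{a+b}(a+b)!^s$ with $A$ \emph{independent of $R$}; this also yields $\tf\in C^\infty_b$ (away from $\{y=0\}$ the series is locally finite since $\epsilon_j\to0$, and at $y=0$ these very bounds give uniform convergence of every derivative series) and the continuity $\nor{\tf}{s,AR,U+i\R}\leq C\nor{f}{s,R,U}$. I would estimate $\partial_{\Re z}^a\partial_{\Im z}^b$ of the series: $\partial_{\Re z}^a$ replaces $f^{(j)}$ by $f^{(j+a)}$, producing $R^{j+a}((j+a)!)^s$, while $\partial_{\Im z}^b$ applied to $(iy)^j\chi_j(y)$, by Leibniz, produces a sum over $\ell\leq b$ of terms bounded by $\binom b\ell\,\frac{j!}{(j-\ell)!}\,|y|^{j-\ell}\,(B_\psi/\epsilon_j)^{b-\ell}(b-\ell)!^s$, with $|y|\leq\epsilon_j$ and $\epsilon_j^{-1}\lesssim R\langle j\rangle^{s-1}$ on $\supp\chi_j$. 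The crux is that the scale $\epsilon_j^{-1}\asymp R\langle j\rangle^{s-1}$ is exactly tuned to the index $s$: after using $|y|^j\leq\epsilon_j^j\lesssim(c/R)^j\langle j\rangle^{-(s-1)j}$ to neutralise the factorial growth $((j+a)!)^s$, the resulting series in $j$ has terms $\lesssim\nor{f}{s,R,U}(CR)^{a+b}(a+b)!^s\,\theta^j$ with $\theta<1$ for $c$ small. Tracking the combinatorial factors $\binom b\ell$, $j!/(j-\ell)!$, $((j+a)!)^s$, $(j!)^{s-1}$ and the $(b-\ell)!^s$ loss from the derivatives of $\psi$, checking that they are all absorbed by the geometric decay in $j$ with a final constant $A=A(s)$ not depending on $R$ nor on $\ban$ (all estimates being pointwise in the $\ban$-norm), and handling the ``capped'' summands $\epsilon_j=\rho$ separately (there one uses $\sup_b\rho^{-b}/b!^s<\infty$ to absorb them into the $R$-dependent $C$), is the bulk of the argument and its only genuine difficulty; this kind of bookkeeping, for scalar $\ban=\C$, is carried out in~\cite{bonthonneau2020fbi} and~\cite{Hoermander:V1}.
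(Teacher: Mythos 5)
Your construction takes a genuinely different route from the paper: the paper imports the carefully built Borel--Gevrey functions $\zeta_{k,D}(y)=a_{k,D}(y)\frac{y^k}{k!}$ of Lemma~\ref{l:russe} (from~\cite{Dz:62,MRR:16,Petzsche:88}), sets $\check f(x+iy)=\sum_k\partial^kf(x)\,i^k\zeta_{k,D}(y)$, multiplies by a Gevrey $\sigma$ cutoff with $1<\sigma<s$, and then obtains~\eqref{estimate for dzb} not by direct estimation but by the ``sommation au plus petit terme'' of~\cite{bonthonneau2020fbi} combined with the infinite-order vanishing of $\dzb\tf$ on the real axis. You instead build the extension directly, with the truncation at scale $\epsilon_j$ coming from a \emph{single} Gevrey cutoff $\psi$ rescaled to $\chi_j(y)=\psi(y/\epsilon_j)$, and estimate $\dzb\tf$ by telescoping. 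The telescoping argument for~\eqref{estimate for dzb} is sound, and~\eqref{propriete de commutation} and the support/restriction properties are immediate, so those parts are fine.

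The genuine gap is in the claim $\tf\in\mathcal G^{s,AR}_b$ of~\eqref{e:continuous-extension}, and it is not mere bookkeeping. With a fixed $\psi\in\mathcal G^s_c$, one has $\|\chi_j^{(m)}\|_{L^\infty}\sim(B_\psi/\epsilon_j)^m\,m!^s$, and the $m!^s$ is a \emph{loss} that is not neutralised by the factor $y^j/j!$ when $m\sim j$. Concretely, take $a=0$, $b=j$ and write $g_j(y):=\frac{(iy)^j}{j!}\chi_j(y)$; Leibniz gives, at points $|y|\sim\epsilon_j$ where $\psi^{(j)}$ realises its Gevrey growth,
$\displaystyle \big|g_j^{(j)}(y)\big|\gtrsim\frac{|y|^j}{j!}\,\frac{|\psi^{(j)}(y/\epsilon_j)|}{\epsilon_j^j}\sim \frac{\epsilon_j^j}{j!}\cdot\frac{B_\psi^{j}\,j!^s}{\epsilon_j^{j}}= B_\psi^{j}\,j!^{\,s-1},$
which is super-exponential, not $O(A^j)$. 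Feeding this into $\partial_y^b\tf=\sum_j f^{(j)}g_j^{(b)}$ at $j=b$ produces $\|f\|R^b\,b!^s\cdot b!^{\,s-1}$, i.e.\ a bound of order $b!^{\,2s-1}$ rather than $b!^s$, so the conclusion $\nor{\tf}{s,AR,U+i\R}\leq C\nor{f}{s,R,U}$ fails. This is precisely the difficulty the paper avoids by using $\zeta_{k,D}$, which satisfy the sharp estimate $|\zeta_{k,D}^{(j)}|\leq C^{j+1}B^kD^{j-k}k^{-ks}\max(k,j)^{js}$ (equivalently, $\nor{\zeta_{k,D}}{s,CD}\lesssim(BD)^{-k}k^{-ks}$ uniformly in $k$); the functions $a_{k,D}$ there are \emph{not} a fixed bump rescaled --- that is exactly the content of the nontrivial Borel-extension constructions of~\cite{Dz:62,MRR:16,Petzsche:88}. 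Your appeal to~\cite{bonthonneau2020fbi,Hoermander:V1} for the bookkeeping does not close this gap, since those references also rely on such tailored families (or on a contour/``sommation au plus petit terme'' argument) rather than on a naively rescaled cutoff. As a secondary remark, your truncation scale $\epsilon_j\sim(R\langle j\rangle^{s-1})^{-1}$ differs from the one implicit in Lemma~\ref{l:russe}, which is $(Dk^s)^{-1}$ with $D\sim R$; the former suffices for the telescoping estimate of $\dzb\tf$ but neither choice makes the single-cutoff construction produce the $\mathcal G^{s,AR}_b$ bound.
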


Estimate~\eqref{estimate for dzb} translates the fact that $\tf$ is an \textit{almost analytic extension} of $f$ well-adapted to the Gevrey regularity $\mathcal{G}^{s}$.
Property~\ref{propriete de commutation} states that the operation of derivation w.r.t. the real part and taking the almost analytic extension commute.

If $\ban=\C$, Lemma~\ref{extension of gevrey} is essentially a consequence of Lemma 1.2 and Remark 1.7 in~\cite{bonthonneau2020fbi} (in a simpler $1D$ context). The proof in this reference does not seem to adapt straightforwardly to the case of Banach-valued functions, so we provide here with a short and different proof.

Our proof of Lemma~\ref{extension of gevrey} relies on the following classical result which is the key step (and mostly equivalent) for the Borel extension problem in Gevrey classes. 

\begin{lemma}
	\label{l:russe}
	For all $s>1$, there are constants $B,C>1$ and a family $(\zeta_{k,D}) \in C^\infty(\R)^{\N\times [1,+\infty)}$ such that for all $D\geq 1,k \in \N, j \in \N$,
	$$
	\zeta_{k,D}^{(j)}(0) =\delta_{jk} ,\quad |\zeta_{k,D}^{(j)}(x)| \leq C^{j+1}B^{k} D^{j-k} k^{-ks} \max(k,j)^{js} \text{ for all } x \in  \R .
	$$
\end{lemma}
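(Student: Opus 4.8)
The plan is to construct the family $(\zeta_{k,D})$ by hand from a single ``bump'' building block and then estimate its derivatives by Cauchy's inequality on a well-chosen contour. First I would recall the standard trick for the Borel/Ritt problem in Gevrey classes: start from a fixed function $\chi \in C^\infty_c(\R)$ with $\chi \equiv 1$ near $0$ and $\supp \chi \subset (-1,1)$, and whose Fourier transform decays like a Gevrey weight, $|\hat\chi(\xi)| \leq C_0 \exp(-c_0|\xi|^{1/s})$; such $\chi$ exists precisely because $s>1$ (this is the classical construction, e.g. in H\"ormander~\cite{Hoermander:V1} or Rodino~\cite{Rodino:93}). Then for $k\in\N$ and a scale parameter $\sigma>0$ to be chosen as a function of $k$ and $D$, set
$$
\zeta_{k,D}(x) := \frac{x^k}{k!}\,\chi(x/\sigma), \qquad \sigma = \sigma(k,D) := \frac{k^{s-1}}{D}\cdot \frac{1}{B_0}
$$
for a suitable fixed $B_0\geq 1$. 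Since $\chi\equiv 1$ near $0$, we have $\zeta_{k,D}^{(j)}(0) = \delta_{jk}$ immediately, so the interpolation property is free; everything is in the derivative bound.

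The main work is the pointwise estimate on $\zeta_{k,D}^{(j)}$. I would split into two regimes. For $j \leq k$ one can afford a crude Leibniz estimate: differentiating $x^k/k!$ produces factors of $1/(k-i)!$ times $x^{k-i}$, which on $\supp\chi(\cdot/\sigma)$ is bounded by $\sigma^{k-i}$, while the $i$ derivatives landing on $\chi(x/\sigma)$ cost $\sigma^{-i}\|\chi^{(i)}\|_{L^\infty}$; using $\|\chi^{(i)}\|_{L^\infty}\leq C_1^{i+1} i!^s$ (the Gevrey bound for $\chi$) and collecting powers of $\sigma$ one gets a bound of the shape $C^{j+1}\sigma^{k-j}(\text{combinatorial factors})$, which after inserting $\sigma = k^{s-1}/(B_0 D)$ reorganizes into $C^{j+1}B^k D^{j-k}k^{-ks}\max(k,j)^{js}$ with $\max(k,j)=k$. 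For $j > k$, the cleaner route is a Cauchy estimate: extend $\zeta_{k,D}$ holomorphically is not possible since $\chi$ has compact support, so instead I would go through the Fourier side. Write $\zeta_{k,D}(x) = \frac{x^k}{k!}\chi(x/\sigma)$ and note $\widehat{\zeta_{k,D}}(\xi)$ is, up to constants, $\sigma^{k+1}$ times a $k$-th $\xi$-derivative of $\hat\chi(\sigma\xi)$; then $\zeta_{k,D}^{(j)}(x) = \frac{1}{2\pi}\int (i\xi)^j \widehat{\zeta_{k,D}}(\xi)e^{ix\xi}d\xi$ and one estimates $\int |\xi|^j \sigma^{k+1}|(\partial^k\hat\chi)(\sigma\xi)|\,d\xi$ using $|\partial^k\hat\chi(\eta)|\leq C_2^{k+1}k!^{s-?}\cdot$ (a weight), or more simply using $|\hat\chi(\eta)|\le C_0 e^{-c_0|\eta|^{1/s}}$ together with Cauchy estimates in a complex strip for the entire function $\hat\chi$ to control $\partial^k\hat\chi$. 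Optimizing the resulting integral $\int |\xi|^{j} e^{-c_0(\sigma|\xi|)^{1/s}}d\xi \sim \sigma^{-j-1}\Gamma(s(j+1)+1)/\text{const}$, and using $\Gamma(sj)\lesssim C^j j^{sj}$, produces the factor $\sigma^{-j}j^{sj}$ which again, with $\sigma = k^{s-1}/(B_0 D)$, becomes $D^{j}k^{-(s-1)j}j^{sj}\cdot B_0^j \lesssim C^{j+1}B^k D^{j-k}k^{-ks}\max(k,j)^{js}$ once one checks the bookkeeping of the $k$-dependent constants (the $B^k$ absorbs $B_0^{-k}$ type factors and the $D^{-k}$ comes with $\sigma^{k+1}$).

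The key identity to verify at the end is purely algebraic: with $\sigma \asymp k^{s-1}/D$, $\sigma^{-j}j^{sj} \asymp D^j k^{-(s-1)j}j^{sj} = D^j k^{-sk}\cdot k^{sk - (s-1)j} j^{sj}$, and one wants $k^{sk-(s-1)j}j^{sj}\leq C^{j+1}B^k \max(k,j)^{js}$; in the regime $j>k$ this is $k^{sk}\,j^{sj}/k^{(s-1)j}\le C^{j}k^{sk}j^{sj}\leq C^{j}\max(k,j)^{s(k+j)}$, which is fine since $s(k+j)\le 2sj$ and an extra $\max(k,j)^{sj}$ is cheap up to renaming constants — one has to be a little careful that the final exponent on $\max(k,j)$ is exactly $js$ and not $2js$, which forces the split into the two regimes so that in the $j\le k$ regime the decaying factor $D^{j-k}k^{-ks}$ does the work and in the $j>k$ regime the growing factor $\max(k,j)^{js}=j^{js}$ does. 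I expect the main obstacle to be precisely this final bookkeeping: choosing the constant $B_0$ in $\sigma$ and tracking the $k$-dependent vs. $j$-dependent constants so that the asserted inequality holds with constants $B,C$ independent of $k,j,D$, uniformly for $D\ge 1$. The existence of the Gevrey cutoff $\chi$ with the stated Fourier decay, and all the Gamma-function asymptotics, are completely standard and I would cite~\cite{Hoermander:V1,Rodino:93} rather than reprove them.

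Alternatively — and this may be cleaner to write — one can avoid the Fourier detour entirely and use the classical Ritt/Borel construction directly: it is well known (see~\cite{Rodino:93}) that for $s>1$ there is a universal ``interpolating'' family, and Lemma~\ref{l:russe} is essentially a quantitative, $D$-parametrized restatement of it; I would present the bump construction above as the self-contained proof, since it keeps the paper independent of the precise form of the Borel lemma found in the literature. In either presentation the structure is the same: (i) fix a Gevrey bump $\chi$; (ii) define $\zeta_{k,D}(x)=\frac{x^k}{k!}\chi(B_0 D x / k^{s-1})$; (iii) verify $\zeta_{k,D}^{(j)}(0)=\delta_{jk}$; (iv) prove the derivative bound by Leibniz for $j\le k$ and by a rescaled Cauchy/Fourier estimate for $j>k$; (v) collect constants.
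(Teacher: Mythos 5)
Your proposal takes a genuinely different route from the paper, which does not prove Lemma~\ref{l:russe} at all but cites the explicit constructions of~\cite{Dz:62} and~\cite{MRR:16} (with the refinement over~\cite{Petzsche:88}), describing them as $\zeta_{k,D}(t)=a_{k,D}(t)\,t^k/k!$ with a \emph{$k$-dependent} cutoff family $a_{k,D}$ supported in $[-(Dk^s)^{-1},(Dk^s)^{-1}]$. Your attempt to replace this by a single fixed Gevrey bump $\chi$ rescaled, $\zeta_{k,D}(x)=\tfrac{x^k}{k!}\chi(x/\sigma)$, has two genuine gaps. First, the scale $\sigma=k^{s-1}/(B_0D)$ has the wrong sign in the exponent: it grows with $k$, whereas the support must shrink. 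Already at $j=0$ the required bound is $|\zeta_{k,D}|\le C B^k D^{-k}k^{-ks}$, while on the support your construction gives $\sigma^k/k!\gtrsim (k^{s-2}/(B_0D))^k e^{-k}\cdot k^{0}$, and the ratio to the target is $\gtrsim k^{(2s-2)k}(BB_0)^{-k}\to\infty$ since $s>1$. The correct scale is $\sigma\sim (Dk^s)^{-1}$ (or at least $\lesssim k^{1-s}/D$), as in the cited references.

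Second, and more fundamentally, no single fixed compactly supported $\chi$ with $\chi\equiv 1$ near $0$ can work, whatever the scale. Take $j=k$: the target is $C^{k+1}B^k$, but the Leibniz term with all $k$ derivatives on the cutoff contributes $\tfrac{|x|^k}{k!}\sigma^{-k}|\chi^{(k)}(x/\sigma)|\sim \|\chi^{(k)}\|_{L^\infty}/k!$ on the transition region $|x|\sim\sigma$. Since a compactly supported function equal to $1$ near $0$ cannot lie in a quasianalytic class, $\|\chi^{(k)}\|_{L^\infty}/k!$ is not $O(L^k)$ for any $L$ (for a fixed $\chi\in\mathcal G^{s'}$, $s'>1$, it is of size $k!^{s'-1}$), which exceeds $C^{k+1}B^k$; the same computation shows failure for a whole range $k\le j\lesssim k^{s/(s-s')}$. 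This is precisely why the constructions in~\cite{Dz:62,MRR:16} take $a_{k,D}$ to be a convolution of roughly $k$ normalized indicators of widths $\ell_i\sim c/(D\,i^s)$: each derivative consumes one convolution factor, yielding $\|a_{k,D}^{(j)}\|_{L^\infty}\le C^{j+1}D^j j!^s$ for $j\le k$ \emph{uniformly in $k$}, a bound with no $k!^{s'-1}$ loss and unattainable by rescaling one fixed bump. Your Fourier-side argument for $j>k$ inherits the same defect (the effective Gevrey radius of $\chi(\cdot/\sigma)$ is $\sigma^{-1}$, not $O(D)$). If you want a self-contained proof you must reproduce the $k$-dependent convolution construction; otherwise, as in the paper, cite~\cite{Dz:62} or~\cite{MRR:16}.
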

An explicit construction of such functions $\zeta_{k,D}$ is given in~\cite{Dz:62}. Another less explicit construction but with improved estimates on the constants is provided in~\cite{MRR:16}. 
In both cases, the functions are constructed as $\zeta_{k,D}(t) := a_{k,D}(t) \frac{t^k}{k!}$ with an appropriate family $a_{k,D}(t)$ satisfying $a_{k,D}(0)=1, a_{k,D}^{(j)}(0)=0$ for all $j \geq 2$ together with $\supp(a_{k,D}) \subset [-(Dk^s)^{-1}, (Dk^s)^{-1}]$ (for $k \geq 1$) and appropriate estimates of Gevrey $s$ norm. In~\cite{Dz:62}, $a_{k,D}(t)$ is defined by an explicit expression on page 1 and the estimates are proved on page 4.

In~\cite{MRR:16},  the notation is $a_{k,D}(t) = \varphi_k(t), M_p=p^{ps}, h=D$ and $\varphi_k$ is defined on page 14 and $\zeta_{k,D}=\zeta_k$ on page 15, and the estimates are performed on page 16 and correspond to~(3.17) (in that reference) which is even better, namely $|\zeta_{k,D}^{(j)}(x)| \leq C^{j+1}B^{-k} D^{j-k} k^{-ks} j^{js}$, and is (essentially) equivalent to $\nor{\zeta_{k,D}}{s,CD,\R} \leq C(BD)^{-k}k^{-ks}$. This result of~\cite{MRR:16} is a refinement of~\cite[Theorem~2.2]{Petzsche:88} where the dependence in the parameter $D$ (called $h$ in these two references) is not made explicit.

\bnp[Proof of Lemma~\ref{extension of gevrey}]
From Lemma~\ref{l:russe} we first define 
\begin{align}
	\label{e:def-tilde-f}
	\check{f}(x+iy) : = \sum_{k \in \N}  \d^kf(x) i^k \zeta_{k,D}(y) , \quad (x,y) \in U\times \R .
\end{align}
We first check that for $D$ large enough (fixed later on in the proof), the series converge normally as well as all its derivatives, and prove the estimate in~\eqref{e:continuous-extension} at once. To this aim, we follow essentially~\cite[Proof of Lemma~3.1]{BP:09}. 
From~\eqref{e:gevrey-norm} we have $\nor{\d^k f(t)}{\ban} \leq R^{k}k!^s\nor{f}{s,R,U}$ for all $t \in U$ and thus, uniformly for $(x,y)\in  U\times \R$, 
\begin{align*}
	\nor{\d_x^j \d_y^\ell \check{f}(x+iy)}{\ban} & = \nor{\sum_{k \in \N} \d_x^{k+j}(f)(x)i^k \d_y^\ell \zeta_{k,D}(y)}{\ban} 
	\leq  \sum_{k \in \N} \nor{\d_x^{k+j}(f)(x)}{\ban}  |\d_y^\ell \zeta_{k,D}(y)|  \\
	& \leq  \nor{f}{s,R,U} \sum_{k \in \N} R^{k+j}(k+j)!^s  C^{\ell+1}B^{k} D^{\ell-k} k^{-ks} \max(k,\ell)^{\ell s},
\end{align*}
where we used Lemma~\ref{l:russe} in the last inequality.
We recall the classical inequalities (see e.g. \cite[p10-11]{Rodino:93}): $(k+j)! \leq 2^{k+j}k! j!$, $N! \leq N^N$ and $N! \geq (N/e)^N$.
We deduce
\begin{align}
	\label{e:estim-extension}
	\nor{\d_x^j \d_y^\ell \check{f}(x+iy)}{\ban} &\leq  \nor{f}{s,R,U} \sum_{k \in \N} R^{k+j}2^{sk+sj}k!^s j!^s  C^{\ell+1}B^{k} D^{\ell-k} k^{-ks} \max(k,\ell)^{\ell s} \nonumber  \\
	&\leq  \nor{f}{s,R,U} C(R 2^s)^j j^{js} (CD)^\ell \sum_{k \in \N} (R2^{s}B)^k  D^{-k}  \max(k,\ell)^{\ell s}  .
\end{align}
Then we split the sum as
\begin{align*}
	\sum_{k \in \N} (R2^{s}B)^k D^{-k}  \max(k,\ell)^{\ell s} &=  \sum_{k \leq \ell } (R2^{s}B)^k  D^{-k} \ell^{\ell s}+ \sum_{k > \ell } (R2^{s}B)^k  D^{-k} k^{\ell s} .
\end{align*}
In the last sum we use $k^{\ell s} \leq e^k \left(\frac{\ell s}{e}\right)^{\ell s}$, which is a consequence of $x \geq \log(ex)$ taken for $x=\frac{k}{\ell s}>0$ (applied if $k>\ell>0$, and also true in case $\ell=0$). We obtain
\begin{align*}
	\sum_{k \in \N} (R2^{s}B)^k D^{-k}  \max(k,\ell)^{\ell s} &\leq  \ell^{\ell s} \sum_{k \leq \ell } \left(\frac{R2^{s}B}{D} \right)^{k} + \sum_{k > \ell } (R2^{s}B)^k  D^{-k} e^k \left(\frac{\ell s}{e}\right)^{\ell s} \\
	&\leq \ell^{\ell s} \sum_{k \leq \ell } \left(\frac{R2^{s}B}{D} \right)^{k} +  \left(\frac{\ell s}{e}\right)^{\ell s}  \sum_{k > \ell }  \left(\frac{R2^{s}B e }{D} \right)^{k} 
	\leq (\ell s)^{\ell s}  \sum_{k\in \N}  \left(\frac{R2^{s}B e }{D} \right)^{k} .
\end{align*}
We now fix $D := 2 \times R2^{s}B e$ and, coming back to~\eqref{e:estim-extension}, we obtain
\begin{align*}
	\nor{\d_x^j \d_y^\ell \check{f}(x+iy)}{\ban} &\leq  \nor{f}{s,R,U} C(R 2^s)^j j^{js} (C R2^{s+1}B e)^\ell  2  (\ell s)^{\ell s}=  
	2 C \nor{f}{s,R,U}  (R 2^s)^j (C R2^{s+1}B e s^s )^\ell  \ell^{\ell s}j^{js}   .
\end{align*}
Noticing that $\ell^{\ell s}j^{js}\leq e^{s(j+\ell)}j!\ell !$, we have obtained, uniformly for $(x,y)\in  U\times \R$, 
\begin{align}
	\label{e:estimate-derivatives}
	\nor{\d_x^j \d_y^\ell \check{f}(x+iy)}{\ban} \leq \mathsf{C} \nor{f}{s,R,U}   (AR)^{j+\ell}\ell!^s  j!^s ,\quad \text{ with }  \mathsf{C} = 2C , \quad A = C R2^{s+1}B s^s e^{s+1} .
\end{align}

Now, we take $1<\sigma<s$ and let $g \in \mathcal{G}^{\sigma}(\R;\R)$ be such that $\supp(g) \subset(-\rho,\rho)$ and $g = 1$ in a neighborhood of $0$ and we set 
$$
\tilde{f}(x+iy) := g(y)\check{f}(x+iy) , \quad (x,y) \in U\times \R ,
$$
so that $\tilde{f}$ has the sought support properties in~\eqref{e:def-tilde-f}.
That $\tf(x)=f(x)$ for $x\in U $ is a direct consequence of the definition~\eqref{e:def-tilde-f}, the properties of $g$ together with $\zeta_{k,D}(0) =\delta_{0k}$.
Property~\eqref{propriete de commutation}  is a direct consequence of the definition~\eqref{e:def-tilde-f} and derivation under the sum.

To deduce~\eqref{e:continuous-extension} from~\eqref{e:estimate-derivatives}, we write $\d_x^j \d_y^\ell \tilde{f}(x+iy)  = \d_y^\ell \left( g(y) \d_x^j \check{f}(x+iy) \right)$
and apply~\cite[Lemma~3.7]{MRR:16} with $g=g$ and $f = \d_x^j \check{f}(x+iy)$ (referring to the notation of this reference) for fixed $j$ (that the function is Banach-valued plays no role in the proof of~\cite[Lemma~3.7]{MRR:16}). 
This reference, combined with~\eqref{e:estimate-derivatives} for fixed $j$, implies the existence of a constant $C_{g,s}$ depending only on $g$ (and in particular on $\rho$ and $\sigma$) and $s$ such that for all $(x,y) \in U\times \R$,
\begin{align}
	\label{e:estimate-derivatives-bis}
	\nor{\d_x^j \d_y^\ell \tilde{f}(x+iy)}{\ban} =\nor{\d_y^\ell \left( g(y) \d_x^j \check{f}(x+iy) \right)}{\ban} \leq C_{g,s} \mathsf{C} \nor{f}{s,R,U}   (AR)^{j+\ell}\ell!^s  j!^s .
\end{align}
Noticing that $j!\ell ! \leq (j+\ell)!$, we have obtained the continuity statement in~\eqref{e:continuous-extension} with continuity constant $C_{g,s} \mathsf{C}$ (and $\mathsf{C},A$ given by~\eqref{e:estimate-derivatives}).

Finally, in order to prove~\eqref{estimate for dzb}, we notice that $\dzb\tf\in \mathcal{G}^{s,AR}_b(U+i\R ;\ban)$ since $\tf\in \mathcal{G}^{s,AR}_b(U+i\R ;\ban)$, and check that $\dzb \tf$ vanishes at infinite order on the real axis. Indeed, we have 
\begin{align*}
	\d_x^j \d_y^\ell (\d_x + i \d_y) \check{f}(x+iy) & =  \sum_{k \in \N} \d_x^{k+j+1}(f)(x)i^k \d_y^\ell \zeta_{k,D}(y) +\d_x^{k+j}(f)(x)i^{k+1} \d_y^{\ell+1} \zeta_{k,D}(y)  .
\end{align*}
Using that $\zeta_{k,D}^{(\ell)}(0) =\delta_{\ell k}$ and that $g=1$ in a neighborhood of $0$, this implies
\begin{align}
	\label{e:infinite-order-vanish}
	\d_x^j \d_y^\ell (\d_x + i \d_y) \tilde{f}(x+iy)\Big|_{y=0} & =  \sum_{k \in \N} \d_x^{k+j+1}(f)(x)i^k  \delta_{\ell k} +\d_x^{k+j}(f)(x)i^{k+1} \delta_{\ell+1,k} \nonumber  \\
	& =  \d_x^{\ell+j+1}(f)(x)i^\ell +\d_x^{\ell+1+j}(f)(x)i^{\ell+1+1} = 0 .
\end{align}
Applying the ``sommation au plus petit terme'' in~\cite[Lemma~1.3]{bonthonneau2020fbi} (which holds with the same proof in the Banach-valued case), there exist constants $C,C_0>0$ such that for all $F\in \mathcal{G}^{s,AR}_b(U+i\R ;\ban)$ and all $x+iy  \in U +i\R$
\begin{align*}
	\nor{ F(x+iy)- \sum_{\ell \leq C_0^{-1}(AR|y|)^{-\frac{1}{s-1}}} \frac{1}{\ell!}  (\d_y^\ell F ) (x+iy)\big|_{y=0}}{\ban}
	& \leq  C\nor{F}{s,AR, U+i\R}\exp \left( - \frac{1}{C (AR |y|)^{\frac{1}{s-1}}}\right) .
\end{align*}
We may apply this estimate to $F = \dzb\tf \in \mathcal{G}^{s,AR}_b(U+i\R ;\ban)$ according to the following consequence of~\eqref{e:estimate-derivatives-bis}
\begin{align*}
	\nor{\d_x^j \d_y^\ell(\d_x+i\d_y) \tilde{f}(x+iy)}{\ban} \leq 2 \mathsf{C}C_{g,s} \nor{f}{s,R,U}   (AR)^{j+\ell+1} (j+\ell+1)!^s  .
\end{align*}
Recalling the infinite order of vanishing~\eqref{e:infinite-order-vanish} finally yields~\eqref{estimate for dzb}, and concludes the proof of the lemma.
\enp

\bigskip
Consider now $\X,\Y$ two separable Hilbert spaces and denote by $\mathcal{L}(\X,\Y)$ the space of bounded operators from $\X$ to $\Y$, which is a Banach space as well for $\|\cdot\|_{\L(\X,\Y)}$. 
We recall some facts of semiclassical analysis in dimension $1$ with values in $\mathcal{L}(\X,\Y)$. We consider a family of symbols depending on a (small) parameter   $h \in (0,1)$.
We say that $a \in S^m(\R\times \R; \mathcal{L}(\X,\Y))$ if $a \in C^{\infty}(\R\times \R; \mathcal{L}(\X,\Y))$ depends implicitly on $h \in (0,1)$ and satisfies: for all $ \alpha , \beta \in \N^n$ there is $C_{\alpha\beta}>0$ such that 
$$ \nor{\d_x^\alpha\d_\xi^\beta a(t,\xi, h)}{\mathcal{L}(\X,\Y)} \leq C_{\alpha\beta} \langle \xi \rangle^{m-\beta}, \quad \text{for all }(t,\xi, h) \in \R\times \R \times (0,1) .$$
Note that for readability, in this section, we write $\xi=\xi_t$ for the dual variable to the time variable $t$. 
We then quantify (using the Weyl quantization) such a symbol as 
\begin{align}
	\label{e:weyl-quantization}
	\left(   \op^w (a) u  \right)(t)& : = \frac{1}{2\pi}\int_{\R\times\R} e^{i(t-s)\xi}a\left(\frac{t+s}{2}, \xi\right)u(s)ds d\xi .
\end{align}
	According to~\cite[Paragraph~18.1 Remark~2 p~117]{Hoermander:V3}, 
	\begin{itemize}
		\item 
		for all $a \in S^m(\R\times \R; \mathcal{L}(\X,\Y))$,  $\op^w (a)$ maps continuously $\mathcal{S}(\R;\X)$ into $\mathcal{S}(\R;\Y)$ uniformly in $h \in (0,1)$;
		\item 
		for all $a \in S^0(\R\times \R; \mathcal{L}(\X,\Y))$,  $\op^w (a)$ maps continuously $L^2(\R;\X)$ into $L^2(\R;\Y)$ uniformly in $h \in (0,1)$.
	\end{itemize}
	If $a \in S^0(\R\times \R; \mathcal{L}(\X,\Y))$ has compact support in $\R\times\R$ (with support possibly depending on the parameter $h \in (0,1)$), then 
	$$
	(\op^w(a)u)(t)=\int_{\R} \mathcal{K}(t,s)u(s)ds ,\quad \mathcal{K}(t,s) = \frac{1}{2\pi}\int_{\R} e^{i(t-s)\xi}a\left(\frac{t+s}{2}, \xi\right) d\xi  ,
	$$
	where  the Schwartz kernel $\mathcal{K}$ of the operator $\op^w(a)$ satisfies $\mathcal{K} \in C^\infty(\R \times \R; \mathcal{L}(\X,\Y))$. 
	Note that such functions $a$ do not necessarily belong to $S^{-\delta}$ for some $\delta>0$ (since the support may depend on $h$).
	
\begin{remark}
\label{r:lot-bis}
	Note that in the application we have in mind, for a domain $V \subset \R^d$, we choose $\X = \Y = L^2(V)$ and $\ban = L^\infty(V)$
	and observe the embedding  $L^\infty(V) = \ban \hookrightarrow \L(\X,\Y) = \L(L^2(V))$ (via the application that maps to a bounded function $f$ the multiplication operator by $f$) with $\|\cdot \|_{ \L(\X,\Y)} \leq \|\cdot\|_{\ban}$.
	
	Another application is $\X = H^1(V),  \Y = L^2(V)$ and $\ban = L^d(V)$ if $d\geq 3$ (resp.  $\ban = L^{2+\eps}(V)$ for all $\eps>0$ if $d=2$) 
	and observe the embedding  $L^d(V) = \ban \hookrightarrow \L(\X,\Y) = \L(H^1(V), L^2(V))$ (a function $\pot$ acting by multiplication) according to the Sobolev embedding: $\|\pot u\|_{L^2(V)} \leq \|\pot\|_{L^d(V)} \|u\|_{L^{\frac{2d}{d-2}}(V)} \leq  \|\pot\|_{L^d(V)} \|u\|_{H^1(V)}$ if $d\geq 3$ (resp. $\|\pot u\|_{L^2(V)} \leq \|\pot\|_{L^{2+\eps}(V)} \|u\|_{H^1(V)}$ for all $\eps>0$ if $d=2$). 
	\end{remark}
	
	\subsection{The conjugated operator}
	\label{s:conj-operator}
	In this section we define for $t_0 \in \R$ and $r_0>0$ the open intervals $I:=(t_0-2r_0,t_0+2r_0)$ and $U:=(t_0-r_0,t_0+r_0)$. Given now $f \in \mathcal{G}^{s}(I;\L(\X,\Y))$ there exists $R>0$ such that $f \in \mathcal{G}^{s,R}_b(U;\L(\X,\Y))$. The intervals $I,U$ and the radius $R$, used in definition \eqref{e:gevrey-norm} will be fixed for the rest of this section. For $\rho>0$ we denote by $\tf(z) $ the almost analytic extension of $f$ in $U+i\R$ given by Lemma~\ref{extension of gevrey} which is supported on $U_\rho:=U+i[-\rho,\rho]$.
	
	Along this section, we will need some cut-off functions satisfying the following properties: $\chi^0 \in C^\infty_c((-4,4);[0,1])$ with $\chi=1$ in a neighborhood of $[-3,3]$, $\theta^0 \in C^\infty_c((-1,1);[0,1])$
	and  $\eta^0 \in C^\infty_c((-3,3);[0,1])$ with $\eta=1$ in a neighborhood of $[-2,2].$
	
	Take now $r$ with $0<r< \min(\frac{r_0}{4},\frac{\rho}{3})$. We will define $\chi(t)=\chi^0((t-t_0)/r)$, $\theta(t)=\theta^0((t-t_0)/r)$ and $\eta(\xi)=\eta^0(\xi/r)$. In particular, they satisfy
	\begin{itemize}
		\item $\chi \in C^\infty_c((t_0-4r,t_0+4r);[0,1])$ with $\chi=1$ in a neighborhood of $[t_0-3r,t_0+3r]$
		\item $\theta \in C^\infty_c((t_0-r,t_0+r);[0,1])$
		\item $\eta \in C^\infty_c((-3r,3r);[0,1])$ with $\eta=1$ in a neighborhood of $[-2r,2r].$
	\end{itemize}
	The functions $\chi$, $\theta$ and $\eta$ depend implicitly on $r$ and $t_0$, but we will not write anymore this dependence for better readability. 
	
	With $h \in (0,1)$, we set
	\begin{align}
		\tf^r(z)& :=\chi(\Re z) \eta(h^{-1/3}\Im z)\tf(z), \quad z \in \C \quad \text{ and hence } \label{e:def-tf-r} \\
		\tf^r(t+ih\xi) & = \chi(t) \eta(h^{2/3}\xi) \tf(t+ih\xi) , \quad (t,\xi) \in \R \times \R. \nonumber
	\end{align}
	Observe that the function $(t,\xi) \mapsto \tf^r(t+ih\xi)$ is smooth, compactly supported in $\R\times \R$, and belongs to $S^0(\R\times \R; \mathcal{L}(\X,\Y))$. According to the above discussion, we define the operator
	\begin{equation}
		\label{def of Fh}
		F_h:=\textnormal{op}^w(\tf^r(t+ih\xi)).
	\end{equation}
	It maps continuously $\mathcal{S}(\R;\X)$ into $\mathcal{S}(\R;\Y)$ uniformly in $h \in (0,1)$ and  
	\begin{equation}	
	\label{e:bornitude-L2-Fh}
	F_h \in \mathcal{L} \left(L^2(\R;\X) ; L^2(\R;\Y) \right) , \quad \text{ uniformly in } h \in(0,1) .
	\end{equation}
	We are now ready to state the following result, which guarantees that we have a reasonable conjugate for the operator $e^{-\frac{h}{2}|D_t|^2}f$.
	
	\begin{proposition}
		\label{good conjugagte with function}
		Let $\rho,r_0>0$ and $0<r< \min(\frac{r_0}{4},\frac{\rho}{3})$. Then there exists $c>0$ such that for all $R>0$ and all $k \in \N$ there exist $C_k >0$ and $h_0>0$ such that for all 
		 $f \in \mathcal{G}^{2,R}_b(U;\L(\X,\Y))$ and $u\in \mathcal{S}(\R;\X)$ one has
		$$
		\nor{\chi F_h e^{-\frac{h}{2}|D_t|^2} \theta u-e^{-\frac{h}{2}|D_t|^2} f \theta u }{L^2(\R; \Y)}\leq  C_{k} h^{-k}\left(\sum_{j \leq k}\nor{f^{(j)}}{2,R,U}\right)e^{-\frac{ch^{-1/3}}{R}}\nor{u}{H^{-k}(\R;\X)},
		$$
		for all $0<h\leq h_0$, where $F_h$ is defined by~\eqref{def of Fh}.
	\end{proposition}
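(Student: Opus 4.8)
### Proof plan for Proposition~\ref{good conjugagte with function}

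\medskip

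\textbf{Overall strategy.} The plan is to follow Tataru's contour-deformation argument. I would start from the explicit kernel of $e^{-\frac{h}{2}|D_t|^2}f\theta$ and the explicit kernel of $\chi F_h e^{-\frac{h}{2}|D_t|^2}\theta$, write their difference as an integral operator, and deform the $\xi$-contour into the complex plane to exploit the almost-analyticity of $\tf$. The Gaussian factor $e^{-\frac{h}{2}\xi^2}$ provides the decay needed to justify the deformation and to absorb the error coming from $\dzb\tf$, which by~\eqref{estimate for dzb} is exponentially small of size $\exp(-c(R|\Im z|)^{-1/(s-1)})$ with $s=2$, i.e. $\exp(-c(Rh^{1/3})^{-1})$ on the relevant region $|\Im z|\sim h^{1/3}$. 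The cut-offs $\chi,\eta,\theta$ localize the computation so that all the manipulations take place where $\tf$ is well-behaved and the frequency variable stays in the regime $|\xi|\lesssim h^{-2/3}$.

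\medskip

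\textbf{Step 1: reduce to a kernel estimate.} Using $e^{-\frac{h}{2}|D_t|^2}(g(t)u) $ for $g=f\theta$, I would write $\big(e^{-\frac{h}{2}|D_t|^2}f\theta u\big)(t)=\frac{1}{2\pi}\int e^{i(t-s)\xi}e^{-\frac{h}{2}\xi^2}f(s)\theta(s)u(s)\,ds\,d\xi$ and similarly, from~\eqref{e:weyl-quantization} and~\eqref{def of Fh},
$\big(\chi F_h e^{-\frac{h}{2}|D_t|^2}\theta u\big)(t)$ as a double integral involving $\tf^r\big(\frac{t+s'}{2}+ih\xi'\big)$. Composing the two and integrating out one variable (the Gaussian convolution semigroup law, $e^{-\frac{h}{2}|D_t|^2}$ has kernel $(2\pi h)^{-1/2}e^{-(t-s)^2/(2h)}$) I would arrive at an integral operator whose kernel is a single $\xi$-integral of the form $\frac{1}{2\pi}\int e^{i(t-s)\xi}e^{-\frac{h}{2}\xi^2}\,\big[\chi(t)\eta(h^{2/3}\xi)\tf\big(\tfrac{t+s}{2}+ih\xi\big)-f(s)\big]\theta(s)\,d\xi$ up to a commutator-type remainder where $\chi$ differs from $1$ near $\supp\theta$; since $\chi=1$ on a neighbourhood of $[t_0-3r,t_0+3r]\supset\supp\theta$, that remainder is supported where $|t-s|\gtrsim r$ and is $O(e^{-c/h})$ by the Gaussian, hence absorbed. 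The factor $\eta(h^{2/3}\xi)$ likewise can be removed up to an error supported in $|\xi|\gtrsim r h^{-2/3}$, where $e^{-\frac h2\xi^2}\le e^{-cr^2 h^{-1/3}}$, again absorbed into the right-hand side (at the cost of powers of $h$ and derivatives of $f$, matching the stated bound).

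\medskip

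\textbf{Step 2: contour deformation.} It then remains to estimate the kernel $K_h(t,s)=\frac{1}{2\pi}\int_\R e^{i(t-s)\xi}e^{-\frac h2\xi^2}\big[\tf\big(\tfrac{t+s}{2}+ih\xi\big)-f(s)\big]\,d\xi$ (times the cut-offs). Writing $m=\frac{t+s}{2}$, I would view $\xi\mapsto e^{i(t-s)\xi-\frac h2\xi^2}\tf(m+ih\xi)$ and use $\dzb\tf$-control: deform the real $\xi$-line, inside the support where $\eta$-type cut-offs keep us in $\{|\Im z|\le\rho\}$, toward a contour passing through the point where the phase $i(t-s)\xi-\frac h2\xi^2$ is stationary, i.e. $\xi_c=i(t-s)/h$ (this is where $m+ih\xi_c = m-(t-s)=s$, which is exactly the value making the bracket vanish — this is the mechanism by which $\tf(\cdot)-f(s)$ produces cancellation). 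By Stokes/Green's formula the difference between the integral over $\R$ and over a deformed contour is the area integral of $\dzb$ of the integrand over the region swept out; on that region $|\Im z|\lesssim h^{1/3}$ (one must take the deformation only up to height $\sim h^{1/3}$, consistent with the $\eta(h^{2/3}\xi)$ cut-off giving $|h\xi|\lesssim h^{1/3}$), so $\|\dzb\tf\|\le C\|f\|_{2,R,U}\exp(-c(Rh^{1/3})^{-1})$ by~\eqref{estimate for dzb} with $s-1=1$. The Gaussian weight on the real part controls the $\xi$-integration, and one picks up at most polynomial-in-$h^{-1}$ and $\sum_{j\le k}\|f^{(j)}\|_{2,R,U}$ losses from differentiating cut-offs and from bounding $\tf$ and its derivatives via~\eqref{e:continuous-extension}, \eqref{propriete de commutation}.

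\medskip

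\textbf{Step 3: from kernel bound to operator bound.} Once $|K_h(t,s)|\le C_k h^{-k}\big(\sum_{j\le k}\|f^{(j)}\|_{2,R,U}\big)e^{-ch^{-1/3}/R}\,w_k(t-s)$ in $\L(\X,\Y)$-norm, with $w_k$ an integrable (in fact Schwartz-type, or at worst $\langle t-s\rangle^{-N}$) weight coming from repeated integration by parts in $\xi$ using the $e^{-\frac h2\xi^2}$ factor, Schur's test gives the $L^2(\R;\X)\to L^2(\R;\Y)$ bound. To get the $H^{-k}(\R;\X)$ norm on $u$ on the right-hand side rather than $L^2$, I would, before applying Schur, integrate by parts $k$ times in $s$ against $u(s)$: each integration by parts either hits $\theta(s)$ (harmless, bounded derivatives) or produces a factor $\xi$ in the $\xi$-integral (from $\partial_s e^{i(t-s)\xi}$), and together with $\langle D_s\rangle^{-k}u = $ (an $L^2$ function) this yields the stated $h^{-k}$ and the $H^{-k}$ norm. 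The derivatives of $f$ appear because moving $\partial_s$ onto $f(s)$ in the $-f(s)$ part of the bracket produces $f^{(j)}(s)$, which is why the sum $\sum_{j\le k}\|f^{(j)}\|_{2,R,U}$ shows up.

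\medskip

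\textbf{Main obstacle.} The delicate point is Step~2: since $\tf$ is only almost analytic, the contour deformation is not exact, and one must carefully track the region over which $\dzb\tf$ is integrated so that $|\Im z|$ stays of order exactly $h^{1/3}$ — too small and there is no cancellation, too large and one leaves the support of $\eta$ or the region where~\eqref{estimate for dzb} is usable. Balancing the $\xi$-cut-off scale ($h^{-2/3}$), the deformation height ($h^{1/3}$), and the decay rate in~\eqref{estimate for dzb} (which for $s=2$ gives $\exp(-c/(R|\Im z|))$) is precisely what makes $h^{1/3}$ the natural scale and produces the $e^{-ch^{-1/3}/R}$ in the conclusion. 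A secondary technical nuisance is that everything must be done with $\L(\X,\Y)$-valued symbols, so one cannot use scalar residue calculus or stationary phase black-boxes directly; one stays at the level of the area integral of $\dzb$ (Green's formula), which is robust to the Banach-valued setting, as already exploited in the proof of Lemma~\ref{extension of gevrey}.
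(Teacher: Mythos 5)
Your overall architecture matches the paper's: reduce to a kernel estimate (Lemma~\ref{l:identify-kernel}), split off the high-frequency part $(1-\eta(h^{2/3}\xi))$ and the $(1-\chi)$ commutator as Gaussian-small remainders, exploit the almost-analyticity of $\tf$ through a Stokes/area term at the scale $|\Im z|\sim h^{1/3}$, conclude by Schur/Cauchy--Schwarz, and obtain the $H^{-k}$ norm by integrating by parts in $s$ (which is indeed how the derivatives $f^{(j)}$ and the $h^{-k}$ loss arise). The genuine gap is in your Step~2, which is precisely where the exponential gain must be produced. First, the arithmetic identifying the stationary point with the zero of the bracket is wrong: at $\xi_c=i(t-s)/h$ one has $\tfrac{t+s}{2}+ih\xi_c=\tfrac{3s-t}{2}\neq s$ unless $t=s$, so the bracket does not vanish at the critical point. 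Second, and more seriously, even after correcting this, a contour shift in $\xi$ up to height $|\Im\xi|\sim h^{-2/3}$ cannot yield $e^{-ch^{-1/3}/R}$ near the diagonal: the boundary integrand on the shifted contour carries a factor $e^{\frac h2(\Im\xi)^2}\sim e^{ch^{-1/3}}$ which must be beaten by $e^{-(t-s)\Im\xi}$, and this fails for $|t-s|\ll h^{1/3}$; there the only smallness left is the first-order vanishing $|\tf(\cdot)-f(s)|=O(|t-s|+h|\xi|)$, which after integration gives a kernel that is merely polynomially small in $h$. Vanishing of the amplitude at one point of a contour never produces exponential smallness.

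The mechanism your plan is missing is the factorization $\tf(z)-f(s)=(z-s)\check{b}_s(z)$ with the $C^1$ difference quotient $\check{b}_s$ of~\eqref{def of b check}, combined with the exact phase identity~\eqref{calcul de phase cht de variable}, whose consequence $\dzb\, e^{\Phi/h}=-\tfrac{z-s}{h}e^{\Phi/h}$ allows one to integrate by parts in $\dzb$ (Stokes in the $z=\tfrac{t+w}{2}+ih\xi$ plane) so that the whole kernel is rewritten through $\dzb\big(\eta\,\check{b}_s\big)$. The term where $\dzb$ hits $\tf$ is then exponentially small by~\eqref{estimate for dzb} on $|\Im z|\le 3rh^{1/3}$; the term where it hits $\chi$ is supported far from the diagonal and is Gaussian-small; and the term where it hits $\eta$ lives at $|\xi|\sim h^{-2/3}$ and still requires a second, genuine contour deformation, in the $w$-variable, of carefully calibrated small height $\sigma h\xi$ (Lemma~\ref{lemma for estimate of integral in w}) — a step absent from your plan. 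Without the difference-quotient integration by parts, the cancellation of $\tf(z)-f(s)$ at $z=s$ is exploited only to first order and the proof does not close.
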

	We refer to Remark~\ref{r:rem-k} for the interest of the index $k$. Here again, the proof of Proposition~\ref{good conjugagte with function} is simpler for $k=0$.
		Note also that  for all $R>0,j \in \N$ and $\eps>0$, there is a constant $C>0$ such that $(R-\eps)^m (m+j)^2 \leq C R^m$ for all $m \in \N$, whence 
		$$
		 \sum_{j \leq k}\nor{f^{(j)}}{2,R-\eps,U}  \leq C(R,k,\eps)\nor{f}{2,R,U} , \quad \text{ for all }f \in \mathcal{G}^{2,R}_b(U;\L(\X,\Y)).
		$$
		As a consequence, the result of the lemma reformulates in a simpler way as
		\begin{align*}
		\nor{\chi F_h e^{-\frac{h}{2}|D_t|^2} \theta u-e^{-\frac{h}{2}|D_t|^2} f \theta u }{L^2(\R; \Y)}\leq  C_{k,\eps} \nor{f}{2,R,U} e^{-\frac{c}{R-\eps}h^{-1/3}}\nor{u}{H^{-k}(\R;\X)},
		\end{align*}
		for all $h \in (0,h_0)$ where $h_0=h_0(k,\eps)$.

	\begin{remark}
		Taking $h=\mu/\tau^3$ one sees that 
		$$
		e^{-\frac{\mu}{2\tau^3}|D_t|^2} \theta f  =F_{\frac{\mu}{\tau^3}} e^{-\frac{\mu}{2\tau^3}|D_t|^2} \theta  , \qquad F_{\frac{\mu}{\tau^3}} = \op^w \left( \chi(t) \eta(\frac{\mu^{2/3}}{\tau^2}\xi) \tf(t+i\frac{\mu}{\tau^3}\xi) \right) ,
		$$
modulo an exponentially small error of order $e^{-c \tau}$ (in well-adapted norms), which is an admissible error in the Carleman estimate~\eqref{Carlemanschrod} (in view of its application ot unique continuation in Section~\ref{section the uniqueness theorem}).
 Notice that with this scaling, the cut-off $\eta$ localizes in frequencies $|\xi_t| \lesssim \tau^2$. This is consistent with the sketch of proof in Section~\ref{s:plan-structure}.
	\end{remark}
	
	\begin{remark}
		Proposition~\ref{good conjugagte with function} provides with a substitute of Lemma~\ref{lmcommuteps} in the case where $f(t)=t$ is replaced by an arbitrary Gevrey $2$ function.
	\end{remark}

	\begin{lemma}
		\label{l:identify-kernel}
		Setting 
		\begin{align}
			\label{e:def-Rh}
			R_h := \chi F_h e^{-\frac{h}{2}|D_t|^2} \theta  -\chi e^{- \frac{h}{2}|D_t|^2} f \theta  \quad \in \L\big(L^2(\R,\X) , L^2(\R,\Y)\big)  , 
		\end{align} 
		we have
		\begin{align}
			\label{difference noyau}
			(R_h u )(t)& =\int_\R \mathcal{K}_h(t,s)u(s)ds , \quad u \in \mathcal{S}(\R,\X) \text{ with }  \\
			\label{e:K-K1+K2}
			\mathcal{K}_h(t,s)& =-\frac{1}{2 \pi}\kerone+ C_h \kertwo(t,s) \quad C_h:= \frac{1}{2\pi}    \left(\frac{1}{2\pi h}\right)^{1/2}, \quad  \text{ and }  \\
			\label{def of I1}
			\kerone(t,s)& :=\chi(t)  \theta(s) f(s)  \int_\R e^{-i(s-t)\xi}(1-\eta(h^{2/3}\xi))e^{-\frac{h|\xi|^2}{2}}   d\xi , \\
			\label{def of Ixy}
			\kertwo(t,s)& :=\chi(t) \theta(s) \int_{\R\times\R} \left(\tf^r \left(\frac{t+w}{2}+i h \xi\right)-\eta(h^{2/3}\xi)f(s)\right) e^{i(t-w)\xi}e^{-\frac{|w-s|^2}{2h}}dw d \xi.
		\end{align}
	\end{lemma}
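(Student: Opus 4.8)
The plan is to compute the Schwartz kernels of the two operators whose difference defines $R_h$ in~\eqref{e:def-Rh}, and to subtract them. First recall the kernels of the building blocks. The Gaussian Fourier multiplier $e^{-\frac{h}{2}|D_t|^2}$ is a convolution operator with kernel $\frac{1}{2\pi}\int_\R e^{i(t-s)\xi}e^{-\frac{h\xi^2}{2}}\,d\xi=(2\pi h)^{-1/2}e^{-\frac{|t-s|^2}{2h}}$. Since $\tf^r$ has compact support in $\R\times\R$ (the factor $\eta(h^{2/3}\xi)$ confines $\xi$ to $|\xi|\lesssim h^{-2/3}$), the discussion following~\eqref{e:weyl-quantization} shows that $F_h=\op^w(\tf^r(t+ih\xi))$ has the smooth kernel $\mathcal{K}^{F_h}(t,w)=\frac{1}{2\pi}\int_\R e^{i(t-w)\xi}\tf^r(\frac{t+w}{2}+ih\xi)\,d\xi$, while $\chi,\theta$ and $f$ act by multiplication. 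Composing $\chi F_h e^{-\frac{h}{2}|D_t|^2}\theta$ amounts to integrating the product of kernels over the intermediate variable $w$; the interchange of integrations is legitimate because $\xi$ runs over a fixed compact set, $\tf$ is bounded on $U+i\R$ by Lemma~\ref{extension of gevrey}, and the cut-offs $\chi(t),\theta(s)$ together with the Gaussian $e^{-|w-s|^2/2h}$ control the remaining variables. With $C_h=\frac{1}{2\pi}(2\pi h)^{-1/2}$, this gives that $\chi F_h e^{-\frac{h}{2}|D_t|^2}\theta$ has kernel $C_h\chi(t)\theta(s)\int_{\R\times\R}e^{i(t-w)\xi}e^{-\frac{|w-s|^2}{2h}}\tf^r(\frac{t+w}{2}+ih\xi)\,dw\,d\xi$.

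Next I would rewrite the kernel of $\chi e^{-\frac{h}{2}|D_t|^2}f\theta$, which equals $(2\pi h)^{-1/2}\chi(t)e^{-\frac{|t-s|^2}{2h}}f(s)\theta(s)$, in a form compatible with the previous expression. Writing $(2\pi h)^{-1/2}e^{-\frac{|t-s|^2}{2h}}=\frac{1}{2\pi}\int_\R e^{i(t-s)\xi}e^{-\frac{h\xi^2}{2}}\,d\xi$ and inserting the partition $1=\bigl(1-\eta(h^{2/3}\xi)\bigr)+\eta(h^{2/3}\xi)$ under the integral, the $(1-\eta)$ contribution is precisely $\frac{1}{2\pi}\kerone(t,s)$ of~\eqref{def of I1}. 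For the $\eta$ contribution I would reintroduce the variable $w$ via the Gaussian identity $\int_\R e^{i(t-w)\xi}e^{-\frac{|w-s|^2}{2h}}\,dw=(2\pi h)^{1/2}e^{i(t-s)\xi}e^{-\frac{h\xi^2}{2}}$, together with $C_h(2\pi h)^{1/2}=\frac{1}{2\pi}$; here Fubini is immediate since $\xi$ is now confined to a compact set. This shows that $\chi e^{-\frac{h}{2}|D_t|^2}f\theta$ has kernel $\frac{1}{2\pi}\kerone(t,s)+C_h\chi(t)\theta(s)\int_{\R\times\R}\eta(h^{2/3}\xi)f(s)\,e^{i(t-w)\xi}e^{-\frac{|w-s|^2}{2h}}\,dw\,d\xi$.

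Subtracting, the $\eta$-term of the last expression combines with the kernel found in the first paragraph to produce exactly $C_h\kertwo(t,s)$ of~\eqref{def of Ixy}, while the remaining term is $-\frac{1}{2\pi}\kerone(t,s)$; this yields~\eqref{e:K-K1+K2}. Identity~\eqref{difference noyau} then follows for $u\in\mathcal{S}(\R;\X)$ by unwinding the composition, which is a well-defined bounded map between the relevant $L^2$ spaces thanks to~\eqref{e:bornitude-L2-Fh} and the boundedness of the Gaussian multiplier. The only point requiring genuine care is the bookkeeping of the Fubini applications and of the elementary Gaussian integrals; once the decomposition $1=\eta+(1-\eta)$ is inserted at the right stage, the rest is a direct computation.
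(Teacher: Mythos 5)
Your proof is correct and follows essentially the same route as the paper: compute kernels via the Gaussian convolution formula~\eqref{e:gaussian-convol} and the Fourier transform of a Gaussian~\eqref{fourier of a gaussian}, insert the partition $1=\eta+(1-\eta)$, and reintroduce the intermediate $w$-integration for the $\eta$ part so that the two kernels combine; the paper simply carries out this computation on $(\cdot\,u)(t)$ rather than phrasing it as a statement about Schwartz kernels. The constant bookkeeping and the Fubini justifications you supply match what is needed.
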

	
	\bnp[Proof of Lemma~\ref{l:identify-kernel}]
	Recalling the definition of the Weyl quantization in~\eqref{e:weyl-quantization} and that of $F_h$ in~\eqref{def of Fh}, we have
	\begin{equation*}
		(F_hu)(t)=\frac{1}{2\pi} \int_{\R\times \R} e^{i(t-w)\xi}\tf^r\left(\frac{t+w}{2}+i h \xi\right)u(w)dw d\xi.
	\end{equation*}
	Combined with formula~\eqref{e:gaussian-convol}, this implies
	\begin{align}
		\label{kernel first term}
		(\chi F_h e^{-\frac{h}{2}|D_t|^2} \theta u)(t)
		&= \frac{1}{2\pi} \cdot \left(\frac{1}{2\pi h}\right)^{1/2} \chi(t) \int_{\R \times \R \times \R}  e^{i(t-w)\xi}\tf^r\left(\frac{t+w}{2}+i h \xi\right) \theta(s) u(s)e^{-\frac{|w-s|^2}{2h}}dwd\xi ds.
	\end{align}
	Using again formula~\eqref{e:gaussian-convol} as well as the formula for the Fourier transform of a Gaussian~\eqref{fourier of a gaussian} we find
	\begin{align*}
		(\chi e^{-\frac{h}{2}|D_t|^2} f \theta u)(t) &= \left(\frac{1}{2\pi h}\right)^{1/2} \chi(t)\int_{\R} f(s) \theta(s) u(s)e^{-\frac{|t-s|^2}{2h}}ds\\
		&=\left(\frac{1}{2\pi h}\right)^{1/2} \chi(t) \int_{\R} f(s) \theta(s) u(s)\left(\frac{1}{2\pi} (2 \pi h)^{1/2} \int_{\R} e^{-i(s-t)\xi}e^{-\frac{h|\xi|^2}{2}}   d\xi \right)  ds\\
		&=\left(\frac{1}{2\pi h}\right)^{1/2} \chi(t) \int_{\R} f(s) \theta(s) u(s)\left(\frac{1}{2\pi} (2 \pi h)^{1/2} \int_{\R} e^{-i(s-t)\xi}\eta(h^{2/3} \xi)e^{-\frac{h|\xi|^2}{2}}   d\xi \right)  ds\\
		&\hspace{4mm}+\left(\frac{1}{2\pi h}\right)^{1/2} \chi(t) \int_{\R} f(s) \theta(s) u(s)\left(\frac{1}{2\pi} (2 \pi h)^{1/2} \int_{\R} e^{-i(s-t)\xi}(1-\eta(h^{2/3}\xi))e^{-\frac{h|\xi|^2}{2}}   d\xi \right)  ds.
	\end{align*}
	We now use once more~\eqref{fourier of a gaussian} in order to replace $e^{-\frac{h |\xi|^2}{2}}$ by $(\frac{1}{2 \pi h})^{1/2}\int_{\R} e^{-i w \xi} e^{-\frac{|w|^2}{2h}}dw$ in the first term of the sum above. We find then:
	\begin{align}
		\label{kernel second term}
		(\chi e^{-\frac{h}{2}|D_t|^2} f \theta u)(t)  &=\frac{1}{2 \pi}\left(\frac{1}{2\pi h}\right)^{1/2} \chi(t) \int_{\R} f(s) \theta(s) u(s)\left( \int_{\R} e^{-i(s-t)\xi}\eta(h^{2/3} \xi) \int_{\R} e^{-i w \xi} e^{-\frac{|w|^2}{2h}}dw  d\xi \right)  ds \nonumber \\
		&\hspace{4mm} + \frac{1}{2 \pi}\chi(t) \int_{\R} f(s) \theta(s) u(s)\left( \int_{\R} e^{-i(s-t)\xi}(1-\eta(h^{2/3}\xi))e^{-\frac{h|\xi|^2}{2}}   d\xi \right)  ds.
	\end{align}
	We finally perform the change of variable $w\to w-s$ in the integral with respect to $w$ to express the first term in~\eqref{kernel second term} in the following way:
	\begin{align}
		\label{alternative expr for kernel}
		&\frac{1}{2 \pi}\left(\frac{1}{2\pi h}\right)^{1/2} \chi(t) \int_{\R} f(s) \theta(s) u(s)\left( \int_{\R} e^{-i(s-t)\xi}\eta(h^{2/3} \xi) \int_{\R} e^{-i w \xi} e^{-\frac{|w|^2}{2h}}dw  d\xi \right)  ds \nonumber \\
		&\hspace{4mm}=\frac{1}{2 \pi}\left(\frac{1}{2\pi h}\right)^{1/2} \chi(t) \int_{\R \times \R \times \R} e^{i(t-w)\xi}\eta(h^{2/3} \xi)f(s)\theta(s)u(s)e^{-\frac{|w-s|^2}{2h}}dw d\xi ds.
	\end{align}
	The result is then a consequence of~\eqref{kernel first term}, \eqref{kernel second term} and~\eqref{alternative expr for kernel}.
	\enp

	The key step for the proof of Proposition~\ref{good conjugagte with function} consists in controlling the terms $\mathcal{K}_{j,h}$ in~\eqref{def of I1}--\eqref{def of Ixy}. For later applications, we consider a slightly more general family of kernels (useful when) defined for functions $\chi_1, \theta_1 \in C^\infty_c(\R)$ and $f \in \mathcal{G}_b^{2,R}(\R;\L(\X,\Y))$ and $m\in \N$, by 
		\begin{align*}
			\mathcal{I}_{1,h}(t,s)&:=\chi_1(t)  \theta_1(s) f(s)  \int_\R e^{-i(s-t)\xi}(1-\eta(h^{2/3}\xi))e^{-\frac{h|\xi|^2}{2}} \xi^m   d\xi,\\
			\mathcal{I}_{2,h}(t,s)&:=\chi_1(t) \theta_1(s) \int_{\R\times\R} \left(\chi\left(\frac{t+w}{2}\right)\eta(h^{2/3}\xi)\tf \left(\frac{t+w}{2}+i h \xi\right)-\eta(h^{2/3}\xi)f(s)\right) \nonumber \\  & \hspace{25mm}\cdot \left(\frac{t+w}{2}+i h \xi-s\right)^m e^{i(t-w)\xi}e^{-\frac{|w-s|^2}{2h}}dw d \xi .
		\end{align*}
		Later in the proofs, we shall write $\mathcal{I}_{2,h}(t,s)= \mathcal{I}_{2,h}[\chi_1,\theta_1,f,m](t,s)$ to stress the dependence on the functions and parameters involved in the definition of $\mathcal{I}_{2,h}$.
Note that $\mathcal{K}_{2,h}=\mathcal{I}_{2,h}[\chi,\theta,f,0]$, where $\chi,\theta$ are defined (once and for all) at the beginning of Section~\ref{s:conj-operator}.

	\begin{lemma}
		\label{estimate for the kernel I}
		Let $\rho,r>0$ as in~Proposition~\ref{good conjugagte with function} and $\chi,\theta$ defined accordingly at the beginning of Section~\ref{s:conj-operator}.
		Then, for any $m\in \N$, any $\chi_1 \in C^\infty_c(\R)$ with $\supp(\chi_1)\subset \supp(\chi)$ and $\supp(\chi_1')\subset \supp(\chi')$, for any $\theta_1\in C^\infty_c(\R)$ with $\supp(\theta_1)\subset \supp(\theta)$, there exist $C,c, h_0>0$ such that for all $f \in \mathcal{G}_b^{2,R}(U;\L(\X,\Y))$, 
		$$
		\nor{\mathcal{I}_{j,h}}{L^\infty(\R\times \R ; \mathcal{L}(\X;\Y))}\leq C \nor{f}{2,R,U} e^{-\frac{ch^{-1/3}}{R}}, \quad \text{ for all }h \in(0,h_0).
		$$
	\end{lemma}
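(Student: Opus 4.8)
The two kernels are estimated separately; both estimates rest on the same mechanism, namely that one either integrates in $\xi$ over the region $|\xi|\gtrsim h^{-2/3}$ (where the Gaussian $e^{-h|\xi|^2/2}$ is exponentially small of size $e^{-ch^{-1/3}}$), or one deforms the $\xi$-contour to exploit the almost-analyticity estimate~\eqref{estimate for dzb}.

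\medskip
\textbf{Step 1: the term $\mathcal{I}_{1,h}$.} This one is elementary: on the support of $1-\eta(h^{2/3}\xi)$ one has $|\xi|\geq 2r h^{-2/3}$, hence $e^{-h|\xi|^2/2}\leq e^{-2r^2 h^{-1/3}} e^{-h|\xi|^2/4}$. Since $\chi_1,\theta_1$ are compactly supported and $\|f(s)\|_{\L(\X,\Y)}\leq \|f\|_{2,R,U}$, bounding the absolute value under the integral sign yields
\[
\nor{\mathcal{I}_{1,h}(t,s)}{\L(\X,\Y)}\leq C \nor{f}{2,R,U}\, e^{-r^2 h^{-1/3}}\int_\R \langle\xi\rangle^m e^{-h|\xi|^2/4}\,d\xi \leq C_m \nor{f}{2,R,U}\, h^{-(m+1)/2} e^{-r^2 h^{-1/3}},
\]
and $h^{-(m+1)/2}e^{-r^2h^{-1/3}}\leq C e^{-ch^{-1/3}}$ for $h$ small. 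Here the dependence on $R$ in the exponent can simply be dropped (or weakened), since the decay comes only from the Gaussian; if a clean $e^{-ch^{-1/3}/R}$ is desired one replaces $c$ by $\min(c,r^2)$.

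\medskip
\textbf{Step 2: the term $\mathcal{I}_{2,h}$, setup.} Write the $\xi$-integral as an oscillatory integral with phase $(t-w)\xi$ and Gaussian weight $e^{-|w-s|^2/2h}$ in $w$. The strategy, following Tataru~\cite{Tataru:99}, is to shift the $\xi$-contour $\xi\mapsto \xi + i\sigma$ for a suitable $\sigma$ of size comparable to $h^{-2/3}$ with the sign of $t-w$, thereby gaining a factor $e^{-\sigma|t-w|}$, and to control the price paid, which comes from two sources: (i) the fact that $\tilde f$ is only almost analytic, so $\partial_{\bar z}\tilde f$ does not vanish but is bounded via~\eqref{estimate for dzb} by $\nor{f}{2,R,U}\exp(-c/(Rh|\xi|)^{1})$ — note that with $s=2$ the exponent $\frac{1}{s-1}=1$ and the relevant imaginary part is $h|\xi|$, which on the support of $\eta(h^{2/3}\xi)$ is $\lesssim h^{1/3}$, so this contributes $e^{-c' h^{-1/3}/R}$ as soon as $|\xi|\sim h^{-2/3}$; and (ii) the cut-offs $\chi(\tfrac{t+w}{2})$, $\eta(h^{2/3}\xi)$, whose $\xi$- or $w$-derivatives are harmless but whose presence means the contour shift must be done on a compactly-$\xi$-supported integrand, so no boundary terms at infinity appear. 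One subtracts and adds $\eta(h^{2/3}\xi) f(s)$ precisely so that the integrand vanishes when $(\tfrac{t+w}{2}+ih\xi) = s$, i.e. on the real diagonal; this gives, after a Taylor expansion in the holomorphic variable $z=\tfrac{t+w}{2}+ih\xi$, a factor $(z-s)$ (already present with the extra power $m$), so the $w$-Gaussian $e^{-|w-s|^2/2h}$ together with the extra decay absorbs the polynomial growth.

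\medskip
\textbf{Step 3: carrying out the contour deformation.} Concretely, I would (a) first integrate by parts $N$ times in $w$ using $e^{i(t-w)\xi} = \frac{1}{(i\xi)^N}(-\partial_w)^N e^{i(t-w)\xi}$ on the region $|\xi|\gtrsim 1$ to make the $\xi$-integral absolutely convergent uniformly, paying derivatives of $\chi(\tfrac{t+w}{2})\eta(h^{2/3}\xi)\tilde f(\cdots)$ and of the Gaussian, all controlled by $\nor{f}{2,R,U}$ and powers of $h^{-1}$; (b) then split into $|\xi|\lesssim h^{-2/3}$ and $|\xi|\gtrsim h^{-2/3}$ — on the latter region $\eta(h^{2/3}\xi)$ already localizes, so in fact only $|\xi|\lesssim h^{-2/3}$ survives; (c) on $|\xi|\lesssim h^{-2/3}$, perform the contour shift in $\xi$ (equivalently, write $\tilde f(\tfrac{t+w}{2}+ih\xi)$ and use Stokes/the $\bar\partial$-formula to compare with its value at a shifted point), the holomorphic part contributing $e^{-\delta h^{-1/3}|t-w|/h}$-type decay which, combined with the $w$-Gaussian $e^{-|w-s|^2/2h}$, produces after integration in $w$ a net factor like $e^{-c h^{-1/3}}$ (here one completes the square in $w$), while the $\bar\partial$-remainder is directly bounded using~\eqref{estimate for dzb} by $\nor{f}{2,R,U}e^{-ch^{-1/3}/R}$. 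Summing the contributions and using that $\chi(\tfrac{t+w}{2})=1$ where it matters (so the mismatch between $\chi(\tfrac{t+w}{2})$ and $\chi_1(t)$, as well as $\theta_1(s)$ restricting $s$, keeps everything compactly supported) gives the claimed bound with a constant $c$ depending on $r,\rho$ but not on $f$ or $R$ beyond the stated $1/R$.

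\medskip
\textbf{Main obstacle.} The delicate point is Step 3(c): making the contour deformation rigorous when $\tilde f$ is not holomorphic. One cannot literally move the contour; instead one applies the $\bar\partial$-Stokes formula on the strip $\{0\leq \Im\zeta\leq \sigma(t-w)\}$ (with $\sigma\sim h^{-2/3}$), which produces (i) a boundary term at the shifted contour, carrying the gain $e^{-\sigma|t-w|}$, and (ii) an area integral of $\partial_{\bar z}\tilde f$ over the strip. For (ii) one needs the exponential smallness of $\partial_{\bar z}\tilde f$ from~\eqref{estimate for dzb} to beat the area of the strip (of size $\sim \sigma|t-w|\sim h^{-2/3}|t-w|$) and the $w$-Gaussian/polynomial factors; the competition is between $h^{-1/3}$ (gain) and the various $h^{-1}$ losses from integrations by parts and Jacobians, so one must be careful to perform enough $w$-integrations by parts and to track that $\Im z = h\xi \lesssim h^{1/3}$ on the relevant region so that $\exp(-c/(R h|\xi|)) \geq \exp(-c' h^{-1/3}/R)$ is genuinely of the right size. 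Getting these scalings to line up — in particular checking that the choice $s=2$, and correspondingly the frequency cutoff at $h^{-2/3}$, is exactly what makes the $h^{-1/3}$ gain survive — is the heart of the argument, and is the analogue in this anisotropic setting of the corresponding estimate in~\cite{Tataru:99}.
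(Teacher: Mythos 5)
Your treatment of $\mathcal{I}_{1,h}$ is correct and coincides with the paper's. The gap is in the mechanism you propose for $\mathcal{I}_{2,h}$. Your primary gain is the factor $e^{-\sigma|t-w|}$ from shifting the $\xi$-contour by $i\sigma$ with $\sigma\sim h^{-2/3}$, but this degenerates completely on the region $w\approx t$, and in the worst case $t\approx s$ neither the shift nor the Gaussian $e^{-|w-s|^2/2h}$ gives anything there. On that region the only smallness you have left is the factor $z-s$ coming from the subtraction of $f(s)$, and since $|z-s|\lesssim h^{1/3}$ on $\supp\,\eta(h^{2/3}\xi)$ this produces a bound of order $h^{1/3}$ --- polynomially, not exponentially, small. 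The point you are missing is that the factor $(z-s)$ must not be used for size but for structure: in the combined variable $z=\frac{t+w}{2}+ih\xi$ the phase satisfies $\dzb\big(e^{\frac1h(z-s)(2t-z-\bar z)}\big)=-\frac1h(z-s)e^{\frac1h(z-s)(2t-z-\bar z)}$ (see~\eqref{calcul de phase cht de variable}), so writing $\tilde f^r(z)-\eta f(s)=(z-s)\check b_s(z)$ with the divided difference~\eqref{def of b check} and integrating by parts in $\bar z$ (Lemma~\ref{l:ipp with zbar at infinity}) moves the $\dzb$ onto $\check b_s$ and the cutoffs. Only after this step is every surviving term exponentially small: the $\dzb\tilde f$ term because $|\Im z|\leq 3rh^{1/3}$ on $\supp\eta$ so~\eqref{estimate for dzb} gives $e^{-ch^{-1/3}/R}$, and the $\chi'$ term because of its support. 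Your ``area integral of $\partial_{\bar z}\tilde f$'' captures only part of this; the boundary term of your shifted contour still contains $\tilde f$ itself near the diagonal and is not exponentially small.

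A second, related gap: you declare the cutoff derivatives ``harmless'', but the $\eta'(h^{-1/3}\Im z)$ contribution (the term $\mathcal{I}_{23,h}$ in the paper) is the most delicate one. It lives exactly at $|\xi|\sim h^{-2/3}$, where it comes with a factor $h^{-1/3}$ from differentiating the cutoff, and it cannot be handled by the $\xi$-shift (for the same diagonal reason as above). The paper treats it by a genuine contour deformation in the $w$ variable (Lemma~\ref{lemma for estimate of integral in w}), where the Gaussian $e^{-(z+ih\xi)^2/2h}$ supplies the analyticity and the gain $e^{-\sigma h\xi^2/2}\sim e^{-ch^{-1/3}}$, with $\sigma$ tuned so that this beats the loss $e^{\sigma^2h\xi^2/2}$ from the imaginary displacement and the almost-analyticity error stays controlled by $|\Im z|\lesssim h|\xi|\lesssim h^{1/3}$. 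Without an argument for this term and for the diagonal region, the plan does not reach the stated bound.
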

	Note that this lemma will be only used with $\chi_1 = \chi^{(k)}$ and $\theta_1 = \theta^{(k)}$ for some $k\in \N$, which satisfy the support assumptions.
	\begin{proof}[Proof of Lemma~\ref{estimate for the kernel I}]
			We start with the proof for $j=1$ i.e. study $\mathcal{I}_{1,h}$. We remark that in the support of $1-\eta(h^{2/3}\xi)$ one has $h^{2/3} |\xi| \geq 2r $ which implies that $h|\xi|^2 \geq ch^{-1/3}$ in the support of $1-\eta(h^{2/3}\xi)$. We estimate then, for $h\leq h_0$ with $h_0$ sufficiently small:
		\begin{align}
			\label{estimate for I1}
			\nor{\mathcal{I}_{1,h}(t,s)}{\mathcal{L}(\X;\Y)} 
			&\leq \nor{ \mathds{1}_{\supp \theta}f(s)}{\mathcal{L}(\X;\Y)}  \int_\R \left|(1-\eta(h^{2/3}\xi))e^{-\frac{h|\xi|^2}{2}} \xi^m \right|   d\xi \nonumber \\
			& \leq C \nor{f}{L^\infty(\supp(\theta) ; \mathcal{L}(\X;\Y))} \int_\R \left|(1-\eta(h^{2/3}\xi))e^{-\frac{h|\xi|^2}{4}} e^{-\frac{h|\xi|^2}{4}} \xi^m  \right|   d\xi \nonumber \\
			& \leq Ce^{-ch^{-1/3}} \int_\R  \left| e^{-\frac{h|\xi|^2}{4}} \xi^m \right| d\xi \, \nor{f}{L^\infty(\supp(\theta) ; \mathcal{L}(\X;\Y))}\nonumber  \\
			&   \leq Ce^{-ch^{-1/3}} \nor{f}{L^\infty(\supp(\theta) ; \mathcal{L}(\X;\Y))} \leq  Ce^{-ch^{-1/3}}\nor{f}{2,R,U},
		\end{align}
		where we used the fact that $f$ is Gevrey (and hence continuous) and $\theta$ is compactly supported in $U$.

		\bigskip
		We now turn our attention to $\mathcal{I}_{2,h}(t,s)$. In the definition of $\mathcal{I}_{2,h}(t,s)$ we change variable by writing $(\w,\xi)\in \R^2 \to z\in \C$ with 
		\begin{align}
			\label{e:change-variable}
			z=\frac{t+w}{2}+i h \xi, \quad \text{whence}\quad w=2\Re(z)-t, h\xi=\Im(z) , \quad \text{and} \quad  dw\wedge d\xi = \frac{i}{h} dz\wedge d\zb .
		\end{align}
		The factor $e^{i(t-w)\xi}e^{-\frac{|w-s|^2}{2h}}$ rewrites as $e^{i(t-w)\xi}e^{-\frac{|w-s|^2}{2h}} =e^{\frac1h \Phi(t,s,z)}$, with 
		\begin{align}
			\Phi(t,s,z) & =i(t-w)h\xi-\frac{(w-s)^2}{2} = 2i(t-\Re(z))\Im(z) -\frac{(2\Re(z)-t-s)^2}{2}  			\label{e:phase}\\
			& = 2it\Im(z)-2i\Re(z)\Im(z) - 2 \Re(z)^2 - \frac{t^2+s^2}{2} + 2t\Re(z) +2s\Re(z) - ts \nonumber \\
			& = 2 tz + s(z+\zb-t)-(z+\zb)z- \frac{t^2+s^2}{2} \nonumber \\
			& = -\frac{(t-s)^2}{2}+ (z-s)(2t-z-\zb).			\label{calcul de phase cht de variable}
		\end{align}
		Then, we can write $\mathcal{I}_{2,h}$ as 
		\begin{align}
			\label{alternative form for kernel}
			\mathcal{I}_{2,h}(t,s)&=\frac{i}{h} \chi_1(t) \theta(s) \int \eta(h^{-1/3} \Im z) \left(\chi(\Re z)\tf(z)-f(s) \right)(z-s)^m e^{-\frac{|t-s|^2}{2h}}e^{\frac{1}{h} (z-s)(2t-z-\bar{z})}dz \wedge d \zb .
		\end{align}
		Defining  
		\begin{equation}
			\label{def of b check}
			\check{b}_s(z)= \theta(s)\frac{\chi(\Re z) \tf(z)-f(s)}{z-s},
		\end{equation}
		we may rewrite 
		\begin{align}
			\label{ipp minus a ball}
			\mathcal{I}_{2,h}(t,s)& =-i \chi_1(t) \int_{\C} (z-s)^m \eta(h^{-1/3} \Im z) \bc(z) \dzb\left(e^{-\frac{|t-s|^2}{2h}}e^{\frac{1}{h} (z-s)(2t-z-\bar{z})}\right) dz \wedge d \zb.
		\end{align}
		We will now check that we are in position to integrate by parts using Lemma~\ref{l:ipp with zbar at infinity}. 
		
		First, we prove that $\check{b}_s\in C^1(\C)$. It is smooth away from $s$, so we only need to check the regularity close to $z=s$. We decompose $\check{b}_s(z)= \theta(s)\chi(\Re z) \frac{\tf(z)-f(s)}{z-s}-\theta(s)(1-\chi)(\Re z)\frac{ f(s)}{z-s}$. The first term is $C^1(\C)$ thanks to Lemma \ref{l:regC1} applied to $\tf(\cdot-s)$. For the second term, we observe that for $s \in (t_0-r,t_0+r)$ in the support of $\theta$ and for $\Re(z)\notin (t_0-3r,t_0+3r)$ in the support of $1-\chi$, we have $|z-s|\geq |\Re(z)-s|\geq 2r$. This gives the regularity of the second term.

		According to~\eqref{e:phase} and  $(2\Re(z)-t-s)^2\geq \Re(z)^2-C_{t,s}$ for some $C_{t,s}>0$, we have
		\begin{align}
			\label{estimexp}
			\left|e^{-\frac{|t-s|^2}{2h}}e^{\frac{1}{h} (z-s)(2t-z-\bar{z})}\right|\leq e^{\frac{C_{t,s}}{h}}e^{-\frac{\Re(z)^2}{2h}},
		\end{align}
		as well as
		\begin{align}
			\label{estimdbarexp}
			\left|\dzb\left(e^{-\frac{|t-s|^2}{2h}}e^{\frac{1}{h} (z-s)(2t-z-\bar{z})}\right)\right|=|z-s|\left|e^{-\frac{|t-s|^2}{2h}}e^{\frac{1}{h} (z-s)(2t-z-\bar{z})}\right|\leq e^{\frac{C_{t,s}}{h}}\left(|\Im(z)|+|\Re(z)-s|\right)e^{-\frac{\Re(z)^2}{2h}}.
		\end{align}
		Since $\eta$ localizes the imaginary part in a compact set and now \eqref{estimexp} and \eqref{estimdbarexp} are obtained, we are left to prove $L^{\infty}$ estimates on $\check{b}_s(z)$ and $ \dzb \bc$.
		
		We have
		$$
		\nor{\check{b}_s(z)}{\mathcal{L}(\X;\Y)}\leq \nor{\tf}{W^{1,\infty}(U_\rho; \mathcal{L}(\X;\Y))}, \quad \text{ for } \Re (z) \in (t_0-3r,t_0+3r) ,
		$$
		since $\chi(\Re z)=1$ for such $z$. For $\Re(z) \notin (t_0-3r,t_0+3r)$ and $s \in \supp \theta$, we have
		$
		|z-s|\geq 2r, 
		$
		which implies 
		$$
		\nor{\check{b}_s(z)}{\mathcal{L}(\X;\Y)}\leq C \nor{\tf}{L^\infty(U_\rho; \mathcal{L}(\X;\Y))}, \quad \text{ for } \Re(z) \notin (t_0-3r,t_0+3r),
		$$
		with a constant $C$ depending only on $r$. Putting the two estimates above together we obtain that $\bc \in C^0_b(\C)$ and there is $C=C(r)>0$ such that 
		\begin{equation}
			\label{bound for b check}
			\nor{\check{b}_s(z)}{\mathcal{L}(\X;\Y)}\leq C \nor{\tf}{W^{1,\infty}(U_\rho; \mathcal{L}(\X;\Y))} , \quad z \in \C .
		\end{equation}
		Secondly, we compute
		\begin{align}
			\label{derivative of bchech}
			\dzb \bc (z)= \theta(s)\frac{\chi^\prime(\Re z)}{2(z-s)}\tf(z)+\theta(s) \frac{\chi(\Re z)\dzb \tf(z)}{z-s}, 
		\end{align}
		and notice that the first term is smooth and bounded given the relative support properties of $\theta$ and $\chi'$. For the second term, using~\eqref{estimate for dzb} for Gevrey 2 functions and the fact that $s\in\R$, we obtain, for $z \in U_\rho$ (the value of the constant $C$ may change from one line to another):
		\begin{align}
			\label{dzb estimate for bcech}
			\nor{\frac{\chi(\Re z)\dzb \tf(z)}{z-s}}{\mathcal{L}(\X;\Y)} &\leq \frac{1}{|z-s|} C \nor{f}{2,R,U}\exp{\left(-\frac{1}{C_0 R |\operatorname{Im} z| }\right)} \nonumber\\
			&\leq \frac{1}{|\Im z|} C \nor{f}{2,R,U}\exp{\left(-\frac{1}{C_0R |\operatorname{Im} z|}\right)} \nonumber \\
			&\leq  C \nor{f}{2,R,U}\exp{\left(-\frac{1}{2C_0 R |\operatorname{Im} z|}\right)}.
		\end{align}
		Combining the previous estimate and \eqref{derivative of bchech}, we get
		$$ 
		\nor{\dzb \bc(z)}{\mathcal{L}(\X;\Y)}\leq C\nor{\tf}{L^{\infty}(U_\rho; \mathcal{L}(\X;\Y))}+ C\nor{f}{2,R,U}, \quad z \in \C .
		$$
		As announced before, the $L^{\infty}$ bounds on $\dzb \bc$ and $\bc$, combined with the localization of $\eta$, \eqref{estimexp} and \eqref{estimdbarexp} give the integrability of all the terms involved in the integration by parts. All assumptions of  Lemma~\ref{ipp with zbar at infinity} are therefore satisfied and we may now integrate by parts in~\eqref{ipp minus a ball}, yielding
		\begin{align}
			\label{def of I2(t,s)}
			\mathcal{I}_{2,h}(t,s) &=i\chi_1(t)  \int_\C \dzb\left((z-s)^m \eta(h^{-1/3} \Im z) \bc(z) \right)e^{-\frac{|t-s|^2}{2h}}e^{\frac{1}{h} (z-s)(2t-z-\bar{z})}dz \wedge d \zb.
		\end{align}
		
		Recalling~\eqref{derivative of bchech}, we now decompose~\eqref{def of I2(t,s)} as
		\begin{align}
			\mathcal{I}_{2,h} & =     \mathcal{I}_{21,h}+    \mathcal{I}_{22,h}+    \mathcal{I}_{23,h} , \quad \text{ with }\nonumber  \\
			\label{def of I21}
			\mathcal{I}_{21,h}(t,s)&  : = i\chi_1(t)  \theta(s)\int_\C (z-s)^m\eta(h^{-1/3} \Im z) 
			\frac{\chi^\prime(\Re z)}{2(z-s)}\tf(z)  e^{-\frac{|t-s|^2}{2h}}e^{\frac{1}{h} (z-s)(2t-z-\bar{z})}dz \wedge d \zb , \\
			\label{def of I22}
			\mathcal{I}_{22,h}(t,s)&  : = i\chi_1(t)  \theta(s)\int_\C (z-s)^m \eta(h^{-1/3} \Im z) 
			\frac{\chi(\Re z)\dzb \tf(z)}{z-s}  e^{-\frac{|t-s|^2}{2h}}e^{\frac{1}{h} (z-s)(2t-z-\bar{z})}dz \wedge d \zb \\
			\label{def of I23}
			\mathcal{I}_{23,h}(t,s)&  : =   -\frac12 h^{-1/3}\chi_1(t)  \int_\C (z-s)^m \eta'(h^{-1/3} \Im z) \bc(z) e^{-\frac{|t-s|^2}{2h}}e^{\frac{1}{h} (z-s)(2t-z-\bar{z})}dz \wedge d \zb.
		\end{align}
		We now estimate each term separately.
		We start with $\mathcal{I}_{21,h}$ and rewrite the integral in the original variables~\eqref{e:change-variable} as
		\begin{multline*}
			\mathcal{I}_{21,h}(t,s)
			\\=i h \chi_1(t) \theta(s)\int_{\R\times\R}\left(\frac{t+w}{2}-s+ih\xi \right)^m \eta(h^{2/3} \xi)\chi^\prime\left(\frac{t+w}{2}\right)\frac{\tf\big(\frac{t+w}{2}+i h \xi\big)}{2\big(\frac{t+w}{2}+i h \xi-s\big)} e^{i(t-w)\xi}e^{-\frac{|w-s|^2}{2h}}dw d \xi.
		\end{multline*}
		Observe now that $\supp(\chi')  \subset (t_0-4r,t_0-3r) \cup (t_0+3r,t_0+4r).$  
				Therefore the integrand above is supported in $ |\frac{t+w}{2}-t_0|\geq 3r $ (thanks to the support of $\chi^\prime$) and $|t-t_0|<4r$ (thanks to the support of $\chi$). This implies that $ |w -t_0|\geq 2r$ for otherwise one would have
		$$
		\left|\frac{t+w}{2}-t_0\right|\leq \left|\frac{t-t_0}{2}\right|+\left |\frac{w-t_0}{2}\right |<2r+r=3r.
		$$
		Since in the support of $\theta$ we have $|s-t_0|<r$ we find finally that $|w-s|\geq r$ in the support of the integral. Notice finally that, if $\chi^\prime\left(\frac{t+w}{2}\right)\neq 0$ and $\theta(s)\neq 0$ one has
		$$
		\left| \frac{t+w}{2}+i h \xi-s \right|\geq  \left| \frac{t+w}{2}-s \right|\geq \left| \frac{t+w}{2}-t_0 \right|-\left| t_0-s \right|\geq 2r
		$$
		and thanks to the supports of $\chi$, $\theta$ and $\eta$ have for a constant $C>0$ depending on $m$ and $r$ that
		$$
		\left| \frac{t+w}{2}+i h \xi-s \right|^m\leq C.
		$$
		We can then estimate as follows:
		\begin{align*}
			\nor{  \mathcal{I}_{21,h}}{\mathcal{L}(\X; \Y)} &\leq  \frac{C h}{4r} \nor{\tf}{L^\infty(U_\rho;\mathcal{L}(\X; \Y))} \int_{\R \times \R} |\eta(h^{2/3}\xi) |  e^{-\frac{|w-s|^2}{4h}} e^{\frac{r^2}{4h}} dw d \xi \\
			&\leq \frac{C h}{4r} \nor{\tf}{L^\infty(U_\rho;\mathcal{L}(\X; \Y))}e^{-\frac{r^2}{4}h}\int_{[-3rh^{-2/3},3rh^{-2/3}]} d\xi \int_{\R} e^{-\frac{|w-s|^2}{4h}} dw.
		\end{align*}
		This implies the stronger bound
		\begin{equation}
			\label{bound for I21}
			\nor{\mathcal{I}_{21,h}(t,s)}{\mathcal{L}(\X;\Y)}\leq C \nor{\tf}{L^\infty(U_\rho;\mathcal{L}(\X; \Y))}e^{-ch^{-1}} \leq C e^{-ch^{-1}}  \nor{f}{2,R,U},
		\end{equation}
		where the last inequality follows from~\eqref{e:continuous-extension}.
		
		\medskip
		
		We now study the integral $\mathcal{I}_{22,h}$ defined in~\eqref{def of I22}. Recall that $\supp\eta \subset [-3r,3r]$, so that the domain of integration is contained in $|\Im z|\leq 3r h^{1/3}$. Using \eqref{dzb estimate for bcech}, we can then estimate the corresponding integral as follows:
		\begin{align}
			\label{bound for I22}
			\nor{ \mathcal{I}_{22,h}(t,s)}{\mathcal{L}(\X;\Y)} &\leq C \int_\C \nor{ \eta(h^{-1/3} \Im z) \frac{\chi(\Re z)\dzb \tf(z)}{z-s}   e^{-\frac{|t-s|^2}{2h}}e^{\frac{1}{h} (z-s)(2t-z-\bar{z})} }{\mathcal{L}(\X;\Y)} \left| dz \wedge d \zb \right|  \nonumber \\
			&\leq C  \nor{f}{2,R,U}\exp{\left(-\frac{h^{-1/3}}{6 rC_0 R }\right)}\int_{ K_\rho'}  \left |   e^{-\frac{|t-s|^2}{2h}}e^{\frac{1}{h} (z-s)(2t-z-\bar{z})} \right | \left| dz \wedge d \zb \right|  \nonumber\\
			&\leq C \nor{f}{2,R,U}   e^{-\frac{ch^{-1/3}}{R}}.
		\end{align}
		In this last inequality of~\eqref{bound for I22}, we used the fact that
		\begin{align*}
			\int_{ K_\rho'}  \left |   e^{-\frac{|t-s|^2}{2h}}e^{\frac{1}{h} (z-s)(2t-z-\bar{z})} \right | |dz \wedge d \zb|
			\leq \int_{ K_\rho'}|dz \wedge d \zb| \leq C.
		\end{align*}
		which follows from~\eqref{e:phase}.

		The last term we need to control is the integral $\mathcal{I}_{23,h}$ in~\eqref{def of I23}. In the original coordinates~\eqref{e:change-variable}, we have
		\begin{align*}
			\mathcal{I}_{23,h}(t,s)= i\frac{h^{2/3}}{2}\chi_1(t)   \int_{\R\times \R} \left(\frac{t+w}{2}-s+ih\xi \right)^m\eta^\prime(h^{2/3}\xi)\bc\left(\frac{t+w}{2}+ih\xi\right)e^{i(t-w)\xi}e^{-\frac{|w-s|^2}{2h}} dw d \xi.
		\end{align*}
		We look at the integral in $w$ and treat $\xi$ as a parameter satisfying $2r h^{-2/3}\leq |\xi| \leq 3r h^{-2/3}$ thanks to the support of $\eta^\prime$. The change of variable $w \to w+s$ allows to rewrite the integral as follows:
		\begin{align}
			\label{e:use-g}
			& \int_{\R}  \bc\left(\frac{t+w}{2}+ih\xi\right)e^{i(t-w)\xi}e^{-\frac{|w-s|^2}{2h}} dw  
			=e^{-i(s-t) \xi}\int_{\R}g_{h\xi,t,s}(w)e^{-iw\xi}e^{-\frac{|w|^2}{2h}}dw, \\
			\label{def of g}
			&\text{with } \quad 
			g_{\widetilde{\xi},t,s}(w):= \bc\left(\frac{t+s+w}{2}+i\widetilde{\xi}\right)\left(\frac{t+w-s}{2}+i\widetilde{\xi}\right)^m. 
		\end{align}
		Using~\eqref{e:use-g}, we obtain 
		\begin{align*}
			\nor{ \mathcal{I}_{23,h}(t,s)}{\mathcal{L}(\X;\Y)} 
			&=\nor{\frac{h^{2/3}}{2}\chi_1(t)   \int_{\R\times\R} \left(\frac{t+w}{2}-s+ih\xi \right)^m \eta^\prime(h^{2/3}\xi)\bc\left(\frac{t+w}{2}+ih\xi\right)e^{i(t-w)\xi}e^{-\frac{|w-s|^2}{2h}} dw d \xi}{\mathcal{L}(\X;\Y)} \nonumber \\
			&= \frac12 \chi_1(t) h^{2/3}  \nor{ \int_{\R} \eta^\prime(h^{2/3}\xi)e^{-i(s-t) \xi} \left(\int_{\R} g_{h \xi,t,s} (w) e^{-i w\xi}e^{-\frac{|w|^2}{2h}} dw \right)d\xi}{\mathcal{L}(\X;\Y)} \nonumber \\
			&\leq  \frac12 \chi_1(t)h^{2/3}  \int_{\R}\left|\eta^\prime(h^{2/3} \xi)\right| \nor{\int_{\R } g_{h \xi,t,s} (w) e^{-i w\xi}e^{-\frac{|w|^2}{2h}} dw}{\mathcal{L}(\X;\Y)} d \xi .
		\end{align*}
		Recalling that $\supp\chi \subset (t_0-4r,t_0+4r)$ together with the definition of $g_{h \xi,t,s}$ in~\eqref{def of g}, of $\check b_s$ in~\eqref{def of b check}
		and $\supp \chi \subset (t_0-4r,t_0+4r)$, Lemma~\ref{lemma for estimate of integral in w} (below)  now implies
		\begin{align*}
			\chi_1(t)\int_{\R}|\eta^\prime(h^{2/3} \xi)| \nor{\int_{\R } g_{h \xi,t,s} (w) e^{-i w\xi}e^{-\frac{|w|^2}{2h}} dw}{\mathcal{L}(\X;\Y)} d \xi
			\leq C\int_\R |\eta^\prime(h^{2/3}\xi)| d \xi  e^{-\frac{ch^{-1/3}}{R}}  \nor{f}{2,R,U}.
		\end{align*}
		Combining the two estimates above and recalling the support of $\eta$ yields
		\begin{align}
			\label{estimate for I23(t,s)}
			\nor{ \mathcal{I}_{23,h}(t,s)}{\mathcal{L}(\X;\Y)} &\leq h^{2/3}  \int_{-3rh^{-2/3}}^{3rh^{-2/3}} d\xi   e^{-\frac{ch^{-1/3}}{R}}  \nor{f}{2,R,U}\leq C  e^{-\frac{ch^{-1/3}}{R}}  \nor{f}{2,R,U},
		\end{align}
		for $h \leq h_0$.

		Putting together~\eqref{bound for I21}, \eqref{bound for I22} and \eqref{estimate for I23(t,s)} yields for some constants $C$ and $c$ depending only on $I, \rho, r$:
		\begin{equation*}
			\nor{\mathcal{I}_{2,h}(t,s)}{\mathcal{L}(\X;\Y)}\leq C  e^{-\frac{ch^{-1/3}}{R}} \nor{f}{2,R,U},
		\end{equation*}
		which concludes the proof of Lemma~\ref{estimate for the kernel I}.
	\end{proof}

	In the proof of Lemma~\ref{estimate for the kernel I}, we have used the following result.
	\begin{lemma}
		\label{lemma for estimate of integral in w}
		Let $g_{h\xi,t,s}$ be as in~\eqref{def of g} and fix $c_2>c_1>0$. Then there exist $C>0$, $c>0$ and $h_{0}$ depending on $I, \rho, r, c_1,c_2$ such that for  $t \in (t_0-4r,t_0+4r)$, $s\in \R$, $h\in (0,h_{0})$ and $c_1 h^{-2/3}\leq |\xi|\leq c_2 h^{-2/3}$ one has:
		$$
		\nor{\int_{\R}g_{h\xi,t,s}(w)e^{-iw\xi}e^{-\frac{|w|^2}{2h}} dw }{\mathcal{L}(\X;\Y)} \leq C  e^{-\frac{ch^{-1/3}}{R}}  \nor{f}{2,R,U}.
		$$
	\end{lemma}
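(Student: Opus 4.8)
The plan is to exploit that, as a function of $w$, the integrand $g_{h\xi,t,s}$ built in \eqref{def of g} is \emph{almost analytic}: recall from the proof of Lemma~\ref{estimate for the kernel I} (using Lemma~\ref{l:regC1}) that $\bc$ extends to a $C^1$ function on $\C$, with the $L^\infty$ bound \eqref{bound for b check} and the $\dzb$ formula \eqref{derivative of bchech}--\eqref{dzb estimate for bcech}. We may assume $\xi>0$ (the case $\xi<0$ is symmetric, deforming the contour upward) and, without loss of generality, $R\geq 1$. Write $\widetilde\xi:=h\xi$, so that $c_1 h^{1/3}\leq\widetilde\xi\leq c_2 h^{1/3}$, and use the algebraic identity $-iw\xi-\frac{w^2}{2h}=-\frac{1}{2h}(w+i\widetilde\xi)^2-\frac{\widetilde\xi^2}{2h}$, with $\frac{\widetilde\xi^2}{2h}\geq\frac{c_1^2}{2}h^{-1/3}$. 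Thus $e^{-iw\xi}e^{-w^2/2h}$ equals, up to the scalar $e^{-\widetilde\xi^2/2h}$, the restriction to $\R$ of the entire function $w\mapsto e^{-\frac{1}{2h}(w+i\widetilde\xi)^2}$, whose modulus on the line $\{\Im w=-\sigma\}$ is $e^{-(\Re w)^2/2h}e^{(\widetilde\xi-\sigma)^2/2h}$, minimized over $\sigma\in[0,\widetilde\xi]$ at $\sigma=\widetilde\xi$.

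I would then apply the $\dzb$--Stokes formula of Lemma~\ref{l:ipp with zbar at infinity} on the strip $\{-\widetilde\xi\leq\Im w\leq 0\}$ to the $C^1$ integrand $G(w):=g_{h\xi,t,s}(w)\,e^{-iw\xi}e^{-w^2/2h}$ — decay at $\Re w\to\pm\infty$ is uniform on the strip since there the factor $e^{(\widetilde\xi-\sigma)^2/2h}\leq e^{\widetilde\xi^2/2h}$ is constant in $\Re w$ and $|g_{h\xi,t,s}|$ grows at most polynomially — obtaining
\[
\int_\R G(w)\,dw=\int_{\Im w=-\widetilde\xi}G(w)\,dw+\text{(area integral of }\partial_{\bar w}G\text{ over the strip)}.
\]
For the line term, parametrizing $w=w'-i\widetilde\xi$ gives $e^{-iw\xi}e^{-w^2/2h}=e^{-\widetilde\xi^2/2h}e^{-w'^2/2h}$, while the argument of $\bc$ in $g_{h\xi,t,s}(w'-i\widetilde\xi)$ becomes $\tfrac{t+s+w'}{2}+i\tfrac{\widetilde\xi}{2}$, with imaginary part $\tfrac{\widetilde\xi}{2}\in[\tfrac{c_1}{2}h^{1/3},\tfrac{c_2}{2}h^{1/3}]<\rho$ for $h$ small. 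Since $\bc$ carries the factor $\theta(s)$ and obeys \eqref{bound for b check} (so $\|\bc\|\leq C\|f\|_{2,R,U}$, up to an irrelevant $h^{-1/3}$ loss near $z=s$, where $|z-s|\geq\tfrac{\widetilde\xi}{2}$), and $\theta(s)\neq 0$ forces $|t-s|\leq 5r$, the polynomial factor $(\tfrac{t+w'-s}{2}+i\widetilde\xi)^m$ is $\leq C(1+|w'|^m)$; hence the line term is $\leq e^{-\widetilde\xi^2/2h}C\|f\|_{2,R,U}\int_\R(1+|w'|^m)e^{-w'^2/2h}\,dw'\leq C e^{-\frac{c_1^2}{4}h^{-1/3}}\|f\|_{2,R,U}$.

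For the area term, on $\{\Im w=-\sigma,\ 0\leq\sigma\leq\widetilde\xi\}$ one has $|e^{-iw\xi}e^{-w^2/2h}|=e^{-(\Re w)^2/2h}e^{\sigma(\sigma/2h-\xi)}\leq e^{-(\Re w)^2/2h}$ because $\sigma\leq\widetilde\xi$ gives $\sigma/2h\leq\xi/2$; moreover $\partial_{\bar w}G(w)=\tfrac12(\dzb\bc)\big(\tfrac{t+s+w}{2}+i\widetilde\xi\big)\cdot(\tfrac{t+w-s}{2}+i\widetilde\xi)^m\,e^{-iw\xi}e^{-w^2/2h}$, the polynomial factor and the Gaussian being holomorphic in $w$. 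Splitting $\dzb\bc$ via \eqref{derivative of bchech}: on the support of $\chi'(\Re z)$ with $z=\tfrac{t+s+w}{2}+i\widetilde\xi$, the relative supports of $\chi',\chi,\theta$ force $|\Re w|\geq r$, so this part contributes $\lesssim\|f\|_{2,R,U}\int_{|\Re w|\geq r}e^{-(\Re w)^2/2h}\,d\Re w\leq C\|f\|_{2,R,U}e^{-ch^{-1}}$, which is negligible. For the $\chi$--part, $\Im z=\widetilde\xi-\tfrac{\sigma}{2}\in[\tfrac{\widetilde\xi}{2},\widetilde\xi]$, so $0<\tfrac{c_1}{2}h^{1/3}\leq\Im z\leq c_2 h^{1/3}$ and, $s$ being real, $|z-s|\geq\Im z\geq\tfrac{c_1}{2}h^{1/3}$; the Gevrey-$2$--adapted bound \eqref{estimate for dzb} of Lemma~\ref{extension of gevrey} gives $\|\dzb\tf(z)\|\leq C\|f\|_{2,R,U}\exp\!\big(-\tfrac{1}{C_0 R|\Im z|}\big)\leq C\|f\|_{2,R,U}e^{-c h^{-1/3}/R}$ with $c=1/(C_0 c_2)$ independent of $R$. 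Integrating $\tfrac{1}{|z-s|}e^{-(\Re w)^2/2h}$ over the strip costs only an $O(|\ln h|)$ factor (split $|\Re w|\leq h^{1/3}$ versus $|\Re w|>h^{1/3}$) and the strip thickness is $\widetilde\xi=O(h^{1/3})$; all such powers and logarithms of $h$ are absorbed into the exponential after slightly shrinking $c$ and $h_0$. Combining the line and area terms yields the claim. The only genuinely delicate point — and what I expect to be the main obstacle — is this balance in the area term: we are free to lose part of the Gaussian gain $e^{-\widetilde\xi^2/2h}\sim e^{-ch^{-1/3}}$ (indeed we use none of it there), and we recover the needed exponential smallness purely from the almost-analyticity estimate \eqref{estimate for dzb} evaluated at the scale $|\Im z|\sim h^{1/3}$, which is exactly the scale at which Gevrey $2$ (rather than analyticity) suffices.
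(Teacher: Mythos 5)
Your argument is correct, and it follows the same overall strategy as the paper (deform the $w$--contour and use the almost-analyticity bound~\eqref{estimate for dzb} at scale $|\Im z|\sim h^{1/3}$), but the execution is genuinely different and, I think, cleaner in two respects. First, the paper truncates to $|w|\leq r$ \emph{before} deforming (so that $\chi(\Re z)\equiv 1$ on the rectangle and the $\chi'$--term in~\eqref{derivative of bchech} never appears inside the Stokes step), whereas you keep the full real line and dispose of the $\chi'$--term as a negligible $O(e^{-c/h})$ contribution inside the area integral, using that $\supp\chi'$ forces $|\Re w|\geq r$. Second — and this is the substantive difference — you push the contour all the way to $\Im w=-\widetilde\xi$, i.e.\ to the true stationary point of the Gaussian, and you observe that along the \emph{entire} strip $\{-\widetilde\xi\leq\Im w\leq 0\}$ the factor $|e^{-iw\xi}e^{-w^2/2h}|$ stays $\leq e^{-(\Re w)^2/2h}$, because $\sigma(\sigma/2h-\xi)\leq 0$ for $0\leq\sigma\leq\widetilde\xi=h\xi$. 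The paper instead deforms only to $\Im w=-\sigma h\xi$ with $\sigma<1$ a free parameter, estimates the Gaussian factor more crudely by $e^{\sigma^2c_2^2h^{-1/3}/2}$, and then has to tune $\sigma$ (which ends up depending on $R$) so that this loss is dominated by the Gevrey gain $e^{-\tilde c h^{-1/3}}$. Your version removes the $\sigma$--balancing entirely and still yields the boundary (line) term at full Gaussian strength $e^{-\widetilde\xi^2/2h}\lesssim e^{-c_1^2 h^{-1/3}/2}$. Two harmless blemishes: you cite Lemma~\ref{l:ipp with zbar at infinity}, which is stated on all of $\C$; what you really need is~\eqref{stokes for dzb} applied to truncated rectangles $[-M,M]\times[-\widetilde\xi,0]$ followed by $M\to\infty$, using the decay you correctly identify. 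And the claimed $O(|\ln h|)$ loss from $\tfrac{1}{|z-s|}$ is pessimistic — since $|z-s|\geq\Im z\geq\widetilde\xi/2\gtrsim h^{1/3}$ and the strip has thickness $\widetilde\xi\sim h^{1/3}$, the area integral is in fact $O(\sqrt{h})$ times the pointwise Gevrey bound, so no logarithms appear at all.
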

	\begin{proof}
		First, thanks to the definition of $\check{b}_s$ and the support of $\theta$, we can assume without loss of generality that $s\in (t_0-r,t_0+r)$, for otherwise the integral is zero. 
		We start by separating the integral in two terms:
		$$
		\int_{\R}g_{h\xi,t,s}(w)e^{-iw\xi}e^{-\frac{|w|^2}{2h}} dw=\int_{|w|\geq r}g_{h\xi,t,s}(w)e^{-iw\xi}e^{-\frac{|w|^2}{2h}} dw+\int_{|w|\leq r}g_{h\xi,t,s}(w)e^{-iw\xi}e^{-\frac{|w|^2}{2h}} dw.
		$$
		Observe that since $t,s, h\xi$ lie in a fixed compact set (which depends on $r$) we have that 
		$$
		\left|\frac{t+w-s}{2}+i h\xi\right|^m\leq C (|w|^m+1).
		$$
		For the integral in $|w|\geq r$ we can then proceed as in~\eqref{estimate for I1} to obtain the stronger bound
		\begin{align*}
			\nor{\int_{|w|\geq r}g_{h\xi,t,s}(w)e^{-iw\xi}e^{-\frac{|w|^2}{2h}} dw }{\mathcal{L}(\X;\Y)} &\leq C e^{-ch^{-1}}  \nor{\tf}{W^{1,\infty}(U_\rho; \mathcal{L}(\X;\Y))}
			\int_\R   e^{-\frac{h|w|^2}{4}} (|w|^m+1)  dw \\&\leq C e^{-ch^{-1}}  \nor{\tf}{W^{1,\infty}(U_\rho; \mathcal{L}(\X;\Y))} \leq C e^{-ch^{-1}} \nor{f}{2,R,U},
		\end{align*}
		thanks to \eqref{bound for b check}.

		We now work in the region $|w|\leq r$ and remark that for $t \in (t_0-4r,t_0+4r), s\in (t_0-r,t_0+r)$ and $|w|\leq r$ one has for $z=\frac{t+s+w}{2}+ih\xi $ that
		\begin{align*}
			\left|\Re(z)-t_0\right|\leq \left|\frac{t-t_0}{2}\right|+\left|\frac{s-t_0}{2}\right|+\left|\frac{w}{2}\right|\leq 3r,
		\end{align*}
		and $|\Im(z)|=h |\xi|$. Therefore in this region we have $\chi(\Re z)=1$ and consequently
		$$
		\bc\left(z\right)=\theta(s) \frac{\chi(\Re z)\tf(z)-f(s)}{z-s}=\theta(s) \frac{\tf(z)-f(s)}{z-s}.
		$$
		This implies as in~\eqref{dzb estimate for bcech} that, for $\Im(z)\leq h_0$:
		\begin{equation}
			\label{dzb estimate for bchech bis}
			\nor{\dzb \bc(z)}{\mathcal{L}(\X;\Y)}\leq  C \nor{f}{2,R,U}\exp{\left(-\frac{1}{2C_0 R |\operatorname{Im} z|}\right)}.
		\end{equation}

		To alleviate the notation we write $g$ for $g_{h\xi,t,s }$. We know thanks to~\eqref{def of g} that $g$ admits a complex extension in $[-r,r]+i[-\rho/2,\rho/2]$ for $h\leq h_0$ given by
		$$
		g(w+iv):=\check{b}_s\left(\frac{t+s+w}{2}+ih\xi+\frac{iv}{2}\right)\left(\frac{t+w-s}{2}+ih\xi+\frac{iv}{2}\right)^m,
		$$
		that is 
		$$
		g(z)=\check{b}_s\left(\frac{z}{2}+\frac{t+s}{2}+ih\xi\right)\left(\frac{z}{2}+\frac{t-s}{2}+ih\xi\right)^m,
		$$
		which implies
		\begin{equation}
			\label{dzb for g}
			\dzb g(z)=\frac{1}{2}\dzb \check{b}_s\left(\frac{z}{2}+\frac{t+s}{2}+ih\xi\right)\cdot \left(\frac{z}{2}+\frac{t-s}{2}+ih\xi\right)^m.
		\end{equation}
		Remark that for $|z|\leq r$ and $t,s, \xi$ as in the statement of the lemma we have 
		$$
		\left|\frac{z}{2}+\frac{t-s}{2}+ih\xi\right|^m\leq C,
		$$
		for a constant $C>0$ depending on $r$ and $m$.
		
		We now write the integral we want to control as
		\begin{align*}
			\int_{-r}^{r} g(z)e^{-iz\xi}e^{-\frac{z^2}{2h}}dz= \int_{-r}^{r} g(z)e^{\frac{-h\xi^2}{2}}e^{-\frac{(z+ih\xi)^2}{2h}}dz.
		\end{align*}
		We consider now $\sigma \in (0,\frac12)$ to be chosen later on. We let $\Omega = [-r,r]\times[-\sigma h \xi,0]$ in case $\xi \in [c_1 h^{-2/3} , c_2 h^{-2/3}]$ (see Figure~\ref{change of contour}), resp. $\Omega = [-r,r]\times[0, -\sigma h \xi]$ in case $\xi \in [-c_2 h^{-2/3} ,-c_1 h^{-2/3}]$. Stoke's theorem applies, see~\eqref{stokes for dzb}, and yields:
		\begin{align}
			\label{e:stokes-Gamma}
			\int_{-r}^{r} g(z)e^{\frac{-h\xi^2}{2}}e^{-\frac{(z+ih\xi)^2}{2h}}dz &=\int_{\Gamma_1}g(z)e^{\frac{-h\xi^2}{2}}e^{-\frac{(z+ih\xi)^2}{2h}}dz+\int_{\Gamma_2}g(z)e^{\frac{-h\xi^2}{2}}e^{-\frac{(z+ih\xi)^2}{2h}}dz \nonumber \\&\hspace{4mm}+\int_{\Gamma_3}g(z)e^{\frac{-h\xi^2}{2}}e^{-\frac{(z+ih\xi)^2}{2h}}dz+\int_{\Omega}\dzb(g(z))e^{-iz\xi}e^{-\frac{z^2}{2h}} dz \wedge d \zb,
		\end{align}
		\begin{figure}
			\centering
			\begin{tikzpicture}
				\draw (-5,0) -- (5,0) node[anchor=west]{$\R$};
				\draw (0,-2.5)--(0,2) node[above]{$i\R$};
				\filldraw[black] (-4,0) circle (1pt) node[above]{$-r$};
				\filldraw[black] (4,0) circle (1pt) node[above]{$r$};
				\filldraw[black] (0,-1.5) circle (1pt) node[anchor=north west]{$-i\sigma h \xi$};
				\draw [-stealth](-4,0) -- (-4,-0.75) node[left]{$\Gamma_1$};
				\draw (-4,-0.75)--(-4,-1.5);
				\draw [-stealth](-4,-1.5) -- (-1,-1.5) node[below]{$\Gamma_2$};
				\draw (-1,-1.5)--(4,-1.5);
				\draw (-1,-1.5)--(4,-1.5);
				\draw [-stealth](4,-1.5) -- (4,-0.75) node[right]{$\Gamma_3$};
				\draw (4,-0.75)--(4,0);
				\draw[pattern=north east lines, pattern color=blue, ,opacity=0.2] (-4,0) rectangle (4,-1.5);
				
				\node at (1, -1) {$\Omega$};
			\end{tikzpicture}
			\caption{The domain $\Omega$ where we apply Stokes' theorem in case $\xi >0$ (the picture in case $\xi <0$ is the symmetric about the real axis). Notice that $\partial \Omega=\Gamma_1\cup \Gamma_2 \cup \Gamma_3 \cup [-r,r]$. Recall as well that in this regime we have $\xi\sim h^{-2/3}$ and therefore $h \xi \sim h^{1/3}$. As $h$ goes to $0$ the domain $\Omega$ collapses to the segment $[-r,r]$. }
			\label{change of contour}
		\end{figure}
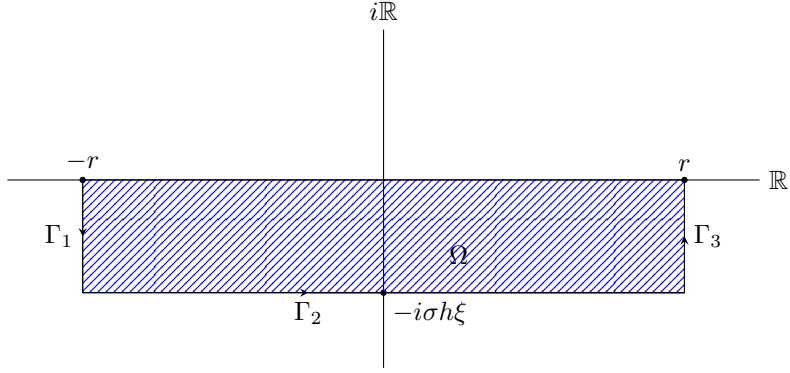
		where the contours (oriented counterclockwise, see Figure~\ref{change of contour} in the case $\xi>0$) are defined by 
		\begin{align*}
			\Gamma_1&=\{z \in \C , \Re z=-r,  \quad-\sigma h \xi \leq \Im z\leq 0 \}, \\
			\Gamma_2&=\{z \in \C , -r \leq \Re z \leq r, \quad \Im z = - \sigma h \xi \}, \\
			\Gamma_3&=\{z \in \C ,  \Re z=r, \quad -\sigma h \xi \leq \Im z\leq 0 \},
		\end{align*}
		if $\xi>0$ and 
		\begin{align*}
			\Gamma_1&=\{z \in \C , \Re z=-r, \quad 0 \leq \Im z \leq \sigma h \xi  \}, \\
			\Gamma_2&=\{z \in \C , -r \leq \Re z \leq r, \quad \Im z = \sigma h \xi \}, \\
			\Gamma_3&=\{z \in \C ,  \Re z=-r, \quad 0 \leq \Im z \leq \sigma h \xi  \},
		\end{align*}
		if $\xi<0$.
		We now estimate all terms in the right hand-side of~\eqref{e:stokes-Gamma}.
		
		We start with the last term in the right hand-side of~\eqref{e:stokes-Gamma}. Using~\eqref{dzb for g} and~\eqref{dzb estimate for bchech bis} together with the fact that $z\in \Omega$ in particular $|\Im z| \leq \sigma h|\xi| \leq  \frac12 h |\xi|$ (since $\sigma \leq \frac12$), we obtain 
		\begin{align}
			\label{estimate for gevrey term in stokes term}
			\nor{\dzb(g(z))}{\mathcal{L}(\X;\Y)} &\leq  C \nor{f}{2,R,U}\exp{\left(-\frac{1}{C_0 R |\operatorname{Im} (z/2)+h \xi|}\right)}\nonumber \\
			&  \leq C \nor{f}{2,R,U}\exp{\left(-\frac{1}{C_0 R( |\operatorname{Im} (z/2)|+h |\xi|)}\right)} \leq C \nor{f}{2,R,U}\exp{\left(-\frac{1}{2C_0 R | h \xi|}\right)} \nonumber \\
			&\leq C \nor{f}{2,R,U}\exp{\left(-\frac{1}{2C_0 c_2 R h^{1/3}}\right)}  =  C \nor{f}{2,R,U}e^{-\tilde{c} h^{-1/3}},
		\end{align}
		where $c_2$ is given by $|\xi|\leq c_2 h^{-2/3}$ and $\tilde{c}:=\frac{1}{2C_0 c_2 R}$. 
		We write $z=\alpha+i \beta$ with $\alpha, \beta \in \R$ and notice that for $z \in \Omega$ we have $|\beta| \leq \sigma h |\xi|$ and $\beta\xi<0$ (in both cases). As a consequence, we deduce
		\begin{align*}
			\left|e^{-iz\xi}e^{-\frac{z^2}{2h}} \right|&= e^{\beta\xi} e^{-\frac{\alpha^2-\beta^2}{2h}}\leq e^{\frac{\sigma^2 h^2 |\xi|^2}{2h}}\leq e^{\frac{\sigma^{2} h |\xi|^2}{2}}\leq e^{\frac{\sigma^{2} c_2^2}{2 h^{1/3}}}  . 		\end{align*}
		Together with~\eqref{estimate for gevrey term in stokes term} this yields 
		$$
		\int_{\Omega} \nor{\dzb(g(z))e^{-iz\xi}e^{-\frac{z^2}{2h}} }{\mathcal{L}(\X;\Y)} |dz \wedge d \zb|  \leq C \nor{f}{2,R,U}e^{- (\tilde{c}-\sigma^{2} c_2^2/2) h^{-1/3}} \leq C \nor{f}{2,R,U}e^{-\tilde{c}/2 h^{-1/3}},
		$$
		after having chosen $\sigma := \min (\frac{\tilde{c}^{1/2}}{ c_2},\frac{1}{2})$.  
		With $\sigma$ now fixed we control the other three terms in~\eqref{e:stokes-Gamma}.
		
		\begin{itemize}
			\item For $\alpha+i \beta=z \in \Gamma_1$ we have $\alpha^2=r^2$ and estimate the real part of the second exponential, using $(\beta + h\xi)^2 \leq (h\xi)^2$ (in both cases $-\sigma h \xi\leq\beta \leq 0$ if $\xi \geq 0$ and $0 \leq \beta \leq -\sigma h \xi$ if $\xi <0$), as 
			\begin{align*}
				\Re \left(\frac{(z+ih\xi)^2}{2h}\right)&=\frac{r^2-(\beta+h\xi)^2}{2h}\geq \frac{r^2-h^2\xi^2}{2h} \geq \frac{r^2-c_2^2h^{2/3}}{2h}\geq \frac{r^2}{4h}\geq 0,
			\end{align*}
			for $h$ sufficiently small.
			This implies
			\begin{align}
				\label{e:estim-gamma1}
				\int_{\Gamma_1} \nor{g(z)e^{\frac{-h\xi^2}{2}}e^{-\frac{(z+ih\xi)^2}{2h}}}{\mathcal{L}(\X;\Y)}dz\leq C \nor{\tf}{W^{1,\infty}(U_\rho; \mathcal{L}(\X;\Y))} e^{\frac{-h\xi^2}{2}}\leq  C  \nor{\tf}{W^{1,\infty}(U_\rho; \mathcal{L}(\X;\Y))} e^{-ch^{-1/3}},
			\end{align}
			thanks to~\eqref{bound for b check}.
			\item For the integral in $\Gamma_3$ we proceed exactly as for $\Gamma_1$.
			
			\item For $\alpha+i \beta = z\in \Gamma_2$ we have $\beta=- \sigma h \xi$ and $\alpha \in [-r,r]$, and we obtain
			\begin{align*}
				\Re \left( \frac{h|\xi|^2}{2}+\frac{(z+ih\xi)^2}{2h}\right)&  = \frac{h\xi^2}{2} + \frac{\alpha^2 - (\beta+h\xi)^2}{2h} \\
				& \geq \frac{h \xi^2}{2}-\frac{(\beta+h\xi)^2}{2h}=\frac{h \xi^2}{2}\left(1-(1-\sigma)^2\right)\geq \frac{\sigma h \xi^2}{2}  \geq \frac{\sigma c_1^2}{2} h^{-1/3},
			\end{align*}
			for $|\xi| \geq c_1h^{-2/3}$. The estimate of  $\int_{\Gamma_3}$ in~\eqref{e:stokes-Gamma} then proceeds as that of $\int_{\Gamma_1}$ in~\eqref{e:estim-gamma1}.
		\end{itemize}
		This concludes the proof of Lemma~\ref{lemma for estimate of integral in w}.
	\end{proof}
	\subsection{Proof of Proposition~\ref{good conjugagte with function}}
	We can now turn to the proof of Proposition~\ref{good conjugagte with function}.
	
	\begin{proof}[Proof of Proposition~\ref{good conjugagte with function}]
		For $u \in \mathcal{S}(\R;\X)$, we start by writing
		\begin{align}
			\label{difference facile}
			\chi F_h e^{-\frac{h}{2}|D_t|^2}\theta u-e^{-\frac{h}{2}|D_t|^2} f\theta u & =\left(\chi F_h e^{-\frac{h}{2}|D_t|^2}\theta u-\chi e^{-\frac{h}{2}|D_t|^2} f\theta u\right) -(1-\chi)e^{-\frac{h}{2}|D_t|^2}(f\theta u) \nonumber \\
			& = R_h u  -(1-\chi)e^{-\frac{h}{2}|D_t|^2}(f\theta u) ,
		\end{align}
		where $R_h$ is defined in~\eqref{e:def-Rh}.
		The second term in~\eqref{difference facile} is bounded using Lemma~\ref{lemma 2.4 from ll} by
		\begin{align}
			\label{l:1-chi theta}
			\nor{(1-\chi)e^{-\frac{h}{2}|D_t|^2}(f\theta u)}{L^2(\R;\Y)} \leq C e^{-c/h} \nor{f \theta u }{H^{-k}(\R;\Y)} \leq  C e^{-c/h} \nor{f}{W^{k,\infty}(\supp(\theta);\L(\X,\Y))}\nor{u}{H^{-k}(\R;\X)}
		\end{align}
		thanks to the supports of $(1-\chi)$ and $\theta$. Concerning the first term in~\eqref{difference facile}, the kernel of $R_h$ is $\mathcal{K}_h(t,s)$ given by~\eqref{difference noyau} according to Lemma~\ref{l:identify-kernel}.
		Since $\mathcal{K}_h(t,s)=-\frac{1}{2 \pi}\mathcal{K}_{1,h}(t,s)+ C_h \mathcal{K}_{2,h}(t,s)$, Lemma~\ref{estimate for the kernel I} applied in the particular case $m=0, \chi_1=\chi$ yields
		\begin{equation}
			\label{kernel l infini}
			\nor{\mathcal{K}_h(\cdot,\cdot)}{L^\infty(\R \times \R; \mathcal{L}(\X;\Y))}\leq C  e^{-\frac{ch^{-1/3}}{R}}  \nor{f}{2,R,U}.
		\end{equation}
		Combining Lemmata~\ref{l:identify-kernel} and~\ref{estimate for the kernel I} and recalling $\supp \mathcal{K}_h \subset (t_0-4r,t_0+4r) \times (t_0-r,t_0+r)$, the Cauchy-Schwarz inequality yields
		\begin{align*}
			\nor{R_h u }{L^2(\R;\Y)}&= \nor{\int \mathcal{K}_h(\cdot,s) u(s)ds}{L^2(\R;\Y)} 
			\leq  C e^{-\frac{ch^{-1/3}}{R}}  \nor{f}{2,R,U} \nor{ u}{L^2((t_0-r,t_0+r);\X)} .
		\end{align*}
		This, together with~\eqref{difference facile} and \eqref{l:1-chi theta}, implies 
				\begin{align*}
			\nor{\chi F_h e^{-\frac{h}{2}|D_t|^2} \theta u-e^{-\frac{h}{2}|D_t|^2} f \theta u }{L^2}
			&\leq  C e^{-\frac{ch^{-1/3}}{R}}  \nor{f}{2,R,U} \nor{ u}{L^2},
		\end{align*}
		and concludes the proof of Proposition~\ref{good conjugagte with function} for $k=0$.
		
		To obtain the estimate for $k\in \N^*$, and given~\eqref{difference facile} and~\eqref{l:1-chi theta}, it only remains to prove that
					\begin{align}
			\label{e:to-be-k}
		\nor{R_h \dt^k u}{L^2(\R;\Y)} \leq  C_k h^{-k} e^{-\frac{ch^{-1/3}}{R}}\left( \sum_{j \leq k}\nor{f^{(j)}}{2,R,U} \right) \nor{u}{L^2(\R;\X)} ,
		\end{align}
		with $R_h$ defined in~\eqref{e:def-Rh}. We can suppose without loss of generality that $k=2n$, $n\in \N$ and thus
		\begin{align}
		\label{e:ineq-rh-k}
		\nor{R_h \dt^k u}{L^2} \leq C\nor{R_h u}{L^2}+C \sum_{\ell=1}^n\nor{R_h D^{2\ell}_t u}{L^2}.
		\end{align}
		It suffices therefore to control the terms $\nor{R_h D^{\ell}_t u}{L^2}$ for $\ell\geq1$. To do so we observe that the kernel of $R_h D_t$ is given by $D_s \mathcal{K}_h$
		where $\mathcal{K}_h$ is the kernel of $R_h$. Recalling~\eqref{e:K-K1+K2}, we need consequently to control $\nor{\d_s^\ell \mathcal{K}_{j,h}(t,s)}{\mathcal{L}(\X;\Y)}$ for $j=1,2$ and prove that they satisfy the estimate of Lemma~\ref{estimate for the kernel I}. Concerning the term $\d_s^\ell \mathcal{K}_{1,h}(t,s)$ we remark that the desired bound follows from Lemma~\ref{estimate for the kernel I} applied to some derivatives of $\theta$ and $f$ instead of $\theta$ and $f$.  We need consequently to study $\d_s^\ell \mathcal{K}_{2,h}(t,s)$. 
		According to Lemma~\ref{calcul clef pour les derivees de I} below, applied to $\mathcal{K}_{2,h}=\mathcal{I}_{2,h}[\chi,\theta,f,0]$, and recalling that $\supp(\mathcal{I}_{2,h})\subset \supp(\chi)\times \supp(\theta)$ which is a compact set in  $(t,s)$ (whence $|t-s|^{k_2}$ is bounded on this set) we have 
		\begin{align*}
		\left\| \d^\ell_s \mathcal{K}_{2,h}(t,s)\right\|_{\L(\X,\Y)} & \leq C_\ell h^{-\ell}  \sum_{k_j \leq \ell} \left\| \mathcal{I}_{2,h}[\chi,\theta^{(k_3)},f^{(k_4)},k_5](t,s) \right\|_{\L(\X,\Y)}\nonumber\\ 
				& \quad + C_\ell h^{-\ell}  \sum_{k_j\leq \ell} \nor{ B[\theta^{(k_2)},k_3,k_4](t,s)}{\L(\X,\Y)} ,
		\end{align*}
		where we take $\chi_1=\chi$ in the definition of $B$.
		 Using Lemma~\ref{estimate for the kernel I} to estimate all terms involving $\mathcal{I}_{2,h}$ and proceeding as in~\eqref{bound for I21} to estimate all terms involving $B$ (where we use the localization of $\supp(\chi^\prime)$), we obtain for all $(t,s)\in \R^2$ and $h\leq 1$,
	  	\begin{align*}
		\left\| \d^\ell_s \mathcal{K}_{2,h}(t,s)\right\|_{\L(\X,\Y)} & \leq C_\ell h^{-\ell}  \left( e^{-\frac{ch^{-1/3}}{R}} + e^{-c/h} \right) \sum_{j \leq \ell}\nor{f^{(j)}}{2,R,U}  .
		\end{align*}
		Coming back to~\eqref{e:ineq-rh-k}, we have now obtained~\eqref{e:to-be-k}, which concludes the proof of Proposition~\ref{good conjugagte with function}.
	\end{proof}

		\begin{lemma}
			\label{calcul clef pour les derivees de I}
			For all $\chi_1,\theta_1 \in C^\infty_c(\R)$ and $f \in \mathcal{G}_b^{2,R}(\R;\L(\X,\Y))$,  $m,\ell \in \N$, there are coefficients $\alpha_k,\beta_k \in \R$ such that  
				\begin{align}
				\label{e:dell-I}
				\d^\ell_s \mathcal{I}_{2,h}[\chi_1,\theta,f,m] (t,s)& =\sum_{k_j \leq \ell} \alpha_k h^{-k_1}(t-s)^{k_2} \mathcal{I}_{2,h}[\chi_1,\theta^{(k_3)},f^{(k_4)},m+k_5](t,s) \nonumber\\ 
				& \quad +\sum_{k_j\leq \ell} \beta_k h^{-k_1} B[\theta^{(k_2)},k_3,m+k_4](t,s) .
			\end{align}
					where 
		\begin{align}
		B[\theta,m,k](t,s) & := \chi_1(t) \theta(s) \int_{\R \times \R}   \chi' \left(\frac{t+w}{2}\right) \eta(h^{2/3}\xi)  \tf \left(\frac{t+w}{2}+i h \xi\right)\nonumber\\
		& \quad \times  e^{i(t-w)\xi}e^{-\frac{|w-s|^2}{2h}} (w-s)^k \left(\frac{t+w}{2}+i h \xi-s \right)^m dw d \xi . \label{e:def-B-derI}
		\end{align}	
		\end{lemma}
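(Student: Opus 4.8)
The plan is to prove the identity \eqref{e:dell-I} by induction on $\ell$, the case $\ell=0$ being the definition of $\mathcal{I}_{2,h}$. The engine is an explicit formula for $\d_s\mathcal{I}_{2,h}[\chi_1,\theta,f,m]$ and a companion one for $\d_s B[\theta,m,k]$; granting these, one differentiates the right-hand side of the formula for $\d_s^\ell$ summand by summand, using Leibniz on the prefactors $h^{-k_1}(t-s)^{k_2}$ (which only lowers $k_2$ and leaves $h^{-k_1}$ untouched), and checks that all the shifts and derivative orders grow by at most one per differentiation, so that the right-hand side stays of the announced form with indices controlled by $\ell+1$.

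To compute $\d_s\mathcal{I}_{2,h}[\chi_1,\theta,f,m]$ I would stay in the $(w,\xi)$ representation and write, with $Z=\tfrac{t+w}{2}+ih\xi$, $E_1=e^{i(t-w)\xi}$, $E_2=e^{-|w-s|^2/2h}$, and amplitude $\mathsf{a}_s=\eta(h^{2/3}\xi)\big(\chi(\tfrac{t+w}{2})\tf(Z)-f(s)\big)$, so that $\mathcal{I}_{2,h}[\chi_1,\theta,f,m]=\chi_1(t)\theta(s)\iint\mathsf{a}_s(Z-s)^m E_1E_2\,dw\,d\xi$. The derivative $\d_s$ falls on $\theta(s)$ (giving $\mathcal{I}_{2,h}[\chi_1,\theta',f,m]$), on $(Z-s)^m$, on the $f(s)$ inside $\mathsf{a}_s$, and on $E_2$ (giving $\tfrac{w-s}{h}E_2$). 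Three elementary facts make the computation close up. First, property \eqref{propriete de commutation}, $\d_{\Re z}\tf=\widetilde{f'}$, yields $\d_w\mathsf{a}_s=\tfrac12\eta(h^{2/3}\xi)\big(\chi'(\tfrac{t+w}{2})\tf(Z)+\chi(\tfrac{t+w}{2})\widetilde{f'}(Z)\big)$, whence $\d_s\mathsf{a}_s=-\eta(h^{2/3}\xi)f'(s)=-2\d_w\mathsf{a}_s+\eta(h^{2/3}\xi)\chi'(\tfrac{t+w}{2})\tf(Z)+\mathsf{a}'_s$, with $\mathsf{a}'_s$ the amplitude built from $f'$; one integration by parts in $w$ turns the $-2\d_w\mathsf{a}_s$ piece into $2\mathsf{a}_s\,\d_w\big((Z-s)^m E_1 E_2\big)$, the $\eta\chi'\tf$ term is by definition $B[\theta,m,0]$, and the $\mathsf{a}'_s$ term is $\mathcal{I}_{2,h}[\chi_1,\theta,f',m]$. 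Second, $\d_wE_1=-i\xi E_1$ is what produces, after that integration by parts, the only genuinely delicate object: a bare factor of $\xi$. Third, the identity $w-s=2(Z-s)-(t-s)-2ih\xi$, applied to the $\tfrac{w-s}{h}$ factor coming from $\d_sE_2$, produces a shift $m\mapsto m+1$ with weight $-2/h$, a factor $(t-s)/h$, and a term in $+2i\xi$ that cancels \emph{exactly} the $\xi$-term created above. Collecting everything gives the clean formula
\begin{align*}
\d_s\mathcal{I}_{2,h}[\chi_1,\theta,f,m]
&=\mathcal{I}_{2,h}[\chi_1,\theta',f,m]+\mathcal{I}_{2,h}[\chi_1,\theta,f',m]-\tfrac{2}{h}\,\mathcal{I}_{2,h}[\chi_1,\theta,f,m+1]\\
&\quad +\tfrac{t-s}{h}\,\mathcal{I}_{2,h}[\chi_1,\theta,f,m]+B[\theta,m,0],
\end{align*}
which is of the form \eqref{e:dell-I} at level $\ell=1$. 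Since the amplitude of $B[\theta,m,k]$ is independent of $s$, differentiating it is immediate --- $\d_s$ only touches $\theta(s)$, $(Z-s)^m$, $(w-s)^k$ and $E_2$, and $\tfrac{w-s}{h}E_2$ merely raises $k$ to $k+1$ --- so
\begin{align*}
\d_s B[\theta,m,k]=B[\theta',m,k]-m\,B[\theta,m-1,k]-k\,B[\theta,m,k-1]+\tfrac1h\,B[\theta,m,k+1].
\end{align*}

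For the inductive step I would apply $\d_s$ to each summand of the formula for $\d_s^\ell\mathcal{I}_{2,h}[\chi_1,\theta,f,m]$: on an $\mathcal{I}_{2,h}$-summand combine Leibniz with the $\ell=1$ identity applied to $\mathcal{I}_{2,h}[\chi_1,\theta^{(k_3)},f^{(k_4)},m+k_5]$ (which raises $k_1,k_3,k_4,k_5$ by at most one and spawns one new $B$-summand), on a $B$-summand use the formula for $\d_sB$ (which keeps it a combination of $B$-kernels with parameters moved by at most one), and on the prefactors $h^{-k_1}(t-s)^{k_2}$ use Leibniz. This produces \eqref{e:dell-I} at level $\ell+1$ and closes the induction; the coefficients remain real, since only the scalars $\pm1,\pm2,\pm m,\pm k$ ever occur. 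The main --- and in fact rather mild --- obstacle is the bookkeeping around the bare-$\xi$ term: one must check that the integrations by parts in $w$ generate \emph{only} the shifts appearing in \eqref{e:dell-I}, and in particular that no $\dzb\tf$, no $\chi''$, no $\eta'$, and no uncontrolled power of $\xi$ ever survives. This is guaranteed precisely because the $(w-s)$-factor coming from $\d_sE_2$ is treated algebraically via $w-s=2(Z-s)-(t-s)-2ih\xi$ rather than by an integration by parts in $\xi$ (which would have produced $\eta'$ and $\dzb\tf$ terms), and because the two $\xi$-contributions cancel exactly; the finite induction then keeps every exponent bounded in terms of $\ell$.
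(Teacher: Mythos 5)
Your proof is correct and follows essentially the same route as the paper: both establish the first-order identities
$\d_s \mathcal{I}_{2,h}[\chi_1,\theta,f,m] = \mathcal{I}_{2,h}[\chi_1,\theta',f,m] + \mathcal{I}_{2,h}[\chi_1,\theta,f',m] + h^{-1}(t-s)\mathcal{I}_{2,h}[\chi_1,\theta,f,m] - 2h^{-1}\mathcal{I}_{2,h}[\chi_1,\theta,f,m+1] + B[\theta,m,0]$ and the companion formula for $\d_s B$, and then run the identical induction on $\ell$. The only (cosmetic) difference is how the first identity is obtained: the paper computes $\d_t\mathcal{I}_{2,h}$ in the complex representation and $(\d_t+\d_s)\mathcal{I}_{2,h}$ via the fact that $\d_t+\d_w+\d_s$ annihilates both the phase and $(Z-s)^m$, then subtracts, whereas you compute $\d_s$ directly and achieve the same cancellations (of the bare $\xi$-factor and of the $(Z-s)^{m-1}$ terms) through the algebraic substitution $w-s = 2(Z-s)-(t-s)-2ih\xi$; both derivations are valid and land on the same formulas.
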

		The proof of Lemma~\ref{calcul clef pour les derivees de I} relies on the following identities.
				\begin{lemma}
		\label{l:computation-dI}
		We have 
		\begin{align}
		\label{e:dt-I2}
		\d_t \mathcal{I}_{2,h}[\chi_1,\theta,f,m] &= \mathcal{I}_{2,h}[\chi_1',\theta,f,m]
		-h^{-1}(t-s)\mathcal{I}_{2,h}[\chi_1,\theta,f,m] \nonumber  \\
		& \quad 		+2h^{-1}\mathcal{I}_{2,h}[\chi_1,\theta,f,m+1] ,
		\end{align}
		and 
			\begin{align}
		\label{e:goal-decomp}
		(\d_t+\d_s) \mathcal{I}_{2,h}[\chi_1,\theta,f,m] & =  \mathcal{I}_{2,h}[\chi_1',\theta,f,m] +  \mathcal{I}_{2,h}[\chi_1,\theta',f,m] \nonumber\\
		 & \quad +  \mathcal{I}_{2,h}[\chi_1,\theta,f',m] + B[\theta,m,0] .
		\end{align}
		\end{lemma}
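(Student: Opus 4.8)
The plan is to differentiate under the integral sign in the $(w,\xi)$-representation of $\mathcal{I}_{2,h}$ and exploit two elementary observations that let one trade a $\d_t$ (resp.\ a $\d_t+\d_s$) for an integration by parts in $w$. I would set $z=z(t,w,\xi):=\tfrac{t+w}{2}+ih\xi$, $E:=e^{i(t-w)\xi}$, $F:=e^{-\frac{|w-s|^2}{2h}}$, and
$$
G=G(t,w,\xi,s):=\Big(\chi\big(\tfrac{t+w}{2}\big)\eta(h^{2/3}\xi)\tf(z)-\eta(h^{2/3}\xi)f(s)\Big)(z-s)^m ,
$$
so that $\mathcal{I}_{2,h}[\chi_1,\theta,f,m](t,s)=\chi_1(t)\theta(s)\iint_{\R^2}G\,E\,F\,dw\,d\xi$. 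For fixed $h$ the $\xi$-integral runs over the compact set $\{|\xi|\le 3rh^{-2/3}\}$ (support of $\eta$) and the $w$-integral converges absolutely because of the Gaussian factor $F$ (while $G$ grows at most polynomially in $w$), so differentiating under the integral is legitimate and the boundary terms in the integrations by parts in $w$ below vanish. The two key observations are: (i) $G$ depends on $(t,w)$ only through $\tfrac{t+w}{2}$, hence $\d_tG=\d_wG$; and (ii) $(\d_t+\d_s)(EF)=\big(i\xi+\tfrac{w-s}{h}\big)EF=-\d_w(EF)$, since $\d_tE=i\xi E=-\d_wE$ and $\d_sF=\tfrac{w-s}{h}F=-\d_wF$.

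For \eqref{e:dt-I2}, I would apply $\d_t$: the term in which $\d_t$ falls on $\chi_1$ gives $\mathcal{I}_{2,h}[\chi_1',\theta,f,m]$, while in the remaining term $\d_t(GEF)=(\d_tG)EF+i\xi\,GEF=(\d_wG)EF+i\xi\,GEF$ by (i). Integrating $(\d_wG)EF$ by parts in $w$ and using $\d_wE=-i\xi E$, $\d_wF=-\tfrac{w-s}{h}F$ yields $\int(\d_wG)EF\,dw=\int i\xi\,GEF\,dw+\tfrac1h\int(w-s)GEF\,dw$, hence $\int\big[(\d_tG)+i\xi G\big]EF\,dw=2\int i\xi\,GEF\,dw+\tfrac1h\int(w-s)GEF\,dw$. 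At this point I would substitute the algebraic identity $w-s=2(z-s)-(t-s)-2ih\xi$: the $i\xi$-contributions cancel, leaving $\tfrac2h\int(z-s)GEF\,dw-\tfrac{t-s}{h}\int GEF\,dw$. Since $(z-s)G$ is exactly the integrand (prefactor aside) of $\mathcal{I}_{2,h}[\cdot,\cdot,\cdot,m+1]$, reinstating $\chi_1(t)\theta(s)$ produces precisely the right-hand side of \eqref{e:dt-I2}.

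For \eqref{e:goal-decomp}, I would apply $\d_t+\d_s$: the terms falling on $\chi_1$ and on $\theta$ give $\mathcal{I}_{2,h}[\chi_1',\theta,f,m]+\mathcal{I}_{2,h}[\chi_1,\theta',f,m]$. For the rest, by (ii), $(\d_t+\d_s)(GEF)=\big[(\d_t+\d_s)G\big]EF-G\,\d_w(EF)$, and integrating the last term by parts in $w$ turns it into $(\d_wG)EF$, so that $\int(\d_t+\d_s)(GEF)\,dw=\int\big[(\d_t+\d_w+\d_s)G\big]EF\,dw$. I would then compute $(\d_t+\d_w+\d_s)G$: one has $(\d_t+\d_w+\d_s)(z-s)=\tfrac12+\tfrac12-1=0$, so $(z-s)^m$ is annihilated; moreover $(\d_t+\d_w)$ acts on functions of $z$ as the real-direction derivative $\d_{\Re z}$ (since $(\d_t+\d_w)\Re z=1$ and $(\d_t+\d_w)\Im z=0$), and $\d_s$ acts only on $f(s)$. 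Using property \eqref{propriete de commutation}, i.e.\ $\d_{\Re z}\tf(z)=\widetilde{f'}(z)$, this gives
$$
(\d_t+\d_w+\d_s)G=\Big[\chi'\big(\tfrac{t+w}{2}\big)\eta(h^{2/3}\xi)\tf(z)+\chi\big(\tfrac{t+w}{2}\big)\eta(h^{2/3}\xi)\widetilde{f'}(z)-\eta(h^{2/3}\xi)f'(s)\Big](z-s)^m .
$$
After reinstating $\chi_1(t)\theta(s)$ and integrating, the first summand is precisely $B[\theta,m,0]$, and the second and third together form $\mathcal{I}_{2,h}[\chi_1,\theta,f',m]$ (here $\widetilde{f'}$, the almost analytic extension of $f'$, is exactly the function entering the definition of $\mathcal{I}_{2,h}[\cdot,\cdot,f',\cdot]$, again by \eqref{propriete de commutation}). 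Collecting the four terms yields \eqref{e:goal-decomp}.

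The computation is essentially bookkeeping, so I do not expect a genuine obstacle; the points needing (minor) care are the justification of differentiation under the integral and the vanishing of boundary terms — both immediate from the Gaussian factor $F$, the $\eta$-cutoff in $\xi$, and the at most polynomial growth of $G$ in $w$ — together with the correct matching of the resulting integrals with the symbols $\mathcal{I}_{2,h}[\cdots]$ and $B[\cdots]$, where the identity $\d_{\Re z}\tf=\widetilde{f'}$ from \eqref{propriete de commutation} is the decisive ingredient.
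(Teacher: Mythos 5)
Your proof is correct, and for the key identity \eqref{e:goal-decomp} it is essentially the paper's argument: the same use of $(\d_t+\d_w)e^{i(t-w)\xi}=0$ and $(\d_w+\d_s)e^{-|w-s|^2/(2h)}=0$, an integration by parts in $w$ to reduce to $(\d_t+\d_w+\d_s)$ applied to the symbol, the cancellation $(\d_t+\d_w+\d_s)(z-s)^m=0$, and the identity $\d_{\Re z}\tf=\widetilde{f'}$ from \eqref{propriete de commutation}. For \eqref{e:dt-I2} you take a slightly longer but equivalent route: the paper differentiates the complex-variable form \eqref{alternative form for kernel}, where $t$ enters only through $\chi_1(t)$ and the factor $e^{-|t-s|^2/(2h)}e^{(z-s)(2t-z-\zb)/h}$, so the coefficients $-h^{-1}(t-s)$ and $2h^{-1}(z-s)$ appear immediately without any integration by parts, whereas your real-variable computation recovers them via the identity $w-s=2(z-s)-(t-s)-2ih\xi$; both are valid, and your justification of differentiation under the integral and of the vanishing boundary terms is adequate.
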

		As a direct corollary of Lemma~\ref{l:computation-dI}, decomposing 
		\begin{align*}
		\d_s \mathcal{I}_{2,h} =(\d_t+\d_s) \mathcal{I}_{2,h}  -\d_t \mathcal{I}_{2,h} ,
		\end{align*}
		we deduce the following key formula
		\begin{align}
		\label{e:forme-utile}
		\d_s \mathcal{I}_{2,h}[\chi_1,\theta,f,m] & =  \mathcal{I}_{2,h}[\chi_1,\theta',f,m] + \mathcal{I}_{2,h}[\chi_1,\theta,f',m] + h^{-1}(t-s)\mathcal{I}_{2,h}[\chi_1,\theta,f,m] \nonumber \\
		&\quad -2h^{-1}\mathcal{I}_{2,h}[\chi_1,\theta,f,m+1] + B[\theta,m,0] .
		\end{align}
		We also notice that differentiation under the integral yields
		\begin{align}
		\label{e:forme-utile-2}
		\d_s B[\theta,m,k] = B[\theta',m,k] +h^{-1}B[\theta,m,k+1] - k B[\theta,m,k-1] - m B[\theta,m-1,k] .
		\end{align}
		With these two formulas at hand, we are now prepared to prove Lemma~\ref{calcul clef pour les derivees de I}.

		\bnp[Proof of Lemma~\ref{calcul clef pour les derivees de I} from~\eqref{e:forme-utile} and~\eqref{e:forme-utile-2}]
		The proof proceeds by induction on $\ell \in \N$. For $\ell =0$, the result holds straightforwardly with $\alpha_{(0,0,0,0,0)}=1$ and $\beta_{(0,0,0,0)}=0$.
		Assume now that the result holds at range $\ell$ and prove it at range $\ell+1$. Differentiating~\eqref{e:dell-I}, we obtain
		\begin{align*}
				&\d^{\ell+1}_s \mathcal{I}_{2,h}[\chi_1,\theta,f,m] =\sum_{k_j \leq \ell} \alpha_k h^{-k_1}\Big( (t-s)^{k_2} \d_s\mathcal{I}_{2,h}[\chi_1,\theta^{(k_3)},f^{(k_4)},m+k_5]\\
				& \quad -k_2(t-s)^{k_2-1}\mathcal{I}_{2,h}[\chi_1,\theta^{(k_3)},f^{(k_4)},m+k_5]\Big)  +\sum_{k_j\leq \ell} \beta_k h^{-k_1} \d_s B[\theta^{(k_2)},k_3,m+k_4] .
			\end{align*}
					Using~\eqref{e:forme-utile}, we deduce that the first term, involving $\d_s\mathcal{I}_{2,h}$, has the form~\eqref{e:dell-I} with $\ell$ replaced by $\ell+1$. The second term, involving $(t-s)^{k_2-1}\mathcal{I}_{2,h}$, is directly under the appropriate form as well. Finally,~\eqref{e:forme-utile-2} implies that the last term, involving $\d_s B$ is also of the form~\eqref{e:dell-I} with $\ell$ replaced by $\ell+1$.
		\enp

		We conclude by proving Lemma~\ref{l:computation-dI}.
		\begin{proof}[Proof of Lemma~\ref{l:computation-dI}]
		Formula~\eqref{e:dt-I2} directly follows from rewriting $\mathcal{I}_{2,h}$ as in~\eqref{alternative form for kernel} and differentiating under the integral.
		Concerning Formula~\eqref{e:goal-decomp}, we rewrite $\mathcal{I}_{2,h}$ as 
		\begin{align}
		\mathcal{I}_{2,h}(t,s)&  = \chi_1(t) \theta(s) \mathcal{J}_2(t,s)  \quad \text{with} \label{e:I2--J2}\\
		  \mathcal{J}_2(t,s) & :=\int_{\R \times \R} F(t,w,s,\xi) e^{i(t-w)\xi}e^{-\frac{|w-s|^2}{2h}}  dw d \xi  , \nonumber\\
		F(t,w,s,\xi)& := \left(\tf^r \left(\frac{t+w}{2}+i h \xi\right)-\eta(h^{2/3}\xi)f(s)\right)\left(\frac{t+w}{2}+i h \xi-s \right)^m , \nonumber
		\end{align}
		From~\eqref{e:I2--J2} we deduce
		\begin{align}
		\label{e:I2--J2-bis}
		(\d_t+\d_s)\mathcal{I}_{2,h}(t,s)  = \chi_1'(t) \theta(s) \mathcal{J}_2(t,s)  +  \chi_1(t) \theta'(s) \mathcal{J}_2(t,s) +\chi_1(t) \theta(s) (\d_t+\d_s)\mathcal{J}_2(t,s) 
		\end{align}
		Next, we focus on $\mathcal{J}_2$. Using that $(\d_t+\d_w)(e^{i(t-w)\xi})=0$, we have on the one hand
		\begin{align*}
		\d_t \mathcal{J}_2(t,s) = \int_{\R \times \R} \d_t F(t,w,s,\xi) e^{i(t-w)\xi}e^{-\frac{|w-s|^2}{2h}}  dw d \xi -  \int_{\R \times \R} F(t,w,s,\xi)\d_w( e^{i(t-w)\xi}) e^{-\frac{|w-s|^2}{2h}}  dw d \xi  .
		\end{align*}
		Integrating by parts in $w$ in the second integral, and using $(\d_w+\d_s)(e^{-\frac{|w-s|^2}{2h}})=0$, we deduce
		\begin{align*}
		\d_t \mathcal{J}_2(t,s) = \int_{\R \times \R} (\d_t +\d_w)F(t,w,s,\xi) e^{i(t-w)\xi}e^{-\frac{|w-s|^2}{2h}}  dw d \xi - \int_{\R \times \R} F(t,w,s,\xi) e^{i(t-w)\xi}\d_s( e^{-\frac{|w-s|^2}{2h}} ) dw d \xi  .
		\end{align*}
		On the other hand, we have 
		\begin{align*}
		\d_s \mathcal{J}_2(t,s) = \int_{\R \times \R} \d_s F(t,w,s,\xi) e^{i(t-w)\xi}e^{-\frac{|w-s|^2}{2h}}  dw d \xi +  \int_{\R \times \R} F(t,w,s,\xi) e^{i(t-w)\xi} \d_s(e^{-\frac{|w-s|^2}{2h}})  dw d \xi  ,
		\end{align*}
		which, combined with the previous line yields
		\begin{align}
		\label{e:d-d-J2}
		(\d_t+\d_s) \mathcal{J}_2(t,s) = \int_{\R \times \R} (\d_t +\d_w+\d_s)F(t,w,s,\xi) e^{i(t-w)\xi}e^{-\frac{|w-s|^2}{2h}}  dw d \xi .
		\end{align}
		We next notice that $(\d_t +\d_w+\d_s) \left(\frac{t+w}{2}+i h \xi-s \right)^m=0$ and 
		\begin{align*}
		&  (\d_t +\d_w+\d_s)\left(\tf^r \left(\frac{t+w}{2}+i h \xi\right)-\eta(h^{2/3}\xi)f(s)\right)   = \d_{\Re(z)}\left(\tf^r\right) \left(\frac{t+w}{2}+i h \xi\right)-\eta(h^{2/3}\xi)f'(s) \\
		 & \quad = \eta(h^{2/3}\xi) \left(  \chi' \left(\frac{t+w}{2}\right) \tf \left(\frac{t+w}{2}+i h \xi\right)+ \chi \left(\frac{t+w}{2}\right)  \d_{\Re(z)}\tf \left(\frac{t+w}{2}+i h \xi\right)-f'(s)  \right) .
		\end{align*}
		Combining this together with~\eqref{e:d-d-J2} and~\eqref{e:I2--J2-bis} and the fact that $\d_{\Re(z)}\tf =\widetilde{(f')}$ (from~\eqref{propriete de commutation} in Lemma~\ref{extension of gevrey}) finally yields~\eqref{e:goal-decomp} and concludes the proof of the lemma.
		\end{proof}
			
	\section{The unique continuation theorems}
	\label{section the uniqueness theorem}
	
	\subsection{Adding partially Gevrey lower order terms }
	\label{section adding partially gevrey}
	
	With the results of Section~\ref{section conjugation with gevrey} at our disposal, we can now add in the Carleman estimate of Theorem~\ref{th:carlemanschrod} lower order terms with coefficients which are Gevrey $2$ with respect to $t$ and bounded with respect to $x$. Let $I \subset \R$ and $V \subset \R^d$ be open sets and define $\Omega:=I \times V$. 
	The goal of this section is to prove the following local Carleman estimate for the operator $\pg$ defined in~\eqref{laplacien-perturbe-gevrey}.
	\begin{theorem}[The Carleman estimate with Gevrey lower-order terms]
		\label{th carleman with gevrey terms}
		Let $\xg_0=(t_0,x_0) \in \Omega=I \times V \subset \R^{1+d}$ and assume that the metric $g$ is Lipschitz on $V$, with time-independent coefficients, and $\mathsf{b}^j, \pot \in \mathcal{G}^2(I; L^\infty(V;\C))$.
		Assume that $\phi$ and $f$ satisfy the assumptions of Theorem~\ref{th:carlemanschrod}.
		Then, for all $k \in \N$ and all $\mu>0$, there exist $r, \mathsf{d}, C, \tau_0>0$ such that for all $\tau \geq \tau_0$ and $w \in C^\infty_c(B(\xg_0,r))$, we have 
		\bnan
		\label{Carlemanschrod with gevrey}
		C\nor{Q_{\e,\tau}^{\phi}\pg w}{L^2}^2+ Ce^{-\mathsf{d}\tau}\nor{e^{\tau\phi}w}{H^{-k}_t H^1_x}^2 \geq  \tau \|Q_{\e,\tau}^{\phi}w\|_{\Ht{1}}^2 .
		\enan
	\end{theorem}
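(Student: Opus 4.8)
The plan is to derive Theorem~\ref{th carleman with gevrey terms} from the unperturbed Carleman estimate of Theorem~\ref{th:carlemanschrod} by a perturbation argument, whose only nontrivial ingredient is the conjugation result of Proposition~\ref{good conjugagte with function}. First I would write $\pg = P_0 + L$, where $P_0 := i\d_t + \sum_{j,k}\d_{x_j}g^{jk}(x)\d_{x_k}$ and $L := \sum_{j=1}^d \mathsf{b}^j\d_{x_j} + \pot$. Since $P_0$ differs from the operator $P = i\d_t+\Delta_g$ of~\eqref{e:def-P-0} only by a first-order operator with time-independent $L^\infty(V)$ coefficients, Remark~\ref{r:carleman-density} shows that Theorem~\ref{th:carlemanschrod} holds verbatim for $P_0$; combining it with $\|Q_{\e,\tau}^{\phi}P_0 w\|_{L^2}^2 \le 2\|Q_{\e,\tau}^{\phi}\pg w\|_{L^2}^2 + 2\|Q_{\e,\tau}^{\phi}L w\|_{L^2}^2$, the whole statement reduces to the estimate
\[
\|Q_{\e,\tau}^{\phi}L w\|_{L^2} \le C\,\|Q_{\e,\tau}^{\phi}w\|_{H^{1}_{\tau,x}} + C_k\,e^{-c\tau}\,\|e^{\tau\phi}w\|_{H^{-k}_tH^1_x} , \qquad \tau \ge \tau_0 ,
\]
for some $c>0$. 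Indeed, by~\eqref{def of anisotropic norm}--\eqref{def of space norm} one has $\|Q_{\e,\tau}^{\phi}w\|_{H^{1}_{\tau,x}}^2 \le \tau^{-1}\big(\tau\|Q_{\e,\tau}^{\phi}w\|_{\Ht{1}}^2\big)$, so after squaring the displayed bound the first term is absorbed into the right-hand side of the Carleman estimate for $\tau\ge\tau_0$ large, while the second is an admissible remainder of the form $e^{-\mathsf{d}\tau}\|e^{\tau\phi}w\|_{H^{-k}_tH^1_x}^2$.

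To estimate $\|Q_{\e,\tau}^{\phi}L w\|_{L^2}$, set $v := e^{\tau\phi}w\in C^\infty_c(B(\xg_0,r))$ and commute $e^{\tau\phi}$ through $L$. Since $\pot,\mathsf{b}^j$ are multiplication operators and $e^{\tau\phi}\d_{x_j}e^{-\tau\phi} = \d_{x_j}-\tau\d_{x_j}\phi$, one finds $e^{\tau\phi}Lw = \sum_{j}\mathsf{b}^j\d_{x_j}v + \pot\, v - \tau\sum_j(\d_{x_j}\phi)\,\mathsf{b}^j\, v$, hence
\[
Q_{\e,\tau}^{\phi}Lw = \sum_{j=1}^d e^{-\frac{\e}{2\tau^3}|D_t|^2}\big(\mathsf{b}^j\d_{x_j}v\big) + e^{-\frac{\e}{2\tau^3}|D_t|^2}(\pot\, v) - \tau\sum_{j=1}^d e^{-\frac{\e}{2\tau^3}|D_t|^2}\big((\d_{x_j}\phi)\mathsf{b}^j\, v\big) .
\]
Because $\phi$ is a quadratic polynomial, $\d_{x_j}\phi$ is affine, hence Gevrey $1$ (a fortiori Gevrey $2$) in $t$ with values in $L^\infty(V)$ on the bounded set $V$; as $\mathcal{G}^2(I;L^\infty(V;\C))$ is an algebra, each coefficient $\mathsf{b}^j$, $\pot$, $(\d_{x_j}\phi)\mathsf{b}^j$ belongs to $\mathcal{G}^2(I;L^\infty(V;\C))$, with Gevrey norms independent of $\tau$. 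Therefore, on a bounded open $U\ni t_0$ with $\overline U\subset I$, there is $R>0$ such that all of them lie in $\mathcal{G}^{2,R}_b(U;L^\infty(V;\C))$, hence --- via the embedding $L^\infty(V)\hookrightarrow\L(\X,\Y)$ of Remark~\ref{r:lot-bis} with $\X=\Y=L^2(V)$ --- in $\mathcal{G}^{2,R}_b(U;\L(\X,\Y))$.

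Now apply Proposition~\ref{good conjugagte with function} with $h=\e/\tau^3$, separately to each coefficient $f\in\{\mathsf{b}^j,\pot,(\d_{x_j}\phi)\mathsf{b}^j\}$, $f$ acting on $\d_{x_j}v$ in the first case and on $v$ in the others (all of which lie in $C^\infty_c(\R\times V)\subset\mathcal{S}(\R;L^2(V))$), having chosen the support radius $r$ of $w$ small enough --- smaller than the conjugation radius of Proposition~\ref{good conjugagte with function}, and so that the cut-off $\theta$ equals $1$ on the time-projection of $\overline B(\xg_0,r)$, whence $\theta v=v$ and $\theta\d_{x_j}v=\d_{x_j}v$. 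Since $h^{-1/3}=\e^{-1/3}\tau$, the error in Proposition~\ref{good conjugagte with function} is bounded by $C_k\,\e^{-k}\tau^{3k}\,e^{-c\e^{-1/3}\tau/R}$ times $\|\d_{x_j}v\|_{H^{-k}_tL^2_x}$, resp. $\|v\|_{H^{-k}_tL^2_x}$; using $\|\d_{x_j}v\|_{H^{-k}_tL^2_x}\le\|v\|_{H^{-k}_tH^1_x}=\|e^{\tau\phi}w\|_{H^{-k}_tH^1_x}$ and absorbing the polynomial factor $\tau^{3k}$ (and, for the last family of terms, the extra $\tau$) into the exponential, these errors are all of the admissible form. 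For the principal terms, the uniform $L^2$-boundedness~\eqref{e:bornitude-L2-Fh} of $F_h=\op^w(\tf^r(t+ih\xi))$ (attached to the relevant coefficient) together with $[e^{-\frac{\e}{2\tau^3}|D_t|^2},\d_{x_j}]=0$ yields
\[
\big\|\chi F_h e^{-\frac{\e}{2\tau^3}|D_t|^2}\d_{x_j}v\big\|_{L^2}\lesssim\big\|\d_{x_j}e^{-\frac{\e}{2\tau^3}|D_t|^2}v\big\|_{L^2}=\|\d_{x_j}Q_{\e,\tau}^{\phi}w\|_{L^2}\le\|Q_{\e,\tau}^{\phi}w\|_{H^{1}_{\tau,x}} ,
\]
while $\|\chi F_h e^{-\frac{\e}{2\tau^3}|D_t|^2}v\|_{L^2}\lesssim\|e^{-\frac{\e}{2\tau^3}|D_t|^2}v\|_{L^2}=\|Q_{\e,\tau}^{\phi}w\|_{L^2}$, so the $\pot$-term and the $\tau(\d_{x_j}\phi)\mathsf{b}^j$-terms are each $\lesssim\|Q_{\e,\tau}^{\phi}w\|_{H^{1}_{\tau,x}}$. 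Summing over $j$ gives the desired bound on $\|Q_{\e,\tau}^{\phi}Lw\|_{L^2}$, and inserting it into the Carleman estimate for $P_0$ and taking $\tau_0$ large completes the proof.

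I do not anticipate a genuine obstacle, since the two substantive ingredients --- Theorem~\ref{th:carlemanschrod} and Proposition~\ref{good conjugagte with function} --- are already available. The points requiring care are the bookkeeping of powers of $\tau$, namely checking that the conjugated main terms land exactly in the $H^{1}_{\tau,x}$-norm (which carries a spare $\tau^{-1}$ relative to the gain $\tau$ of the Carleman estimate, hence is absorbable), while the conjugation errors --- which come with the polynomial weight $h^{-k}=\e^{-k}\tau^{3k}$ --- remain dominated by $e^{-\mathsf{d}\tau}\|e^{\tau\phi}w\|_{H^{-k}_tH^1_x}^2$; and the matching of the cut-off functions of Proposition~\ref{good conjugagte with function} with the support of $w$, so that the cut-offs act as the identity on $v$ and $\d_{x_j}v$.
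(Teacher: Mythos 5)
Your proof is correct and follows essentially the same route as the paper: decompose $\pg$ into the time-independent principal part plus the Gevrey perturbation $L$, apply Theorem~\ref{th:carlemanschrod} (via Remark~\ref{r:carleman-density}), and control $\nor{Q_{\e,\tau}^{\phi}Lw}{L^2}$ by Proposition~\ref{good conjugagte with function} together with the uniform $L^2$-bound~\eqref{e:bornitude-L2-Fh} and the absorption of the resulting $H^1_{\tau,x}$ and exponentially small terms. The only departure is a bookkeeping one: the paper feeds $u=e^{\tau\phi}\d_{x_j}w$ directly into Proposition~\ref{good conjugagte with function} and commutes $e^{\tau\phi}$ through $\d_{x_j}$ afterwards, whereas you commute first and then invoke the conjugation result for the three families of coefficients $\mathsf{b}^j,\pot,(\d_{x_j}\phi)\mathsf{b}^j$ --- a slightly tidier route that, by folding $(\d_{x_j}\phi)$ into the Gevrey coefficient, avoids having to pass the affine-in-$t$ factor $\d_{x_j}\phi$ separately through $e^{-\mu|D_t|^2/(2\tau^3)}$.
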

	Note that this Carleman estimate is still valid for $P_{\mathsf{b},\pot, \varphi}$ (defined in~\eqref{e:def-P-b-pot-phi} below) in place of $\pg$ according to Remark~\ref{r:carleman-density}.
	\begin{proof}
		We define $R:=\sum_{j=1}^d \mathsf{b}^j\d_{x_j}+\pot$ so that $\pg=i\d_t+\Delta_{g,1}+R$. We estimate $ \nor{Q_{\e,\tau}^{\phi}\pg w}{L^2}^2 \gtrsim \nor{Q_{\e,\tau}^{\phi}P w}{L^2}^2-\nor{Q_{\e,\tau}^{\phi} R w}{L^2}^2$. Application of~\eqref{Carlemanschrod} in Theorem~\ref{th:carlemanschrod} yields
		\begin{align}
			\label{estimate aux carleman gevrey}
			\nor{Q_{\e,\tau}^{\phi}\pg w}{L^2}^2+e^{-\mathsf{d}\tau}\nor{e^{\tau\phi}w}{H^{-k}_t H^1_x}^2 &\gtrsim \nor{Q_{\e,\tau}^{\phi}P w}{L^2}^2+e^{-\mathsf{d}\tau}\nor{e^{\tau\phi}w}{H^{-k}_t H^1_x}^2 -\nor{Q_{\e,\tau}^{\phi} R w}{L^2}^2  \nonumber \\
			&\gtrsim  \tau \|Q_{\e,\tau}^{\phi}w\|_{\Ht{1}}^2 -\nor{Q_{\e,\tau}^{\phi} R w}{L^2}^2.
		\end{align}
		We now estimate the last term using Proposition~\ref{good conjugagte with function}, up to reducing $r$. In order for all the setting of Section~\ref{section conjugation with gevrey} to apply, we pick $r_{0}$ small enough so that $J=(t_{0}-2r_{0},t_{0}+2r_{0})\subset I$ and $\rho>0$ is arbitrary. If $r$ is the one given by Theorem~\ref{th:carlemanschrod}, we reduce it again in order to ensure the assumption $0<r< \min(\frac{r_0}{4},\frac{\rho}{3})$. We select $\chi, \theta , \eta$ with the additional assumption that $\theta=1$ on $[t_{0}-r/2,t_{0}+r/2]$. We denote by $B_{j,h}$ the approximate conjugated operator associated to $\mathsf{b}^j$ as defined in Section~\ref{section conjugation with gevrey}, that is $B_{j,h}=F_{h}$ as defined in \eqref{def of Fh}, in the case $f=\mathsf{b}^j$ and $h$ is linked to $\tau$ via $h=\e/\tau^3$. We will keep however the $h$ notation for the conjugated operator. The function  $\mathsf{b}^j\in \mathcal{G}^2(J; L^\infty(V;\C))$ is identified with the multiplication operator in $\mathcal{G}^2(J; \mathcal{L}(L^2(V;\C)))$, that is, we make the choice $\X = \Y = L^2(V)$ and $\ban = L^\infty(V)$.

		We now assume $w \in C^\infty_c(B(\xg_0,r/2))$ so that $\theta w=w$. Applying Proposition~\ref{good conjugagte with function} with $u=e^{\tau \phi}\d_{x_j} w=\theta u$ gives 
		$$
		\nor{\chi B_{j,h} e^{-\frac{\e |D_t|^2}{2 \tau^3}}u-e^{-\frac{\e |D_t|^2}{2 \tau^3}} \mathsf{b}^j u}{L^2(\R;L^2(V))}\leq  C e^{-c \tau}\nor{u}{H^{-k}(\R;L^2(V))}.
		$$
		According to~\eqref{e:bornitude-L2-Fh}, $B_{j,h} \in \mathcal{L}(L^2(\R;L^2(V)))$ uniformly in $h\in(0,1)$, which, combined with the previous estimate gives
		\begin{align*}
			\nor{Q_{\e,\tau}^{\phi} \mathsf{b}^j \d_{x_j} w}{L^2}&=\nor{e^{-\frac{\e |D_t|^2}{2 \tau^3}} \mathsf{b}^j u}{L^2}\lesssim \nor{B_{j,h} e^{-\frac{\e |D_t|^2}{2 \tau^3}}u }{L^2}+ e^{-c \tau} \nor{u}{H^{-k}_t L^2_x}\\
			&\lesssim \nor{e^{-\frac{\e |D_t|^2}{2 \tau^3}}u }{L^2}+ e^{-c \tau} \nor{u}{H^{-k}_t L^2_x}= \nor{{Q_{\e,\tau}^{\phi} \d_{x_j} w}}{L^2}+ e^{-c \tau}\nor{e^{\tau \phi} \d_{x_j} w}{H^{-k}_t L^2_x}. 
		\end{align*}	
		Using that $e^{\tau\phi}\d_{x_j}w=\d_{x_j}(e^{\tau\phi}w) - \tau (\d_{x_j}\phi) e^{\tau\phi}w$ and $[e^{-\frac{\e |D_t|^2}{2 \tau^3}} ,\d_{x_j}]=0$, this implies
		\begin{align*}
			\nor{Q_{\e,\tau}^{\phi} \mathsf{b}^j \d_{x_j} w}{L^2} &\lesssim \tau \nor{{Q_{\e,\tau}^{\phi}  w}}{L^2}+ \nor{{Q_{\e,\tau}^{\phi} w}}{H^1_x}+ \tau e^{- c \tau} \nor{e^{\tau \phi} w}{H^{-k}_t H^1_x}.
		\end{align*}
		We proceed similarly for the potential $\pot$ to find $ \nor{Q_{\e,\tau}^{\phi} \pot  w}{L^2}\lesssim \nor{{Q_{\e,\tau}^{\phi}  w}}{L^2}+ e^{-c \tau} \nor{e^{\tau \phi} w}{H^{-k}_t L^2_x} $ and therefore adding these two estimates yields
		\begin{equation}
			\label{estimate for the rest of gevrey}
			\nor{Q_{\e,\tau}^{\phi} R w}{L^2} \lesssim  \tau \nor{{Q_{\e,\tau}^{\phi}  w}}{L^2}+ \nor{{Q_{\e,\tau}^{\phi} w}}{H^1_x}+ e^{-\frac{c}{2} \tau} \nor{e^{\tau \phi} w}{H^{-k}_t H^1_x}.
		\end{equation}
		Estimate~\eqref{estimate for the rest of gevrey} allows to absorb the last term in~\eqref{estimate aux carleman gevrey} up to taking $\tau\geq \tau_0$ with $\tau$ sufficiently large. This concludes the proof of Theorem~\ref{th carleman with gevrey terms} up to renaming the constants $r,C,c, \mathsf{d}$ and $\tau_0$.
	\end{proof}

	\subsection{Using the Carleman estimate: proof of Theorem~\ref{theorem local uniqueness intro}}
	\label{section using the carleman estimate}
	
	In this section, we prove Theorem~\ref{theorem local uniqueness intro} as a consequence of the Carleman estimate of Theorem~\ref{th carleman with gevrey terms}.
	As usual in this procedure (see e.g.~\cite[Chapter~28]{Hoermander:V4},~\cite{Lerner:book-carleman} or~\cite{LL:23notes}), we need to construct a weight function $\phi$ that
	\begin{itemize}
		\item satisfies the assumptions of Theorem~\ref{th carleman with gevrey terms}, that is the assumptions of Theorem~\ref{th:carlemanschrod};
		\item has level sets appropriately curved with respect to the level sets of $\Psi$; this is the geometric convexification part.
	\end{itemize}
	This is the content of the following lemma, in which we recall that $I \subset \R$ and $V \subset \R^d$ denote bounded open sets and we write $\xg=(t,x)$.
	\begin{lemma}
		\label{geometric convexification}
		Let $\xg_0=(t_0,x_0) \in \Omega=I \times V \subset \R^{1+d}$ and assume that the metric $g$ is Lipschitz on $V$, with time-independent coefficients, and $\mathsf{b}^j, \pot \in \mathcal{G}^2(I; L^\infty(V;\C))$.
		Let $\Psi \in C^2(\Omega ; \R)$ satisfy~\eqref{e:non-char-schro} and $\Psi(\xg_0)=0$.
		Then there exists a quadratic polynomial $\phi$ and a function $f$ satisfying the assumptions of Theorem~\ref{th:carlemanschrod} together with the following properties:  $\phi(\xg_0)=0$ and there exists $r_0$ such that for any $0<r<r_0$ there exists $\eta>0$ so that $\phi(\xg)\leq -\eta$ for $\xg \in \{\Psi\leq 0\} \cap \{r/2 \leq |\xg-\xg_0| \leq r \}$.
	\end{lemma}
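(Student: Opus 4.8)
The plan is to follow the classical convexification procedure (as in \cite{Hoermander:V4,LL:18,LL:23notes}), adapted to the anisotropic Schrödinger setting and to the fact that the hypothesis \eqref{e:non-char-schro} only gives non-characteristicity (i.e. $\gln{\nablag \Psi(\xg_0)}>0$, which a priori only controls the \emph{spatial} gradient of $\Psi$). First I would translate the problem so that $\xg_0=0$ and $\Psi(\xg_0)=0$. Since \eqref{e:non-char-schro} says precisely $\gln{\nablag \Psi(\xg_0)}>0$, by continuity there is $r_0>0$ and $c_1>0$ with $\gln{\nablag \Psi}\geq c_1$ on $\ovl{B}(0,r_0)$; in particular $\Psi$ has no spatial critical point there, which is all the convexification machinery of Section~\ref{s:convexification} needs.

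Next I would apply Corollary~\ref{c:explicit-convexification-exp} with $\check{\phi}=G(\Psi)$, $f=2G''(\Psi)\gln{\nablag\Psi}$ and $G(t)=e^{\lambda t}-1$: on $B(0,r_0)$ one gets
\begin{align*}
\B_{g,\check\phi,f}(X) &\geq \lambda e^{\lambda\Psi}\gln{X}\left(\lambda \gln{\nablag\Psi} - 2|\Hess(\Psi)|_g - \Lap\Psi\right), \\
\E_{g,\check\phi,f} &\geq \lambda e^{\lambda\Psi}\gln{\nablag\check\phi}\left(\lambda \gln{\nablag\Psi} - 2|\Hess(\Psi)|_g + \Lap\Psi\right),
\end{align*}
where on $B(0,r_0)$ the quantities $|\Hess(\Psi)|_g$ and $|\Lap\Psi|$ are bounded (here $\Psi\in C^2$ and $g$ Lipschitz are exactly enough to make these $L^\infty$, by the discussion in Section~\ref{s:Riemtool}). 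Choosing $\lambda$ large enough, depending only on these bounds and on $c_1$, makes both parentheses $\geq c_1\lambda/2>0$, so \eqref{e:sub-ellipticity-EB} holds on $B(0,r_0)$ for $\check\phi$ with some $C_0>0$, and $\gln{\nablag\check\phi}=\lambda^2 e^{2\lambda\Psi}\gln{\nablag\Psi}>0$ there. Shrinking $r_0$ if needed, I also get $\phi(\xg)\leq -\eta$ on $\{\Psi\leq 0\}\cap\{r/2\leq|\xg|\leq r\}$ for any $r<r_0$: indeed on $\{\Psi\leq 0\}$ we have $\check\phi=e^{\lambda\Psi}-1\leq 0$, and since $\nablag\check\phi\neq 0$ while $\check\phi(0)=0$, on the annulus $\check\phi$ stays bounded away from $0$ — this is the standard argument (see \cite[Proof of Theorem~6.7]{LL:15}), where one uses that on the annulus $\{r/2\le|\xg|\le r\}\cap\{\Psi\le 0\}$ the function $\check\phi$, being $C^1$ and vanishing only on the set $\{\Psi=0\}$ which meets the sphere $\{|\xg|=\text{const}\}$ transversally near $\xg_0$, is $\le -\eta(r)<0$.

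Finally, since Theorem~\ref{th:carlemanschrod} and hence Theorem~\ref{th carleman with gevrey terms} require $\phi$ to be a \emph{quadratic polynomial}, not the function $\check\phi=G(\Psi)$, I would replace $\check\phi$ by its second-order Taylor polynomial $\phi$ at $\xg_0$, and keep $f$ as is (or also Taylor-truncate $f$ to order $\leq 1$; it only needs to be $W^{1,\infty}$). The point, as noted in Section~\ref{s:convexification}, is that the conditions \eqref{e:sub-ellipticity-EB} and $\gln{\nablag\phi}>0$ involve only the $0$-th, $1$-st and $2$-nd derivatives of $\phi$, which at $\xg_0$ coincide with those of $\check\phi$; hence the strict inequalities $\B\geq C_0\gln{X}$, $\E\geq C_0\gln{\nablag\phi}$, $\gln{\nablag\phi}>0$, which hold with room to spare at $\xg_0$, persist on a (possibly smaller) ball $B(\xg_0,r)$ by continuity of all these expressions in $\xg$ — here using that the coefficients of $g$ are in $W^{1,\infty}$, so $\Hess$, $\Lap$, $\gl{\cdot}{\cdot}$ depend continuously (in $L^\infty$, or pointwise a.e.) on $\xg$. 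One checks $\phi(\xg_0)=\check\phi(\xg_0)=0$, and the annulus estimate $\phi\leq-\eta$ on $\{\Psi\leq0\}\cap\{r/2\leq|\xg-\xg_0|\leq r\}$ is inherited from that for $\check\phi$ after shrinking $r_0$, because $\phi-\check\phi=O(|\xg-\xg_0|^3)$ is negligible compared to $\eta(r)\sim r^2$ on the annulus. The main obstacle is really just bookkeeping: ensuring the order of quantifiers is correct — $\lambda$ and $C_0$ are fixed first (depending on the $C^2$ norm of $\Psi$, the Lipschitz norm of $g$, $c_1$), then $r_0$ shrunk to absorb the Taylor error of both $\phi$ and $f$, and only then, in Section~\ref{section using the carleman estimate}, the radius $r$ of the cutoff is chosen (smaller still, as dictated by Theorem~\ref{th carleman with gevrey terms}). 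There is no deep difficulty here; the lemma is the routine bridge between the geometric hypothesis \eqref{e:non-char-schro} on $\Psi$ and the analytic hypotheses of the Carleman estimate on $(\phi,f)$.
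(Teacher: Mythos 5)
Your proof follows the same outline as the paper's (convexify via $G(t)=e^{\lambda t}-1$, Taylor-truncate to a quadratic, argue by continuity of $\B,\E$ in the $C^2$ topology), but it omits a step that is not cosmetic and without which the geometric conclusion fails: the subtraction of a small quadratic term $-\delta|\xg-\xg_0|^2$. The paper defines $\phi := \check{\phi}_T - \delta|\xg-\xg_0|^2$, not $\phi := \check{\phi}_T$.

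The annulus estimate you assert for $\check{\phi}$ is false as stated. The set $\{\Psi=0\}\cap\{r/2\le|\xg-\xg_0|\le r\}$ is nonempty (the hypersurface $\{\Psi=0\}$ passes through $\xg_0$, so it meets every small annulus around $\xg_0$), and on it $\check{\phi}=e^{\lambda\cdot 0}-1=0$. Hence $\check{\phi}$ is \emph{not} bounded above by any $-\eta(r)<0$ on $\{\Psi\le 0\}\cap\{r/2\le|\xg-\xg_0|\le r\}$, and the heuristic "$\eta(r)\sim r^2$" has no basis for $\check{\phi}$. Transversality of $\{\Psi=0\}$ with the spheres is beside the point: the problem is that $\check{\phi}$ vanishes identically on a set that enters the annulus. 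What the references \cite{LL:15,LL:23notes} actually prove is the estimate for $\phi=\check{\phi}_T-\delta|\xg-\xg_0|^2$: on $\{\Psi\le 0\}$ one has $\check{\phi}\le 0$, so $\check{\phi}_T\le |\check{\phi}_T-\check{\phi}|=O(|\xg-\xg_0|^3)$, and therefore on the annulus $\phi\le O(r^3)-\delta r^2/4\le -\delta r^2/8=:-\eta$ once $r_0$ is taken small enough relative to $\delta$. The quadratic term is precisely what bends the zero level set of $\phi$ strictly into $\{\Psi>0\}$ away from $\xg_0$. It costs only an $O(\delta)$ perturbation of $\check{\phi}_T$ in $C^2$, which your (correct) continuity argument for $\B$ and $\E$ absorbs when $\delta$ is small. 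With this term inserted, and the order of choices being $\lambda$, then $\delta$, then $r_0$, your proof coincides with the paper's.
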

	\begin{proof}
		Given $\Psi \in C^2(\Omega ; \R)$ define $\check{\phi}=G(\Psi)$ and $f$ as in~\eqref{e:choice-phi-f} with $G(s)=e^{\lambda s}-1$. Note in particular that $\check{\phi}$ and $\Psi$ have the same level sets. Then using Corollary \ref{c:explicit-convexification-exp}, one has, for $\lambda$ large enough, almost everywhere on $U$ and for every vector field $X$, 
		\begin{align}
			\label{e:cond-sous-ell-inpf}
			\B_{g, \check{\phi}, f}(X) \geq  C_0 \gln{X}, \quad \text{ and } \quad 
			\E_{g, \check{\phi}, f}  \geq C_0  \gln{\nablag \check{\phi}}>0 .
		\end{align}
		Now define $\check{\phi}_T$ by 
		$$
		\check{\phi}_T(\xg):=\sum_{|\alpha|\leq 2}\frac{1}{\alpha !} (\d^{\alpha} \check{\phi})(\xg_0)(\xg-\xg_0)^\alpha.
		$$
		Observe that both quantities $\B_{g, \check{\phi}, f}$ and $\E_{g, \check{\phi}, f}$ involve derivatives of order at most $2$ of $\check{\phi}$. Since $\Psi$ is $C^2$ and $G$ is smooth, $\check{\phi}=G(\Psi)$ is of class $C^2$ as well. Since $(\d^\alpha\check{\phi}_T)(\xg_0)=(\d^\alpha \check{\phi})(\xg_0)$ for $\alpha \leq 2$ we obtain by continuity that for any $\eps >0$,  there exists $r_1>0$ such that
		$\nor{\check{\phi}_T-\check{\phi}}{C^2(B(\xg_0,r_1))}<\eps$.
		Define finally $\phi$ by
		$$
		\phi:= \check{\phi}_T-\delta |\xg-\xg_0|^2.
		$$
		Then there is $\delta_0>0$ such that for all $\delta \in (0,\delta_0)$, $\nor{\check{\phi}_T-\phi}{C^2(B(\xg_0,r_1))}<\eps$ and hence $\nor{\phi-\check{\phi}}{C^2(B(\xg_0,r_1))}<2\eps$.
		As a consequence of~\eqref{e:cond-sous-ell-inpf}, together with the fact that $\B_{g, \phi, f}$ and $\E_{g, \phi, f}$ (defined in~\eqref{e:def-B}-\eqref{e:def-E}) are continuous with respect to $\phi$ in $C^2$ topology, we finally deduce existence of $r_1>0$ and $\delta>0$ such that for a.e. $\xg \in \overline{B}(\xg_0,r_1)$ and for all vector fields $X$, 
		\begin{align*}
			\B_{g, \phi, f}(\xg)(X) \geq  \frac{C_0}{2} \gln{X}, \quad \text{ and } \quad 
			\E_{g, \phi, f}(\xg)  \geq  \frac{C_0}{2} \gln{\nablag \phi}(\xg)>0, 
		\end{align*}
		As a consequence, $\phi$ satisfies the assumptions of Theorem~\ref{th:carlemanschrod}. The geometric statement of the lemma follows from the facts that $\check{\phi}$ and $\Psi$ have the same level sets and $\check{\phi}_T$ is the order $2$ Taylor expansion of $\check{\phi}$ (see e.g.~\cite[Proof of Theorem~2.2]{LL:23notes}).
	\end{proof}
	We are now prepared to prove Theorem~\ref{theorem local uniqueness intro}.

	\begin{proof}[Proof of Theorem~\ref{theorem local uniqueness intro}]
		Consider $u$ a solution of $\pg u=0$ such that $u=0$ in $\Omega \cap \{\Psi>0\}$. Let $\phi$ be as in Lemma~\ref{geometric convexification}. Theorem~\ref{th carleman with gevrey terms} for $k=0$ implies that there exist $r, \mathsf{d}, C, \tau_0>0$ such that for all $\tau \geq \tau_0$ and $w \in C^\infty_c(B(\xg_0,r))$, we have 
		\begin{equation}
			\label{e:carleman-appl}
			C\nor{Q_{\e,\tau}^{\phi}\pg w}{L^2}^2+ Ce^{-\mathsf{d}\tau}\nor{e^{\tau\phi}w}{L_t^2H^1_x}^2 \geq  \tau \|Q_{\e,\tau}^{\phi}w\|_{\Ht{1}}^2 .  
		\end{equation}
		According to usual approximation argument, Estimate~\ref{e:carleman-appl} still holds for functions $w \in L^2(I;H^1(V))$ such that $\pg w \in L^2$ and $\supp w \subset B(\xg_0,r)$. 
		We have moreover:
		\begin{enumerate}
			\item \label{i:propty-1} $\phi(\xg_0)=0$ and there exists $\eta>0$ so that $\phi(\xg)\leq -\eta$ for $\xg \in \{\Psi\leq 0 \}\cap \{r\geq|\xg-\xg_0|\geq r/2\}$,
			\item  \label{i:propty-2} $\phi(\xg)\leq \mathsf{d}/4$ for $|\xg-\xg_0|\leq r$.
		\end{enumerate}
		Property~\ref{i:propty-1} comes from Lemma~\ref{geometric convexification} and Property~\ref{i:propty-2} is just the continuity of $\phi$, up to reducing $r$.  Let $\chi \in C^{\infty}_c(B(\xg_0,r))$ with $\chi=1$ in $B(\xg_0,r/2)$.
		In order to apply the Carleman estimate~\eqref{e:carleman-appl} to $w=\chi u \in L^2(I;H^1(V))$, we first estimate 
		\begin{align*}
			\nor{Q_{\e,\tau}^{\phi}\pg \chi u }{L^2}&\leq \nor{Q_{\e,\tau}^{\phi}\chi\pg u }{L^2}+\nor{Q_{\e,\tau}^{\phi}[\pg, \chi] u }{L^2}=\nor{Q_{\e,\tau}^{\phi}[\pg, \chi] u }{L^2} \\
			&\leq \nor{e^{\tau \phi }[\pg, \chi] u }{L^2} \leq e^{-\eta \tau} \nor{u}{L^2_tH^1_x},
		\end{align*}
		according to the fact that $\supp(\nabla_\xg \chi)\subset  \{r\geq|\xg-\xg_0|\geq r/2\}$ and $\supp (u)\subset \{\Psi \leq 0\}$, Property~\ref{i:propty-1} and the fact that $[\pg, \chi]$ is a differential operator of order one with no derivatives in $t$. We have as well
		$$
		e^{-\mathsf{d}\tau}\nor{e^{\tau\phi}w}{L^2_tH^1_x} \leq e^{-3\mathsf{d}\tau/4} \nor{u}{L^2_tH^1_x},
		$$
		thanks to Property~\ref{i:propty-2}. Plugging the last two estimates in~\eqref{e:carleman-appl}, we finally obtain that there exists a $\delta>0$ such that 
		$$
		\nor{Q_{\e,\tau}^{\phi} \chi u }{L^2} \leq \|Q_{\e,\tau}^{\phi} \chi u\|_{\Ht{1}}^2\leq C e^{- \delta \tau}\nor{u}{L^2_tH^1_x},
		$$
		which implies that $ \nor{Q_{\e,\tau}^{\phi+\delta} \chi u }{L^2}\leq C$ uniformly in $\tau\geq \tau_{0}$. Lemma~\ref{lemme d analyse harmonique} gives $\supp(\chi u) \subset \{\phi \leq -\delta\}$. Since $\phi(\xg_0)=0$ and $\chi=1$ in $B(\xg_0,r/2)$ one has that $W=B(\xg_0,r/2) \cap \{\phi> -\delta/2\}$ is a neighborhood of $\xg_0$ in which $\chi u= u= 0$ and the proof of Theorem~\ref{theorem local uniqueness intro} is complete
	\end{proof}
	
	\subsection{Reducing the regularity of the solution: proof of Theorem~\ref{theorem local L2}}
	\label{section reducing the regularity}
	Theorem~\ref{theorem local uniqueness intro} concerns solutions $u \in L^2(I;H^1(V))$ of the  Schr{\"o}dinger equation $\pg u=0$. The  $L^2(I;H^1(V))$ regularity allows in particular not to care about the divergence form and to make sense of $\mathsf{b}^j(t,x) \d_{x_j}u(t,x)$ in the sense of distributions if $b \in L^{\infty}(I\times V)$ only. In the present section, assuming divergence form of the principal part and additional space regularity on the vectorfield $\mathsf{b}$, we generalize Theorem~\ref{theorem local uniqueness intro} to $L^2(I\times V)$ solutions to $\pg u=0$ and prove Theorem~\ref{theorem local L2}. Since the statement of Theorem~\ref{theorem local L2} is sensitive to the form of the elliptic operator involved, we prove it in the more general setting with $\pg$ replaced by 
\begin{align}
\label{e:def-P-b-pot-phi}
	P_{\mathsf{b},\pot, \varphi} = i \d_t +\Delta_{g,\varphi}+\sum_{j=1}^d \mathsf{b}^j(t,x)\d_{x_j}+\pot(t,x) ,
\end{align}
	where $\Delta_{g,\varphi}$ is defined in Section~\ref{s:rem-div}. Then we have $\pg=P_{\mathsf{b},\pot,1}$, i.e. the statement of Theorem~\ref{theorem local L2} corresponds to taking $\varphi=1$, and the application to the second part of Theorem~\ref{theorem global intro} to $\varphi = \sqrt{\det(g)}$. 
	The idea is to use the Carleman estimate of Theorem~\ref{th carleman with gevrey terms} for $k=1$ instead of $k=0$. This allows to exploit the ellipticity of $\Delta_{g,\varphi}$ via Lemma~\ref{lm:ellipticH1H-1} to gain regularity. 
	
	We first state a local regularity result for the Schr\"odinger operator $P_{\mathsf{b},\pot, \varphi}$.

	\begin{lemma}[Local regularity for $\pg$]
		\label{imroved regularity for L2 solution}
		Let $I \subset \R$ and $V \subset \R^d$ be bounded open sets and $\Omega=I \times V$.
		Assume that $g^{jk}\in W^{1,\infty}_{\loc}(V;\R)$ is symmetric and satisfies~\eqref{e:elliptic},
		that $\varphi \in W^{1,\infty}_{\loc}(V;\R)$ satisfies $\varphi >0$ on $V$,
		 that $\pot , \mathsf{b}^j \in L^{\infty}_{\loc}(\Omega;\C)$ 
		and $\sum _{j=1}^d\d_{x_j}\mathsf{b}^j \in  L^\infty_{\loc}(\Omega ;\C)$. 
Let $\chi^t \in C^\infty_c(I)$, $\chi^x\in C^\infty_c(V)$ and set $\chi_{3}(t,x)= \chi^t(t)\chi^x(x)$. Then, there is a constant $C>0$ such that for any $u \in L^2(\Omega)$ with $\chi_3 P_{\mathsf{b},\pot,\varphi}  u \in H^{-1}(\R;H^{-1}(\R^d))$, 
		we have $\chi_{3} u \in H^{-1}(\R ; H^1(V))$ with
		\begin{align}
		\label{e:reg-schrod}
		\nor{\chi_{3}u}{H^{-1}(\R ; H^1(V))} \leq C \nor{\chi_3 P_{\mathsf{b},\pot,\varphi}  u}{H^{-1}(\R;H^{-1} (\R^d))} + C\nor{u}{L^2(\Omega)}.
		\end{align}
	\end{lemma}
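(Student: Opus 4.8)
The strategy is to exploit the anisotropy of $P_{\mathsf{b},\pot,\varphi}$: in the equation the term $i\d_t$ costs only one power of $\langle D_t\rangle$, whereas $\Delta_{g,\varphi}$ is uniformly elliptic in $x$ by~\eqref{e:elliptic} and hence regularizing by two space derivatives. So the plan is to localize, use the equation to express $\Delta_{g,\varphi}(\chi_3 u)$ as a sum of terms controlled in $H^{-1}(\R;H^{-1}(\R^d))$, and then gain one space derivative by a $t$-parametrized elliptic estimate. Concretely I would set $v:=\chi_3 u\in L^2(\R^{1+d})$ (extended by zero; it is compactly supported in $\Omega$) and, using $P_{\mathsf{b},\pot,\varphi}(\chi_3 u)=\chi_3 P_{\mathsf{b},\pot,\varphi}u+[P_{\mathsf{b},\pot,\varphi},\chi_3]u$ together with the convention $\sum_j\mathsf{b}^j\d_{x_j}w:=\sum_j\d_{x_j}(\mathsf{b}^j w)-(\sum_j\d_{x_j}\mathsf{b}^j)w$ (the sense in which $P_{\mathsf{b},\pot,\varphi}$ acts on $L^2$ functions here), write
\begin{align*}
\Delta_{g,\varphi}v = \chi_3 P_{\mathsf{b},\pot,\varphi}u - i\d_t v - \sum_j\d_{x_j}(\mathsf{b}^j v) + \Big(\sum_j\d_{x_j}\mathsf{b}^j\Big)v - \pot v + [\Delta_{g,\varphi},\chi_3]u + i(\d_t\chi_3)u + \sum_j\mathsf{b}^j(\d_{x_j}\chi_3)u .
\end{align*}
Since $\chi_3=\chi^t(t)\chi^x(x)$, the commutator $[\Delta_{g,\varphi},\chi_3]$ is a first-order differential operator in $x$ alone, of the form $\sum_k b^k\d_{x_k}+b^0$ with $b^k\in W^{1,\infty}_{\loc}$ and $b^0\in L^\infty_{\loc}$, supported in $\supp(\nabla_x\chi_3)\Subset\Omega$.

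The second step is to bound the right-hand side in $H^{-1}(\R;H^{-1}(\R^d))$ by $C\big(\nor{\chi_3 P_{\mathsf{b},\pot,\varphi}u}{H^{-1}(\R;H^{-1}(\R^d))}+\nor{u}{L^2(\Omega)}\big)$. The first term is controlled by hypothesis. For $i\d_t v$ I would use $\nor{\langle D_t\rangle^{-1}\d_t v}{L^2}\le\nor{v}{L^2}$ together with $L^2(\R^d)\hookrightarrow H^{-1}(\R^d)$. The terms $(\sum_j\d_{x_j}\mathsf{b}^j)v$, $\pot v$, $i(\d_t\chi_3)u$, $\sum_j\mathsf{b}^j(\d_{x_j}\chi_3)u$ and $b^0 u$ all lie in $L^2(\Omega)$ (using $\mathsf{b}^j,\sum_j\d_{x_j}\mathsf{b}^j,\pot\in L^\infty_{\loc}$ and the compact support of $v$). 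Finally $\sum_j\d_{x_j}(\mathsf{b}^j v)$ and $\sum_k b^k\d_{x_k}u$ lie in $L^2(\R;H^{-1}(\R^d))$, since $\mathsf{b}^j v$ and $u$ (after multiplication by a cutoff supported in $\Omega$) are compactly supported $L^2$ functions and multiplication by a locally $W^{1,\infty}$ function is bounded on $H^{-1}(\R^d)$ by duality with $H^1(\R^d)$. All these spaces embed in $H^{-1}(\R;H^{-1}(\R^d))$, which gives the desired bound on $\Delta_{g,\varphi}v$.

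For the last step I would apply $\langle D_t\rangle^{-1}$, which commutes with $\Delta_{g,\varphi}$, yielding $\Delta_{g,\varphi}(\langle D_t\rangle^{-1}v)\in L^2(\R;H^{-1}(\R^d))$ with $\nor{\Delta_{g,\varphi}\langle D_t\rangle^{-1}v}{L^2(\R;H^{-1}(\R^d))}=\nor{\Delta_{g,\varphi}v}{H^{-1}(\R;H^{-1}(\R^d))}$; moreover $\langle D_t\rangle^{-1}v\in L^2(\R;L^2(\R^d))$ is supported in $\R\times\supp(\chi^x)$ with $\supp(\chi^x)\Subset V$, so for a.e.\ $t$ the slice $(\langle D_t\rangle^{-1}v)(t,\cdot)$ is a compactly supported $L^2$ function of $x$ whose $\Delta_{g,\varphi}$-image lies in $H^{-1}(\R^d)$. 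I would invoke the elliptic estimate of Lemma~\ref{lm:ellipticH1H-1} (with constant uniform in $t$, depending only on $c_0$, the $W^{1,\infty}$ norms of $g^{jk}$ and $1/\varphi$ near $\supp(\chi^x)$, and $\chi^x$) for a.e.\ $t$ and integrate in $t$; since $\nor{\langle D_t\rangle^{-1}v}{L^2(\R;H^1(V))}=\nor{v}{H^{-1}(\R;H^1(V))}$ by~\eqref{e:norm-esp-tps} and $\nor{\langle D_t\rangle^{-1}v}{L^2}\le\nor{u}{L^2(\Omega)}$, this gives $\chi_3 u\in H^{-1}(\R;H^1(V))$ together with~\eqref{e:reg-schrod}. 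I expect the step requiring care to be the $H^{-1}$-bound of the second paragraph: because $\mathsf{b}$ and $\pot$ are only $L^\infty$ in $x$ and $u$ only $L^2$, one must use the divergence form of the first-order term and the hypothesis $\sum_j\d_{x_j}\mathsf{b}^j\in L^\infty$ to keep every product within $H^{-1}(\R;H^{-1}(\R^d))$ (multiplication by a mere $L^\infty$ function would not preserve $H^{-1}_x$); the commutator computation, the commutation of $\langle D_t\rangle^{-1}$ with $\Delta_{g,\varphi}$, and the slicewise elliptic estimate are otherwise routine.
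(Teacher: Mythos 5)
Your proposal is correct, and it establishes the lemma by a genuinely different (though closely related) decomposition from the paper's. The paper invokes Lemma~\ref{lm:ellipticH1H-1} at the very start, applied slicewise to $w=\dt^{-1}\chi^t u$ with $\chi^x$ playing the role of the elliptic cutoff; the commutator with the spatial cutoff is thus absorbed into that lemma, and only the decomposition $\Delta_{g,\varphi}=P_{\mathsf{b},\pot,\varphi}+D_t-R$ remains. The paper then controls $\chi^x\dt^{-1}\chi^t R u$ by a duality argument, integrating the $\d_{x_j}$ off $u$ and onto the test function, which is where the hypothesis $\div_1(\mathsf{b})\in L^\infty_{\loc}$ enters. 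You instead commute the full cutoff $\chi_3$ through $P_{\mathsf{b},\pot,\varphi}$ first, expand $\Delta_{g,\varphi}(\chi_3 u)$ into explicit terms (including the commutator $[\Delta_{g,\varphi},\chi_3]$), bound every term in $H^{-1}_tH^{-1}_x$ using the divergence form directly, and only then apply the slicewise elliptic estimate to $\dt^{-1}(\chi_3 u)$. Both routes are about equal in length; yours is a little more bookkeeping at the commutator step (you have to observe that $[\Delta_{g,\varphi},\chi_3]$ has first-order $W^{1,\infty}_{\loc}$ coefficients so that the product stays in $H^{-1}_x$), while the paper's requires the explicit duality pairing. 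One small point you should spell out: to conclude $\|\chi_3 u\|_{H^{-1}_tH^1_x}\lesssim\|\dt^{-1}\Delta_{g,\varphi}(\chi_3 u)\|_{L^2_tH^{-1}_x}+\|u\|_{L^2(\Omega)}$ via Lemma~\ref{lm:ellipticH1H-1} applied to $w=(\dt^{-1}\chi_3 u)(t,\cdot)$, you cannot take $\chi^x$ itself as the elliptic cutoff (since $\chi^x\cdot\chi_3 u\neq\chi_3 u$); you need an auxiliary $\widetilde\chi^x\in C^\infty_c(V)$ equal to $1$ on $\supp\chi^x$, which makes $\widetilde\chi^x\,\dt^{-1}\chi_3 u=\dt^{-1}\chi_3 u$ and, by locality of $\Delta_{g,\varphi}$, $\widetilde\chi^x\,\Delta_{g,\varphi}(\dt^{-1}\chi_3 u)=\Delta_{g,\varphi}(\dt^{-1}\chi_3 u)$. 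With that adjustment the argument closes.
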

	
	\begin{proof}
		We prove~\eqref{e:reg-schrod} for all $u \in C^\infty_c(V)$,  and the lemma follows with a regularization argument left to the reader.
		We define the operator $R:= \sum_{j=1}^d \mathsf{b}^j(t,x) \d_{x_j}+\pot(t,x)$ so that $P_{\mathsf{b},\pot, \varphi} = i \d_t +\Delta_{g,\varphi}+R$ where $\Delta_{g,\varphi}$ is defined in Section~\ref{s:rem-div}. 
		We apply Lemma~\ref{lm:ellipticH1H-1} for any $t \in \R$ to $w=\dt^{-1}\chi^tu$ and integrate in time to obtain
		\begin{align*}
			\nor{\chi_{3}u}{H^{-1}(\R;H^1(V))}&= \nor{ \chi^x\dt^{-1} \chi^tu}{L^{2}(\R;H^1(\R^d))} \\ 
			&\leq C \left( \nor{\chi^x\dt^{-1}\chi^t \Delta_{g,\varphi} u}{L^2(\R; H^{-1}(\R^d))}+\nor{\dt^{-1}\chi^tu}{L^2(\R;L^2(\supp(\chi^x)))}\right) .   
			\end{align*}
			Using that  $\Delta_{g,\varphi} = P_{\mathsf{b},\pot,\varphi}  +D_t -R$, this implies 
			\begin{align}
			\label{mixed norm u for regularity L2}
			\nor{\chi_{3}u}{H^{-1}(\R;H^1(V))} 
			& \leq C\Big( \nor{\chi^x\dt^{-1}\chi^tD_t u}{L^2(\R;H^{-1} (\R^d))}+ \nor{\chi^x\dt^{-1}\chi^t P_{\mathsf{b},\pot,\varphi}  u}{L^2(\R;H^{-1} (\R^d))} \nonumber \\
			& \quad +	\nor{\chi^x\dt^{-1}\chi^tR u}{L^2(\R;H^{-1} (\R^d))}+ \nor{\dt^{-1}\chi^tu}{L^2(\R;L^2(\supp(\chi^x)))}\Big) .
		\end{align} 
		Now observe that  
		$\nor{\dt^{-1}\chi^tD_t}{L^2(\R) \to L^2(\R)}<+\infty,$ so that for any $\tilde{\chi}^t\in C^\infty_c(I)$ with $\tilde{\chi}^t=1$ in a neighborhood of $\chi^t$, we have 
		\begin{align}
		\label{e:intermediate-toto-1}
		 \nor{\chi^x\dt^{-1}\chi^tD_t u}{L^2(\R;H^{-1} (\R^d))} = \nor{\chi^x\dt^{-1}\chi^tD_t \tilde{\chi}^t u}{L^2(\R;H^{-1} (\R^d))} \leq  \nor{\chi^x\tilde{\chi}^t u}{L^2(\R;H^{-1} (\R^d))} .
		\end{align}
		Next remark that 
		\begin{align}
		\label{e:intermediate-toto-2}
		 \nor{\chi^x\dt^{-1}\chi^tP_{\mathsf{b},\pot,\varphi}  u}{L^2(\R;H^{-1} (\R^d))}  & =  \nor{\chi_3P_{\mathsf{b},\pot,\varphi}  u}{H^{-1}(\R;H^{-1} (\R^d))} , \quad \text{ and } \\
		 \label{e:intermediate-toto-3}
		 \nor{\dt^{-1}\chi^tu}{L^2(\R;L^2(\supp(\chi^x)))}& \leq \nor{ \chi^tu}{L^2(\R;L^2(\supp(\chi^x)))} .
		\end{align}
		 To handle the last term, we argue by duality and write
		\begin{equation}
			\label{norm by duality 2}
				\nor{\chi^x\dt^{-1}\chi^tR u}{L^2(\R;H^{-1} (\R^d))}\leq 
			\nor{\chi_3  Ru }{L^2(\R;H^{-1} (\R^d))}=\underset{\theta \in \mathcal{S}(\R^{1+d}), \nor{\theta}{L^2(\R;H^{1}(\R^d))} \leq 1}{\sup}\left|\left(\chi_3 Ru, \theta\right)_{L^2(\R^{1+d})}\right|.
		\end{equation}
		We calculate
		\begin{align*}
			\left(\chi_3 Ru, \theta\right)_{L^2(\R^{1+d})}&=\int_{\R^{1+d}} \sum_{j=1}^{d} \chi_3\mathsf{b}^j \bar{\theta} \d_{x_j} u  \, dtdx+\int_{\R^{1+d}} \chi_3 \pot u \bar{\theta} \, dtdx \\
			&=-\int_{\R^{1+d}} \sum_{j=1}^d \d_{x_j}(\chi_3\mathsf{b}^j \bar{\theta})u \, dtdx+\int_{\R^{1+d}} \chi_3 \pot u \bar{\theta} \, dtdx \\
			&=-\int_{\R^{1+d}} \sum_{j=1}^d (\d_{x_j}\chi^x)\chi^t\mathsf{b}^j \bar{\theta}u dt -\int_{\R^{1+d}} \chi_3 \div_1(\mathsf{b})u \bar{\theta} \, dtdx \\
			&\quad-\int_{\R^{1+d}} \sum_{j=1}^d \chi_3\mathsf{b}^j (\d_{x_j}\bar{\theta})u \, dtdx+\int_{\R^{1+d}} \chi_3 \pot u \bar{\theta} \, dtdx.
		\end{align*}
		Consequently, the Cauchy--Schwarz inequality yields
		\begin{align*}
			\left|\left(\chi_3 Ru, \theta\right)_{L^2(\R^{d+1})}\right| &\leq C \nor{\mathsf{b}}{L^\infty(\supp(\chi_3))} \nor{u}{L^2(\Omega)} \nor{\theta}{L^2(\R^{d+1})}+C\nor{\div_1(\mathsf{b})}{L^\infty(\supp(\chi_3))} \nor{u}{L^2(\Omega)} \nor{\theta}{L^2(\R^{d+1})}\\
			&\quad+C\nor{\mathsf{b}}{L^\infty(\supp(\chi_3))} \nor{u}{L^2(\Omega)} \nor{\theta}{L^2(\R;H^1(\R^d))}+C\nor{\pot}{L^\infty(\supp(\chi_3))} \nor{u}{L^2(\Omega)} \nor{\theta}{L^2(\R^{d+1})} \\
			&\leq C\nor{u}{L^2(\Omega)} \nor{\theta}{L^2(\R;H^1(\R^d))},
		\end{align*}
		and we obtain thanks to~\eqref{norm by duality 2} that 
		$
		\nor{\chi^x \dt^{-1}\chi^t Ru }{L^2(\R ;H^{-1}(\R^d)} \leq C\nor{u}{L^2(\Omega)}.
		$ 
		Combining this together with~\eqref{e:intermediate-toto-1}--\eqref{e:intermediate-toto-2}--\eqref{e:intermediate-toto-3} in~\eqref{mixed norm u for regularity L2} yields finally~\eqref{e:reg-schrod} for all $u \in C^\infty_c(V)$, which concludes the proof of the lemma.
	\end{proof}

	We now prove Theorem~\ref{theorem local L2}  in the more general setting of the operator $P_{\mathsf{b},\pot,\varphi}$.
	\begin{proof}[Proof of Theorem~\ref{theorem local L2}]
			The proof of Theorem~\ref{theorem local L2} proceeds as that of Theorem~\ref{theorem local uniqueness intro}. The main differences are that now we apply the Carleman estimate of Theorem~\ref{th carleman with gevrey terms} for $k=1$ and that we consider the operator $P_{\mathsf{b},\pot,\varphi}$. That Theorem~\ref{th carleman with gevrey terms} still holds for $P_{\mathsf{b},\pot,\varphi}$ in place of $\pg$ is a direct consequence of Remark~\ref{r:carleman-density}. The functions $\Psi$ and $\phi$ are the same as in the proof of Theorem~\ref{theorem local uniqueness intro}, i.e. those furnished by Lemma~\ref{geometric convexification}.  

		Recall that for $\eps,k>0$, 
		$$
		\nor{D_t^k e^{-\eps|D_t|^2}}{L^2\to L^2} = \max_{\xi_t\in \R^+}\xi_t^ke^{-\eps\xi_t^2} = \left( \frac{k}{2 e \eps}\right)^{k/2}.
		$$
		As a consequence, we have for $\tau\geq 1$ (and using $k=1$ in the above identity), 
		\begin{align*}
			\nor{Q_{\e,\tau}^{\phi}P_{\mathsf{b},\pot,\varphi}  w}{L^2}& = \nor{\dt e^{\frac{-\e |D_t|^2}{2\tau^3}} \dt^{-1} e^{\tau \phi} P_{\mathsf{b},\pot,\varphi} w }{L^2}\\
			&  \leq 2  \nor{e^{\frac{-\e |D_t|^2}{2\tau^3}} \dt^{-1} e^{\tau \phi} P_{\mathsf{b},\pot,\varphi} w }{L^2} +2 \nor{D_t e^{\frac{-\e |D_t|^2}{2\tau^3}} \dt^{-1} e^{\tau \phi} P_{\mathsf{b},\pot,\varphi} w }{L^2}  \\
			&   \leq  C \tau^{3/2} \nor{ e^{\tau \phi} P_{\mathsf{b},\pot,\varphi} w }{H^{-1}_t L^2_x} .
		\end{align*}
		This, combined with the Carleman estimate of Theorem~\ref{th carleman with gevrey terms} for $k=1$ yields
		\begin{equation}
			\label{est aux for l2 sol}
			C \tau^3 \nor{  e^{\tau \phi} P_{\mathsf{b},\pot,\varphi} w }{H^{-1}_t L^2_x}^2+Ce^{-\mathsf{d}\tau}\nor{e^{\tau\phi}w}{H^{-1}_t H^1_x}^2 \geq  \tau \|Q_{\e,\tau}^{\phi}w\|_{\Ht{1}}^2.
		\end{equation}
		We now apply Inequality~\eqref{est aux for l2 sol} to $w=\chi u$ with $\chi$ as in the proof of Theorem~\ref{theorem local uniqueness intro} and $u\in L^2(\Omega)$ solution to $P_{\mathsf{b},\pot,\varphi} u$ in $\mathcal{D}'(\Omega)$. According to Lemma~\ref{imroved regularity for L2 solution}, $\chi_3 u \in H^{-1}(\R;H^1(V))$ for all $\chi_3$ with $\supp(\chi_3)\subset I\times V$. Moreover $[P_{\mathsf{b},\pot,\varphi} , \chi]$ is a differential operator with $L^\infty$ coefficients and involving only space derivatives of order at most $1$. As a consequence, $[P_{\mathsf{b},\pot,\varphi} , \chi]u \in H^{-1}(\R;L^2(V))$ and we need to estimate
		$$
		\tau^2 \nor{ e^{\tau \phi} P_{\mathsf{b},\pot,\varphi}  (\chi u) }{H^{-1}_t L^2_x}= \tau^2 \nor{e^{\tau \phi} [P_{\mathsf{b},\pot,\varphi} , \chi]u }{H^{-1}_t L^2_x}.
		$$
		We argue by duality and write 
		\begin{equation}
			\label{norm by duality}
			\nor{e^{\tau \phi} [P_{\mathsf{b},\pot,\varphi} , \chi]u }{H^{-1}_t L^2_x}=\underset{\theta \in \mathcal{S}(\R^{1+d}), \nor{\theta}{H^1_tL^2_x} \leq 1}{\sup}\left|\left(e^{\tau \phi} [P_{\mathsf{b},\pot,\varphi} , \chi]u, \theta\right)_{L^2(\R^{1+d})}\right|.
		\end{equation}
		We choose a function $\chi_1 \in C^\infty_c(\Omega;\R)$ such that $\chi_1=1$ on the support of $\nabla_\x \chi$ and $\supp(\chi_1) \subset \{r\geq |\xg-\xg_0|\geq r/2- \varepsilon \} $ with $\varepsilon>0$ small. We consider as well $\chi_2 \in C^\infty(\Omega;\R)$ with $\chi_2=1$ on $\{\Psi<0\}$ and $\chi_2=0$ on $\{\Psi > \varepsilon\}$. Notice that this implies in particular that $\chi_2=1$ on the support of $u$. Recall that we have the property
		$$
		\phi(\xg)\leq -\eta \quad \text{for all } \xg \in \{\Psi\leq 0 \}\cap \{r\geq|\xg-\xg_0|\geq r/2\}.
		$$
		By continuity, we can then choose $\varepsilon>0$ sufficiently small such that
		\begin{align}
		\label{e:support-phi-psi-2}
		\phi(\xg)\leq -\eta/2 \quad \text{ for all }  \xg \in \{\Psi\leq \varepsilon \}\cap \{r\geq|\xg-\xg_0|\geq r/2- \varepsilon\}= \supp(\chi_1) \cap \supp(\chi_2).
		\end{align}
		We finally take $\chi^t \in C^\infty_c(I)$ and $\chi^x \in C^\infty_c(V)$ such that $\chi_{3}(t,x):= \chi^t(t)\chi^x(x)$ satisfies $\chi_{3}=1$ on $\supp(\chi)$. 		
		The operator $[P_{\mathsf{b},\pot,\varphi}, \chi]$ is a differential operator with derivatives of order at most $1$, no time derivatives, and with $L^\infty$ coefficients supported in $\supp(\nabla_\x\chi)$ where $\chi_1=1$. We then obtain
		\begin{align*}
			|(e^{\tau \phi} [P_{\mathsf{b},\pot,\varphi} , \chi]u, \theta)_{L^2(\R^{n+1})}|&=\left| \int e^{\tau \phi} [P_{\mathsf{b},\pot,\varphi} , \chi]u \overline{\theta}dtdx \right|=\left| \int  [P_{\mathsf{b},\pot,\varphi} , \chi](\chi_{3}u) e^{\tau \phi} \chi_1 \chi_2 \overline{\theta}dtdx \right| \\
			&=|([P_{\mathsf{b},\pot,\varphi}, \chi](\chi_{3}u), e^{\tau \phi}\chi_1 \chi_2 \theta)_{L^2(\R^{1+d})}|\leq \nor{[P_{\mathsf{b},\pot,\varphi}, \chi](\chi_{3}u)}{H^{-1}_tL^2}\nor{e^{\tau \phi}\chi_1 \chi_2 \theta}{H^1_tL^2_x} \\
			&\leq C \tau e^{-\eta \tau/2} \nor{\chi_{3}u}{H^{-1}_tH^1_x}\nor{\theta}{H^1_tL^2_x} \leq Ce^{-\eta \tau/4} \nor{\chi_{3}u}{H^{-1}_tH^1_x}\nor{\theta}{H^1_tL^2_x},
		\end{align*}
		where we have used~\eqref{e:support-phi-psi-2} as well as the support properties of $\nabla_\x \chi, u, \chi_1, \chi_2$. 
		Coming back to~\eqref{norm by duality} we have thus obtained the estimate
		$$
		\nor{e^{\tau \phi} [P_{\mathsf{b},\pot,\varphi} , \chi]u }{H^{-1}_t L^2_x} \leq Ce^{-\eta \tau/4} \nor{\chi_{3}u}{H^{-1}_tH^1_x}.
		$$
		Similarly, one has
		$$
		e^{-\mathsf{d}\tau}\nor{e^{\tau\phi}w}{H^{-1}_t H^1_x}\leq e^{-\mathsf{d}\tau/8} \nor{\chi_{3}u}{H^{-1}_t H^1_x}.
		$$
		Combining the last two estimates with~\eqref{est aux for l2 sol} and using Lemma~\ref{imroved regularity for L2 solution} gives the existence of some $\delta>0$ with
		$$
		\|Q_{\e,\tau}^{\phi}w\|_{L^2} \leq Ce^{-\delta \tau}\nor{\chi_{3}u}{H^{-1}_t H^1_x} \leq  Ce^{-\delta \tau} \nor{u}{L^2(\Omega)}.
		$$
		From this point forward, the conclusion of the proof of Theorem~\ref{theorem local L2} is identical to that of Theorem~\ref{theorem local uniqueness intro}. 
	\end{proof}

	In the course of the proof, we have used the following elliptic regularity lemma. It is rather classical, but we provide with a short proof for sake of completeness.
	\begin{lemma}
		\label{lm:ellipticH1H-1}
		Let $V \subset \R^d$ be an open set, assume $\mathsf{g}^{jk}\in W^{1,\infty}_{\loc}(V;\R)$ satisfies~\eqref{e:elliptic}, that $\varphi \in W^{1,\infty}_{\loc}(V;\R)$ satisfies $\varphi >0$ on $V$, and let $\chi\in C^{\infty}_{c}(V)$. 
		Then, there exists $C>0$ so that, for any $w\in L^{2}(V;\C)$ with  $\chi \Delta_{\mathsf{g},\varphi}(w)\in H^{-1}(\R^{d})$, we have 
		$$
		\nor{\chi w}{H^{1}(\R^d)} \leq C \nor{ \chi  \Delta_{\mathsf{g},\varphi}(w)}{H^{-1}(\R^d)}+C \nor{w}{L^2(\supp(\chi))}.
		$$
	\end{lemma}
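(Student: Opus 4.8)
The plan is to reduce the problem to a global elliptic equation on $\R^d$ and to recognize $\chi w$ as a very weak solution whose right-hand side already lies in $H^{-1}$; the quantitative bound is then the $H^1\to H^{-1}$ isomorphism property of the (constant $+$ Laplace-type) operator, while the only real work is an interior regularity statement.

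First I would localize and extend the coefficients. Fix $\theta\in C^\infty_c(V)$ equal to $1$ on a neighbourhood of $\supp(\chi)$, and set $\hat{\mathsf g}^{jk} := \theta\,\mathsf g^{jk} + (1-\theta)\,\delta^{jk}$ and $\hat\varphi := \theta\,\varphi + (1-\theta)$, interpreted as $\delta^{jk}$ and $1$ off $\supp(\theta)$. Since $\supp(\theta)$ is a compact subset of $V$, these are well-defined elements of $W^{1,\infty}(\R^d)$, the matrix $(\hat{\mathsf g}^{jk})$ is uniformly elliptic (of constant $\min(c_0,1)$), $\hat\varphi$ is bounded above and below by positive constants, and $\Delta_{\hat{\mathsf g},\hat\varphi} = \Delta_{\mathsf g,\varphi}$ on $\{\theta = 1\}$. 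The $\hat\varphi$-weighted Dirichlet form $\hat a(u,v) := \int_{\R^d}\hat\varphi\big(\sum_{j,k}\hat{\mathsf g}^{jk}\partial_k u\,\overline{\partial_j v} + u\bar v\big)\,dx$ is continuous and coercive on $H^1(\R^d)$, so by Lax--Milgram $\mathcal A := 1 - \Delta_{\hat{\mathsf g},\hat\varphi}$ is an isomorphism of $H^1(\R^d)$ onto $H^{-1}(\R^d)$.

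Next I would carry out the key computation. Put $v := \chi w \in L^2(\R^d)$; it is supported in $\supp(\chi)\subset\{\theta=1\}$, so $\Delta_{\hat{\mathsf g},\hat\varphi}v = \Delta_{\mathsf g,\varphi}(\chi w)$ as distributions on $\R^d$. Expanding the commutator and using the divergence form of $\Delta_{\mathsf g,\varphi}$ together with $\mathsf g,\varphi\in W^{1,\infty}$ near $\supp(\chi)$,
\begin{equation*}
\Delta_{\mathsf g,\varphi}(\chi w) = \chi\,\Delta_{\mathsf g,\varphi}w + \sum_{j,k}\mathsf g^{jk}(\partial_j\chi)(\partial_k w) + \frac1\varphi\sum_{j,k}\partial_j\big(\varphi\,\mathsf g^{jk}(\partial_k\chi)\,w\big).
\end{equation*}
The last term is $\tfrac1\varphi$ times $\partial_j$ of an $L^2$ function supported in $\supp(\chi)$, hence it lies in $H^{-1}(\R^d)$ (multiplication by the $W^{1,\infty}$ function $\tfrac1\varphi$ being bounded on $H^{-1}$) with norm $\lesssim\|w\|_{L^2(\supp(\chi))}$; the middle term equals $\sum_k\partial_k(\mathsf g^{jk}(\partial_j\chi)w) - \sum_k\partial_k(\mathsf g^{jk}\partial_j\chi)\,w$, which again lies in $H^{-1}(\R^d)$ with norm $\lesssim\|w\|_{L^2(\supp(\chi))}$. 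Combined with the hypothesis $\chi\,\Delta_{\mathsf g,\varphi}w\in H^{-1}(\R^d)$ this gives
\begin{equation*}
\mathcal A v = F, \qquad F := v - \Delta_{\hat{\mathsf g},\hat\varphi}v \in H^{-1}(\R^d), \qquad \|F\|_{H^{-1}(\R^d)} \lesssim \|\chi\,\Delta_{\mathsf g,\varphi}w\|_{H^{-1}(\R^d)} + \|w\|_{L^2(\supp(\chi))} .
\end{equation*}

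Finally I would set $\tilde v := \mathcal A^{-1}F\in H^1(\R^d)$, so $\|\tilde v\|_{H^1(\R^d)}\lesssim\|F\|_{H^{-1}(\R^d)}$ and $\mathcal A(v-\tilde v) = 0$ in $\mathcal D'(\R^d)$, with $v - \tilde v\in L^2(\R^d)$ and $v - \tilde v = -\tilde v\in H^1$ outside $\supp(v)$. It then remains to see that $v-\tilde v\equiv 0$, i.e. that an $L^2(\R^d)$ solution of $\mathcal A u = 0$ vanishes: equivalently, that a very weak ($L^2$) solution of the divergence-form elliptic equation $\Delta_{\hat{\mathsf g},\hat\varphi}u = u$ is an $H^1_{\loc}$ function, after which the remark above gives $u\in H^1(\R^d)$ and coercivity of $\hat a$ forces $u=0$. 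This last point is the classical interior regularity statement for second-order divergence-form elliptic operators with Lipschitz coefficients; I would obtain it by mollifying $u$, bounding the resulting second-order commutators in $H^{-1}_{\loc}$ by the Friedrichs/DiPerna--Lions commutator lemma (which costs only $\|u\|_{L^2}$, not a derivative of $u$) and passing to the limit in the energy estimate, or simply cite a standard reference. Granting this, $\chi w = v = \tilde v\in H^1(\R^d)$ with $\|\chi w\|_{H^1(\R^d)}\lesssim\|F\|_{H^{-1}(\R^d)}$, which is the asserted inequality. The genuinely delicate step is this regularity claim, forced by the fact that $w$ carries no a priori derivatives; everything else is the Lax--Milgram isomorphism plus the (easy) observation that the divergence form of the operator places the commutator $[\Delta_{\mathsf g,\varphi},\chi]w$ in $H^{-1}$ rather than only in $H^{-2}$.
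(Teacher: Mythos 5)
Your proof is correct, but it takes a genuinely different route from the paper's. The paper proves the estimate directly: it first reduces $\Delta_{\mathsf g,\varphi}$ to a single Laplace--Beltrami operator $\Delta_g$ (using that $\Delta_{\mathsf g,\varphi}$ is a nonvanishing $W^{1,\infty}$ multiple of some $\Delta_g$, and multiplication by such functions is bounded on $H^{-1}$), then tests the equation against $\chi w$ for \emph{smooth} $w$ and integrates by parts once to produce the bound, and finally invokes a density argument to pass to $w\in L^2$. You instead extend $(\mathsf g,\varphi)$ to a globally uniformly elliptic pair on $\R^d$, expand $[\Delta_{\mathsf g,\varphi},\chi]w$ into terms visibly in $H^{-1}$ with norm $\lesssim\|w\|_{L^2(\supp\chi)}$, solve $\mathcal{A}\tilde v=F$ by Lax--Milgram, and finish by identifying $\chi w$ with $\tilde v$ through uniqueness of $L^2$ solutions of $\mathcal{A}u=0$.

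The two proofs are less different than they look, because they rest on the same hidden ingredient. The paper's ``density argument'' and your ``interior regularity of a very weak solution'' step both amount to the Friedrichs mollification--commutator estimate for a second-order divergence-form operator with $W^{1,\infty}$ coefficients acting on an $L^2$ function. The paper applies it tacitly to approximate $w$; you apply it explicitly to show that $u:=v-\tilde v\in L^2(\R^d)$ with $\mathcal{A}u=0$ actually lies in $H^1(\R^d)$, after which coercivity forces $u=0$. Your version has the merit of isolating the exact qualitative fact that is needed, at the cost of the Lax--Milgram scaffolding; the paper's version is shorter and more self-contained. One small imprecision: Lax--Milgram applied to the form $\hat a$ gives that the form operator $\hat\varphi\,\mathcal{A}$ is an isomorphism $H^1\to H^{-1}$; to conclude that $\mathcal{A}=1-\Delta_{\hat{\mathsf g},\hat\varphi}$ itself is one, you should add that multiplication by $\hat\varphi^{\pm 1}$, which are in $W^{1,\infty}(\R^d)$ and bounded above and below, is an isomorphism of $H^{-1}(\R^d)$---the very same observation the paper uses in its reduction step.
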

	Recall (see e.g. Section~\ref{s:rem-div}) that  $ \Delta_{\mathsf{g},\varphi} =\div_\varphi \nabla_{\mathsf{g}}  = \sum_{jk}\frac{1}{\varphi} \d_{x_j} \mathsf{g}^{jk} \varphi \d_{x_k}$. 
	Note that, for any $\varphi$ and $\mathsf{g}$ as in the statement, there is a Lipschitz continuous Riemannian metric $g$ such that $\mathsf{g}\varphi = g \sqrt{\det(g)}$ (namely $g := \det(\mathsf{g}\varphi)^{-\frac{1}{d+2}}\mathsf{g}\varphi$) and for this $g$ we have $\Delta_{\mathsf{g},\varphi} = \frac{\sqrt{\det(g)}}{\varphi} \Lap = \det(\mathsf{g}\varphi)^{\frac{2}{d+2}}\Lap$. In this expression (and in the setting of Lemma~\ref{lm:ellipticH1H-1}), the prefactor is a Lipschitz nonvanishing  function. Since multiplication by a $W^{1,\infty}$ function is bounded on $H^{-1}$ (for it is on $H^1$),  it suffices to prove the result of Lemma~\ref{lm:ellipticH1H-1} for $\Lap$ (defined at the beginning of Section~\ref{s:Riemtool}) in place of $\Delta_{\mathsf{g},\varphi}$. 

	\bnp
	We may assume $w\in C^{\infty}_c(V;\R)$, the conclusion of the lemma will follow from a density argument, together with application of the result to the real and imaginary parts of the function. By integration by parts, using the notation of Section~\ref{s:Riemtool}, we have
	\begin{align*}
	\int \gln{\nablag(\chi w)}= -\int \Lap  (\chi w) \chi w=-\int  \Lap  (w) \chi^{2} w- (\Lap \chi) \chi w^2 -2\gl{\nablag\chi}{\nablag w}\chi w.
	\end{align*}
	Rewritting $\gl{\nablag\chi}{\nablag w}\chi w= \gl{\nablag\chi}{\nablag (\chi w)}  w- \gln{\nablag\chi} w^2$, we deduce  
	\begin{align*}
	\int \gln{\nablag{(\chi w)}}&=-\int \Lap (w) \chi^{2} w+( \gln{\nablag\chi}- \Lap \chi  \chi )w^2 -2\gl{\nablag\chi}{\nablag (\chi w)}  w .
	\end{align*}
	Since  $g^{jk}\in W^{1,\infty}_{\loc}(V;\R)$ and $\chi \in C^\infty_c(V)$ we have $\Lap \chi \in L^\infty(V)$. As a consequence, we have for any $\eps>0$, the existence of $C_\eps= C_\eps (\chi, g)>0$ such that
	\begin{align}
	\int \gln{\nablag{(\chi w)}}&\leq  \nor{ \chi \Lap  (w)}{H^{-1}(\R^d)}\nor{\chi w}{H^1(\R^d)}+C \nor{w}{L^2(\supp(\chi))}^{2}+C \nor{\nablag (\chi w)}{L^2(\R^d)} \nor{w}{L^2(\supp(\chi))} \nonumber\\
	&\leq C_\eps \nor{ \chi  \Lap  (w)}{H^{-1}(\R^d)}^{2}+\eps \nor{\chi w}{H^1(\R^d)}
	+ C_\eps \nor{w}{L^2(\supp(\chi))}^{2}+ \eps \nor{\nablag (\chi w)}{L^2}^{2} .
	\label{e:antipenul-elliptic} 
	\end{align}
	Using ellipticity and boundedness of $g$ on $\supp(\chi)$, we further have existence of $C_g=C_g(\chi)>1$ such that for all $w \in C^\infty_c(V)$, 
	$$
	C_g^{-1} \nor{\chi w}{H^1(\R^d)}^2 \leq  \nor{\nablag(\chi w)}{L^2}^{2} + \nor{\chi w}{L^2}^{2} \leq C_g \nor{\chi w}{H^1(\R^d)}^2 .
	$$
	Combining this with~\eqref{e:antipenul-elliptic}, we have now obtained
	$$
	C_g^{-1} \nor{\chi w}{H^1(\R^d)}^2 \leq
	C_\eps \nor{ \chi  \Lap  (w)}{H^{-1}(\R^d)}^{2}+\eps(1+C_g) \nor{\chi w}{H^1(\R^d)}
	+ (C_\eps+1) \nor{w}{L^2(\supp(\chi))}^{2} ,
	$$
	which concludes the proof of the lemma when choosing $\eps = C_g^{-1}(1+C_g)^{-1}/2$.
	\enp

	\appendix
	\section{Tools}
	\label{a:tools}
	In this appendix, we collect technical lemmata that are used along the article.
\subsection{The conclusive lemma for unique continuation}
The following is~\cite[Proposition~2.1]{Hor:97} that we state here (without proof) for the reader's convenience.
\begin{lemma}
\label{lemme d analyse harmonique}
Let $u \in L^2(\R^n)$ and let $\phi$ be a smooth real valued function. Let $(A_\tau)_{\tau >0}$ be a family of continuous bounded functions in $\R^n$, such that for any compact set $K\subset \R^n$, we have  $\| A_\tau -1\|_{L^\infty(K)} \rightarrow_{\tau \rightarrow \infty} 0$. If there exist $C,\tau_0>0$ such that
$$
\nor{A_\tau (D)e^{\tau\phi} u}{L^2}\leq C, \quad \text{ for all } \tau \geq \tau_0 , 
$$
then $\supp {u} \subset \{\phi \leq 0\}$.
\end{lemma}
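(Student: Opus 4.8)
The plan is to deduce that $u$ vanishes almost everywhere on the open set $\{\phi>0\}$; since for $u\in L^2$ the support is the complement of the largest open set on which $u$ vanishes a.e., this yields $\supp u\subset\{\phi\le 0\}$. So it suffices to show $\langle u,\psi\rangle=0$ for an arbitrary $\psi\in C^\infty_c(\{\phi>0\})$. First I would rewrite this pairing as $\langle u,\psi\rangle=\langle e^{\tau\phi}u,\Psi_\tau\rangle$ with $\Psi_\tau:=e^{-\tau\phi}\psi$, and record that, since $0<e^{-\tau\phi}\le e^{-\tau\delta}$ on $\supp\psi$ (where $\delta:=\min_{\supp\psi}\phi>0$), one has $\nor{\Psi_\tau}{L^2}\le e^{-\tau\delta}\nor{\psi}{L^2}$ while the derivatives of $\Psi_\tau$ only grow polynomially in $\tau$, hence $\left|\widehat{\Psi_\tau}(\xi)\right|\le C_N(1+\tau)^N\langle\xi\rangle^{-N}$ for every $N$, uniformly in $\tau\ge1$.

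The core mechanism is a transfer of low-frequency information. Fixing $\theta\in C^\infty_c(B(0,2);[0,1])$ with $\theta\equiv1$ on $B(0,1)$, the hypothesis $\nor{A_\tau-1}{L^\infty(B(0,2R))}\to0$ gives, for each $R>0$, a threshold $\tau_R$ beyond which $|A_\tau|\ge 1/2$ on $B(0,2R)$; then $\theta(\xi/R)/A_\tau(\xi)$ (extended by $0$) is a bounded symbol, bounded by $2$, whose product with $A_\tau$ equals $\theta(\cdot/R)$, so that $\nor{\theta(D/R)(e^{\tau\phi}u)}{L^2}\le 2\nor{A_\tau(D)e^{\tau\phi}u}{L^2}\le 2C$ for $\tau\ge\tau_R$. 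Transposing $\theta(D/R)$ onto $e^{\tau\phi}u$ then yields $\left|\langle e^{\tau\phi}u,\theta(D/R)\Psi_\tau\rangle\right|=\left|\langle\theta(D/R)(e^{\tau\phi}u),\Psi_\tau\rangle\right|\le 2C\,e^{-\tau\delta}\nor{\psi}{L^2}$, which tends to $0$ as $\tau\to\infty$ for any fixed $R$.

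It then remains to bound the complementary piece $\langle e^{\tau\phi}u,(1-\theta(D/R))\Psi_\tau\rangle$ and to let $\tau\to\infty$ along a well-chosen coupling $R=R(\tau)\to\infty$. Splitting $e^{\tau\phi}u=\mathbf 1_{\{\phi\le 0\}}e^{\tau\phi}u+\mathbf 1_{\{\phi>0\}}e^{\tau\phi}u$, the first summand is bounded in $L^2$ by $\nor{u}{L^2}$, so its contribution is $\lesssim\nor{u}{L^2}C_N(1+\tau)^NR(\tau)^{n/2-N}$ and vanishes once $R(\tau)$ outgrows a fixed power of $\tau$. The genuine obstacle is the pairing of $\mathbf 1_{\{\phi>0\}}e^{\tau\phi}u$ — whose $L^2$ norm grows exponentially in $\tau$ — against the high-frequency tail $(1-\theta(D/R))\Psi_\tau$: here one must use that $\Psi_\tau$ is smooth and supported in $\{\phi\ge\delta\}$, insert further band-limited cut-offs so that this contribution too is measured only on the frequency band where the transfer estimate is available, and calibrate $R(\tau)$ against the (qualitative) modulus of continuity of $\tau\mapsto A_\tau$ furnished by the hypothesis. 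This balancing is the technical heart of the argument, and I would carry it out following the analysis in~\cite[Proof of Proposition~2.1]{Hor:97}. Combining it with the two estimates above forces $\langle u,\psi\rangle=0$, and hence $\supp u\subset\{\phi\le 0\}$.
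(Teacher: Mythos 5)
The paper does not prove this lemma: it is stated as a verbatim citation of H\"ormander's Proposition 2.1 in~\cite{Hor:97}, ``without proof.'' So there is no internal argument to compare against, and your proposal has to be judged on its own terms.

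Your reduction to showing $\langle u,\psi\rangle=0$ for $\psi\in C^\infty_c(\{\phi>0\})$, the rewriting $\langle u,\psi\rangle=\langle e^{\tau\phi}u,\Psi_\tau\rangle$ with $\Psi_\tau=e^{-\tau\phi}\psi$, and the low-frequency estimate $\|\theta(D/R)e^{\tau\phi}u\|_{L^2}\le 2\|A_\tau(D)e^{\tau\phi}u\|_{L^2}$ for $\tau\ge\tau_R$ are all correct and are the natural first steps. (Two minor remarks: in the Fourier bound on $\Psi_\tau$ you dropped the factor $e^{-\tau\delta}$ that integration by parts actually produces, and the whole setup implicitly requires $u$ to be compactly supported so that $e^{\tau\phi}u$ is a tempered distribution — this is harmless, since in the paper the lemma is applied to $\chi u$.) The genuine problem is the step you yourself flag as ``the technical heart'' and leave as a black box. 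The calibration you envisage cannot work as described: to kill the contribution of $\mathbf 1_{\{\phi>0\}}e^{\tau\phi}u$ (of $L^2$ size up to $e^{\tau M}\|u\|$, $M=\max_{\supp u}\phi$) against the high-frequency tail of $\Psi_\tau$ (of size $C_N(1+\tau)^N e^{-\tau\delta}R^{n/2-N}$), one needs $R(\tau)$ to grow at least like $e^{c\tau}$. But the hypothesis only gives, for each fixed $R$, a threshold $\tau_R$ with no rate — and for the concrete multiplier $A_\tau(\xi)=e^{-\mu|\xi_t|^2/2\tau^3}$ used in this paper, one computes $\tau_R\sim R^{2/3}$, so the requirement $\tau\ge\tau_{R(\tau)}$ with $R(\tau)\sim e^{c\tau}$ is violated. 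Moreover the two pieces of your decomposition sum identically to $\langle u,\psi\rangle$, so proving that the low-frequency piece tends to $0$ merely shows the high-frequency piece tends to $\langle u,\psi\rangle$; no direct norm bound on the latter of the kind you describe can close the argument without a further, qualitatively different idea. In short: the structure is sensible, but the crucial estimate is missing and the proposed route to it does not go through, so this is not (yet) a proof — you would need to reproduce the actual mechanism of~\cite[Proof of Proposition~2.1]{Hor:97} rather than invoke it.
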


	\subsection{A technical lemma on the Gaussian multiplier}
	We first recall the formula
	\begin{equation}
		\label{fourier of a gaussian}
		\mathcal{F}(e^{-\frac{|\cdot|^2}{\lambda}})(\xi)=(\pi \lambda)^{1/2}e^{-\lambda \frac{|\xi|^2}{4}}, \quad \xi \in \R ,
	\end{equation}
	used several times in the article, and its consequence
	\begin{equation}
		\label{e:gaussian-convol}
		\left( e^{-\frac{h}{2}|D_t|^2}f \right) (t)=\left(\frac{1}{2\pi h}\right)^{1/2}\int_\R f(s)e^{-\frac{|t-s|^2}{2h}}ds , \quad t \in \R .
	\end{equation}
	
	\begin{lemma}
		\label{lemma 2.4 from ll}
		Let $(\X,\|\cdot\|_\X)$ be a normed vector space, $\chi_1, \chi_2 \in C^{\infty}(\R)$ with all derivatives bounded and such that  $\dist(\supp{f_1},\supp{f_2})\geq d>0$. Then for every $k,m \in \mathbb{N}$, there exist $C,c >0$ such that for all 
		$u \in \mathcal{S}(\R;\X)$ and all $\lambda > 0$ we have 
		$$
		\nor{\chi_1 e^{-\frac{|D_t|^2}{\lambda}}(\chi_2u)}{H^k(\R;\X)} \leq Ce^{- c\lambda}\nor{u}{H^{-m}(\R;\X)}.
		$$
	\end{lemma}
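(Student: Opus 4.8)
The statement is a standard ``pseudo-locality'' estimate for the Gaussian Fourier multiplier $e^{-|D_t|^2/\lambda}$: composing it with multiplications by two functions with disjoint supports produces a kernel which is exponentially small in $\lambda$. I would prove it by writing the operator explicitly via its Schwartz kernel and exploiting that the Gaussian decays fast away from the diagonal. First I would reduce to the scalar case: since $u$ is $\X$-valued and the operator acts only in the $t$ variable, the estimate for each derivative and each $\lambda$ follows from the corresponding scalar convolution bound applied to $\|u(\cdot)\|_\X$; no geometry of $\X$ is used, only the triangle inequality under the integral sign. So from now on I treat $u$ as complex-valued.

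\textbf{Key steps.} Using~\eqref{e:gaussian-convol}, write
$$
\big(\chi_1 e^{-\frac{|D_t|^2}{\lambda}}(\chi_2 u)\big)(t) = \chi_1(t)\Big(\frac{\lambda}{4\pi}\Big)^{1/2}\int_\R e^{-\frac{\lambda|t-s|^2}{4}}\chi_2(s)u(s)\,ds ,
$$
(matching~\eqref{e:gaussian-convol} with $h = 2/\lambda$, up to the harmless normalization). On the support of the product $\chi_1(t)\chi_2(s)$ one has $|t-s|\geq d$, hence $e^{-\frac{\lambda|t-s|^2}{4}} = e^{-\frac{\lambda|t-s|^2}{8}}e^{-\frac{\lambda|t-s|^2}{8}}\leq e^{-\frac{\lambda d^2}{8}}e^{-\frac{\lambda|t-s|^2}{8}}$. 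I would then differentiate in $t$ up to order $k$: each $\d_t$ falls either on $\chi_1(t)$ (bounded, by hypothesis all derivatives of $\chi_1$ are bounded) or on the Gaussian, producing a polynomial factor $P_j(\lambda^{1/2}(t-s))$ times the Gaussian; absorbing this polynomial into one more copy of the Gaussian and keeping one factor $e^{-\frac{\lambda d^2}{8}}$ out, one obtains, for each $j\le k$,
$$
\big|\d_t^j\big(\chi_1 e^{-\frac{|D_t|^2}{\lambda}}(\chi_2 u)\big)(t)\big| \leq C\, e^{-c\lambda}\, \lambda^{1/2}\int_\R e^{-\frac{\lambda|t-s|^2}{16}}|\chi_2(s)|\,|u(s)|\,ds ,
$$
with $c = d^2/16$ say. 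To convert the right-hand side into an $H^{-m}$ norm of $u$, I would argue by duality in $s$: the kernel $\lambda^{1/2}e^{-\frac{\lambda|t-s|^2}{16}}\chi_2(s)$, seen as a function of $s$ for each fixed $t$, has $H^m_s$ norm bounded by $C\lambda^{m/2}$ (all $s$-derivatives of the Gaussian are again polynomial$\times$Gaussian, integrable with the stated power of $\lambda$, and $\chi_2$ together with its derivatives are bounded). Pairing against $u$ and using $|\langle v, u\rangle| \le \|v\|_{H^m}\|u\|_{H^{-m}}$ gives
$$
\big|\d_t^j\big(\chi_1 e^{-\frac{|D_t|^2}{\lambda}}(\chi_2 u)\big)(t)\big| \leq C\,\lambda^{m/2}\, e^{-c\lambda}\, \|u\|_{H^{-m}(\R;\X)} ,
$$
uniformly in $t$, and this pointwise bound is supported in $\supp(\chi_1)$, a fixed bounded set; hence its $L^2_t$ norm is controlled by the same right-hand side. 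Summing over $j\le k$ and renaming constants (absorbing $\lambda^{m/2}$ into $e^{-c\lambda}$ at the cost of shrinking $c$) yields the claim.

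\textbf{Main obstacle.} There is no real obstacle here — this is a routine kernel estimate — but the one point requiring a little care is the bookkeeping of $\lambda$-powers: differentiating the Gaussian in $t$ (for the $H^k$ norm on the left) and in $s$ (to produce the $H^{-m}$ norm on the right) each generates polynomial weights $\lambda^{1/2}(t-s)$, and one must check that after absorbing all of them into Gaussians the surviving prefactor is only a fixed power of $\lambda$, which is then killed by the exponential $e^{-\lambda d^2/8}$ upon slightly decreasing the constant $c$. The Banach-valued nature of $\X$ plays no role beyond the triangle inequality, and the support separation $d>0$ is exactly what makes the diagonal singularity of the heat kernel irrelevant.
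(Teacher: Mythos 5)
Your overall strategy (explicit Gaussian kernel, off-diagonal exponential decay from $|t-s|\geq d$, moving derivatives onto the kernel) is the same as the paper's, but your final step contains a genuine gap. You pass from a pointwise-in-$t$ bound to the $L^2_t$ norm by asserting that the output ``is supported in $\supp(\chi_1)$, a fixed bounded set.'' The hypotheses only require $\chi_1,\chi_2\in C^\infty(\R)$ with bounded derivatives and \emph{separated} supports; compactness is not assumed, and the lemma is in fact applied in the paper with $\chi_1=1-\chi$, whose support is unbounded (see the estimate~\eqref{estim for 1-chi}). A uniform-in-$t$ bound therefore does not yield an $L^2_t$ bound. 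The paper avoids this by never leaving $L^2$: it bounds the truncated Gaussian kernel in $L^1$ and applies Young's convolution inequality, which gives $\nor{\cdot}{L^2_t}\leq \nor{\text{kernel}}{L^1}\nor{\chi_2 u}{L^2}$ with $\nor{\text{kernel}}{L^1}\leq Ce^{-c\lambda}$, regardless of the size of the supports. To repair your argument, instead of pairing the kernel with $u$ pointwise in $t$, write $u=\dt^{m}w$ with $w=\dt^{-m}u\in L^2(\R;\X)$, integrate by parts in $s$ to move $\dt^{m}$ onto $e^{-\lambda|t-s|^2/4}\chi_2(s)$ (producing polynomial-times-Gaussian factors as you describe), and then apply Young's inequality to the resulting operator acting on $w$. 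This is, modulo presentation, exactly the paper's route: it proves $L^2\to L^2$ exponential smallness for $D_t^\ell\chi_1 D_t^k e^{-|D_t|^2/\lambda}\chi_2 D_t^m$ by an induction on $(\ell,m)$ using the commutators $[D_t,\chi_j]$, and then substitutes $v=(1+D_t^2)^{-m}u$.

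Two further points. First, your ``reduction to the scalar case'' by replacing $u$ with $\|u(\cdot)\|_\X$ is incompatible with the $H^{-m}(\R;\X)$ norm on the right-hand side: by~\eqref{e:norm-esp-tps} this norm is $\nor{\dt^{-m}u}{L^2(\R;\X)}$, and negative-order Fourier multipliers do not commute with taking the pointwise $\X$-norm. There is no need for this reduction: the Young/Cauchy--Schwarz steps go through verbatim for Banach-valued integrands via $\|\int f g\|_\X\leq\int|f|\,\|g\|_\X$. Second, in your displayed intermediate bound you place absolute values on $u(s)$ inside the integral \emph{before} invoking duality against $H^{-m}$; once the absolute values are taken the $H^{-m}$ pairing is no longer available, so the integration by parts in $s$ must be performed first. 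Neither of these is fatal, but both need to be fixed for the argument to be correct.
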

	See e.g.~\cite[Lemma~2.4]{LL:15} in case $m=k=0$. 
	\begin{proof}
		We start with $k=m=0$ and recall~\eqref{e:gaussian-convol}.
		Using the support properties of $\chi_1, \chi_2$, this implies
		\begin{align*}
			\chi_1 e^{-\frac{|D_t|^2}{\lambda}}(\chi_2u)(t)&=\left(\frac{\lambda}{4\pi }\right)^{1/2}\chi_1(t)\int_{|t-s|\geq d} e^{-\frac{\lambda}{4}(s-t)^2} \chi_2(s) u(s)ds \\
			&=\left(\frac{\lambda}{4\pi }\right)^{1/2}\chi_1(t) \mathds{1}_{|\cdot|\geq d}e^{-\frac{\lambda}{4}(\cdot)^2} * \chi_2(\cdot)(t) 
		\end{align*}
		The Young  inequality thus yields
		\begin{align*}
			\nor{\chi_1 e^{-\frac{|D_t|^2}{\lambda}}(\chi_2u)}{L^2(\R;\X)}&\leq  \left(\frac{\lambda}{4\pi }\right)^{1/2} \nor{\chi_1}{L^\infty} \nor{\mathds{1}_{|\cdot|\geq d}e^{-\frac{\lambda}{4}(\cdot)^2}}{L^1(\R)} \nor{\chi_2 u}{L^2(\R;\X)} \\
			&\leq \left(\frac{\lambda}{4\pi }\right)^{1/2} \nor{\chi_1}{L^\infty} \nor{\chi_2}{L^\infty} \nor{\mathds{1}_{|\cdot|\geq d}e^{-\frac{\lambda}{4}(\cdot)^2}}{L^1(\R)} \nor{ u}{L^2(\R;\X)}.
		\end{align*}
		The result for $k=m=0$ then follows from the fact that
		\begin{align*}
			\frac{1}{2}\nor{\mathds{1}_{|\cdot|\geq d}e^{-\frac{\lambda}{4}(\cdot)^2}}{L^1(\R)}&=\int_{d}^{\infty} e^{-\frac{\lambda }{8}s^2} e^{-\frac{\lambda }{8}s^2} ds\leq e^{-\frac{\lambda }{8}d^2} \int_{0}^{\infty} e^{-\frac{\lambda }{8}s^2}ds
			\leq  \frac{Ce^{-\frac{\lambda }{8}d^2} }{\sqrt{\lambda}}\int_{0}^{\infty}e^{-s^2}ds\leq  Ce^{-c \lambda}.
		\end{align*}
		As a preparation for the general case, we prove a similar estimate if $e^{-\frac{|D_t|^2}{\lambda}}$ is replaced by $D_t^k e^{-\frac{|D_t|^2}{\lambda}}$ for $k \in \N$.
		Notice that from~\eqref{e:gaussian-convol}, we have 
		$$
		D_t^k e^{-\frac{|D_t|^2}{\lambda}}f=\left(\frac{\lambda}{4\pi }\right)^{1/2}\int_\R D_t^k e^{-\frac{\lambda}{4}(s-t)^2}f(s)ds =  \left(\frac{\lambda}{4\pi }\right)^{1/2} \sum_{0\leq k_1,k_2 \leq k} \alpha_{k_1,k_2} \int_\R  \lambda^{k_1}(s-t)^{k_2}e^{-\frac{\lambda}{4}(s-t)^2}f(s)ds ,
		$$
		where $\alpha_{k_1,k_2}\in \C$ do not depend on $\lambda$. As a consequence, proceeding as above with the Young inequality, we obtain
		\begin{align*}
			\nor{\chi_1 D_t^k e^{-\frac{|D_t|^2}{\lambda}}(\chi_2u)}{L^2(\R;\X)}&\leq  C_k \lambda^{k+1/2} \sum_{0\leq  k_2 \leq k}
			\nor{ \chi_1(t) \mathds{1}_{|\cdot|\geq d}e^{-\frac{\lambda}{4}(\cdot)^2} (\cdot)^{k_2}*(\chi_2u)(t) }{L^2(\R;\X)} \\ 
			& \leq C_k \lambda^{k+1/2}  \nor{\chi_1}{L^\infty} \nor{\chi_2}{L^\infty} \sum_{0\leq  k_2 \leq k}\nor{\mathds{1}_{|\cdot|\geq d}e^{-\frac{\lambda}{4}(\cdot)^2} (\cdot)^{k_2}}{L^1(\R)} \nor{u}{L^2(\R;\X)} .
		\end{align*}
		Using now 
		\begin{align*}
			\nor{\mathds{1}_{|\cdot|\geq d}e^{-\frac{\lambda}{4}(\cdot)^2} (\cdot)^{k_2}}{L^1(\R)}&=2 \int_{d}^{\infty} e^{-\frac{\lambda }{8}s^2} e^{-\frac{\lambda }{8}s^2} s^{k_2} ds\leq 2 e^{-\frac{\lambda }{8}d^2} \int_{0}^{\infty} s^{k_2} e^{-\frac{\lambda }{8}s^2}ds \\
			&=e^{-\frac{\lambda }{8}d^2}\left(\frac{8}{\lambda}\right)^{\frac{k_2+1}{2}} \Gamma\left(\frac{k_2+1}{2}\right)\leq C_{k_2} e^{-c_{k_2} \lambda},
		\end{align*}
		Combining these two lines, we finally deduce that for any $k \in \N$ and any $\chi_1, \chi_2 \in L^{\infty}(\R)$ such that  $\dist(\supp{f_1},\supp{f_2})\geq d>0$, there are $C_k,c_k>0$ such that for all $u \in \mathcal{S}(\R;\X)$,
		\begin{align}
			\label{e:dk-expo}
			\nor{\chi_1 D_t^k e^{-\frac{|D_t|^2}{\lambda}}(\chi_2u)}{L^2(\R;\X)}&\leq  C_{k} e^{-c_{k} \lambda}\nor{u}{L^2(\R;\X)} .
		\end{align}
		Now, we prove the following statement: for all $k,\ell,m \in \N$, for all $\chi_1, \chi_2 \in C^\infty_b(\R)$ such that  $\dist(\supp{f_1},\supp{f_2})\geq d>0$ there are $C,c>0$ such that for all $u \in \mathcal{S}(\R;\X)$,
		\begin{align}
			\label{e:dk-expo-bis}
			\nor{ D_t^\ell \chi_1D_t^k  e^{-\frac{|D_t|^2}{\lambda}}(\chi_2 D_t^m u)}{L^2(\R;\X)}&\leq  C e^{-c \lambda}\nor{u}{L^2(\R;\X)} .
		\end{align}
		To this aim, given $\ell,m \in \N$, we consider the induction assumption
		\begin{align}
			\label{e:induct-asspt}
			\tag{$A(\ell,m)$}
			\text{\eqref{e:dk-expo-bis} is satisfied for all }  k \in \N.
		\end{align} 
		We notice first that $(A(0,0))$ is~\eqref{e:dk-expo}. Then, we assume~\eqref{e:induct-asspt} and prove $(A(\ell+1,m+1))$. For this, we decompose and expand
		\begin{align*}
			D_t^{\ell+1} \chi_1D_t^k  e^{-\frac{|D_t|^2}{\lambda}} \chi_2 D_t^{m+1} & =  D_t^{\ell} \big( \chi_1D_t   +[D_t ,\chi_1] \big)D_t^k  e^{-\frac{|D_t|^2}{\lambda}} \big(D_t \chi_2 + [\chi_2,D_t] \big) D_t^{m} \\
			& = 
			D_t^{\ell}   \chi_1 D_t^{k+2}  e^{-\frac{|D_t|^2}{\lambda}}  \chi_2  D_t^{m} 
			+ i 
			D_t^{\ell}  \chi_1  D_t^{k+1}  e^{-\frac{|D_t|^2}{\lambda}}  \chi_2'  D_t^{m} \\
			& \quad 
			-
			i D_t^{\ell} \chi_1'  D_t^{k+1}  e^{-\frac{|D_t|^2}{\lambda}}  \chi_2 D_t^{m} 
			+
			D_t^{\ell} \chi_1' D_t^k  e^{-\frac{|D_t|^2}{\lambda}} \chi_2' D_t^{m} ,
		\end{align*}
		and notice that the induction assumption~\eqref{e:induct-asspt} applies to all of these four terms since $\supp \chi_j' \subset \supp\chi_j$, $j=1,2$.
		This concludes the proof of~\eqref{e:dk-expo-bis}.
		
		To conclude the proof of the lemma, we deduce from~\eqref{e:dk-expo-bis} (for $k=0$) that for $\ell,m\in \N$, and all $v \in \mathcal{S}(\R;\X)$,
		\begin{align*}
			\nor{(1+D_t^2)^\ell \chi_1 e^{-\frac{|D_t|^2}{\lambda}}(\chi_2 (1+D_t^2)^m v)}{L^2(\R;\X)}&\leq  C e^{-c \lambda}\nor{v}{L^2(\R;\X)} .
		\end{align*}
		Letting $v := (1+D_t^2)^{-m} u$ in this expression, we deduce that for all $u \in \mathcal{S}(\R;\X)$,
		\begin{align*}
			\nor{\chi_1 e^{-\frac{|D_t|^2}{\lambda}}(\chi_2  u)}{H^{2\ell}(\R;\X)}&  = 
			\nor{(1+D_t^2)^\ell \chi_1 e^{-\frac{|D_t|^2}{\lambda}}(\chi_2 (1+D_t^2)^m v)}{L^2(\R;\X)}\\
			&\leq  C e^{-c \lambda}\nor{v}{L^2(\R;\X)} = 
			Ce^{- c\lambda}\nor{u}{H^{-2m}(\R;\X)}.
		\end{align*}
		This concludes the proof of the lemma (for even integers, and thus for all integers).
\end{proof}

\subsection{A complex analysis lemma}
The following regularity lemma is used in the conjugation argument.
\begin{lemma}
\label{l:regC1}
Let $U\subset \C$ an open set containing $0$ and $h\in C^{2}(U)$ such that $|\d_{\bar z}h(z)|=o(|z|)$ as $z \to 0$. Then, the function defined by 
$$w(z) :=\frac{h(z)-h(0)}{z}, \text{ for }z \neq 0 , \quad \text{ and }\quad w(0) = \d_z h(0) ,$$ satisfies $w\in C^1(U)$.
\end{lemma}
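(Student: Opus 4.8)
The statement is a local regularity result at the single point $z=0$ (away from $0$, $w$ is a quotient of $C^2$ functions by a nonvanishing one, hence smooth). The plan is to work in a disk $B(0,r)\subset U$ and write $h(z)-h(0)$ as an integral along the segment from $0$ to $z$. Parametrizing $\gamma(s)=sz$, $s\in[0,1]$, the chain rule gives
\[
h(z)-h(0)=\int_0^1 \frac{d}{ds}h(sz)\,ds=\int_0^1\Big( z\,\d_z h(sz)+\bar z\,\d_{\bar z}h(sz)\Big)\,ds,
\]
so that, for $z\neq 0$,
\[
w(z)=\int_0^1 \d_z h(sz)\,ds+\frac{\bar z}{z}\int_0^1 \d_{\bar z}h(sz)\,ds .
\]
The first term, call it $w_1(z)$, extends to a $C^1$ function on $B(0,r)$ by differentiation under the integral sign (since $\d_z h\in C^1$), with $w_1(0)=\d_z h(0)$. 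The whole difficulty is concentrated in the second term $w_2(z):=\frac{\bar z}{z}\int_0^1 \d_{\bar z}h(sz)\,ds$, for which I must show $w_2\in C^1$ with $w_2(0)=0$ and $\nabla w_2(0)=0$; then $w=w_1+w_2$ is $C^1$ with $w(0)=w_1(0)=\d_z h(0)$, matching the defined value.

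\textbf{Handling $w_2$.} Here I exploit the hypothesis $|\d_{\bar z}h(z)|=o(|z|)$, which is the key quantitative input. Write $g:=\d_{\bar z}h$; by assumption $g\in C^1$, $g(0)=0$, and the $o(|z|)$ condition forces $\nabla g(0)=0$ as well (a $C^1$ function whose first-order Taylor expansion is $o(|z|)$ has vanishing differential at the origin). Set $G(z):=\int_0^1 g(sz)\,ds$, which is $C^1$ on $B(0,r)$ by differentiation under the integral; since $g(0)=0$ and $\nabla g(0)=0$ one checks $G(0)=0$ and $\nabla G(0)=0$, and moreover $G(z)=o(|z|)$ as $z\to 0$ (this follows by plugging the estimate $|g(sz)|=o(|sz|)\le o(|z|)$ uniformly in $s$, or more carefully by $G(z)=\int_0^1\int_0^1 \nabla g(\sigma s z)\cdot(sz)\,d\sigma\,ds$ with $\nabla g$ small near $0$). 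Then $w_2(z)=\frac{\bar z}{z}G(z)$ on the punctured disk. Since $|\bar z/z|=1$, the bound $|w_2(z)|=|G(z)|=o(|z|)\to 0$ shows $w_2$ extends continuously by $w_2(0)=0$, and in fact $w_2(z)=o(|z|)$ shows $w_2$ is differentiable at $0$ with $dw_2(0)=0$.

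\textbf{Continuity of $\nabla w_2$ at $0$.} This is the main obstacle: being differentiable at $0$ with zero differential is not enough; I need $\nabla w_2(z)\to 0$ as $z\to 0$. For $z\neq 0$ compute, using $\d_z(\bar z/z)=-\bar z/z^2$ and $\d_{\bar z}(\bar z/z)=1/z$,
\[
\d_z w_2(z)=-\frac{\bar z}{z^2}G(z)+\frac{\bar z}{z}\d_z G(z),\qquad
\d_{\bar z}w_2(z)=\frac{1}{z}G(z)+\frac{\bar z}{z}\d_{\bar z}G(z).
\]
The terms $\frac{\bar z}{z}\d_z G(z)$ and $\frac{\bar z}{z}\d_{\bar z}G(z)$ tend to $0$ because $|\bar z/z|=1$ and $\nabla G(0)=0$ (so $\nabla G$ is continuous and vanishes at $0$). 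The genuinely delicate terms are $\frac{\bar z}{z^2}G(z)$ and $\frac{1}{z}G(z)$; both are controlled in modulus by $\frac{|G(z)|}{|z|}=o(1)$, hence tend to $0$. Therefore all partials of $w_2$ extend continuously to $B(0,r)$ with value $0$ at the origin, so $w_2\in C^1(B(0,r))$. Combining with $w_1\in C^1$, we conclude $w=w_1+w_2\in C^1(U)$ with $w(0)=\d_z h(0)$, as claimed. The only slightly subtle point to write carefully is the uniform-in-$s$ passage from $|g(sz)|=o(|sz|)$ to $|G(z)|=o(|z|)$; I would phrase this via the mean value estimate $G(z)=\int_0^1\big(\int_0^1\nabla g(\sigma sz)\,d\sigma\big)\cdot(sz)\,ds$ together with $\sup_{|\zeta|\le|z|}|\nabla g(\zeta)|\to 0$, which is clean and avoids any $\varepsilon$–$\delta$ bookkeeping.
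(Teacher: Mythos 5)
Your proof is correct and follows essentially the same route as the paper's: the Taylor formula along the segment $s\mapsto sz$, the decomposition $w=\int_0^1\partial_z h(sz)\,ds+\frac{\bar z}{z}\int_0^1\partial_{\bar z}h(sz)\,ds$, and the use of $|\partial_{\bar z}h(z)|=o(|z|)$ (which forces $\partial_{\bar z}h(0)=0$ and $\nabla\partial_{\bar z}h(0)=0$) to control the singular factor $\bar z/z$ and its derivatives. Your extra remark that $w_2(z)=o(|z|)$ gives differentiability at the origin is a slightly more careful finish than the paper's, but the argument is the same.
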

\bnp
The only problem is close to $z=0$ and may thus assume that $U$ is a small open ball centered at $0$. We write the Taylor formula $h(z)=h(0)+z\int_0^1\partial_{z}h(sz)ds+\bar{z}\int_0^1\partial_{\bar{z}}h(sz)ds$ and obtain 
\begin{align}
\label{e:zzbarderiv-0}
w(z)=\int_0^1\partial_{z}h(sz)ds+\frac{\bar{z}}{z}\int_0^1\partial_{\bar{z}}h(sz)ds , \quad z \neq 0 .
\end{align}
The first term in the right-hand side is of class $C^{1}$ by assumption and we only need to prove that the second term $u(z) :=\frac{\bar{z}}{z}\int_0^1\partial_{\bar{z}}h(sz)ds$ can be extended as a $C^1$ function near $0$. The assumption $|\d_{\bar z}h(z)| =o(|z|)$ implies that $u(z)$ can be continuously extended by $0$ at $0$ so, we are left to consider the derivatives of $u$. Denoting by $\nabla$ any derivative, we have 
\begin{align}
\label{e:zzbarderiv}
\nabla u(z)=\nabla\left(\frac{\bar{z}}{z}\right)\int_0^1\partial_{\bar{z}}h(sz)ds+\frac{\bar{z}}{z}\int_0^1s\nabla\partial_{\bar{z}} h(sz)ds.
\end{align}
By assumption, $\d_{\bar z}h\in C^{1}$ and we may thus write (Taylor expansion with Peano form of the remainder) $\d_{\bar z}h(z) = \d_{\bar z}h(0) + z\d_z\d_{\bar z}h(0) + \zb\d_{\bar z}^2h(0) + o(|z|)$.
Since we further assume $|\d_{\bar z}h(z)| =o(|z|)$, we deduce that $\d_{\bar z}h(0)=0$, $\nabla \partial_{\bar{z}}h(0)=0$, and therefore $|\nabla \partial_{\bar{z}}h(z)| = o(1)$ as $z\to 0$. Since $\left|\nabla\left(\frac{\bar{z}}{z}\right)\right|\leq C |z|^{-1}$, we deduce from~\eqref{e:zzbarderiv} and $|\d_{\bar z}h(z)|=o(|z|)$ that
$$
|\nabla u(z)| \leq C |z|^{-1} \int_0^1 o(|sz|) ds+ \int_0^1s^2|z|ds \to 0, 
$$
as $z \to 0$ (note that in the first integral, we have used that, since $h$ is $C^2$, we have $o(z) = zm(z)$ with $m$ continuous near zero and $m(z)\to 0$ as $z\to 0$, together with the Lebesgue convergence theorem). This proves that $u$ is of class $C^1$ near zero and hence, coming back to~\eqref{e:zzbarderiv-0}, so is $w$ (with $\nabla w(0)= \nabla \partial_{z}h(0)$).
\enp
\subsection{Integration by parts formul\ae}

Given a bounded $C^1$ (or piecewise $C^1$) domain $\Omega \subset \C$ and a $C^1$ one form $\omega$ defined in a neighborhood of $\Omega$, we recall the Stokes formula
$$
\int_{\d\Omega} \omega = \int_\Omega d\omega.
$$
Here, $\d\Omega$ is given the orientation coming from the canonical orientation of $\C$. 

Now given a Banach space $\ban$ and a function $f_0 : \R^2 \simeq \C \to \ban$, and under the identification $f_0(x,y)=f(z,\bar{z})$, we apply the above formula with the one Banach-valued form $\omega(x,y)= f(z,\bar{z})dz$ to obtain
$$
\int_{\d\Omega} f(z,\bar{z})dz = \int_\Omega d \left( f(z,\bar{z})dz \right) =  \int_\Omega   \d_z f(z,\bar{z})dz\wedge dz +  \d_{\bar{z}} f(z,\bar{z})d\bar{z}\wedge dz =  \int_\Omega \d_{\bar{z}} f(z,\bar{z})d\bar{z}\wedge dz, 
$$
that is 
\begin{equation}
\label{stokes for dzb}
\int_{\d\Omega} f(z,\bar{z})dz=\int_\Omega \d_{\bar{z}} f(z,\bar{z})d\bar{z}\wedge dz.
\end{equation}
Here $\d\Omega$ is oriented so that $\Omega$ lies to the left of $\d\Omega$, see for instance \cite[Chapter 1, Section 1.2]{Hor:90}. 
Applying this to $f=gh$ with $g \in C^1(\C;\C), h \in C^1(\C;\ban)$, we deduce 
\begin{equation}
\label{stokes product}
\int_\Omega g  \d_{\bar{z}} h  d\bar{z}\wedge dz = \int_{\d\Omega} ghdz - \int_\Omega  h  \d_{\bar{z}} g d\bar{z}\wedge dz  .
\end{equation}
If now  $g \in C^1(\C)$ and $h \in C^1(\C;\ban)$ satisfy $hg \to 0$ at infinity  and $g  \d_{\bar{z}} h \in L^1(\C;\ban)$, $h \d_{\bar{z}} g \in L^1(\C;\ban)$, then we may choose $\Omega =B(0,R)$ and let $R \to +\infty$, yielding the following statement.
\begin{lemma}
\label{l:ipp with zbar at infinity}
Assume $\ban$ is a Banach space, $g \in C^1(\C;\C)$ and $h \in C^1(\C;\ban)$ satisfy $g  \d_{\bar{z}} h \in L^1(\C;\ban)$, $h \d_{\bar{z}} g \in L^1(\C;\ban)$  and $\int_{\d B(0,R)} \|hg\|_{\ban}(z) dz\to 0$ as $R \to +\infty$. Then 
\begin{equation}
	\label{ipp with zbar at infinity}
	\int_\C g  \d_{\bar{z}} h  d\bar{z}\wedge dz = - \int_\C  h  \d_{\bar{z}} g d\bar{z}\wedge dz  .
\end{equation}
\end{lemma}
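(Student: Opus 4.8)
\textbf{Proof plan for Lemma~\ref{l:ipp with zbar at infinity}.}

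The plan is to obtain~\eqref{ipp with zbar at infinity} as a limiting case of the finite-domain identity~\eqref{stokes product}. First I would apply~\eqref{stokes product} on the ball $\Omega = B(0,R)$, which gives, for every $R>0$,
\begin{equation*}
\int_{B(0,R)} g\, \d_{\bar z}h\, d\bar z \wedge dz = \int_{\d B(0,R)} g h \, dz - \int_{B(0,R)} h\, \d_{\bar z}g\, d\bar z\wedge dz .
\end{equation*}
The two volume integrals on the right and left converge as $R\to+\infty$ to the corresponding integrals over $\C$, by the dominated convergence theorem: this is exactly where the hypotheses $g\,\d_{\bar z}h \in L^1(\C;\ban)$ and $h\,\d_{\bar z}g\in L^1(\C;\ban)$ are used, taking as dominating functions $\mathds{1}_{B(0,R)}\|g\,\d_{\bar z}h\|_{\ban} \leq \|g\,\d_{\bar z}h\|_{\ban}$ and similarly for the other term. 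It remains to show that the boundary term tends to $0$.

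For the boundary term, I would bound $\left\| \int_{\d B(0,R)} g h\, dz \right\|_{\ban} \leq \int_{\d B(0,R)} \|g h\|_{\ban}(z)\, |dz|$, and the right-hand side tends to $0$ as $R\to+\infty$ by assumption. (If one only wants to invoke the stated hypothesis $\int_{\d B(0,R)} \|hg\|_{\ban}(z)\, dz \to 0$ literally, the same estimate applies once one interprets $dz$ on the circle as arclength, which is the convention under which that hypothesis is meaningful.) Passing to the limit $R\to+\infty$ in the displayed identity then yields~\eqref{ipp with zbar at infinity}.

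I do not expect any serious obstacle here: the statement is essentially a bookkeeping exercise combining Stokes' formula~\eqref{stokes for dzb} (equivalently~\eqref{stokes product}, which follows from it applied to the product $f = gh$ together with the Leibniz rule $\d_{\bar z}(gh) = (\d_{\bar z}g)h + g(\d_{\bar z}h)$) with a routine dominated-convergence argument. The only mild point to be careful about is that $h$ is Banach-valued, so one works with Bochner integrals and uses $\|\int_{\d B(0,R)} gh\, dz\|_{\ban} \leq \int_{\d B(0,R)} \|gh\|_{\ban}|dz|$ rather than a scalar triangle inequality; this is harmless since all the relevant integrands are continuous (hence strongly measurable) and the integrability hypotheses are stated directly in terms of the $\ban$-norm. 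One should also note that~\eqref{stokes product} was already established (in the excerpt) for $g\in C^1(\C;\C)$, $h\in C^1(\C;\ban)$ on bounded $C^1$ domains such as $B(0,R)$, so no regularity issue arises.
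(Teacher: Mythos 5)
Your proof is correct and follows essentially the same route as the paper: apply~\eqref{stokes product} on $\Omega = B(0,R)$, pass the two volume integrals to the limit $R\to\infty$ using integrability over $\C$, and kill the boundary term using the stated decay hypothesis. Your aside about reading $dz$ on the circle as arclength $|dz|$ (so that the boundary hypothesis is meaningful for the nonnegative integrand $\|hg\|_{\ban}$) is the right interpretation of the lemma's statement.
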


Note finally that $z=x+iy$ and  $\bar{z}=x-iy$ so that 
$$ 
d\bar{z}\wedge dz = d(x-iy)\wedge d(x+iy) = 2i dx\wedge dy ,
$$
where $dx\wedge dy$ is the usual Lebesgue measure on $\R^2$ (oriented).

Note also that if $f$ is holomorphic, we recover the usual deformation of contour principle $\int_{\d\Omega} f(z)dz =0$.

\small
\bibliographystyle{alpha}
\bibliography{bibli}
\end{document}